\numberwithin{thm}{subsection}
\author{Loren Spice}
\address{Texas Christian University \\ Fort Worth, TX \ \ 76129}
\email{l.spice@tcu.edu}
\dedicatory{To the memory of Paul J.~Sally, Jr.}
\thanks{The author was partially supported by Simons Foundation Collaboration Grant 246066.}
\title{Explicit asymptotic expansions for tame supercuspidal characters}
\subjclass[2000]{Primary 22E50, 22E35}
\keywords
	{$p$-adic group,
	supercuspidal representation,
	character computation}
\begin{document}
\maketitle

\begin{abstract}
We combine the ideas of a Harish-Chandra--Howe local character expansion, which can be centred at an arbitrary semisimple element, and a Kim--Murnaghan asymptotic expansion, which so far has been considered only around the identity.  We show that, for most smooth, irreducible representations (those containing a good, minimal K-type), Kim--Murnaghan-type asymptotic expansions are valid on explicitly defined neighbourhoods of nearly arbitrary semisimple elements.  We then give an explicit, inductive recipe for computing the coefficients in an asymptotic expansion for a tame supercuspidal representation.  The only additional information needed in the inductive step is a fourth root of unity, which we expect to be useful in proving stability and endoscopic-transfer identities.
\end{abstract}

\tableofcontents

\section{Introduction}

\subsection{Motivation}

According to Harish-Chandra's Lefschetz principle, by analogy with the situation for real groups, the character tables of representations of \(p\)-adic groups are expected to carry significant information about harmonic analysis on those groups.  For example, in \cite{debacker-spice:stability}, DeBacker and the author used the character computations of Adler and the author \cite{adler-spice:explicit-chars} to show that a certain natural candidate for an L-packet, constructed and shown to satisfy many of the necessary properties by Reeder \cite{reeder:sc-pos-depth}, satisfied stability conditions.  Actually, a slight modification to the construction of Yu \cite{yu:supercuspidal}, on which the conjecture is built, was necessary; see \xcite{debacker-spice:stability}*{Definition \xref{defn:twisted-cusp-ind}} and, from a different perspective, our discussion of Weil representations in
\S\ref{sec:cuspidal}.  It is not clear that the appropriate modification could easily have been seen from ``first principles'', but it essentially popped out of the explicit character formul{\ae} of \cite{adler-spice:explicit-chars}.  These investigations have been generalised, and put in a broader setting related to the local Langlands correspondence, by Kaletha in \cites{kaletha:simple-wild,kaletha:epipelagic,kaletha:regular-sc}, the last of which seems to represent the current state of the art in extracting such information from character formul{\ae}.

In \cite{kaletha:regular-sc}*{Corollary 4.10.1}, Kaletha shows that the character formula \cite{debacker-spice:stability}*{Theorem \xref{thm:ratl}}, which is stated by DeBacker and the author only for supercuspidal representations attached to data satisfying a compactness condition, is valid on regular, topologically semisimple elements for the so called regular supercuspidal characters, even if they do not satisfy the compactness condition.  In \cite{kaletha:regular-sc}*{Corollary 4.7.2}, he also re-interprets the roots of unity in \cite{debacker-spice:stability}*{\S\xref{sec:root}} in such a way that they make sense on the dual-group side, even without the fine structure theory of \(p\)-adic groups.  This allows him to construct L-packets \cite{kaletha:regular-sc}*{Proposition 5.2.4}, and to prove stability \cite{kaletha:regular-sc}*{Theorem 6.3.2} and endoscopic-transfer identities \cite{kaletha:regular-sc}*{Theorem 6.3.4} as a consequence of the re-interpreted character formul{\ae} (in particular, globally for \emph{toral} supercuspidal characters, and on the regular, topologically semisimple set for all regular supercuspidal characters).  In order to extend the range of validity of these identities, some generalisation of the character formula is needed.

One immediate obstruction is the fact that the compactness assumption in \xcite{adler-spice:explicit-chars}*{Theorem \xref{thm:char-tau|pi-1}} is not just an artifact of the proof; it is needed even to state the result, by guaranteeing the finite-dimensionality of some representations and so allowing us to view their characters as, not merely densely defined \via the intricate Harish-Chandra machinery of representing functions (\cite{hc:harmonic}*{\S6, p.~60, Theorem 12} and \cite{hc:submersion}*{\S4, p.~99, corollary to Theorem 2}),
but actually \emph{globally} defined objects.  Indeed, \xcite{adler-spice:explicit-chars}*{Theorem \xref{thm:full-char}} is stated in terms of the values \(\Theta_{\pi'_0}(\gamma'_0)\) of a character \(\pi'_0\) at a possibly \emph{singular}, topologically semisimple (modulo centre) element \(\gamma'_0\), and it is not clear how to assign sensibly a numerical value to this symbol in all cases if \(\pi'_0\) is infinite-dimensional, so that the operator \(\pi'_0(\gamma'_0)\) is not trace class.

The work of Kim--Murnaghan \cites{jkim-murnaghan:charexp,jkim-murnaghan:gamma-asymptotic} on asymptotic expansions, similar to the local character expansion but phrased in terms of non-nilpotent orbits, suggests one way forward.  Indeed, \xcite{adler-spice:explicit-chars}*{Corollary \xref{cor:char-tau|pi-1-germ}} shows that the formul{\ae} of \cite{adler-spice:explicit-chars} specialise near the identity to the simplest case of such an asymptotic expansion (in terms of a single, semisimple orbital integral).  Unfortunately, this can only give information near the identity.  Here we turn to the work of Adler--Korman \cite{adler-korman:loc-char-exp}, which follows DeBacker \cite{debacker:homogeneity} and Kim--Murnaghan in proving quantitative results for asymptotic expansions centred at non-identity elements.  The Adler--Korman results concern the local character expansion, in terms of nilpotent orbital integrals.  We view these two programs as suggesting a middle ground:  a Kim--Murnaghan-type asympotic expansion, but centred around arbitrary semisimple points.

This gives an idea for the \emph{shape} of a general character formula, but there is one important piece of information missing:  namely, the coefficients in the asymptotic expansions.  In \xcite{debacker-spice:stability}*{Theorem \xref{thm:ratl}}, DeBacker and the author re-write the character formul{\ae} of \cite{adler-spice:explicit-chars} in a form that seems more amenable to use in computational harmonic analysis.  In this paper we give an explicit inductive recipe, inspired by the re-written characters of \cite{debacker-spice:stability}, for computing the relevant coefficients when the representation being considered is tame supercuspidal (\ie, arises from Yu's construction \cite{yu:supercuspidal}).  In future joint work with DeBacker and Kaletha, we will explore the consequences of these results for the endoscopic-transfer identities of \cite{kaletha:regular-sc} for non-toral, regular supercuspidal representations.

\subsection{Structure of the paper}

One surprising aspect of this paper is that, although our goal is explicit character computations, we do not even mention representations except in \S\S\ref{sec:asymptotic} and \ref{sec:cuspidal}.  It turns out that much of the machinery that we need can be constructed in the setting of general invariant distributions.  We hope that this extra generality will be useful in future applications.

In \S\ref{sec:notn}, we lay out the standard notation that we will use.
In \S\ref{sec:concave}, we briefly recall the Bruhat--Tits theory of groups associated to concave functions \cite{bruhat-tits:reductive-groups-1}*{Proposition 6.4.9}, and use Yu's approach to their structure theory \cite{yu:supercuspidal}*{\S2} to do a few calculations.  In \S\ref{sec:depth-matrix}, we modify the theory to handle a class of compact, open subgroups whose definition involves reductive, algebraic subgroups that need not have full rank.  In previous work \cites{adler-spice:good-expansions,adler-spice:explicit-chars,debacker-spice:stability}, we have often needed to impose considerable tameness hypotheses in order to handle such groups.  Although we are not yet ready completely to discard such hypotheses, we can at least isolate exactly the parts that we need.  Namely, we have Hypotheses \ref{hyp:mexp}, \ref{hyp:depth}, and \ref{hyp:MP-ad}, which deal with the group itself; Hypotheses \ref{hyp:funny-centraliser}, \ref{hyp:gamma}, and \ref{hyp:gamma-central}, which concern the element \(\gamma\) about which we centre our asymptotic expansions; and Hypotheses \ref{hyp:Z*} and \ref{hyp:K-type}, which concern the K-types contained in the representations whose characters we want to compute, and are automatically satisfied for Yu's tame supercuspidal representations.  (There are also Hypotheses \ref{hyp:X*} and \ref{hyp:phi}, which need not be explicitly imposed for the main results, where they are automatically satisfied.)  All of these are known to hold in many cases; we discuss sufficient conditions for each as it is introduced.

We have also avoided as long as possible assuming that our group is connected (or that the automorphism of its identity component induced by the element \(\gamma\) of \S\ref{sec:depth-matrix} is inner).  Although we eventually do inherit this assumption from \cite{yu:supercuspidal} in \S\ref{sec:cuspidal}, we have laid enough of the groundwork by that point that we hope it can serve as a starting point for investigating asymptotic expansions related to twisted characters and base change.

Although our final result provides explicit \emph{computations} only for tame supercuspidal representations, it turns out that the asymptotic-expansion machinery built by DeBacker in \cite{debacker:homogeneity}, and later generalised by Kim--Murnaghan in \cites{jkim-murnaghan:charexp,jkim-murnaghan:gamma-asymptotic} and Adler--Korman in \cite{adler-korman:loc-char-exp}, is sufficiently general that it can handle our recentred \emph{existence} results for most smooth, irreducible representations.
In \S\ref{sec:nearly-good}, we recall the now-standard properties of ``good'' elements, slightly generalised to the non-connected setting.  Our goal is to use this machinery to describe the good minimal K-types, but, before we do so, it turns out to be convenient to take a detour in \S\ref{sec:H-perp} to do some abstract-algebraic computations that allow us to define a ``perpendicular group'', in some sense analogous to the groups \(J\) and \(J_+\) of \cite{yu:supercuspidal}*{\S9}, that will be useful in our later computation of Gauss sums (\S\ref{sec:Gauss}).  In \S\S\ref{sec:dual-blob}, \ref{sec:asymptotic}, we describe the K-types that we consider.  Our first main result, Theorem \ref{thm:asymptotic-exists}, shows that the character of a representation containing one of these K-types has Kim--Murnaghan-style asymptotic expansions, on explicitly described neighbourhoods, about (nearly) arbitrary semisimple elements, not just the identity.

In \S\ref{sec:quantitative}, we build to a quantitative version of the qualitative results of \S\ref{sec:qualitative}.  Inevitably (see \cite{adler-spice:explicit-chars}, and the historical discussion there), explicit calculations in \(p\)-adic harmonic analysis seem to involve certain fourth roots of unity known as Gauss sums.  In \cite{kaletha:regular-sc}*{\S\S4.6--4.7}, Kaletha shows that these fourth roots of unity are actually indirectly predicted by the local Langlands correspondence, since they occur in the definition of the transfer factor.  In \S\ref{sec:Gauss}, we follow Waldspurger \cite{waldspurger:loc-trace-form} in interpreting these roots of unity as Weil indices.  We expect that the recent work of Kottwitz \cite{kottwitz:weil}, which has already been used in \cite{kaletha:epipelagic}*{Theorem 4.10} and \cite{kaletha:regular-sc}*{Corollary 4.7.2}, will continue to be helpful in translating these fourth roots of unity into a form suited to stability and endoscopic-transfer calculations.

Our work in \S\ref{sec:Gauss} falls into two parts.  The easy part is to define a Weil index using the Lie algebra (Notation \ref{notn:Gauss}); the hard part is to show that this index actually arises in computations on the group.  We do this latter in Proposition \ref{prop:Gauss-to-Weil}.  This section is the analogue of \xcite{adler-spice:explicit-chars}*{\S\xref{ssec:gauss}}.  We have managed to avoid the ``centrality assumption'' \xcite{adler-spice:explicit-chars}*{Hypothesis \xref{hyp:X*-central}} there by working as much as possible directly on the group (which behaves well under tame base change), rather than with values of linear characters (which need not extend).  We hope that the occasionally hairy computations are justified by what we see as increased clarity of the underlying concepts.

The heart of this paper is \S\ref{sec:dist}.  As mentioned above, although our final goal is to compute characters, it turns out that many of the tools along the way---particularly vanishing results, which cut down on the support of a character, or on the domain over which an integral must be extended---work just as well for arbitrary invariant distributions.  In particular, we have isolated a key part of \xcite{adler-spice:explicit-chars}*{Proposition \xref{prop:step1-support}} as Lemma \ref{lem:r-to-s+}; and translated the crux of \xcite{adler-spice:explicit-chars}*{Proposition \xref{prop:step1-formula1}} to the setting of general invariant distributions, as Proposition \ref{prop:Gauss-appears}.  With these tools in hand, we can prove the main result of the section, Theorem \ref{thm:dist-G-to-G'}, which is a descent result that allows us to relate the coefficients in asymptotic expansions on a group \bG and a twisted Levi subgroup \(\bG'\).  We show again here the inspiration that we have drawn from the work of Kim--Murnaghan; our Theorem \ref{thm:dist-G-to-G'}  is very reminiscent of the descent arguments appearing in \cite{jkim-murnaghan:gamma-asymptotic}*{\S\S6.2, 7.2}.

Of course, some compatibility condition is necessary between the distributions on \(G\) and \(G'\).  We have phrased it in a way that, we believe, suggests a Hecke-algebra isomorphism crying out to be discovered.  We are not yet able to prove the existence of such an isomorphism, but the first main result of \S\ref{sec:cuspidal}, Theorem \ref{thm:isotypic-pi-to-pi'}, describes a crude but suitable substitute in the setting of Yu's construction of tame supercuspidal representations.  Combining this with Theorem \ref{thm:dist-G-to-G'} allows us to deduce the main result of the paper, Theorem \ref{thm:asymptotic-pi-to-pi'}, which gives an explicit, inductive description of the coefficients in the asymptotic expansion, centred around a (nearly) arbitrary semisimple point, of the character of a positive-depth, tame supercuspidal representation.

\subsection{Acknowledgements}

My advisor, Paul J.~Sally, Jr., instilled in me the idea of the importance of explicit character formul{\ae} in harmonic analysis, but passed away before I could share these results with him.  I like to think that he would have been pleased.  It is my honour to dedicate this paper to his memory.

I owe an enormous debt to Jeff Adler and Stephen DeBacker, who patiently endured endless streams of technical questions.  I also benefited from conversations with Tasho Kaletha, who kept the pressure on me finally to write up the results, and Jessica Fintzen and Cheng-Chiang Tsai.  Adler and Kaletha also made comments on an earlier draft of this paper.  It is a pleasure to thank all of them.

Most of this paper was written while I was on sabbatical leave at the University of Michigan.  I thank them for their hospitality, and for access to their incredible library; and the Simons Foundation for the funding that supported my travel.

\section{Notation and definitions}
\label{sec:notn}

\subsection{Representations and function spaces}

Most of the notation below is standard, but we point out two important points that might be unexpected.  First, the \emph{function} \(\chrc{K, f}\) depends on the Haar measure \(\upd g\) on \(G\), although the \emph{measure} \(\chrc{K, f}\upd g\) does not.  Second, we have followed \cite{yu:supercuspidal}*{\S17} in using the notation of \cite{bushnell-kutzko:gln-book}*{\S4.1}, which builds in a contragredient to the definition of the Hecke algebra \(\Hecke(G\sslash K, \rho)\).

If \(V\) is a vector space over any field \(F\), then we denote by \anonpair the pairing between the dual space \(V^* \ldef \Hom_F(V, F)\) and \(V\), so that, for \(v^* \in V^*\) and \(v \in V\), we write \(\pair{v^*}v\) in place of \(v^*(v)\).

If \(\rho\) is a representation of a group \(G\) on the \C-vector space \(V\) such that point stabilisers are open, then we write \mnotn{\rho^\vee} for the \noterm{contragredient} representation of \(G\) on \(V^\vee\), given by \(\pair{\rho^\vee(g)v^*}v = \pair{v^*}{\rho(g)\inv v}\) for all \(v \in V\), \(v^* \in V^*\), and \(g \in G\).  If \map\gamma{G'}G is a homomorphism from some other group \(G'\), then we write \mnotn{\rho^\gamma} for the representation \(\rho \circ \gamma\) of \(G'\).

If \(X\) is an \(l\)-space in the sense of \cite{bernstein-zelevinskii:gln}*{\S1.1}---\ie, a Hausdorff topological space for which every point has a neighbourhood system consisting of open and compact sets---then we write \mnotn{\Hecke(X)} for its \term{Hecke algebra}, which is, by definition, the \C-vector space of locally constant, compactly supported, \C-valued functions on \(X\).  If
	\begin{itemize}
	\item \(V\) is a finite-dimensional \C-vector space,
	\item \(K_1\) (respectively, \(K_2\)) is a compact group acting on \(X\) on the left (respectively, right),
and	\item \(\rho_i\) is a representation of \(K_i\) on \(V\) with open point stabilisers for \(i \in \sset{1, 2}\),
	\end{itemize}
then we write \mnotn{\Hecke((K_1, \rho_1)\bslash X/(K_2, \rho_2))} for the subspace of those functions \(f \in \Hecke(X) \otimes \End_\C(V^*)\) such that \(f(k_1 x k_2) = \rho_1^\vee(k_1)f(x)\rho_2^\vee(k_2)\) for all \(g \in X\), \(k_1 \in K\), and \(k_2 \in K_2\).  If \(\rho_1\) or \(\rho_2\) is the trivial representation, then we may omit it, writing, for example, \mnotn{\Hecke(K_1\bslash X/K_2)}.  If \(K_1\) equals \(K_2\) and \(\rho_1\) equals \(\rho_2\), then we may write \mnotn{\Hecke(X\sslash K_1, \rho_1)}.

If \(G\) is a topological group acting on \(X\) on the left (respectively, right), then we denote the left (respectively, right) regular action of \(G\) on \(\Hecke(X)\) simply by juxtaposition.  Specifically, for \(f \in \Hecke(X)\) and \(g_1, g_2 \in G\), we define \(g_1 f g_2 \in \Hecke(X)\) by \((g_1 f g_2)(g_1 x g_2) = f(x)\) for all \(x \in X\).

Now suppose that \(X\) is, in addition to being an \(l\)-space, a regular measure space.  If \(K\) is a compact, open subset of \(X\) and \(f\) belongs to \(\Hecke(K)\), then we write \mnotn{\chrc{K, f}} for the element of \(\Hecke(X)\) obtained by extending \(\meas(K)\inv f\) by \(0\).  If \(f\) is the constant function \(1\), then we may omit it and write just \mnotn{\chrc K}.  With the obvious notation, we have \(g_1\chrc{K, f}g_2 = \chrc{g_1 K g_2, g_1 f g_2}\).  We write \(\matnotn{int}{\displaystyle\uint_K f(x)\upd x}\) for \(\displaystyle\int_X \chrc{K, f}(x)\upd x\).

\subsection{Algebraic varieties and algebraic groups}

Let \field be a field, and \bG a smooth group variety over \field.  (We shall soon impose additional assumptions on both.)  We say just ``variety'' for ``smooth variety over \field'', and ``group'' for ``smooth group variety over \field''.  The exception is that we want to discuss mostly, but not only, \emph{complex} vector spaces; so we refer to a ``\C-vector space'' in that case, and, when necessary, to a ``\field-vector space''.

We denote varieties by boldface letters, and their sets of rational points by the corresponding un-bolded letters; so, for example, \(G\) is the group of rational points of \bG.  We denote the identity component of an algebraic group with a following superscript circle, the Lie algebra by the corresponding Fraktur letter or by `Lie', and the dual by `\(\Lie^*\)'; so, for example,
	\begin{itemize}
	\item \matnotn{Gconn}{\bG\conn} is the identity component of \bG,
	\item \mnotn{\Lie(\bG)} or \matnotn g{\pmb\fg} stands for the Lie algebra of \bG,
	\item \(\Lie^*(\bG)\) or \(\pmb\fg^*\) for its dual,
	\item \(\Lie(G)\) or \fg for its space of rational points,
and	\item \(\Lie^*(G)\) or \(\fg^*\) for the space of rational points of its dual (or, equivalently, the dual of its space of rational points).
	\end{itemize}
We always use \(G\conn\) for \((\bG\conn)(\field)\), not necessarily \((\bG(\field))\conn\), which is reduced to a point when \field is totally disconnected.  We write
\begin{align*}
&\map\Int\bG{\Aut(\bG)\conn}, \\
&\map\Ad\bG{\Aut\textsub{Lie}(\Lie(\bG))}, \\
&\map{\Ad^*}\bG{\Aut_\field(\Lie^*(\bG))}, \\
&\map\ad{\Lie(\bG)}{\operatorname{Der}(\Lie(\bG))},
\intertext{and}
&\map{\ad^*}{\Lie(\bG)}{\End_\field(\Lie^*(\bG))}
\end{align*}
for the interior-automorphism, adjoint, and related maps.

We write \mnotn{\Der\bG} for the derived group of \bG; and, if \(\vbG = (\bG^0, \dotsc, \bG^\ell = \bG)\) is a collection of subgroups of \bG, then we write
\begin{align*}
\mnotn{\Lie(\vbG)} &\qtextq{for} (\Lie(\bG^0), \dotsc, \Lie(\bG^\ell)), \\
\mnotn{\Lie^*(\vbG)} &\qtextq{for} (\Lie^*(\bG^0), \dotsc, \Lie^*(\bG^\ell)), \\
\intertext{and}
\mnotn{\Der\vbG} &\qtextq{for} (\Der{\bG^\ell} \cap \bG^0, \dotsc, \Der{\bG^\ell} \cap \bG^{\ell - 1}, \Der{\bG^\ell}).
\end{align*}
We always use \mnotn{\Der G} for \((\Der\bG)(\field)\), not necessarily \(\Der(\bG(\field))\), which may be smaller.

If \bV is a representation of \bG, then we write \(\Disc_\bV\) for the function \anonmap\bG{\GL_1} given by \anonmapto g{\det_\bV(g - 1)}.  If \bX is a pointed, smooth variety on which \bG acts, preserving the preferred point \(x\), and \bV is the tangent space to \bX at \(x\), with the induced action of \bG, then we may write \(\Disc_\bX\), or even just \mnotn[\Disc_{G/H}]{\Disc_X}, in place of \(\Disc_\bV\).  As in \xcite{debacker-spice:stability}*{Definition \xref{defn:disc}}, we write \matnotn{Dred}{\redD_G} for the functions on \(G\) and its Lie algebra given by \(\redD_G(g) = \Disc_{G/\Cent_G(g\semi)}(g)\) and \(\redD_G(Y) = \Disc_{\Lie(G)/\Lie(\Cent_G(Y\semi))}(Y)\), respectively.

We write \matnotn{Grss}{\bG\rss} for the set of regular, semisimple elements of \bG.



\subsection{Algebraic groups over non-Archimedean fields, and subgroups associated to concave functions}
\label{sec:p-adic-group-basics}

For the entire paper, we require that the field \matnotn k\field be complete with respect to a non-trivial discrete valuation \mnotn\ord, for which the residue field \mnotn\ff is finite.  We assume throughout the paper that the residual characteristic of \ff is odd.  (This is needed anyway to use the Weil representation appearing in \cite{yu:supercuspidal}, but we also need to use it again in our discussion of the Weil index in \S\ref{sec:Gauss}, and, in particular, in Proposition \ref{prop:Gauss-to-Weil}.)  Fix a separable closure \(\matnotn{ksep}\sepfield/\field\), and let \matnotn{ktame}\tamefield be the maximal tame extension of \field inside \sepfield.  For any algebraic extension \(E/\field\), we will denote the unique extension of \(\ord\) to a valuation on \(E\) again by \(\ord\).

Throughout the paper, we fix a complex character \matnotn{Lambda}\AddChar of the additive group \field that is non-trivial on the ring of integers in \field, but trivial on its unique maximal ideal.  We will denote the induced character of \ff also by \AddChar.  If \(V\) is a vector space and \(X^*\) is an element of the dual space, then we write \matnotn{Lambda}{\AddChar_{X^*}} for the character \(\AddChar \circ X^*\) of \(V\).

Also for the entire paper, we require that \matnotn G\bG be a reductive, but not necessarily connected, algebraic group over \field.  We write \mnotn{\BB(G)} for the enlarged Bruhat--Tits building of \bG over \field,
and let \anonmapto x{\matnotn x\ox} be the projection from the enlarged to the reduced building.  We will always equip \(G\) (respectively, \(\Lie(G)\)) with Waldspurger's canonical Haar measures \cite{waldspurger:nilpotent}*{\S I.4}, which assign mass \(\card{\Lie(\ol K)}\inv[1/2]\) to the pro-unipotent radical \(K_+\) of a parahoric subgroup \(K\) (respectively, to the pro-nilpotent radical \(\Lie(K_+)\) of its Lie algebra) with reductive quotient \ol K; see \cite{debacker-reeder:depth-zero-sc}*{\S5.1, p.~835}.  Our results are usually stated in such a way that we need not make any reference to a specific choice of measure, but this specific choice is important in, for example, Theorem \ref{thm:dist-G-to-G'}.

If \mc S is a subset of \(\Lie^*(G)\) (or even of some larger set containing it), then we write \mnotn{\OO^G(\mc S)} for the collection of (rational) coadjoint orbits of \(G\) on \(\Lie^*(G)\) that intersect every neighbourhood (in the analytic topology) of \mc S.  Thus, \(\OO^G(0)\) is the analogue of the set of nilpotent orbits in the Lie algebra (\cite{kempf:instability}*{Corollary 4.3} and \cite{adler-debacker:bt-lie}*{Lemma 2.5.1}); and, if \mc S does not intersect \(\Lie^*(G)\), then \(\OO^G(\mc S)\) is empty.

If \(V\) is a finite-dimensional \field-vector space, then the Fourier transform \matnotn{fhat}{\hat f} of \(f \in \Hecke(V)\) is the element of \(\Hecke(V^*)\) given by
\[
\hat f(v^*) = \int_V f(v)\AddChar(\pair{v^*}v)\upd v
\qforall{\(v^* \in V^*\).}
\]
Similarly, for \(f^* \in \Hecke(V^*)\), we define
\[
\matnotn{fcheck}{\check f^*}(v)
= \int_{V^*} f^*(v^*)\ol{\AddChar(\pair{v^*}v)}\upd{v^*}
\qforall{\(v \in V\).}
\]
Note that this depends on the choice of Haar measure \(\upd{v^*}\).  There is a unique choice, called the \term{dual Haar measure} to \(\upd v\), so that \(\Check{\Hat f}\) equals \(f\) for all \(f \in \Hecke(V)\).  Technically speaking, since we have not specified a choice of \(\upd v\), the functions \(\hat f\) and \(\check f^*\) are not well defined, although the measures \(\hat f(v^*)\upd{v^*}\) and \(\check f^*(v)\upd v\) are.  In practice, \(V\) will be the Lie algebra of a reductive \(p\)-adic group, equipped with its canonical measure, so that this ambiguity will not cause a problem; and then we will equip the dual Lie algebra \(V^*\) with the dual measure.


\begin{defn}
\label{defn:normal-harm}
As in \xcite{debacker-spice:stability}*{Definition \xref{defn:normal-harm}}, if \(\pi\) is an admissible representation of \(G\), then we write \mnotn{\Theta_\pi} for the scalar character of \(\pi\), which is the function on \(G\rss\) that represents the distribution character in the sense that
\[
\tr \pi(f) \qeqq \int_G \Theta_\pi(g)f(g)\upd g
\]
for all \(f \in \Hecke(G)\) \cite{adler-korman:loc-char-exp}*{Proposition 13.1}; and \mnotn{\Phi_\pi} for the function \anonmapto g{\abs{\Disc_{G/\Cent_G(g)}(g)}^{1/2}\Theta_\pi(g)} on \(G\rss\).  Similarly, if \(Z^*\) is an element of \(\Lie^*(G)\) that is fixed by the coadjoint action of some maximal torus \bT in \bG, and \OO belongs to \(\OO^G(Z^*)\), then we write \mnotn{\mu^G_\OO} for the orbital-integral distribution on \(\Lie^*(G)\) given by integration against some invariant measure on \OO, and \mnotn{\muhat^G_\OO} for the function on \(\Lie(G)\rss\) that represents its Fourier transform, in the sense that
\[
\mu^G_\OO(\hat f) \qeqq \int_{\Lie(H)} \muhat^G_\OO(Y)f(Y)\upd Y
\]
for all \(f \in \Hecke(\Lie(G)\rss)\); and \(\mnotn{\Ohat^G_\OO}\) for the function \anonmapto Y{\abs{\redD_G(Y)}^{1/2}\abs{\redD_G(Z^*)}^{1/2}\muhat^G_\OO(Y)} on \(\Lie(G)\rss\).  We sometimes write \(\mu^G_{\mc S}\) in place of \(\mu^G_\OO\), and similarly for \(\muhat\) and \(\Ohat\), if \mc S is a non-empty subset of \OO.  As in \xcite{debacker-spice:stability}*{Remark \xref{rem:disc:roots}}, we have written \(\redD_G(Z^*)\) for the product \(\prod \pair{Z^*}{\textup d\alpha^\vee(1)}\) over all weights \(\alpha\) of \(\bT_\sepfield\) on \(\Lie(\bG_\sepfield)\) for which the multiplicand is non-\(0\).  (For now we are just establishing notation, so we ignore questions about convergence of the orbital integral; but see the discussion preceding Theorem \ref{thm:asymptotic-exists}.)
\end{defn}

\begin{rem}
\label{rem:normal-harm}
Note that the distribution character \anonmapto f{\Theta_\pi(f)} on \(\Hecke(G)\) depends on the choice of Haar measure on \(G\), but the scalar character \anonmapto g{\Theta_\pi(g)} on \(G\rss\) does not.  The distribution \anonmapto f{\muhat^G_\OO(f)} on \(\Hecke(\Lie(G)\) depends on the choice of invariant measure on \OO, which is determined by a Haar measure on \(G\) and one on the centraliser in \(G\) of an element of \OO.  A canonical choice is described in \cite{moeglin-waldspurger:whittaker}*{\S I.8} (see also \cite{debacker:homogeneity}*{\S3.4, p.~410}).  The representing function \anonmapto Y{\muhat^G_\OO(Y)} on \(\Lie(G)\rss\) depends on both these, \emph{and} a Haar measure on \(\Lie(G)\).
\end{rem}

As in \xcite{adler-spice:good-expansions}*{\S\xref{sec:notation-generalities}, p.~8}, we put \(\matnotn R\tR = \R \sqcup \Rp\R \sqcup \sset\infty\).  We define \(\Rpp{\Rp r} = \Rp r\) for all \(r \in \R\), and \(\Rp{\pm\infty} = \pm\infty\).  We define \(\tilde r = \Rpp{-r}\) and \(\widetilde{\Rp r} = -r\) for all \(r \in \R\), and \(\widetilde{\pm\infty} = \mp\infty\).  We extend addition on \R to \(\tR \sqcup \sset{-\infty}\) by putting \(r + \Rp s = (\Rp r) + s = \Rpp{r + s}\) for all \(r, s \in \R\); \(r + (-\infty) = -\infty + r = -\infty\) and \(r + \infty = \infty + r = \infty\) for all \(r \in \tR\) with \(r < \infty\); and \(-\infty + (-\infty) = -\infty\) and \(\pm\infty + \infty = \infty + (\pm\infty) = \infty\).  We extend subtraction on \R to a \emph{partial} operation on \(\tR \sqcup \sset{-\infty}\) by putting \((\Rp r) - s = \Rpp{r - s}\) for all \(r, s \in \R\); \(r - (\pm\infty) = \mp\infty\) for all \(r \in \tR\) with \(r < \infty\); and \(\pm\infty - (\mp\infty) = \pm\infty\).

We follow a suggestion of Cheng-Chiang Tsai and, for \((x, r) \in \BB(G) \times \R_{\ge 0}\), replace the usual notations \(G_{x, r}\) and \(G_{x, \Rp r}\) for Moy--Prasad subgroups (\cite{moy-prasad:k-types}*{\S\S2.6, 3.2, 3.5} and \cite{moy-prasad:jacquet}*{\S\S3.2, 3.3}) by \(\sbtl G x r\) and \(\sbtlp G x r\), respectively.  For convenience, we make the convention that \(\sbtlp G x{\Rp r}\) means \(\sbtlp G x r\).  Note that, by definition, \(\sbtl G x r\) equals \(\sbtl{G\conn}x r\).  We use similar notation for groups associated to concave functions \cite{bruhat-tits:reductive-groups-1}*{\S6.4.3},
\justnotn[Gxf]{\sbtl G x f}\justnotn[Gxf]{\sbtlp G x f}%
as in \xcite{adler-spice:good-expansions}*{Definition \xref{defn:vGvr}}, and for the Lie algebra, where we can drop the requirement that \(r\) be non-negative (respectively, that the relevant function be concave); and in Definition \ref{defn:vGvr}, where we define an analogous class of groups.

These conventions inevitably suggest the notations \matnotn{Gxr}{\sbat G x r} for \(\sbtl G x r/\sbtlp G x r\); we adopt this and obvious variants, even though they are a bit misleading in the case where \field has mixed characteristic.  In particular, we write \(\sbjtl\field r = \set{t \in \field}{\ord(t) \ge r}\), and similarly for other notation related to the filtration on \field; so, for example, \ff equals \(\sbjat\field 0 = \sbjtl\field 0/\sbjtlp\field 0\).

\section{Compact, open subgroups}
\label{sec:subgps}

\subsection{Groups associated to concave functions}
\label{sec:concave}

In Definition \ref{defn:vGvr}, we define a class of compact, open subgroups related to those constructed in \cite{adler-spice:good-expansions}*{Definition \xref{defn:vGvr}}.  For technical reasons involving the presence of algebraic subgroups that are not of full rank in \bG, we work mostly with `depth vectors' (see Definition \ref{defn:tame-Levi}) and `depth matrices' (see Definition \ref{defn:vGvr}), rather than concave functions as in \cite{adler-spice:good-expansions}; but we do need to consider such functions for two technical results.

Lemma \ref{lem:heres-a-gp} is mostly a straightforward generalisation of part of \xcite{adler-spice:good-expansions}*{Proposition \xref{prop:heres-a-gp}}, with essentially the same proof.  We use it only in the proof of Lemma \ref{lem:Ko-and-J}, where we need an additional technical fact, an analogue of \xcite{adler-spice:good-expansions}*{Lemma \xref{lem:more-vGvr-facts}}; so, for convenience, we state both facts together.

Working with groups associated to arbitrary concave functions rather than depth matrices allows us to `move' a group \(\sbtl G x f\) to another point, by writing it as \(\sbtl G y{f + (y - x)}\).  The condition on \(f \vee f\) in Lemma \ref{lem:heres-a-gp} is needed to use \xcite{adler-spice:good-expansions}*{Proposition \xref{prop:H1-iso}}, but it is just for maximal generality; for us, it suffices to know that it is satisfied whenever \(f\) is a translate by a linear function of an everywhere positive, concave function.  Lemma \ref{lem:heres-a-gp} relies on \xcite{adler-spice:good-expansions}*{Hypothesis \xref{hyp:torus-H1-triv}}, but this is automatically satisfied when \bG is \tamefield-split, which will be the case when we use the result (in Lemma \ref{lem:Ko-and-J}).

In the notations \(\stab_{\bG'(E)}(\ox)\) and \(\stab_G(\ox)\), note that the symbol \ox stands for a point in the reduced building of \(\bG(E)\) or \(G\), not necessarily of \(\bG'(E)\) or \(G'\).  The lemma relies crucially on the good descent properties of full stabilisers of points in the building; it would not work, for example, if we replaced \(\stab_{\bG'(E)}(\ox)\) and \(\stab_{G'}(\ox)\) by their parahoric subgroups \(\sbtl{\bG'(E)}x0\) and \(\sbtl{G'}x 0\).

\begin{lem}
\initlabel{lem:heres-a-gp}
Suppose that
	\begin{itemize}
	\item \xcite{adler-spice:good-expansions}*{Hypothesis \xref{hyp:torus-H1-triv}} holds,
	\item \(\bG'\) is the centraliser in \(\bG\conn\) of a \tamefield-split torus,
	\item \bT is a maximally \tamefield-split, maximal torus in \bG,
	\item \(x\) is a point of \(\BB(T)\),
	\item \(E/\field\) is a tame extension,
and	\item \(f_1\) and \(f_2\) are \tR-valued, Galois-invariant, concave \cite{bruhat-tits:reductive-groups-1}*{\S6.4.3} functions on the set \(\Weight(\bG_\sepfield, \bT_\sepfield)\) of weights of \(\bT_\sepfield\) in \(\bG_\sepfield\) satisfying \(f_j(0) > 0\) and \(f_j(\alpha) < (f_j \vee f_j)(\alpha)\) \xcite{adler-spice:good-expansions}*{Definition \ref{defn:concave-vee}} whenever \(\alpha \in \Weight(\bG_\sepfield, \bT_\sepfield)\) is such that \(f_j(\alpha) < \infty\), for \(j \in \sset{1, 2}\).
	\end{itemize}
Then
\begin{align*}
\stab_{G'}(\ox)\sbtl G x{f_1} \cap \sbtl G x{f_2}
&\qeqq
\sbtl{G'}x{\max \sset{0, f_2}}\dotm\sbtl G x{\max \sset{f_1, f_2}} \\
\intertext{and}
\stab_{\bG'(E)}(\ox)\sbtl{\bG(E)}x{f_1}\dotm\sbtl{\bG(E)}x{f_2} \cap G
&\qeqq
\stab_{G'}(\ox)\sbtl G x{f_1}\dotm\sbtl G x{f_2}.
\end{align*}
\end{lem}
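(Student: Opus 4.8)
The plan is to treat both identities at once, since they rest on the same functoriality --- of Moy--Prasad-type filtration subgroups and of \emph{full} point stabilisers --- under the tame twisted Levi \(\bG'\) and under tame base change; the first identity is ``mostly'' the non-connected analogue of \xcite{adler-spice:good-expansions}*{Proposition \xref{prop:heres-a-gp}}, and the second a descent statement in the spirit of \xcite{adler-spice:good-expansions}*{Lemma \xref{lem:more-vGvr-facts}}. In each, the inclusion ``\(\supseteq\)'' is routine: \(\sbtl G x{\max\sset{f_1, f_2}} \subseteq \sbtl G x{f_1} \cap \sbtl G x{f_2}\) because the largest concave minorant of \(\max\sset{f_1, f_2}\) --- which is what \(\sbtl G x{\max\sset{f_1, f_2}}\) abbreviates --- dominates both \(f_j\); \(\sbtl{G'}x{\max\sset{0, f_2}} \subseteq \stab_{G'}(\ox)\) and \(\sbtl{G'}x{\max\sset{0, f_2}} \subseteq \sbtl{G'}x{f_2} \subseteq \sbtl G x{f_2}\); and in the second identity everything in sight is \(\Gal(\sepfield/\field)\)-stable and lies in \(G\).

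For ``\(\subseteq\)'' in the second identity I would first use the Bruhat--Tits theory of \S\ref{sec:concave} (where the conditions \(f_j(\alpha) < (f_j \vee f_j)(\alpha)\) make \(\min\sset{f_1, f_2}\) well behaved) to collapse the products \(\sbtl{\bG(E)}x{f_1}\dotm\sbtl{\bG(E)}x{f_2}\) and \(\sbtl G x{f_1}\dotm\sbtl G x{f_2}\) to the single groups \(\sbtl{\bG(E)}x{\min\sset{f_1, f_2}}\) and \(\sbtl G x{\min\sset{f_1, f_2}}\). Thus a point \(g \in G\) of the left-hand side may be written \(g = \gamma' p\) with \(\gamma' \in \stab_{\bG'(E)}(\ox)\) and \(p \in \sbtl{\bG(E)}x{\min\sset{f_1, f_2}}\); enlarging \(E\), I may assume \(E/\field\) is finite Galois. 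Comparing \(g = \gamma' p\) with \(g = \sigma(g) = \sigma(\gamma')\sigma(p)\) for \(\sigma \in \Gal(E/\field)\) shows that \(\sigma \mapsto \gamma'\inv\sigma(\gamma')\) is a \(1\)-cocycle valued in \(\bG'(E) \cap \sbtl{\bG(E)}x{\min\sset{f_1, f_2}} = \sbtl{\bG'(E)}x{\min\sset{f_1, f_2}}\). Tameness --- through \xcite{adler-spice:good-expansions}*{Hypothesis \xref{hyp:torus-H1-triv}}, automatic here as \(\bG\) is \tamefield-split --- makes the relevant Galois cohomology of this filtration subgroup vanish, as packaged in \xcite{adler-spice:good-expansions}*{Proposition \xref{prop:H1-iso}}, so there is \(c \in \sbtl{\bG'(E)}x{\min\sset{f_1, f_2}}\) with \(\gamma'\inv\sigma(\gamma') = c\inv\sigma(c)\) for all \(\sigma\). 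Then \(\gamma' c\inv\) is Galois-fixed, hence lies in \(\bG'(E) \cap G = G'\) and still fixes \(\ox\), so \(\gamma' c\inv \in \stab_{G'}(\ox)\) by the good descent of full point stabilisers; and \(c p = (\gamma' c\inv)\inv g\) lies in \(G \cap \sbtl{\bG(E)}x{\min\sset{f_1, f_2}} = \sbtl G x{\min\sset{f_1, f_2}} = \sbtl G x{f_1}\dotm\sbtl G x{f_2}\). So \(g = (\gamma' c\inv)\dotm(c p)\) is of the required shape.

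For ``\(\subseteq\)'' in the first identity I would write \(g = \gamma' h\) with \(\gamma' \in \stab_{G'}(\ox)\), \(h \in \sbtl G x{f_1}\), and \(g \in \sbtl G x{f_2}\), and use the factorisation of each \(\sbtl G x{f_j}\) through \(\bG'\) provided, essentially in this form, by \xcite{adler-spice:good-expansions}*{Proposition \xref{prop:heres-a-gp}}: a unique decomposition into the ``\(\bG'\)-part'' \(\sbtl{G'}x{f_j}\) and a complementary product of affine root groups (those attached to roots of \(\bG\) outside \(\bG'\)). Absorbing the \(\sbtl{G'}x{f_1}\)-part of \(h\) into \(\gamma'\), I may assume \(h\) lies in the \(f_1\)-complement; comparing the factorisation of \(g\) coming from \(g = \gamma' h\) with the one coming from \(g \in \sbtl G x{f_2}\), uniqueness forces the complement-part of \(g\) to equal \(h\), hence to lie in both the \(f_1\)- and the \(f_2\)-complement --- so, root group by root group, in the \(\max\sset{f_1, f_2}\)-complement \(\subseteq \sbtl G x{\max\sset{f_1, f_2}}\) --- and forces \(\gamma' \in \stab_{G'}(\ox) \cap \sbtl{G'}x{f_2} = \sbtl{G'}x{\max\sset{0, f_2}}\); the last equality is the standard fact that an element of \(\sbtl{G'}x{f_2}\) fixing \(\ox\) has no contribution from root groups on which \(f_2\) is negative. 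Hence \(g = \gamma'\dotm h\) is of the required shape. (When \(\bG'\) is literally a Levi subgroup this factorisation is the ordinary Iwahori factorisation and the argument is the familiar modular-law manipulation.)

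The step I expect to be the main obstacle is precisely the interplay between the full stabiliser \(\stab_{G'}(\ox)\) --- which is strictly larger than the parahoric \(\sbtl{G'}x0\) and is what lets the argument tolerate a non-connected \(\bG\) and a \(\bG'\) of less than full rank --- and the Moy--Prasad filtration: establishing, in this generality, both the descent \(\stab_{\bG'(E)}(\ox) \cap G = \stab_{G'}(\ox)\) (and the building descent underlying it) and the ``clamping'' identity \(\stab_{G'}(\ox) \cap \sbtl{G'}x{f_2} = \sbtl{G'}x{\max\sset{0, f_2}}\), and verifying that \xcite{adler-spice:good-expansions}*{Proposition \xref{prop:H1-iso}} is available in exactly the form that descends a \emph{product} of filtration subgroups. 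Granting these, the rest is bookkeeping with concave functions: products collapse through \(\min\), intersections through the concave minorant of \(\max\), and passage to the full stabiliser through \(\max\sset{0, \cdot}\).
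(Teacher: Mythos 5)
Your sketch of the first identity is essentially the paper's route (the paper reduces it, as in \xcite{adler-spice:good-expansions}*{Lemma \xref{lem:more-vGvr-facts}}, to the Iwahori-type factorisation and Zariski-closure lemmas of that paper, the crucial point being that \(\stab_{G'}(\ox)\sbtl G x{f_1}\) lies in the product of \(G'\) with the root groups of \(\bT\) in \(\bG\)), so the real issue is your second identity, and there the gap is genuine. Your whole argument rests on the claim that \(\sbtl{\bG(E)}x{f_1}\dotm\sbtl{\bG(E)}x{f_2}\) and \(\sbtl G x{f_1}\dotm\sbtl G x{f_2}\) collapse to the single groups attached to \(\min\sset{f_1, f_2}\). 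That is false in general: the minimum of two concave functions need not be concave, and the product \emph{set} of the two groups need not be a group at all. Already for \(\mathrm{SL}_2\), taking \(f_1\) finite only on \(\alpha\) and \(f_2\) finite only on \(-\alpha\) (both equal to \(1\) at \(0\), so that concavity and the \(f_j < f_j \vee f_j\) hypotheses hold for each \(f_j\) separately), the minimum violates concavity at \(0\), and the product of the corresponding ``upper-triangular'' and ``lower-triangular'' groups is not closed under multiplication. Your cocycle argument then fails twice over: you cannot write an element of the left-hand side as \(\gamma' p\) with \(p\) in a single filtration group, and even if you could, the group \(\sbtl{\bG'(E)}x{\min\sset{f_1, f_2}}\) in which your cocycle would take values is not of the shape to which \xcite{adler-spice:good-expansions}*{Proposition \xref{prop:H1-iso}} applies --- that result needs a concave function satisfying the \(\vee\)-condition, which is hypothesised for each \(f_j\) but is not inherited by the minimum.

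What the paper does instead --- and what your cocycle computation is implicitly trying to reconstruct --- is to descend the product \emph{as a product}: it takes the exact sequence of pointed sets \(1 \to \stab_{\bG'(E)}(\ox)\sbtl{\bG(E)}x{g_1} \cap \sbtl{\bG(E)}x{g_2} \to \stab_{\bG'(E)}(\ox)\sbtl{\bG(E)}x{g_1} \times \sbtl{\bG(E)}x{g_2} \to \stab_{\bG'(E)}(\ox)\sbtl{\bG(E)}x{g_1}\dotm\sbtl{\bG(E)}x{g_2} \to 1\), where the \(\operatorname H^1\)-vanishing needed for exactness of Galois-fixed points is that of the \emph{intersection}; the point of the first identity, applied over \(E\), is precisely to identify that intersection as a single group \(\sbtl{\bG(E)}x g\) to which Proposition \xref{prop:H1-iso} does apply, and then \xcite{adler-spice:good-expansions}*{Lemma \xref{lem:tame-descent}} gives the fixed-point exactness, first with \(g_1 = \infty\) (yielding \(\stab_{G'}(\ox)\sbtl G x{f_1} = \stab_{\bG'(E)}(\ox)\sbtl{\bG(E)}x{f_1} \cap G\)) and then with \(g_j = f_j\). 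So the two identities are not independent: the second needs the first as input, and any repair of your argument must route through such an identification of the intersection; the ``collapse to \(\min\)'' shortcut cannot be saved.
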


\begin{proof}
The first statement is proven, just as in \xcite{adler-spice:good-expansions}*{Lemma \xref{lem:more-vGvr-facts}}, by reducing to a combination of \xcite{adler-spice:good-expansions}*{Lemmas \xref{lem:gen-iwahori-factorization} and \xref{lem:Zariski}}; the crucial point is that \(\stab_{G'}(\ox)\sbtl G x{f_1}\) is contained in the product of \(G'\) with the root groups corresponding to roots of \bT in \bG.

For the second statement, let \(g_1\) and \(g_2\) be any concave functions satisfying the analogues of the conditions imposed on \(f_1\) and \(f_2\).  By the first statement (applied to \(\bG(E)\)), we have that
\[
\stab_{\bG'(E)}(\ox)\sbtl{\bG(E)}x{g_1} \cap \sbtl{\bG(E)}x{g_2}
\qeqq
\sbtl{\bG(E)}x g,
\]
where
\[
g(\alpha) = \begin{cases}
\max \sset{0, g_2(\alpha)},           & \alpha \in \Weight(\bG'_\sepfield, \bT_\sepfield) \\
\max \sset{g_1(\alpha), g_2(\alpha)}, & \text{otherwise,}
\end{cases}
\]
hence by \xcite{adler-spice:good-expansions}*{Proposition \xref{prop:H1-iso}} that the cohomology
\[
\operatorname H^1(E/\field, \stab_{\bG'(E)}(\ox)\sbtl{\bG(E)}x{g_1} \cap \sbtl{\bG(E)}x{g_2})
\]
is trivial.  (This is where we use the hypotheses about \(g_j \vee g_j\), and also where we require \xcite{adler-spice:good-expansions}*{Hypothesis \xref{hyp:torus-H1-triv}}.)  Thus, considering the short exact sequence in cohomology associated to the exact sequence of pointed sets
\begin{align*}
1
& {}\to \stab_{\bG'(E)}(\ox)\sbtl{\bG(E)}x{g_1} \cap \sbtl{\bG(E)}x{g_2} & \\
& {}\to \stab_{\bG'(E)}(\ox)\sbtl{\bG(E)}x{g_1} \times \sbtl{\bG(E)}x{g_2} & \\
& {}\to \stab_{\bG'(E)}(\ox)\sbtl{\bG(E)}x{g_1}\dotm\sbtl{\bG(E)}x{g_2}
& {}\to 1,
\end{align*}
and using \xcite{adler-spice:good-expansions}*{Lemma \xref{lem:tame-descent}}, we see that the sequence
\begin{equation}
\tag{$*$}
\sublabel{eq:exact}
\begin{aligned}
1
& {}\to \stab_{\bG'(E)}(\ox)\sbtl{\bG(E)}x{g_1} \cap \sbtl G x{g_2} & \\
& {}\to (\stab_{\bG'(E)}(\ox)\sbtl{\bG(E)}x{g_1} \cap G) \times \sbtl G x{g_2} & \\
& {}\to \stab_{\bG'(E)}(\ox)\sbtl{\bG(E)}x{g_1}\dotm\sbtl{\bG(E)}x{g_2} \cap G
& {}\to 1
\end{aligned}
\end{equation}
of Galois-fixed points remains exact.

Applying \loceqref{eq:exact} with \(g_1 = \infty\), so that \(\sbtl{\bG(E)}x{g_1}\) equals \(\sset1\), and \(g_2 = f_1\), and using that \(\stab_{\bG'(E)}(\ox) \cap G\) equals \(\stab_{G'}(\ox)\), shows that
\[
\stab_{G'}(\ox)\sbtl G x{f_1}
\qeqq
\stab_{\bG'(E)}(\ox)\sbtl{\bG(E)}x{f_1} \cap G.
\]
Then applying \loceqref{eq:exact} again, with \(g_j = f_j\) for \(j \in \sset{1, 2}\), shows that
\[
\stab_{G'}(\ox)\sbtl G x{f_1}\dotm\sbtl G x{f_2}
= (\stab_{\bG'(E)}(\ox)\sbtl{\bG(E)}x{f_1} \cap G)\sbtl G x{f_2}
\]
equals \(\stab_{\bG'(E)}(\ox)\sbtl{\bG(E)}x{f_1}\dotm\sbtl{\bG(E)}x{f_2} \cap G\), as desired.
\end{proof}

Lemma \ref{lem:der-master-comm} is a slightly stronger version of \xcite{adler-spice:good-expansions}*{Lemma \xref{lem:master-comm}}, adapted to take into account the depth of commutators in \(\Der G\) (not just in \(G\)).  Again, we find it convenient to state it in terms of concave functions, for which we can use the result from \xcite{adler-spice:good-expansions} to bootstrap, rather than necessarily the depth matrices appearing in Definition \ref{defn:vGvr} below.  Having done so, however, we use it only to prove Lemma \ref{lem:filtration} (which \emph{does} concern groups and Lie algebras associated to depth matrices).

\begin{lem}
\initlabel{lem:der-master-comm}
Suppose that
	\begin{itemize}
	\item \bT is a maximally \tamefield-split, maximal torus in \bG,
	\item \(x\) is a point of \(\BB(T)\),
	\item \(f_1\) and \(f_2\) are (\(\tR \cup \sset{-\infty}\))-valued, Galois-invariant functions on the set of weights of \(\bT_\sepfield\) in \(\bG_\sepfield\).
	\end{itemize}
If, for \(j \in \sset{1, 2}\),
	\begin{itemize}
	\item \(g_j\) belongs to \(\sbtl G x{f_j}\),
	\item \(Y_j\) to \(\sbtl{\Lie(G)}x{f_j}\),
and	\item \(X^*_j\) to \(\sbtl{\Lie^*(G)}x{f_j}\)
	\end{itemize}
then
	\begin{itemize}
	\item \(\ad(Y_1)Y_2\) belongs to \(\sbtl{\Lie(\Der G)}x{f_1 \bowtie f_2}\),
and	\item \(\ad^*(Y_1)X^*_2\) to \(\sbtl{\Lie^*(\Der G)}x{f_1 \bowtie f_2}\);
	\end{itemize}
if \(f_1\) is concave, then
	\begin{itemize}
	\item \((\Ad(g_1) - 1)Y_2\) belongs to \(\sbtl{\Lie(\Der G)}x{f_1 \rtimes f_2}\),
and	\item \((\Ad^*(g_1) - 1)X^*_2\) belongs to \(\sbtl{\Lie^*(\Der G)}x{f_1 \rtimes f_2}\);
	\end{itemize}
and, if \(f_1\) and \(f_2\) are both concave, then
	\begin{itemize}
	\item \(\comm{g_1}{g_2}\) belongs to \(\sbtl{\Der G}x{f_1 \vee f_2}\).
	\end{itemize}
Here, \(f_1 \vee f_2\) is the function defined by
\begin{align*}
(f_1 \vee f_2)(\alpha)    & {}= \inf_{\sum a_i + \sum b_j = \alpha} \sum f_1(a_i) + \sum f_2(b_j)
\intertext{in \xcite{adler-spice:good-expansions}*{Definition \xref{defn:concave-vee}}, and \(f_1 \bowtie f_2\) and \(f_1 \rtimes f_2\) are its analogues defined by}
(f_1 \bowtie f_2)(\alpha) & {}= \inf_{a + b = \alpha} f_1(a) + f_2(b) \\
\intertext{and}
(f_1 \rtimes f_2)(\alpha) & {}= \inf_{\sum a_i + b = \alpha} \sum f_1(a_i) + f_2(b),
\end{align*}
for all weights \(\alpha\) of \(\bT_\sepfield\) on \(\Lie(\bG_\sepfield)\).
\end{lem}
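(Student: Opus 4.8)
The plan is to bootstrap from the \bG-level versions of these statements, which are \xcite{adler-spice:good-expansions}*{Lemma \xref{lem:master-comm}} --- with \bG in place of \(\Der\bG\) throughout --- together with the standard compatibility of the Moy--Prasad filtrations with the Lie bracket and the (co)adjoint action.  The only thing to be added is that the elements in question lie not merely in the relevant filtration subobjects of \bG but in those of \(\Der\bG\).  Concavity of \(f_1\) (resp.\ of both \(f_1\) and \(f_2\)) enters exactly so that \(\sbtl G x{f_1}\) (resp.\ also \(\sbtl G x{f_2}\)) is a group and the \(\Ad\)- and commutator statements make sense; the \(\ad\)- and \(\ad^*\)-assertions are bilinear and purely Lie-theoretic and need no such hypothesis.

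First I would record the algebraic facts that do the work: \([\Lie(\bG), \Lie(\bG)] \subseteq \Lie(\Der\bG)\); the quotient \(\bG/\Der\bG\) is a torus, on which \bG acts trivially by conjugation, so that \(\Ad(g_1)\) acts trivially on \(\Lie(\bG)/\Lie(\Der\bG)\), forcing \((\Ad(g_1) - 1)Y_2 \in \Lie(\Der\bG)\), while \(\comm{g_1}{g_2}\) lies in \(\Der\bG\), the kernel of \(\bG \to \bG/\Der\bG\); and, dually, the restriction \(\Lie^*(G) \to \Lie^*(\Der G)\) intertwines the coadjoint actions and maps \(\sbtl{\Lie^*(G)}x f\) into \(\sbtl{\Lie^*(\Der G)}x f\), so the \(\ad^*\)- and \(\Ad^*\)-assertions follow by pushing the \bG-level ones forward.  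Since brackets, the values of \(\Ad\), \(\ad\), \(\Ad^*\), \(\ad^*\), and the commutator are polynomial expressions in \field-rational data, and hence \field-rational, it then remains only to combine these observations with the filtration compatibilities \(\sbtl{\Der G}x f = \Der G \cap \sbtl G x f\) and \(\sbtl{\Lie(\Der G)}x f = \Lie(\Der G) \cap \sbtl{\Lie(G)}x f\) for the functions that occur.

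The last compatibilities are where I expect the real work.  If the versions for groups and Lie algebras attached to concave functions are directly available from \cite{adler-spice:good-expansions} (as I believe, from the machinery around \xcite{adler-spice:good-expansions}*{Proposition \xref{prop:heres-a-gp}}), the lemma is a short deduction; otherwise one re-runs, now for \(\Der\bG\), the induction on depth that proves \xcite{adler-spice:good-expansions}*{Lemma \xref{lem:master-comm}}, carrying the derived-group refinement through each graded quotient \(\sbat G x r \cong \sbat{\Lie(G)}x r\).  Either way the underlying mechanism is a scalar-extension-and-grading argument: over \sepfield, decompose \(\Lie(\bG_\sepfield)\) and \(\Lie^*(\bG_\sepfield)\) into \(\bT_\sepfield\)-weight spaces, for which the Moy--Prasad lattices are graded and \(\Lie(\Der\bG_\sepfield)\) retains every nonzero weight space while replacing the zero-weight (Cartan) piece by the span of the coroots \(\textup d a^\vee(1)\), and verify weight by weight that \([\Lie(\bG_\sepfield)_a, \Lie(\bG_\sepfield)_b]\) lands in \(\Lie(\Der\bG_\sepfield)\) --- automatically when \(a + b \ne 0\), via the coroot line when \(a = -b \ne 0\), and by vanishing when \(a = b = 0\).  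The leftover chores --- keeping \(\vee\), \(\bowtie\), and \(\rtimes\) straight, and peeling off the semisimple part of \(g_1\) in the \(\Ad\)-case when \(\sbtl G x{f_1}\) is not pro-unipotent --- are routine.
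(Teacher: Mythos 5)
Your easy halves are genuinely the easy ones: the paper itself proves only the group-commutator assertion, dismissing the Lie-algebra and dual statements as "easier", and your algebraic inputs ($[\Lie(G), \Lie(G)] \subseteq \Lie(\Der G)$, $\comm{g_1}{g_2} \in \Der G$, triviality of the conjugation action on the quotient torus) are correct.  The gap is at the step you yourself flag as "the real work", namely the compatibility $\sbtl{\Der G}x f = \Der G \cap \sbtl G x f$ and its toral core $\Der G \cap \sbjtl T{f(0)} = \sbjtl{(T \cap \Der G)}{f(0)}$.  This is not available off the shelf in \cite{adler-spice:good-expansions} (intersection/descent statements of this kind are exactly the delicate ones in this subject -- compare Lemma \ref{lem:heres-a-gp} of the present paper, which needs cohomological hypotheses even for full-rank subgroups), and it is in general \emph{false}: already for $\bG = \GL_2$ with $\bT$ the (tame, hence maximally \tamefield-split) elliptic maximal torus attached to a ramified quadratic extension $E/\field$ and $f \equiv 0$, the toral part of $\Der G \cap \sbtl G x f$ is $E^1 \cap \mathcal O_E^\times = E^1$, whereas the toral part of $\sbtl{\Der G}x f$ is the parahoric subgroup of the norm-one torus, which has index two in $E^1$.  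So your route only yields $\comm{g_1}{g_2} \in \Der G \cap \sbtl G x{f_1 \vee f_2}$, which is strictly weaker than the lemma's conclusion; and the whole point of the lemma, as the paper says, is to track the depth of commutators \emph{in} $\Der G$, which an intersection with $\Der G$ cannot see.  Your fallback, "re-run the induction, carrying the derived-group refinement through each graded quotient", together with the weight-by-weight sketch over \sepfield, really only re-proves the Lie-bracket assertions; it supplies none of the group-level toral bookkeeping, which is precisely where the content sits (the problematic direction of each graded quotient is the toral one).

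For comparison, the paper's proof is organised so that no intersection with $\Der G$ is ever taken: after passing to a tame extension so that $\bT$ lies in a Borel subgroup, it uses that $\sbtl G x{f_j}$ is generated by $\sbjtl T{f_j(0)}$ together with $\sbtl{\Der G}x{f_j}$ (equivalently, with the groups $\sbtl B x{f_j}$ for Borel subgroups $\bB \supseteq \bT$), and then runs a commutator-subgroup argument: the set of elements of $\sbtl G x{f_1}$ whose commutators with a given subset land in $\sbtl{\Der G}x{f_1 \vee f_2}$ is a subgroup, and one checks it contains the toral and unipotent generators by quoting the master commutator lemma of \cite{adler-spice:good-expansions} applied to $\Der\bG$ and to unipotent radicals, e.g.\ to get $\comm{\sbjtl T{f_1(0)}}{\sbtl B x{f_2}} \subseteq \sbtl U x{f_1 \vee f_2} \subseteq \sbtl{\Der G}x{f_1 \vee f_2}$.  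Every membership in $\sbtl{\Der G}x{f_1 \vee f_2}$ is thus produced directly.  If you want to rescue your approach, you would have to prove the intersection property under hypotheses that exclude the phenomenon above, and that is work of at least the same order as the lemma itself; as written, the proposal does not establish the group statements.
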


\begin{proof}
We only prove the statement about commutators in the group; the others are easier.

\def\Comm{\operatorname{Comm}}
We may, and do, assume, upon passing to a tame extension, that \bT is contained in a Borel subgroup of \bG.  Then we have that any group of the form \(\sbtl G x f\) is generated by \(\sbjtl T{f(0)}\) and the various \(\sbtl B x f\), where \bB is a Borel subgroup of \bG containing \bT; and similarly for \(\sbtl{\Der G}x f\).  (In fact, we need only take two opposite Borel subgroups.)  In particular, we have that \(\sbtl G x f\) is generated by \(\sbjtl T{f(0)}\) and \(\sbtl{\Der G}x f\).

We use the basic fact that, if \mc G is a subgroup of \(G\) that normalises \(\sbtl{\Der G}x{f_1 \vee f_2}\), and \mc S is a subset of \(G\), then \(\sett{g \in \mc G}{\(\comm g h \in \sbtl{\Der G}x{f_1 \vee f_2}\) for all \(h \in \mc S\)}\) is a subgroup of \(G\).  We temporarily introduce the notation \(\Comm_{\mc G}(\mc S)\) for this subgroup.

For this paragraph, fix \(j \in \sset{1, 2}\).  By \xcite{adler-spice:good-expansions}*{Lemma \xref{lem:master-comm}} (applied to \(\Der\bG\)), we have that \(\sbtl{\Der G}x{f_j}\) normalises \(\sbtl{\Der G}x{f_1 \vee f_2}\), and that \(\sbjtl T{f_j(0)}\) normalises \(\sbtl U x{f_1 \vee f_2}\) for the unipotent radical \bU of any Borel subgroup \bB of \bG containing \bT; so, since of course \(\sbjtl T{f_j(0)}\) normalises any subgroup of \(T\), we have that \(\sbtl G x{f_j}\) normalises \(\sbtl{\Der G}x{f_1 \vee f_2}\).  Thus, the notation \(\Comm_{\sbtl G x{f_j}}(\mc S)\) makes sense.

For this paragraph, fix a Borel subgroup \bB of \bG containing \bT, and let \bU be its unipotent radical.  By another application of \xcite{adler-spice:good-expansions}*{Lemma \xref{lem:master-comm}}, we have that the commutator of \(\sbjtl T{f_1(0)}\) with \(\sbtl B x{f_2}\) belongs to \(\sbtl U x{f_1 \vee f_2} \subseteq \sbtl{\Der G}x{f_1 \vee f_2}\).  Thus, since \(\sbtl U x{f_2}\) is contained in \(\sbtl{\Der G}x{f_2}\), we have that \(\Comm_{\sbjtl T{f_1(0)}}(\sbtl B x{f_2})\) equals \(\sbjtl T{f_1(0)}\), and that \(\Comm_{\sbtl G x{f_1}}(\sbtl U x{f_2})\) contains both \(\sbjtl T{f_1(0)}\) and \(\sbtl{\Der G}x{f_1}\), hence equals \(\sbtl G x{f_1}\).  Thus
\begin{align}
\sublabel{eq:T}
\tag{$*$}
\Comm_{\sbtl G x{f_2}}(\sbjtl T{f_1(0)})
&\qtextq{contains}
\sbtl B x{f_2}, \\
\intertext{and}
\sublabel{eq:DG}
\tag{$**$}
\Comm_{\sbtl G x{f_2}}(\sbtl G x{f_1})
&\qtextq{contains}
\sbtl U x{f_2}.
\end{align}
Symmetric results also hold.

By \loceqref{eq:T} (applied to a pair of opposite Borel subgroups), we have that \(\Comm_{\sbtl G x{f_2}}(\sbjtl T{f_1(0)})\) equals \(\sbtl G x{f_2}\), hence that \(\Comm_{\sbtl G x{f_1}}(\sbtl G x{f_2})\) contains \(\sbjtl T{f_1(0)}\).  By \loceqref{eq:DG} (or, rather, its analogue with the indices \(j = 1\) and \(j = 2\) switched), we have that \(\Comm_{\sbtl G x{f_1}}(\sbtl G x{f_2})\) also contains \(\sbtl U x{f_2}\) whenever \bU is the unipotent radical of a Borel subgroup \bB of \bG containing \bT.  It follows that \(\Comm_{\sbtl G x{f_1}}(\sbtl G x{f_2})\) equals \(\sbtl G x{f_1}\), as desired.
\end{proof}

\subsection{Groups associated to depth matrices}
\label{sec:depth-matrix}

In Definition \ref{defn:vGvr}, we use Hypothesis \ref{hyp:funny-centraliser} to build a class of compact, open subgroups of \(G\) generalising those constructed in \xcite{adler-spice:good-expansions}*{Definition \xref{defn:vGvr}}.  Definition \ref{defn:tame-Levi} begins to set up that generalisation.

\begin{defn}
\label{defn:tame-Levi}
A subgroup \(\bG'\) of \bG is called a \term{tame, twisted Levi subgroup} if \(\bG'_\tamefield\) is a Levi subgroup of \(\bG_\tamefield\), in the sense of \cite{digne-michel:non-connexe}*{D\'efinition 1.4}.

A collection \(\vbG = (\bG^0, \dotsc, \bG^\ell = \bG)\) of subgroups of \bG is called a \term{tame, twisted Levi sequence} if each \(\bG^j\) is a tame, twisted Levi subgroup of \bG, and there is a \tamefield-split torus \bS in \bG that contains the maximal \tamefield-split torus in each \(\Zent(\bG^j)\conn\).  We write \(\BB(\vG)\) for \(\bigcap_{j = 0}^\ell \BB(G^j)\), and say that the sequence \term{contains} an element \(\gamma \in G\) exactly when \(\gamma\) belongs to \(\bigcap_{j = 0}^\ell G^j\).

For this definition, put \(\bT = \Cent_{\bG\conn}(\bS)\), and write \Weight for the collection of weights of \(\bT_\sepfield\) on \(\Lie(\bG_\sepfield)\).

A \term{depth vector} (for \vbG) is a vector \(\vec a = (a_0, \dotsc, a_d)\) with entries in \(\tR \cup \sset{-\infty}\).  We define the function \(f_{\vbG, \vec a}\) on \Weight by putting \(f_{\vbG, \vec a}(\alpha) = a_j\) if \(\alpha\) is a weight of \(\Cent_\bG(\bS)_\sepfield\) on \(\Lie(\bG^j_\sepfield)\), but not on \(\Lie(\bG^{j_-}_\sepfield)\) for any \(0 \le j_- < j\).  For any \(x \in \BB(T)\), we define \matnotn{Lie}{\sbtl{\Lie(\vG)}x{\vec a}} and \matnotn{Lie}{\sbtl{\Lie^*(\vG)}x{\vec a}} to be \(\sum \sbtl{\Lie(\vG)}x f\) and \(\sum \sbtl{\Lie^*(\vG)}x f\), where the sums run over all Galois-invariant, \tR-valued functions \(f\) on \Weight for which the inequality \(f_{\vbG, \vec\alpha} \le f\) is satisfied.

We say that \(\vec a\) is \term[depth vector!concave]{concave} if the inequality \(2a_{j_+} \ge a_{j_-}\) holds for all \(0 \le j_- \le j_+ \le \ell\), and that it is \term[depth vector!grouplike]{grouplike} if, further, \(a_0\) is positive.  In this case, for any \(x \in \BB(T)\), we define \matnotn{Gxa}{\sbtl\vG x{\vec a}} to be \(\sgen[\big]{\bigcup \sbtl\vG x f}\), where the union runs over all Galois-invariant, \tR-valued, \emph{concave} \cite{bruhat-tits:reductive-groups-1}*{\S6.4.3} functions \(f\) on \Weight for which the inequality \(f_{\vbG, \vec\alpha} \le f\) is satisfied.  (The notation is as in \xcite{adler-spice:good-expansions}*{Definition \xref{defn:vGvr}}.)
\end{defn}

The non-full-rank subgroups that arise in Definition \ref{defn:vGvr} depend on a semisimple element \mnotn\gamma of \(G\) (not necessarily \(G\conn\)).  Choose such an element, and
\justnotn[P]{\bP^\pm}\justnotn[N]{\bN^\pm}%
write \(\bP^-\) (respectively, \(\bP^+\)) for the parabolic subgroup of \bG dilated (respectively, contracted) by \(\gamma\) \cite{deligne:support}*{\S1, p.~155}, and \(\bN^-\) (respectively, \(\bN^+\)) for its unipotent radical.  Write \matnotn M\bM for \(\bP^- \cap \bP^+\).

Let \mnotn r be a non-negative real number, and put \(\mnotn s = r/2\).  In this section, we write \matnotn H\bH for \(\CC\bG r(\gamma)\); but we caution the reader that we use different notation in the proof of Proposition \ref{prop:lattice-orth}.

In \xcite{adler-spice:good-expansions}*{Definition \xref{defn:funny-centralizer}}, Adler and the author defined connected, full-rank subgroups \(\CC\bG i(\gamma)\) of \bG associated to an element \(\gamma\) of \(G\conn\) satisfying certain tameness hypotheses.  (These groups are always connected, whereas we allow those in Hypothesis \ref{hyp:funny-centraliser} to be disconnected; but Hypothesis \ref{hyp:gamma} involves only the identity components of these groups, so the difference does not matter.)  With an eye towards future applications, including particularly removing the requirement that \(\gamma\) belong to the identity component, and possibly the tameness requirement, we state very precisely the properties that we need in Hypothesis \ref{hyp:funny-centraliser}.

Hypothesis \ref{hyp:funny-centraliser} always holds for \(\gamma\) an element of a tame torus satisfying \xcite{adler-spice:good-expansions}*{Definition \xref{defn:S-is-good}}, as long as \xcite{adler-spice:good-expansions}*{Hypotheses \xref{hyp:reduced}--\xref{hyp:torus-H1-triv}} are satisfied \xcite{adler-spice:good-expansions}*{Proposition \xref{prop:compatibly-filtered-tame-rank}, Lemma \xref{lem:simult-approx}, and Lemma \xref{lem:funny-centralizer-descends}}.  (Actually, those results require that \(\gamma\) be compact modulo centre; but we may reduce to that case by working inside \bM.)

Although we \emph{allow} the groups \(\CC\bG i(\gamma)\) to be disconnected, Hypothesis \ref{hyp:funny-centraliser} is phrased in such a way that we may replace each \(\CC\bG i(\gamma)\) by its identity component.

\begin{hyp}
\initlabel{hyp:funny-centraliser}
There is a decreasing sequence of (possibly disconnected, possibly non-full-rank) reductive subgroups \((\mnotn{\CC\bG i(\gamma\pinv)})_{\substack{i \in \tR \cup \sset{-\infty} \\ i \le r}}\) of \bG such that the following hold for all \(i\).  We write \mnotn{\CC\bG i(\gamma)} (respectively, \(\CC\bG i(\gamma\inv)\)) for the parabolic subgroup of \(\CC\bG i(\gamma\pinv)\) dilated (respectively, contracted) by \(\gamma\) \cite{deligne:support}*{\S1, p.~155}.
	\begin{enumerate}
	\item\sublabel{basic}
		\begin{itemize}
		\item \(\CC\bG{-\infty}(\gamma\pinv)\conn\) equals \(\bG\conn\),
		\item \(\CC\bG 0(\gamma) \cap \bG\conn\) equals \(\bM\conn\),
		\item \(\CCp\bG 0(\gamma) \cap \bG\conn\) is the centraliser in \(\bG\conn\) of the absolutely-semisimple-modulo-\(\Zent(\bM\conn)\) part of \(\gamma\) \xcite{spice:jordan}*{Definition \xref{defn:top-F-ss-unip}},
	and	\item \(\CC\bG r(\gamma)\) contains \(\Cent_\bG(\gamma)\).
		\end{itemize}
	\item\sublabel{Lie} The Lie algebra \(\Lie(\CC\bG i(\gamma)_\sepfield)\) is the sum of the weight spaces for the action of \(\gamma\) on \(\Lie(\bG_\sepfield)\) corresponding to weights \(\lambda \in \sepfield \setminus \sbjtlp{\field\sep}0\)
for which the inequality \(\ord(\lambda - 1) \ge i\) holds, and similarly for \(\gamma\inv\).
	\item\sublabel{Levi} If \(\bG'\) is a subgroup of \bG that is normalised by \(\gamma\), then \(\CC\bG i(\gamma)\conn \cap \bG'\) and \(\CC\bG i(\gamma\pinv)\conn \cap \bG'\) are smooth.  If
		\begin{itemize}
		\item \(\bG'\) is a subgroup of \bG,
		\item \(\bS'\) is a \tamefield-split torus centralising \(\bG\primeconn\),
		\item \(\bG'\dotm\bS'\) is a tame, twisted Levi subgroup of \bG containing \(\gamma\),
and		\item \(\gamma\) normalises both \(\bS'\) and \(\bG'\),
		\end{itemize}
then \((\CC\bG i(\gamma\pinv)\conn \cap \bG')(\CC\bG i(\gamma\pinv)\conn \cap \bS')\) is a tame, twisted Levi subgroup of \(\CC\bG i(\gamma\pinv)\conn\).
	\item\sublabel{more-vGvr-facts} If \(\vbG = (\bG^0 \subseteq \dotsb \subseteq \bG^\ell = \bG)\) is a tame, twisted Levi sequence in \bG such that \(\gamma\) belongs to \(G^0\) (hence in each \(G^j\)), then there is a commutative diagram of embeddings of buildings
\[\xymatrix{
\BB(\CC{G^{j_1}}{i_1}(\gamma\pinv)) \ar[r]\ar[d] & \BB(\CC{G^{j_1}}{i_2}(\gamma\pinv)) \ar[d] \\
\BB(\CC{G^{j_2}}{i_1}(\gamma\pinv)) \ar[r]       & \BB(\CC{G^{j_2}}{i_2}(\gamma\pinv))
}\]
(for all \(0 \le j_1 \le j_2 \le \ell\) and \(i_2 \le i_1 \le r\)) such that, if
		\begin{itemize}
		\item \(i \in \tR \cup \sset{-\infty}\) satisfies \(i \le r\),
		\item \(x\) is a point of \(\BB(\CC{G^0}i(\gamma\pinv))\),
	and	\item \(\vec a\) is a depth vector,
		\end{itemize}
then we have that
\[
\sbtl\vG x{\vec a} \cap \CC G i(\gamma\pinv)
\qeqq
\sbtl{\CC\vG i(\gamma\pinv)}x{\vec a},
\]
and similarly for the Lie algebra.
	\end{enumerate}
\end{hyp}

Remark \ref{rem:tame-Levi} discusses the descent properties of tame, twisted Levi subgroups.

\begin{rem}
\label{rem:tame-Levi}
If \(\bG'\) is a Levi subgroup of \bG, then \(\bG\conn \cap \bG'\) equals \(\Cent_{\bG\conn}(\Zent(\bG\primeconn))\) is a Levi subgroup of \(\bG\conn\).  In particular, it is connected, so equals \(\bG\primeconn\).

If \((\bL, \bG', \bG)\) is a tame, twisted Levi sequence in \bG, then \bL contains \(\bS' \ldef \Zent(\bG\primeconn)\).  Recall \cite{digne-michel:non-connexe}*{D\'efinition 1.4} that \(\bG'\) equals \(\Norm_\bG(\bG\primeconn, \bQ\conn)\), where \(\bQ\conn\) is a parabolic subgroup of \(\bG\conn\) with Levi component \(\bG\primeconn\).  Let \(\lambda_0\) be a cocharacter of \(\bS'\) so that \(\bQ\conn\) equals \(\bP_{\bG\conn}(\lambda_0)\), in the notation of \cite{springer:lag}*{\S13.4.1}.  We have that \(\bG'\) equals \(\Norm_\bG \set{\lambda \in \bX_*(\bS')}{\bP_{\bG\conn}(\lambda) = \bQ\conn}\), so that \(\bL \cap \bG'\) equals \(\Norm_\bL \set{\lambda \in \bX_*(\bS')}{\bP_{\bG\conn}(\lambda) = \bQ\conn} \subseteq \Norm_\bL(\Cent_{\bL\conn}(\bS'), \bP_{\bL\conn}(\lambda_0))\).    The reverse containment is obvious, so we have equality.  In particular, \(\bL \cap \bG'\) is a tame, twisted Levi subgroup of \bL.

Our insistence on referring only to identity components in Hypothesis \ref{hyp:funny-centraliser} makes Hypothesis \initref{hyp:funny-centraliser}\subpref{Levi} somewhat awkward.  Using the notation there, we allow ourselves to write something like \mnotn{\CC{\bG'}i(\gamma)} for \(\CC\bG i(\gamma) \cap \bG'\), as long as it is understood that we are speaking only of its identity component.  For example, we may refer to \(\Lie(\CC{\bG'}i(\gamma))\), or use \(\CC{\bG'}i(\gamma\pinv)\) as a term in a tame, twisted Levi sequence \(\CC\vbG i(\gamma\pinv)\) in \(\CC\bG i(\gamma\pinv)\) (as in Hypothesis \initref{hyp:funny-centraliser}\subpref{more-vGvr-facts}), even though we have not guaranteed that it is actually a tame, twisted Levi subgroup, because a group such as \(\sbtl{\CC\vG i(\gamma\pinv)}x{\vec r}\) depends only on the identity components of the groups in the vector \(\CC\vbG i(\gamma\pinv)\).
\end{rem}

By Hypothesis \initref{hyp:funny-centraliser}\subpref{Lie}, there is a canonical \(\CC\bG i(\gamma)\)-stable complement \mnotn{\Lie(\CC\bG i(\gamma))^\perp} to \(\Lie(\CC\bG i(\gamma))\) in \(\Lie(\bG)\) (namely, the sum of the other weight spaces for \(\gamma\)).  We identify \(\Lie^*(\CC\bG i(\gamma))\) with the subset of \(\Lie^*(\bG)\) that annihilates \(\Lie(\CC\bG i(\gamma))^\perp\), and write \mnotn{\Lie^*(\CC\bG i(\gamma))^\perp} for the subset of \(\Lie^*(\bG)\) that annihilates \(\Lie(\CC\bG i(\gamma))\).

Definition \ref{defn:vGvr} is closely related to \xcite{adler-spice:good-expansions}*{Definition \xref{defn:vGvr}}.  If we are dealing with tame elements \(\gamma\) in the identity component of \(G\), so that the groups \(\CC\bG i(\gamma)\) have full rank, then our definition has more restrictive hypotheses (and defines the same groups); but it is set up to accommodate the case where the groups \(\CC\bG i(\gamma)\) may not have full rank in \bG.

\begin{defn}
\label{defn:vGvr}
Suppose that \vbG is a tame, twisted Levi sequence in \bG containing \(\gamma\).  Let
\justnotn{f\supn}\justnotn{f\supd}\justnotn{f\supc}%
\[
f = \bigl((f\supn_i)_{\substack{i \in \tR \\ 0 \le i \le r}}, (f\supd_i, f\supc_i)_{\substack{i \in \tR \cup \sset{-\infty} \\ i < 0}}\bigr)
\]
be a collection of depth vectors.  The entire ensemble \(f\) is called a \term{depth matrix} (for \vbG and \(\gamma\)).  We read the superscripts `n', `d', and `c' as shorthand for `neutral', `dilated', and `contracted', referring to the action of \(\gamma\) \cite{deligne:support}*{\S1, p.~155}.  We occasionally write \(\mnotn{f^-} = (f\supn, f\supd)\) and \(\mnotn{f^+} = (f\supn, f\supc)\).

For any \(x \in \BB(\vG)\), we write \matnotn[\sbtl{\Lie(\vG)}x f]{Lie}{\lsub\gamma\sbtl{\Lie(\vG)}x f} for
\begin{multline*}
\bigoplus_{0 \le i \le r} \Lie(\CCp G i(\gamma))^\perp \cap \sbtl{\Lie(\CC\vG i(\gamma))}x{\vec a\supn_i} \oplus{} \\
\bigoplus_{i < 0}
	\Lie(\CCp G i(\gamma\pinv))^\perp \cap (\sbtl{\Lie(\CC\vG i(\gamma))}x{\vec a\supd_i} \oplus \sbtl{\Lie(\CC\vG i(\gamma\inv))}x{\vec a\supc_i}),
\end{multline*}
and similarly for \matnotn[\sbtl{\Lie^*(\vG)}x f]{Lie}{\lsub\gamma\sbtl{\Lie^*(\vG)}x f}.  (The notation, here and in the group case, is as in Definition \ref{defn:tame-Levi}.)

We say that \(f\) is \term[depth matrix!grouplike]{grouplike} if
	\begin{itemize}
	\item \(f\supn_r\) is grouplike;
	\item each \(f\supn_i\), \(f\supd_i\), and \(f\supc_i\) is concave;
	\item the inequality \(f^-_{i_-} \ge f^-_{i_+}\) holds for all \(i_- \le i_+ < 0\) and all \(0 \le i_- \le i_+ \le r\), and similarly for \(f^+\);
and	\item the inequality \(f\supd_{i_-j_-} + f\supc_{i_-j_+} \ge f\supn_{i_+j_+}\) holds for all \(i_- < 0 \le i_+ \le r\) and \(0 \le j_- \le j_+ \le \ell\).
	\end{itemize}
In this case, for any \(x \in \BB(\vG)\), we write \matnotn[\sbtl\vG x f]{Gxf}{\lsub\gamma\sbtl\vG x f} for
\[
\sgen[\big]{
	\bigcup_{0 \le i \le r} \sbtl{\CC\vG i(\gamma)}x{\vec a\supn_i} \cup
	\bigcup_{i < 0}
		(\sbtl{\CC{N^- \cap \vG}i(\gamma)}x{\vec a\supd_i} \cup
		\sbtl{\CC{N^+ \cap \vG}i(\gamma)}x{\vec a\supc_i})
}.
\]
The element \(\gamma\) will usually be clear, and so will be omitted from the notation.
\end{defn}

\begin{rem}
\label{rem:vGvr}
Note that depth \(-\infty\) means that we include everything (resulting in a non-compact subgroup), and depth \(\infty\) means that we include nothing (resulting in a non-open group).  Thus, for example, \(\sbtl{\Lie(G', G)}x{(\infty, -\infty)}\) stands for \(\Lie(G')^\perp\).  Note also that the notation is ``left biased''; so, for example, \(\sbtl{\Lie(H, G', G)}x{(0, \infty, -\infty)}\) stands for \(\sbtl{\Lie(H)}x 0 + \Lie(G')^\perp\), whereas \(\sbtl{\Lie(G', H, G)}x{(\infty, 0, -\infty)}\) stands for \(\Lie(G')^\perp \cap \bigl(\sbtl{\Lie(H)}x 0 + \Lie(H)^\perp\bigr)\).

We have that
\[
\sbtl{\Lie^*(\vG)}x f
\qeqq
\sett{Y^* \in \Lie^*(G)}{\(\pair{Y^*}Y \in \sbjtlp\field 0\) for all \(Y \in \sbtl{\Lie(\vG)}x{\tilde f}\)}.
\]

We use certain shortcuts for Definition \ref{defn:vGvr}, whose meaning we hope is apparent.  For example, the notation
\(\sbtl{\Lie(\CC G r(\gamma), G)}x{(\Rp0, (r - \ord_{\gamma\pinv})/2)}\)
in Proposition \ref{prop:lattice-orth} stands for \(\sbtl{\Lie(G)}x f\), where
\begin{align*}
f\supn_r             & \qeqq \Rp0                                      \\
f\supn_i             & \qeqq (r - i)/2 && \textq{for} 0 \le i < r, \\
\intertext{and}
f\supd_i  = f\supc_i & \qeqq (r - i)/2 && \textq{for} i < 0;
\end{align*}
and the notation
\(\sbtl{(\CC G r(\gamma), \CC{G'}0(\gamma), \CC G 0(\gamma))}x{(\Rp0, r - \ord_\gamma, (r - \ord_\gamma)/2)}\)
in Proposition \ref{prop:Q-to-B} stands for \(\sbtl{(G', G)}x f\), where
\begin{align*}
f\supn_r            &\qeqq \Rp0                                              \\
f\supn_i            &\qeqq (r - i, (r - i)/2) &&\textq{for} 0 \le i < r, \\
\intertext{and}
f\supd_i = f\supc_i &\qeqq \infty             &&\textq{for} i < 0.
\end{align*}
\end{rem}

Definition \ref{defn:concave-vee} is a modification of \xcite{adler-spice:good-expansions}*{Definition \xref{defn:concave-vee}}, used to describe commutator relationships among groups associated to concave functions.  Our depth matrices are used to handle analogues when some of the groups involved (our \(\CC\bG i(\gamma)\)) do not have full rank, so that it does not make sense to speak of weights in the larger group occurring in the smaller one; but the philosophy is still that the multi-cased definition of \mnotn{f_1 \cvvv f_2} is, in essence, keeping track of the many ways that a sum of weights might be a weight in \(\Lie(\CC{\bG^j}i(\gamma)_\sepfield)\).

\begin{defn}
\label{defn:concave-vee}
Suppose that \(f_1\) and \(f_2\) are depth matrices.  We temporarily put
\begin{align*}
(f_1 {\lsub*\cvvv} f_2)\supn_{i j}
= \inf_{\substack{i_- \le i \le i_+ \\ j_- \le j \le j_+}}
	\min \{
		&(f_1\supn)_{i j} + (f_2\supn)_{i_+j_-}, (f_1\supn)_{i j_-} + (f_2\supn)_{i_+j},
		(f_1\supn)_{i j_+} + (f_2\supn)_{i_+j_+}, \\
		&\qquad(f_1^\pm)_{i_-j} + (f_2^\mp)_{i_-j_-},
		(f_1^\pm)_{i_-j_+} + (f_2^\mp)_{i_-j_+}
	\} \\
\intertext{and}
(f_1 {\lsub*\cvvv} f_2)\supd_{i j}
= \inf_{\substack{i_- \le i \le i_+ \\ j_- \le j \le j_+}}
	\min \{
		&(f_1^-)_{i j} + (f_2^-)_{i_+j_-}, (f_1^-)_{i j_-} + (f_2^-)_{i_+j},
		(f_1^-)_{i j_+} + (f_2^-)_{i_+j_+}, \\
		&\qquad(f_1\supd)_{i_-j} + (f_2\supd)_{i_-j_-},
		(f_1\supd)_{i_-j_+} + (f_2\supd)_{i_-j_+}
	\},
\end{align*}
and similarly for \((f_1 {\lsub*\cvvv} f_2)\supc_{i j}\).  With this provisional (asymmetric) definition in place, we make the symmetric definition \(\mnotn{f_1 \cvvv f_2} = \min \sset{f_1 {\lsub*\cvvv} f_2, f_2 {\lsub*\cvvv} f_1}\); and then put \(\mnotn{f_1 \cvev f_2} = \sup \sett F{\(F \le f_1 \cvvv f_2\) and \(F \le f_1 \cvvv F\)}\) and \(\mnotn{f_1 \cvee f_2} = \sup \sett F{\(F \le f_1 \cvev f_2\) and \(F \le F \cvvv f_2\)}\).
\end{defn}

A grouplike depth matrix \(f\) satisfies \(f \le f \cvee f\); and, if \(f_1\) and \(f_2\) are grouplike, then so is \(f_1 \cvee f_2\).

Lemma \ref{lem:filtration} is an adaptation to our situation of \xcite{adler-spice:good-expansions}*{Lemma \xref{lem:master-comm}}.  The proof follows from a routine application of Lemma \ref{lem:der-master-comm} to groups of the form \(\sbtl{\CC\vG i(\gamma)}x{f_i}\).  We omit it, but the interested reader may see a detailed example of the relevant reasoning in the proof of Proposition \ref{prop:Q-and-B}.

\begin{lem}
\initlabel{lem:filtration}
Let
	\begin{itemize}
	\item \vbG be a tame, twisted Levi sequence in \bG,
	\item \(x\) a point of \(\BB(\vG)\),
and	\item \(f_1\) and \(f_2\) depth matrices.
	\end{itemize}
Then the following properties hold.
	\begin{enumerate}
	\item\sublabel{Lie-Lie} If \(Y_j\) belongs to \(\sbtl{\Lie(\vG)}x{f_j}\) for \(j \in \sset{1, 2}\), then \(\comm{Y_1}{Y_2}\) belongs to \(\sbtl{\Lie(\Der\vG)}x{f_1 \cvvv f_2}\).
	\item\sublabel{gp-Lie} If, in addition to \locpref{Lie-Lie}, we have that \(f_1\) is concave and \(g_1\) belongs to \(\sbtl{\vG}x{f_1}\), then \((\Ad(g_1) - 1)Y_2\) belongs to \(\sbtl{\Lie(\Der\vG)}x{f_1 \cvev f_2}\); and analogously on the dual Lie algebra.
	\item\sublabel{gp-gp} If, in addition to \locpref{gp-Lie}, we have that \(f_2\) is concave and \(g_2\) belongs to \(\sbtl\vG x{f_2}\), then \(\comm{g_1}{g_2}\) belongs to \(\sbtl{\Der\vG}x{f_1 \cvee f_2}\).
	\end{enumerate}
\end{lem}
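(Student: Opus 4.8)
The plan is to reduce all three assertions to Lemma~\ref{lem:der-master-comm} by decomposing the groups and (dual) Lie algebras attached to depth matrices into the building blocks of Definition~\ref{defn:vGvr}, and then tracking how the resulting depths combine under commutators. Passing to a tame extension --- harmless, by standard tame descent --- we may assume $\bG$ is $\tamefield$-split with a maximal torus $\bT$ as in Definition~\ref{defn:tame-Levi} contained in a Borel subgroup, so that Lemma~\ref{lem:der-master-comm} applies directly. By Definition~\ref{defn:vGvr}, for $j \in \sset{1, 2}$ the group $\sbtl\vG x{f_j}$ is generated by finitely many subgroups, one for each level $i$, each of the form $\sbtl{\CC\vG i(\gamma)}x{\vec a}$ or $\sbtl{\CC{N^\pm \cap \vG}i(\gamma)}x{\vec a}$ with $\vec a$ the corresponding depth vector of $f_j$, while $\sbtl{\Lie(\vG)}x{f_j}$ and $\sbtl{\Lie^*(\vG)}x{f_j}$ are the direct sums of the neutral, dilated, and contracted pieces enumerated there. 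Each such piece is in turn built, via Definition~\ref{defn:tame-Levi} and Hypothesis~\initref{hyp:funny-centraliser}\subpref{Lie}, from the genuine objects $\sbtl\bG x g$, $\sbtl{\Lie(\bG)}x g$, $\sbtl{\Lie^*(\bG)}x g$ attached to Galois-invariant concave functions $g$ on $\Weight$ that, according to the level $i$, are supported on the appropriate $\gamma^{\pm 1}$-weight spaces; and by Hypothesis~\initref{hyp:funny-centraliser}\subpref{more-vGvr-facts} these constituents may be computed inside the full-rank ambient $\bG$ and only afterwards intersected back into $\CC G i(\gamma\pinv)$, which is the device for handling the possibly-non-full-rank groups $\CC\bG i(\gamma)$. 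After this decomposition, each of the three claims becomes a family of assertions about $\comm{\cdot}{\cdot}$, $\ad(\cdot)$, or $(\Ad(\cdot) - 1)$ between such $\sbtl\bG x g$'s, $\sbtl{\Lie(\bG)}x g$'s, and $\sbtl{\Lie^*(\bG)}x g$'s --- precisely what Lemma~\ref{lem:der-master-comm} supplies.

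To pass from these piece-by-piece bounds to the asserted bounds for the full groups, I would run the standard commutator-subgroup argument from the proof of Lemma~\ref{lem:der-master-comm}. Granting, as a special case of the bound being proved applied to suitably degenerate pairs of depth matrices, that $\sbtl{\Der\vG}x{f_1 \cvee f_2}$ is normalised by each of $\sbtl\vG x{f_1}$ and $\sbtl\vG x{f_2}$, the set of $g_1 \in \sbtl\vG x{f_1}$ whose commutator with every generator of $\sbtl\vG x{f_2}$ lies in $\sbtl{\Der\vG}x{f_1 \cvee f_2}$ is a subgroup; by the piece-by-piece bound it contains all generators of $\sbtl\vG x{f_1}$, hence equals it, and a symmetric argument in the second variable completes the group statement. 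The Lie-algebra statement is handled by the same template --- working summand-by-summand and using linearity of $Y \mapsto \comm{Y_1}Y$ --- with $\cvvv$ in place of $\cvee$ and the $\bowtie$-bound of Lemma~\ref{lem:der-master-comm} in place of the $\vee$-bound; and the mixed statement likewise, with $\cvev$ and the $\rtimes$-bound, using that $\Ad$ is a homomorphism so that the relevant sets are again subgroups (respectively sublattices). A fully worked instance of exactly this style of reasoning appears in the proof of Proposition~\ref{prop:Q-and-B}.

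The substantive point --- and the one I expect to be the main obstacle --- is verifying that the depth furnished by Lemma~\ref{lem:der-master-comm} for the commutator (respectively $\ad$- or $(\Ad - 1)$-action) of two pieces is at least the corresponding entry of $f_1 \cvee f_2$ (respectively $f_1 \cvvv f_2$, $f_1 \cvev f_2$). For pieces represented by concave functions $g_1, g_2$ on $\Weight$, Lemma~\ref{lem:der-master-comm} yields the function $g_1 \vee g_2$ (respectively $g_1 \bowtie g_2$, $g_1 \rtimes g_2$), whose value at a weight $\alpha$ of $\Lie(\CC{\bG^j}i(\gamma)_\sepfield)$ is an infimum over ways of writing $\alpha$ as a sum of weights on which $g_1$ is finite together with weights on which $g_2$ is finite. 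Each such decomposition forces its summands to lie in prescribed $\gamma^{\pm 1}$-eigenspaces and in prescribed terms of the twisted Levi sequences: since $\ord(\lambda\mu - 1) \ge \min\{\ord(\lambda - 1), \ord(\mu - 1)\}$, the ``level'' coordinate $i$ of each summand is controlled, and since the sequences are nested, so is the ``term'' coordinate $j$; meanwhile the fact that $\gamma$ and $\gamma\inv$ act with reciprocal eigenvalues on the dilated versus the contracted pieces is exactly what produces the $\pm/\mp$ pattern in the five-term minima of Definition~\ref{defn:concave-vee}. Carrying this bookkeeping through all the cases --- neutral, dilated, contracted, and their mixtures --- and through the successive suprema relating $\cvvv$, $\cvev$, and $\cvee$ is routine but long, which is why we content ourselves with the worked example of Proposition~\ref{prop:Q-and-B}.
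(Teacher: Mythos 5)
Your proposal matches the paper's own (omitted) argument: the paper states only that the lemma "follows from a routine application of Lemma \ref{lem:der-master-comm} to groups of the form \(\sbtl{\CC\vG i(\gamma)}x{f_i}\)," referring to the proof of Proposition \ref{prop:Q-and-B} for a worked example of exactly the piecewise-decomposition, subgroup-closure, and depth-bookkeeping reasoning you describe. Your sketch is the same approach, including the deferral of the lengthy case-by-case verification against Definition \ref{defn:concave-vee}.
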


Lemma \ref{lem:filtration} can be applied only when we know the depths of the elements in a commutator.  We would like to apply similar results to commutators involving \(\gamma\), but we don't have any information about its depth.  The idea of Hypothesis \ref{hyp:funny-centraliser} is that \(\gamma\) is supposed informally to ``live at depth \(i\) modulo \(\Zent(\CC\bG i(\gamma))\).''  We make this precise in Hypothesis \ref{hyp:gamma} (and later state an analogous dual-Lie-algebra condition in Hypothesis \ref{hyp:X*}).

Hypothesis \ref{hyp:gamma} involves a point \mnotn x in \(\BB(G)\) (eventually, in \(\BB(H)\)), which we now fix.  We note that the hypothesis may hold for some points of \(\BB(H)\), and not for others.  The set of points for which it holds is analogous to the set \(\BB_r(\gamma)\) of \xcite{adler-spice:good-expansions}*{Definition \xref{defn:Brgamma}}.

Suppose for this paragraph
that \xcite{adler-spice:good-expansions}*{Hypotheses \xref{hyp:reduced}--\xref{hyp:torus-H1-triv}} are satisfied,
and \(\gamma\) is a compact-modulo-centre element of a tame torus satisfying \xcite{adler-spice:good-expansions}*{Definition \xref{defn:S-is-good}}.  Then Hypothesis \initref{hyp:gamma}(\subref{Lie}, \subref{gp}) is an easy generalisation of \xcite{adler-spice:good-expansions}*{Lemmas \xref{lem:center-comm} and \xref{lem:iso-quotients}} (using Lemma \ref{lem:der-master-comm} in place of \xcite{adler-spice:good-expansions}*{Lemma \xref{lem:master-comm}} to make sure that commutators land in the derived group).  Hypothesis \initref{hyp:gamma}\subpref{orbit} follows from \xcite{adler-spice:good-expansions}*{Lemma \xref{lem:GE2}, Corollary \xref{cor:compatibly-filtered-tame-rank} (and Definition \xref{defn:funny-centralizer}), and Lemma \xref{lem:simult-approx}}.

\begin{hyp}
\initlabel{hyp:gamma}
Let \vbG be a tame, twisted Levi sequence in \bG containing \(\gamma\),
such that \(x\) belongs to \(\BB(\vG)\), and let \(\vec a\) be a depth vector.
The following hold for any \(i \in \tR \cup \sset{-\infty}\) with \(i \le r\).
	\begin{enumerate}
	\item\sublabel{building} The point \(x\) belongs to \(\BB(\CC G r(\gamma))\).
	\item\sublabel{Lie} The map \(\Ad(\gamma) - 1\) carries \(\sbtl{\Lie(\CC\vG i(\gamma\pinv))}x{\vec a}\) into \(\sbtl{\Lie(\CC{N^+ \cap \vG}i(\gamma\inv), \CC{\Der G \cap \vG}i(\gamma\pinv))}x{(\vec a, \vec a + i)}\), and induces an isomorphism
\begin{align*}
	&\sbat{\Lie(\CC\vG i(\gamma\pinv))}x{\vec a}/\sbat{\Lie(\CCp\vG i(\gamma\pinv))}x{\vec a} \\ \mapisoarrow{}
	&\sbat{\Lie(\CC{N^+ \cap \vG}i(\gamma\inv), \CC\vG i(\gamma\pinv))}x{(\vec a, \vec a + i)}
	/ \\
	&\qquad\sbat{\Lie(\CC{N^+ \cap \vG}i(\gamma\inv), \CCp\vG i(\gamma), \CC\vG i(\gamma))}x{(\vec a, \vec a + i, \Rpp{\vec a + i})}.
\end{align*}
The analogous result for \(\gamma\inv\) also holds.
	\item\sublabel{bi-Lie-Lie} Hypothesis \ref{hyp:funny-centraliser} also holds for \(\gamma^2\), and \(\CC\bG r(\gamma^2) \cap \CCp\bG 0(\gamma)\) equals \(\CC\bG r(\gamma)\).  The map \(\Ad(\gamma) - \Ad(\gamma\inv)\) carries \(\sbtl{\Lie(\CC\vG i(\gamma\pinv[2]))}x{\vec a}\) into \(\sbtl{\Lie(\CC{\Der G \cap \vG}i(\gamma\pinv[2]))}x{\vec a + i}\), and induces an isomorphism
\begin{multline*}
	\sbat{\Lie(\CC\vG i(\gamma\pinv[2]))}x{\vec a}/\sbat{\Lie(\CCp\vG i(\gamma\pinv[2]))}x{\vec a} \\ \mapisoarrow{}
	\sbat{\Lie(\CC\vG i(\gamma\pinv[2]))}x{\vec a + i}/\sbat{\Lie(\CCp\vG i(\gamma\pinv[2]))}x{\vec a + i}.
\end{multline*}
	\item\sublabel{gp} If \(i\) is non-negative and \(\vec a\) is grouplike, then the map \(\comm\gamma\anondot\) carries \(\sbtl{\CC\vG i(\gamma)}x{\vec a}\) into \(\sbtl{\CC{\Der G \cap \vG}i(\gamma)}x{\vec a + i}\), and induces 
a bijection
\[
\anoniso
	{\sbat{\CC\vG i(\gamma)}x{\vec a}/\sbat{\CCp\vG i(\gamma)}x{\vec a}}
	{\sbat{\CC\vG i(\gamma)}x{\vec a + i}/\sbat{\CCp\vG i(\gamma)}x{\vec a + i}};
\]
and, if \(i\) is negative and \(\vec a + i\) is grouplike, then the same map carries \(\sbtl{\CC{N^- \cap \vG}i(\gamma)}x{\vec a}\) into \(\sbtl{\CC{N^- \cap \vG}i(\gamma)}x{\vec a + i}\), and induces 
a bijection
\[
\anoniso
{\sbat{\CC{N^- \cap \vG}i(\gamma)}x{\vec a}/\sbat{\CCp{N^- \cap \vG}i(\gamma)}x{\vec a}}
{\sbat{\CC{N^- \cap \vG}i(\gamma)}x{\vec a + i}/\sbat{\CCp{N^- \cap \vG}i(\gamma)}x{\vec a + i}}.
\]
The analogous result for \(\gamma\inv\) also holds.
	\item\sublabel{orbit} If \(g \in \sbtlp{\CC G i(\gamma\pinv)}x 0\) is such that
\[
\Int(g)\bigl(\sbtlp{\CCp G i(\gamma\inv)}x i\dotm\gamma\sbtlp{\CCp G i(\gamma)}x i\bigr)
\cap \sbtlp{\CCp G i(\gamma\inv)}x i\dotm\gamma\sbtlp{\CCp G i(\gamma)}x i
\]
is non-empty, then \(g\) belongs to \(\sbtlp{\CCp G i(\gamma\pinv)}x 0\).
	\end{enumerate}
\end{hyp}

\begin{rem}
\label{rem:gamma}
Hypotheses \ref{hyp:funny-centraliser} and \ref{hyp:gamma} become weaker if we decrease \(r\), so we may cite all results for smaller values of \(r\) if desired.  For example, we do so in the proof of Proposition \ref{prop:dist-r-to-s+}, when we wish to use Lemma \ref{lem:centre} for \(r = 0\).
\end{rem}

\begin{rem}
\label{rem:gamma:Lie*}
Since the filtration on the dual Lie algebra is defined in terms of the filtration on the Lie algebra (Remark \ref{rem:vGvr}), Hypothesis \initref{hyp:gamma}\subpref{Lie} implies the analogous hypothesis on the dual Lie algebra.
\end{rem}


Lemma \ref{lem:commute-gp} states some consequences of Hypothesis \ref{hyp:gamma}, phrased in terms of groups and Lie algebras associated to depth matrices (rather than just depth vectors, as in Hypothesis \ref{hyp:gamma}).  The only place that we need to know that the commutator in Lemma \initref{lem:commute-gp}\subpref{down} belongs to \(\sbtl{\Der\vG}x F\), rather than just \(\sbtl\vG x F\), is in Proposition \ref{prop:Q-to-B}.
Note that Lemma \initref{lem:commute-gp}\subpref{bi-up} involves \(\CC G r(\gamma^2)\), not \(\CC G r(\gamma)\).

As with Lemma \ref{lem:filtration}, the proof of Lemma \ref{lem:commute-gp} follows from a routine application of Hypothesis \ref{hyp:gamma} to groups of the form \(\sbtl{\CC\vG i(\gamma)}x{\vec a}\), together with ``successive approximation arguments'', as, for example, in \cite{adler:thesis}*{Lemma 2.3.2}, to lift results about finite quotients to results about the ambient groups.  We omit it.

\begin{lem}
\initlabel{lem:commute-gp}
Let \vbG be a tame, twisted Levi sequence in \bG containing \(\gamma\), with \(x \in \BB(\vG)\), and let \(f\) and \(F\) be depth matrices.
	\begin{enumerate}
	\item\sublabel{down} If the inequalities \(F^- \le f^- + \min \sset{\ord_\gamma, r}\) and \(F\supc \le f\supc\) hold, then \(\Ad(\gamma) - 1\) carries \(\sbtl{\Lie(\vG)}x f\) into \(\sbtl{\Lie(\Der\vG)}x F\).  If, further, \(f\) and \(F\) are grouplike and satisfy \(F \le f \cvee F\), then \(\comm\gamma\anondot\) carries \(\sbtl{\CC\vG{-\infty}(\gamma)}x f\) into \(\sbtl{\CC{\Der\vG}{-\infty}(\gamma)}x F\).
	\item\sublabel{up} If the inequalities \(F^- \ge f^- + \ord_\gamma\) and \(F\supc \ge f\supc\) hold outside \(\CC\bG r(\gamma)\), then the pre-image of \(\sbtl{\Lie(\vG)}x F\) under \(\Ad(\gamma) - 1\) is contained in \(\sbtl{\Lie(\CC G r(\gamma), \vG)}x{(-\infty, f)}\).  If, further, \(f\) and \(F\) are grouplike and satisfy \(F \le f \cvee F\), then the pre-image in \(\sbtlp M x 0\) of \(\sbtl M x F\) under \(\comm\gamma\anondot\) is contained in \(\sbtl{(\CC G r(\gamma), M \cap \vG)}x{(\Rp0, f)}\).
	\item\sublabel{bi-up} If the inequalities \(F^\mp \ge f^\mp + \ord_{\gamma\pinv}\) hold outside \(\CC\bG r(\gamma^2)\), then the pre-image of \(\sbtl{\Lie(\vG)}x F\) under \(\Ad(\gamma) - \Ad(\gamma)\inv\) is contained in \(\sbtl{\Lie(\CC\vG r(\gamma^2), \vG)}x{(-\infty, f)}\).
	\item\sublabel{onto} If the inequalities \(F^\mp \ge f^\mp + \ord_{\gamma\pinv}\) hold outside \(\CC\bG r(\gamma)\), then \(\Int(\sbtl G x f)(\sbtl{\CC\vG r(\gamma)}x F\dotm h\gamma)\) contains \(\sbtl\vG x F\dotm h\gamma\sbtl\vG x F\) for any \(h \in \sbtl{\CC\vG r(\gamma)}x f\).
	\item\sublabel{orbit} If \(g \in \sbtlp G x 0\) is such that
\[
\Int(g)(\gamma\sbtl{\CC G r(\gamma)}x r) \cap \gamma\sbtl{\CC G r(\gamma)}x r
\]
is non-empty, then \(g\) belongs to \(\sbtlp{\CC G r(\gamma)}x 0\).
	\end{enumerate}
\end{lem}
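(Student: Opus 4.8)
The plan is to handle all five assertions in the same way: decompose the groups and lattices attached to a depth matrix into the building blocks $\sbtl{\CC\vG i(\gamma\pinv)}x{\vec a}$ out of which they are assembled in Definition \ref{defn:vGvr}, apply Hypothesis \ref{hyp:gamma} one block at a time, and reassemble --- inserting a successive-approximation argument in the spirit of \cite{adler:thesis}*{Lemma 2.3.2} whenever the conclusion concerns groups rather than Lie algebras. As with Lemma \ref{lem:filtration}, the only genuine work here is the depth-matrix bookkeeping; the proof of Proposition \ref{prop:Q-and-B} exhibits the relevant reasoning in detail.

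Consider first the \emph{linear} assertions: the claim about $\Ad(\gamma) - 1$ in \initref{lem:commute-gp}\subpref{down}, the pre-image claim about $\Ad(\gamma) - 1$ in \initref{lem:commute-gp}\subpref{up}, and the pre-image claim about $\Ad(\gamma) - \Ad(\gamma)\inv$ in \initref{lem:commute-gp}\subpref{bi-up}. For these I would write $\sbtl{\Lie(\vG)}x f$ as the direct sum, over the index $i$ and over the neutral/dilated/contracted trichotomy, of the summands $\Lie(\CCp G i(\gamma\pinv))^\perp \cap \sbtl{\Lie(\CC\vG i(\gamma\pinv))}x{\vec a}$ of Definition \ref{defn:vGvr}. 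On each summand, Hypothesis \initref{hyp:gamma}\subpref{Lie} (for $\Ad(\gamma) - 1$) or Hypothesis \initref{hyp:gamma}\subpref{bi-Lie-Lie} (for $\Ad(\gamma) - \Ad(\gamma)\inv$), together with Remark \ref{rem:gamma:Lie*} on the dual Lie algebra, records the depth gain and the fact that the output lands in $\Der\vG$, so that this output is again a summand of the type occurring in $\sbtl{\Lie(\Der\vG)}x F$. It then remains to check that the inequalities imposed between $F$ and $f$ are precisely the term-by-term bounds so produced: for \initref{lem:commute-gp}\subpref{down} this is the content of the inequalities $F^- \le f^- + \min \sset{\ord_\gamma, r}$ and $F\supc \le f\supc$, while for \initref{lem:commute-gp}\subpref{up} and \initref{lem:commute-gp}\subpref{bi-up} the reverse inequalities (outside $\CC\bG r(\gamma)$, respectively outside $\CC\bG r(\gamma^2)$) must be combined with the \emph{isomorphism} halves of the same hypotheses, which force an element whose image is deep to sit, modulo $\Lie(\CC G r(\gamma))$ (respectively, modulo $\Lie(\CC\vG r(\gamma^2))$), as deep as claimed. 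This last step is a short descending induction on $i$ that peels off one filtration layer at a time, using that the Moy--Prasad filtration is exhausting.

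The remaining, group-level, assertions I would prove by successive approximation against Hypothesis \initref{hyp:gamma}\subpref{gp} (and Hypothesis \initref{hyp:gamma}\subpref{orbit} for \initref{lem:commute-gp}\subpref{orbit}). For the $\comm\gamma\anondot$ claim in \initref{lem:commute-gp}\subpref{down} one iterates the ``carries into'' half of Hypothesis \initref{hyp:gamma}\subpref{gp}, invoking Lemma \ref{lem:filtration} to keep the commutators in $\Der\vG$; for the pre-image claim in \initref{lem:commute-gp}\subpref{up} and the absorption claim \initref{lem:commute-gp}\subpref{onto} one uses instead that Hypothesis \initref{hyp:gamma}\subpref{gp} induces a \emph{bijection} on each finite quotient --- injectivity yielding the confinement in \initref{lem:commute-gp}\subpref{up}, surjectivity the approximating conjugators in \initref{lem:commute-gp}\subpref{onto} --- correcting at each stage by an element that Hypothesis \initref{hyp:gamma}\subpref{gp} places at least one filtration layer deeper, and passing to the limit in the complete group. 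For \initref{lem:commute-gp}\subpref{orbit} I would run Hypothesis \initref{hyp:gamma}\subpref{orbit} as a descending induction on $i$ starting from $i = -\infty$, where its hypothesis $g \in \sbtlp{\CC G i(\gamma\pinv)}x 0$ is automatic; the intersection hypothesis survives at each stage because all eigenvalues of $\gamma$ on $\Lie(\bH_\sepfield)$ are units --- indeed $\bH = \CC\bG r(\gamma)$ is already reductive, $\gamma$ neither dilating nor contracting it --- so that $\gamma\sbtl{\CC G r(\gamma)}x r$ sits inside every product $\sbtlp{\CCp G i(\gamma\inv)}x i\dotm\gamma\sbtlp{\CCp G i(\gamma)}x i$, and the conclusion at one stage supplies the hypothesis at the next; the final stage yields $g \in \sbtlp{\CC G r(\gamma)}x 0$.

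I expect the main obstacle to be combinatorial rather than conceptual: organising the depth-matrix bookkeeping across the many cases (the index $i$, the neutral/dilated/contracted trichotomy, and the twisted-Levi index $j$) so that the inequalities between $f$ and $F$, the operations $\cvvv$, $\cvev$, $\cvee$ of Definition \ref{defn:concave-vee}, and the $\pm\infty$ conventions of Remark \ref{rem:vGvr} all interlock --- and, within that, making sure at every step that commutators land in $\Der\vG$ rather than merely $\vG$, which is exactly what forces the use of the refined estimate Lemma \ref{lem:der-master-comm} (through Lemma \ref{lem:filtration}) in place of the cruder bound of \xcite{adler-spice:good-expansions}*{Lemma \xref{lem:master-comm}}. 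A secondary delicate point is the convergence of the successive-approximation limits, which relies on the ``error'' genuinely decreasing by a full filtration layer at each stage; this is guaranteed because Hypothesis \initref{hyp:gamma}\subpref{gp} asserts a bijection on each finite quotient, not merely a surjection.
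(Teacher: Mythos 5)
Your proposal coincides with the paper's own (omitted) proof: the paper says only that Lemma \ref{lem:commute-gp} follows from a routine application of Hypothesis \ref{hyp:gamma} to groups of the form \(\sbtl{\CC\vG i(\gamma)}x{\vec a}\), together with successive-approximation arguments as in \cite{adler:thesis}*{Lemma 2.3.2} to lift statements about finite quotients to the ambient groups, and that is exactly the blueprint you carry out.  Your blockwise use of Hypothesis \initref{hyp:gamma}(\subref{Lie}, \subref{bi-Lie-Lie}, \subref{gp}, \subref{orbit}), the appeal to Remark \ref{rem:gamma:Lie*} and to Lemma \ref{lem:filtration} to keep commutators in \(\Der\vG\), and the limit arguments in the complete group are all consistent with that sketch and with the model computation in the proof of Proposition \ref{prop:Q-and-B}.
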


The result \xcite{debacker-spice:stability}*{Proposition \xref{prop:const}} is stated for a group associated to a very particular concave function, but its proof applies much more generally.  We isolate one general consequence here; the proof is identical.

\begin{lem}[\xcite{debacker-spice:stability}*{Proposition \xref{prop:const}}]
\label{lem:MP-card}
Suppose that \vbG is a tame Levi sequence in \bG containing \(\gamma\), such that \(x\) belongs to \(\BB(\vG)\), and \(f_1\) and \(f_2\) are grouplike depth matrices satisfying \(f_1 \le f_2\) and \((f_1)_{i\,0} = (f_2)_{i\,0}\) for all \(0 \le i \le r\).  Then we have that
\[
\indx{\sbtl\vG x{f_1}}{\sbtl\vG x{f_2}}
\qeqq
\indx{\sbtl{\Lie(\vG)}x{f_1}}{\sbtl{\Lie(\vG)}x{f_2}}.
\]
\end{lem}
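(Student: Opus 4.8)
The plan is a d\'evissage to elementary steps, combined with the standard isomorphism between consecutive Moy--Prasad quotients of the group and of its Lie algebra; this is, as the statement indicates, word-for-word the argument of \xcite{debacker-spice:stability}*{Proposition \xref{prop:const}}, with Lemma \ref{lem:filtration} and Definition \ref{defn:vGvr} playing the organizational role that the concave-function formalism plays there. First I would note that both indices are multiplicative along a tower: for grouplike depth matrices \(f_1 \le g \le f_2\) one has \(\indx{\sbtl\vG x{f_1}}{\sbtl\vG x{f_2}} = \indx{\sbtl\vG x{f_1}}{\sbtl\vG x g}\dotm\indx{\sbtl\vG x g}{\sbtl\vG x{f_2}}\), and likewise on the Lie side, where it is immediate since the filtration lattices form a filtered abelian group. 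So it suffices to interpolate a chain \(f_1 = h_0 \le h_1 \le \dotsb \le h_m = f_2\) of grouplike depth matrices, each with the same zeroth column as \(f_1\) (so that \((h_k)_{i\,0} = (f_1)_{i\,0} = (f_2)_{i\,0}\) for all \(0 \le i \le r\) and all \(k\)), refined enough that each successive quotient \(\sbtl\vG x{h_k}/\sbtl\vG x{h_{k + 1}}\) is ``elementary''; that such a chain exists, with grouplikeness preserved at every stage, follows by inspecting the inequalities defining grouplikeness in Definition \ref{defn:vGvr} and shrinking one non-zeroth entry by an infinitesimal amount at a time.

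For a single step \(\sbtl\vG x{h_k} \supseteq \sbtl\vG x{h_{k + 1}}\), the commutator bound for groups in Lemma \ref{lem:filtration} gives \(\comm{\sbtl\vG x{h_k}}{\sbtl\vG x{h_k}} \subseteq \sbtl{\Der\vG}x{h_k \cvee h_k} \subseteq \sbtl\vG x{h_{k + 1}}\) as soon as the step is small enough (using that every grouplike \(f\) satisfies \(f \le f \cvee f\)), so that the quotient is abelian; after refining a little further it is a single Moy--Prasad graded piece, which by the description in Definition \ref{defn:vGvr} is a direct sum, over the summands occurring there, of graded quotients of the corresponding lattices. The standard isomorphism attached to the point \(x\) between such a group graded piece and the corresponding Lie-algebra graded piece then identifies it with \(\sbtl{\Lie(\vG)}x{h_k}/\sbtl{\Lie(\vG)}x{h_{k + 1}}\) --- \emph{provided} the step does not change the ``core'' (zeroth-column) direction, which is exactly what holding the zeroth column fixed guarantees; the one graded piece whose group- and Lie-side orders one cannot in general compare thus never arises. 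Multiplying the resulting equalities of cardinalities over \(k\) gives the stated equality of indices.

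The only real subtlety --- the reason the argument is not entirely formal --- is the combinatorics of depth matrices in the presence of the possibly non-full-rank subgroups \(\CC\bG i(\gamma)\): one cannot literally speak of the root spaces of a torus in \(\CC\bG i(\gamma)\), and must instead work with the explicit summands of Definition \ref{defn:vGvr} and appeal to the commutator estimates of Lemma \ref{lem:filtration} to see that the graded pieces behave as in the full-rank case. Since those tools were constructed precisely at this level of generality, the bookkeeping is routine, and one may simply transcribe the proof of \xcite{debacker-spice:stability}*{Proposition \xref{prop:const}}. (Equivalently, one may observe that \(\indx{\sbtl\vG x{f_1}}{\sbtl\vG x{f_2}}\) is the ratio \(\meas(\sbtl\vG x{f_1})/\meas(\sbtl\vG x{f_2})\) for any Haar measure on \(G\), and similarly on \(\Lie(G)\), and compare volumes; but this reduces to the same graded-piece computation.)
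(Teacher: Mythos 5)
Your proposal is correct and matches the paper's intent exactly: the paper gives no independent argument, stating only that the proof of \xcite{debacker-spice:stability}*{Proposition \xref{prop:const}} "applies much more generally" and "is identical," and your d\'evissage into graded pieces, identified with Lie-algebra graded pieces away from the toral (zeroth-column) directions that the hypothesis $(f_1)_{i\,0} = (f_2)_{i\,0}$ excludes, is precisely that argument transcribed to depth matrices. Nothing further is needed.
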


The conditions in Lemma \ref{lem:MP-card} require that we quotient out by the ``troublesome'' part of the Moy--Prasad group, which is to say the part corresponding to a maximal torus.  Thus, we may apply Lemma \ref{lem:MP-card} to compute \(\indx{\sbtlp G x 0}{\sbtl{(G', G)}x{(0, \Rp s)}}\) in the proof of Lemma \ref{lem:index-to-disc-X*}, because \(\bG'\) there has full rank; but not to compute \(\indx{\sbtl{(H, M, G)}x{(\Rp0, (r - \ord_\gamma)/2, s)}}{\sbtl{(H, M, G)}x{(\Rp0, s, (r - \ord_{\gamma\pinv})/2)}}\) in Lemma \ref{lem:index-to-disc-gamma}, because \bH need not have full rank.

Lemma \ref{lem:double-coset-count} is used in our explicit ``constant term (about \(\gamma\))'' calculations in Lemma \ref{lem:centre}.

\begin{lem}
\label{lem:double-coset-count}
Suppose that
	\begin{itemize}
	\item \(K^0\) is a compact, open subgroup of \(M\) that is normalised by \(\gamma\),
	\item \(K^\pm\) is a compact, open subgroup of \(N^\pm\) such that \(\Int(\gamma^\pm)K^\pm\) is contained in \(K^\pm\),
and	\item \(K = K^- K^0 K^+\) is a subgroup of \(G\).
	\end{itemize}
Then \(\indx K{K \cap \Int(\gamma)K}\) equals \(\modulus_{P^-}(\gamma)\).
\end{lem}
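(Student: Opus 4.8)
The plan is to pin down $K \cap \Int(\gamma)K$ explicitly using the uniqueness of the ``big cell'' factorisation, then to reduce the index to a single index inside the unipotent group $N^+$ by an elementary subgroup argument, and finally to identify that index with $\modulus_{P^-}(\gamma)$ via the action of $\Ad(\gamma)$ on the unipotent radicals.

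First I would record that, since $\gamma$ normalises $\bM$, $\bN^-$, and $\bN^+$, we have $\Int(\gamma)K = (\Int(\gamma)K^-)\dotm K^0\dotm(\Int(\gamma)K^+)$, and that the multiplication map $\bN^- \times \bM \times \bN^+ \to \bG$ is an open immersion onto the big cell, so that every element of $G$ lying in $N^- M N^+$ has uniquely determined $N^-$-, $M$-, and $N^+$-components. Hence an element $n^- m n^+$ (with $n^\pm \in N^\pm$ and $m \in M$) lies in $K = K^- K^0 K^+$ exactly when $n^- \in K^-$, $m \in K^0$, and $n^+ \in K^+$, and lies in $\Int(\gamma)K$ exactly when $n^- \in \Int(\gamma)K^-$, $m \in K^0$, and $n^+ \in \Int(\gamma)K^+$. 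The hypotheses give $\Int(\gamma)K^+ \subseteq K^+$ and, after rearranging $\Int(\gamma^{-1})K^- \subseteq K^-$, also $K^- \subseteq \Int(\gamma)K^-$; combining these with the previous sentence yields
\[
K \cap \Int(\gamma)K \qeqq K^-\dotm K^0\dotm(\Int(\gamma)K^+).
\]

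Write $L$ for this subgroup (it is one, being the intersection of $K$ with the conjugate $\Int(\gamma)K$). Since $\Int(\gamma)K^+$ is a subgroup of $K^+$, one sees at once that $L\dotm K^+ = K^- K^0 K^+ = K$; and, again by uniqueness of big-cell components, $K^+ \cap L = \Int(\gamma)K^+$. I would then invoke the elementary fact that $\indx K H = \indx S{S \cap H}$ whenever $H$ and $S$ are subgroups of a group $K$ with $K = H S$ (equivalently $K = S H$), applied with $H = L$ and $S = K^+$, to conclude that $\indx K L = \indx{K^+}{\Int(\gamma)K^+}$. Finally, $\indx{K^+}{\Int(\gamma)K^+}$ is the reciprocal of the module of the automorphism $\Int(\gamma)$ of the unipotent group $N^+$, that is, $\abs{\det(\Ad(\gamma) | \Lie(N^+))}^{-1}$; and since $\gamma$ preserves the decomposition $\Lie(G) = \Lie(N^-) \oplus \Lie(M) \oplus \Lie(N^+)$, while $G$ and $M$ are unimodular (being reductive), we have $\abs{\det(\Ad(\gamma) | \Lie(G))} = \abs{\det(\Ad(\gamma) | \Lie(M))} = 1$, whence $\abs{\det(\Ad(\gamma) | \Lie(N^+))}^{-1} = \abs{\det(\Ad(\gamma) | \Lie(N^-))} = \modulus_{P^-}(\gamma)$.

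None of the individual steps is a serious obstacle; the points that require attention are getting the directions of the containments right in the first paragraph (so that the $K^-$-factor drops out entirely and the $K^+$-factor shrinks to $\Int(\gamma)K^+$), and recalling the relation between $\modulus_{P^-}(\gamma)$ and the action of $\Ad(\gamma)$ on the unipotent radicals. I would also want to double-check that $\bM = \bP^- \cap \bP^+$ is reductive, hence unimodular, in the present possibly disconnected setting, which is the one place where the ``reductive Levi'' structure of Deligne's parabolic pairs is genuinely used.
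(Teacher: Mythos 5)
Your proposal is correct, and its skeleton is the same as the paper's: identify \(K \cap \Int(\gamma)K\) as \(K^-K^0\dotm\Int(\gamma)K^+\) (the paper asserts this without the big-cell uniqueness argument you supply, which is a welcome bit of detail), reduce the index to \(\indx{K^+}{\Int(\gamma)K^+}\), and identify that with a determinant of \(\Ad(\gamma)\) on the unipotent radical. The one genuine divergence is the last step. The paper squeezes \(K^+\) between copies of a Moy--Prasad group \(\sbtl{N^+}x d\), cancels the two ``outer'' indices, and then invokes Lemma \ref{lem:MP-card} to convert \(\indx{\sbtl{N^+}x d}{\Int(\gamma)\sbtl{N^+}x d}\) into the corresponding lattice index on \(\Lie(N^+)\), which is visibly \(\abs{\det_{\Lie(N^+)}\Ad(\gamma)\inv}\). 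You instead quote the general theory of modules of automorphisms of locally compact groups: \(\indx{K^+}{\Int(\gamma)K^+}\) is the inverse module of \(\Int(\gamma)\) on \(N^+\), which equals \(\abs{\det_{\Lie(N^+)}\Ad(\gamma)}\inv\), and then you flip \(N^+\) to \(N^-\) using \(\abs{\det_{\Lie(G)}\Ad(\gamma)} = \abs{\det_{\Lie(M)}\Ad(\gamma)} = 1\) (unimodularity of \(G\) and \(M\), which does hold here: each has the rational points of its identity component as an open, finite-index, unimodular subgroup, so the concern you flag about disconnectedness is harmless). The two routes are equivalent in content --- Lemma \ref{lem:MP-card} is precisely the paper's internal substitute for the ``module of an automorphism equals \(\abs{\det\Ad}\) on the Lie algebra'' fact in the Moy--Prasad setting --- so your version buys a slightly more off-the-shelf final step at the cost of citing external standard facts rather than the machinery already set up in the paper, while the paper's version stays self-contained and also silently uses the same cancellation of commensurability indices that your Haar-measure argument encodes.
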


\begin{proof}
Note that \(K \cap \Int(\gamma)K\) equals \(K^- K^0\dotm\Int(\gamma)K^+\), so that the desired index is \(\indx{K^+}{\Int(\gamma)K^+}\).  Choose \(d \in \R\) so that \(K^+\) contains \(\sbtl{N^+}x d\).  Then we have that \(\indx{K^+}{\Int(\gamma)K^+}\) equals \(\indx{K^+}{\sbtl{N^+}x d}\dotm\indx{\sbtl{N^+}x d}{\Int(\gamma)\sbtl{N^+}x d}\dotm\indx{\Int(\gamma)K^+}{\Int(\gamma)\sbtl{N^+}x d}\inv = \indx{\sbtl{N^+}x d}{\Int(\gamma)\sbtl{N^+}x d}\).
By Lemma \ref{lem:MP-card}, we have that \(\indx{\sbtl{N^+}x d}{\Int(\gamma)\sbtl{N^+}x d}\) equals \(\indx{\sbtl{\Lie(N^+)}x d}{\Ad(\gamma)\sbtl{\Lie(N^+)}x d}\), which equals \({\det}_{\Lie(N^+)}(\Ad(\gamma)\inv) = {\det}_{\Lie(N^-)}(\Ad(\gamma)) = \modulus_{P^-}(\gamma)\).
\end{proof}

Lemma \ref{lem:index-to-disc-gamma} states a useful analogue of \xcite{debacker-spice:stability}*{Proposition \xref{prop:const}}, also proven using Lemma \ref{lem:MP-card}.  It accounts for the appearance of factors involving discriminants in Proposition \ref{prop:Gauss-appears}.  As mentioned after Lemma \ref{lem:MP-card}, the necessity to deal with a quotient of indices, rather than a single index, comes from the fact that \bH need not have full rank.  If it happened that \bH \emph{did} have full rank, then reduction to the Lie algebra would allow us to compute \(\indx{\sbtl{(H, M, G)}x{(\Rp0, (r - \ord_\gamma)/2, s)}}{\sbtl{(H, M, G)}x{(\Rp0, s, (r - \ord_{\gamma\pinv})/2)}}\) itself, giving the expected answer.

\begin{lem}
\label{lem:index-to-disc-gamma}
If \(\bG'\) is a tame, twisted Levi subgroup of \bG containing \(\gamma\), with \(x\) in \(\BB(G')\), and \(r\) is positive, then we have that
\begin{align*}
&\frac
	{\indx{\sbtl{(H, M, G)}x{(\Rp0, (r - \ord_\gamma)/2, s)}}{\sbtl{(H, M, G)}x{(\Rp0, s, (r - \ord_{\gamma\pinv})/2)}}}
	{\indx{\sbtl{(H', M', G')}x{(\Rp0, (r - \ord_\gamma)/2, s)}}{\sbtl{(H', M', G')}x{(\Rp0, s, (r - \ord_{\gamma\pinv})/2)}}}
 \\
\intertext{equals}
&\frac
	{\card{\sbat{(H, G)}x{(\Rp0, (r - \ord_{\gamma\pinv})/2)}}\inv[1/2]}
	{\card{\sbat{(H', G')}x{(\Rp0, (r - \ord_{\gamma\pinv})/2)}}\inv[1/2]}
\dotm
\frac
	{\abs{\Disc_{G/H}(\gamma)}\inv[1/2]\modulus_{P^-}(\gamma)^{1/2}}
	{\abs{\Disc_{G'/H'}(\gamma)}\inv[1/2]\modulus_{P\suppm}(\gamma)^{1/2}}
\times{} \\
&\qquad\frac
	{\indx{\sbat G x s}{\sbat H x s}\inv[1/2]}
	{\indx{\sbat{G'}x s}{\sbat{H'}x s}\inv[1/2]}.
\end{align*}
\end{lem}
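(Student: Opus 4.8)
The plan is to interpret the index on the left as a ratio of Waldspurger masses, $\meas\bigl(\sbtl{(H, M, G)}x{(\Rp0, (r - \ord_\gamma)/2, s)}\bigr)/\meas\bigl(\sbtl{(H, M, G)}x{(\Rp0, s, (r - \ord_{\gamma\pinv})/2)}\bigr)$, to expand each mass as a product of local contributions indexed by the weights of $\bT_\sepfield$ on $\Lie(\bG_\sepfield)$, sorted according to which of the groups $\CC\bG i(\gamma)$ they lie in, and finally to match the bookkeeping for $\bG$ against that for $\bG'$ factor by factor. To organise the computation I would first factor each of the two masses through the intersection of the two groups---which, by Lemma \ref{lem:filtration} and a concavity check, is again a group of the form $\sbtl{(H, M, G)}x\bullet$ contained in both of them---so that every index in play becomes an index of \emph{nested} groups, to which the structure theory of \S\S\ref{sec:concave}--\ref{sec:depth-matrix} applies directly.

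The first block of contributions comes from the directions transverse to $\Lie(\bH)$, where conjugation by $\gamma$ does its work. On the $\bN^\pm$-directions $\gamma$-conjugation shifts Moy--Prasad depth, and the computation at the end of the proof of Lemma \ref{lem:double-coset-count}---equivalently, applying Hypothesis \initref{hyp:gamma}\subpref{gp} layer by layer and bootstrapping by a successive-approximation argument, or, on the full-rank Levi $\bM$, invoking Lemma \ref{lem:MP-card} and the corresponding Lie-algebra identity---evaluates the relevant indices as determinants of $\Ad(\gamma)$ on root spaces; summing these over the gaps between depth $s$ and the depths $(r - \ord_\gamma)/2$, $(r - \ord_{\gamma\pinv})/2$ produces the factors $\abs{\Disc_{G/H}(\gamma)}\inv[1/2]$ and $\modulus_{P^-}(\gamma)^{1/2}$, the exponent $1/2$ reflecting that the groups are squeezed half-way between symmetric depths. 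The same computation over $\bG'$ gives $\abs{\Disc_{G'/H'}(\gamma)}\inv[1/2]$ and $\modulus_{P\suppm}(\gamma)^{1/2}$.

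The second block, the crux, is exactly where the non-full-rank of $\bH$ obstructs a direct appeal to Lemma \ref{lem:MP-card}. After intersecting with $\bM$ via Hypothesis \initref{hyp:funny-centraliser}\subpref{more-vGvr-facts} and an Iwahori-type factorisation, the leftover lives inside $\bM$, for the tame, twisted Levi sequence $(\bH, \bM)$ whose first member need not be of full rank. The groups involved are of positive depth throughout (off $\Lie(\bH)$ we have $\ord_\gamma < r$ strictly), so the group index still equals the corresponding Lie-algebra index; but, $\bH$ not being of full rank, the Waldspurger normalisation---each pro-unipotent layer with reductive quotient $\ol K$ carrying mass $\card{\Lie(\ol K)}\inv[1/2]$---no longer has the $\bH$-mass cancel against the $\bG$-mass, as it would in the full-rank case. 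Isolating this residual reductive-quotient mismatch between $\bH$ and $\bG$ expresses the leftover as $\card{\sbat{(H, G)}x{(\Rp0, (r - \ord_{\gamma\pinv})/2)}}\inv[1/2]$ times $\indx{\sbat G x s}{\sbat H x s}\inv[1/2]$, up to factors common to $\bG$ and $\bG'$ that cancel in the final ratio; when $\bH$ happens to have full rank these two factors are trivial---this is the ``expected answer'' alluded to just before the statement.

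Assembling the two blocks for $\bG$ and for $\bG'$ and dividing then yields the asserted identity. The step I expect to be the real obstacle is the second block: keeping the Waldspurger mass bookkeeping honest across the non-full-rank reductive quotient of $\bH$, and pinning down the weight-by-weight ceiling contributions in terms of $\card{\sbat{(H, G)}x{(\Rp0, (r - \ord_{\gamma\pinv})/2)}}$, $\indx{\sbat G x s}{\sbat H x s}$, and the surviving pieces of $\abs{\Disc_{G/H}(\gamma)}$ simultaneously. The remaining ingredients are formal consequences of Lemmas \ref{lem:filtration}, \ref{lem:MP-card}, and \ref{lem:double-coset-count} and Hypothesis \ref{hyp:gamma}.
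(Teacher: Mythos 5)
There is a genuine gap, and it sits exactly at the point the paper flags as the obstruction. Your plan computes the \(\bG\)-side and the \(\bG'\)-side separately and divides only at the end, and within each side you assert that ``the group index still equals the corresponding Lie-algebra index'' because the depths are positive. But positive depth is not the issue: for groups of the form \(\sbtl{(H, M, G)}x{(\Rp0, (r - \ord_\gamma)/2, s)}\) the obstruction is that \bH need not have full rank, so Lemma \ref{lem:MP-card} --- the only group-to-Lie-algebra comparison available for these depth-matrix groups --- does not apply to the single index; this is precisely why Lemma \ref{lem:index-to-disc-gamma} is stated as a quotient of indices in the first place. Nothing in your plan substitutes for it: the later appeal to a ``Waldspurger-mass reductive-quotient mismatch'' cannot do the work, since indices are measure-independent, and the residual factors \(\card{\sbat{(H, G)}x{(\Rp0, (r - \ord_{\gamma\pinv})/2)}}\inv[1/2]\) and \(\indx{\sbat G x s}{\sbat H x s}\inv[1/2]\) are in effect read off from the statement rather than derived. (Your parenthetical that these two factors are trivial when \bH has full rank, and that this is the ``expected answer'', is also a misreading: they persist in the full-rank case, and the ``expected answer'' refers to being able to compute the single index at all, not to those factors disappearing.)

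The paper's proof supplies the two mechanisms your plan is missing. First, the ratio of the \(G\)- and \(G'\)-indices is rewritten as a single index of \emph{nested} groups in which the \(G'\)-coordinates agree --- for instance \(\indx{\sbtl G x s}{\sbtl{(G', H, G)}x{(s, s, \Rp s)}}\) --- so that Lemma \ref{lem:MP-card}, applied to the tame Levi sequence \((\bG', \bG)\) with the \(H\)/\(M\) structure absorbed into the depth matrix, legitimises passage to the Lie algebra; the troublesome torus directions lie inside \(G'\) and cancel in this combined index. Second, on the Lie algebra the indices over the half-intervals, from \((r - \ord_{\gamma\pinv})/2\) to \(s\), are converted into square roots of indices over the full intervals, from \(r - \ord_{\gamma\pinv}\) to \(r\), using the lattice self-duality result cited from \cite{debacker-spice:stability}; only the full-interval indices are literal determinants of \(\Ad(\gamma) - 1\) and \(\Ad(\gamma)\), yielding \(\abs{\Disc_{M/H}(\gamma)}\inv[1/2]\) and \(\modulus_{P^\pm}(\gamma)^{1/2}\), and the boundary quotients generated along the way are exactly what become the \(\card{\sbat{(H, G)}x{(\Rp0, (r - \ord_{\gamma\pinv})/2)}}\) and \(\indx{\sbat G x s}{\sbat H x s}\) factors. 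Your intuition that the exponent \(1/2\) reflects lattices ``squeezed half-way between symmetric depths'' is the right heuristic, but without this duality input (or an equivalent argument) the square roots, and with them the two residual factors, remain unproven.
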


\begin{proof}
For the proof, we write \(s_{\gamma\pinv}\) in place of \((r - \ord_{\gamma\pinv})/2\).

Lemma \ref{lem:MP-card} gives that
\begin{align*}
\frac{\indx{\sbat G x s}{\sbat H x s}}{\indx{\sbat{G'}x s}{\sbat{H'}x s}} ={}
&\indx{\sbtl G x s}{\sbtl{(G', H, G)}x{(s, s, \Rp s)}}
\intertext{equals}
&\indx{\sbtl{\Lie(G)}x s}{\sbtl{\Lie(G', H, G)}x{(s, s, \Rp s)}},
\end{align*}
and similarly for
\[
\frac
	{\indx{\sbtl{(H, M, G)}x{(\Rp0, s_\gamma^-, s)}}{\sbtl{(H, M, G)}x{(\Rp0, s, s_{\gamma\pinv}^-)}}}
	{\indx{\sbtl{(H', M', G')}x{(\Rp0, s_\gamma^-, s)}}{\sbtl{(H', M', G')}x{(\Rp0, s, s_{\gamma\pinv}^-)}}}.
\]
Thus we may, and do, work on the Lie algebra.

We have that \(\indx{\sbtl{\Lie(H, M, G)}x{(\Rp0, s_\gamma, s)}}{\sbtl{\Lie(H, M, G)}x{(\Rp0, s, s_{\gamma\pinv})}}\) equals both
\begin{align*}
&\indx{\sbtl{\Lie(H, M, G)}x{(0, s_\gamma, s)}}{\sbtl{\Lie(H, M, G)}x{(0, s, s_{\gamma\pinv})}} \\
\intertext{and}
&\indx{\sbtl{\Lie(H, M, G)}x{(\Rp0, s_\gamma, s)}}{\sbtlp{\Lie(H, M, G)}x{(0, s_\gamma, s)}}\inv\times{} \\
&\qquad\indx{\sbtlp{\Lie(H, M, G)}x{(0, s_\gamma, s)}}{\sbtlp{\Lie(H, M, G)}x{(0, s, s_{\gamma\pinv})}}\times{} \\
&\qquad\indx{\sbtl{\Lie(H, M, G)}x{(\Rp0, s, s_{\gamma\pinv})}}{\sbtlp{\Lie(H, M, G)}x{(0, s, s_{\gamma\pinv})}}\inv \\ ={}
&\card{\sbat{\Lie(H, M, G)}x{(\Rp0, s_\gamma, s)}}\times{} \\
&\qquad\indx{\sbtlp{\Lie(H, M, G)}x{(0, s_\gamma, s)}}{\sbtlp{\Lie(H, M, G)}x{(0, s, s_{\gamma\pinv})}}\times{} \\
&\qquad\card{\sbat{\Lie(H, M, G)}x{(\Rp0, s, s_{\gamma\pinv})}}.
\end{align*}
Since
\begin{align*}
&\card{\sbat{\Lie(H, M, G)}x{(\Rp0, s_\gamma, s)}}
\dotm\card{\sbat{\Lie(H, M, G)}x{(\Rp0, s, s_{\gamma\pinv})}} \\
\intertext{equals}
&\card{\sbat{\Lie(H, M, G)}x{(\Rp0, s_\gamma, s_{\gamma\pinv})}}
\dotm\card{\sbat{\Lie(H, M, G)}x{(\Rp0, s, s)}} \\
&\qquad=
\card{\sbat{\Lie(H, G)}x{(\Rp0, s_{\gamma\pinv})}}
\dotm\card{\sbat{\Lie(H, G)}x{(\Rp0, s)}},
\end{align*}
and similarly for \(G'\), and since
\[
\frac
	{\sbat{\Lie(H, G)}x{(\Rp0, s)}}
	{\sbat{\Lie(H', G')}x{(\Rp0, s)}}
\qeqq
\frac
	{\indx{\sbat{\Lie(G)}x s}{\sbat{\Lie(H)}x s}}
	{\indx{\sbat{\Lie(G')}x s}{\sbat{\Lie(H')}x s}},
\]
we have by \xcite{debacker-spice:stability}*{Corollary \xref{cor:gxf-inv}} that the left-hand side equals
\[
\frac
	{\card{\sbat{\Lie(H, G)}x{(\Rp0, s_{\gamma\pinv})}}\inv[1/2]}
	{\card{\sbat{\Lie(H', G')}x{(\Rp0, s_{\gamma\pinv})}}\inv[1/2]}
\dotm\frac
	{\indx{\sbat{\Lie(G)}x s}{\sbat{\Lie(H)}x s}\inv[1/2]}
	{\indx{\sbat{\Lie(G')}x s}{\sbat{\Lie(H')}x s}\inv[1/2]}
\]
times
\begin{multline*}
\underbrace{\frac
	{\indx{\sbtl{\Lie(H, M)}x{(0, r - \ord_\gamma)}}{\sbtl{\Lie(H, M)}x{(0, r)}}^{1/2}}
	{\indx{\sbtl{\Lie(H', M')}x{(0, r - \ord_\gamma)}}{\sbtl{\Lie(H, M)}x{(0, r)}}^{1/2}}
}_{(\text I)}\times{} \\
\underbrace{\frac
	{\indx{\sbtl{\Lie(N^+)}x{(0, r)}}{\sbtl{\Lie(N^+)}x{(0, r - \ord_{\gamma\inv})}}^{1/2}}
	{\indx{\sbtl{\Lie(N\suppp)}x{(0, r)}}{\sbtl{\Lie(N\suppp)}x{(0, r - \ord_{\gamma\inv})}}^{1/2}}
}_{(\text{II}^+)}
\end{multline*}
times
\[
\underbrace{\frac
	{\indx{\sbtl{\Lie(N^-)}x{(0, r)}}{\sbtl{\Lie(N^-)}x{(0, r - \ord_\gamma)}}^{1/2}}
	{\indx{\sbtl{\Lie(N\suppm)}x{(0, r)}}{\sbtl{\Lie(N\suppm)}x{(0, r - \ord_\gamma)}}^{1/2}}
}_{(\text{II}^-)}.
\]
We have that the numerator of (I) equals \(\abs{\det_{\Lie(M)/\Lie(H)}(\gamma - 1)}\inv[1/2] = \abs{\Disc_{M/H}(\gamma)}\inv[1/2]\), while the numerator of (\(\text{II}^\pm\)) equals \(\abs{\det_{\Lie(N^\pm)}(\gamma)}^{1/2} = \modulus_{P^\pm}(\gamma)^{1/2}\); and similarly for the denominator.  Since \(\abs{\Disc_{M/H}(\gamma)}\modulus_{P^+}(\gamma)\inv\) equals \(\abs{\Disc_{G/H}(\gamma)}\), and similarly for \(G'\), we are done.
\end{proof}

\section{Existence of asymptotic expansions}
\label{sec:qualitative}

\subsection{Good, and nearly good, elements}
\label{sec:nearly-good}

The main result of \S\ref{sec:qualitative}, Theorem \ref{thm:asymptotic-exists}, is the analogue of \cite{jkim-murnaghan:charexp}*{Theorem 5.3.1}.  That result is stated in terms of a so called good minimal K-type, which in turn is defined in terms of an element \(\Gamma\) satisfying properties analogous to the genericity assumptions \cite{yu:supercuspidal}*{\S8, p.~596, \textbf{GE}}.  Our analogue of Kim and Murnaghan's element \(\Gamma\) is denoted by \mnotn{Z^*_o}.  We require that it satisfy similar genericity assumptions, slightly upgraded to handle the possible disconnectedness of \bG.  This section analyses the properties of \(Z^*_o\) and nearby elements.

\begin{hyp}
\initlabel{hyp:Z*}
There is a tame, twisted Levi subgroup \matnotn G{\bG'} of \bG such that the following properties hold.
	\begin{enumerate}
	\item\sublabel{central} The element \(Z^*_o\) is fixed by the coadjoint action of \(\bG'\).
	\item\sublabel{good} The element \(Z^*_o\) satisfies \cite{yu:supercuspidal}*{\S8, p.~596, \textbf{GE1}} (relative to \(\bG'\)).
	\item\sublabel{orbit} If \(g \in G\) is such that
\[
\Ad^*(g)\inv(Z^*_o + \sbjtlpp{\Lie^*(G')}{-r}) \cap (Z^*_o + \sbjtlpp{\Lie^*(G')}{-r})
\]
is non-empty, then \(g\) belongs to \(G'\).
	\end{enumerate}
\end{hyp}

Note that the subgroup \(\bG'\) in Hypothesis \ref{hyp:Z*} is uniquely determined, and that \(Z^*_o\) is good of depth \(-r\), in the sense of \cite{jkim-murnaghan:charexp}*{Definition 2.1.1(2)}.

\begin{rem}
\label{rem:Z*}
The hypotheses on \(Z^*_o\) (relative to \(\bG'\)) imply the analogous hypotheses for \(\Ad^*(g)\inv Z^*_o\) (relative to \(\Int(g)\inv\bG'\)), for any \(g \in G\).
\end{rem}

We now recall the
	\begin{itemize}
	\item non-negative real number \mnotn r,
	\item element \(\mnotn\gamma \in G\), with its associated groups \(\bP^\mp = \CC\bG{-\infty}(\gamma\pinv)\), \(\bN^\mp\), \(\bM = \CC\bG 0(\gamma)\), and \(\matnotn H\bH = \CC\bG r(\gamma)\),
and	\item point \(\mnotn x \in \BB(H)\),
	\end{itemize}
satisfying Hypotheses \ref{hyp:funny-centraliser} and \ref{hyp:gamma}, from \S\ref{sec:depth-matrix}.  These hypotheses say nothing about the relationship between \(\gamma\) and \(Z^*_o\).  Lemmas \ref{lem:near-H'} and \ref{lem:in-H'} discuss conditions under which some such relationship can be deduced.

\begin{lem}
\label{lem:near-H'}
If \(o \in \BB(G')\) and \(X^* \in \sbtl{\Lie^*(H)}x{-r}\) are such that
\begin{align*}
(X^* + \sbtlpp{\Lie^*(G)}x{-r})
\cap{}
&(Z^*_o + \sbtlpp{\Lie^*(G)}o{-r})
\intertext{is non-empty, then \(x\) belongs to \(\BB(G')\) and \(Z^*_o\) to \(\sbtl{\Lie^*(G')}x{-r}\), and there is an element \(k\) of \((\sbtl G x 0 \cap \sbtlp G o 0)\sbtlp G x 0\) such that}
(X^* + \sbtlpp{\Lie^*(G')}x{-r})
\cap
\Ad^*(k)\inv&\bigl(Z^*_o + (\sbtl{\Lie^*(G')}x{-r} \cap \sbtlpp{\Lie^*(G')}o{-r})\bigr)
\end{align*}
is non-empty.
\end{lem}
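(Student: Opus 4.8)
The plan is to run a descent argument in the style of the optimal-point and degenerate-coset manipulations of Kim--Murnaghan and Adler--Spice; the twist is that we begin knowing only \(x \in \BB(H)\), with no prior relationship between \(\bH\) and \(\bG'\), so the first task is to force \(x\) into \(\BB(G')\). I would fix an element \(Y^*\) of the given non-empty intersection. Since \(X^* \in \sbtl{\Lie^*(H)}x{-r} \subseteq \sbtl{\Lie^*(G)}x{-r}\) and \(Y^* - X^* \in \sbtlpp{\Lie^*(G)}x{-r}\), we have \(Y^* \in \sbtl{\Lie^*(G)}x{-r}\); and \(Y^* \in Z^*_o + \sbtlpp{\Lie^*(G)}o{-r}\), so \(Y^*\) lies in the degenerate coset at \(o\) of the good element \(Z^*_o\). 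By the properties of good elements recalled in \S\ref{sec:nearly-good} (the non-connected analogues of the standard facts in \cite{jkim-murnaghan:charexp}*{\S2}), \(Y^*\) is itself good of depth \(-r\), with associated tame, twisted Levi subgroup \(\Int(g)\bG'\) for some \(g \in \sbtlp G o 0\) and with \(o\) still an optimal point; and the set of points of \(\BB(G)\) at which \(Y^*\) lies in the depth-\((-r)\) lattice is precisely \(\BB(\Int(g)G')\). Since \(Y^* \in \sbtl{\Lie^*(G)}x{-r}\), this gives \(x \in \BB(\Int(g)G')\).

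The crux is then to straighten out \(g\). Writing \(Y^* = \Ad^*(g)(Z^*_o + \zeta)\) with \(\zeta \in \Lie^*(G') \cap \sbtlpp{\Lie^*(G)}o{-r}\), and comparing with \(Y^* \in Z^*_o + \sbtlpp{\Lie^*(G)}o{-r}\), we obtain \(\Ad^*(g)Z^*_o \equiv Z^*_o\) modulo \(\sbtlpp{\Lie^*(G)}o{-r}\). Projecting along the canonical \(\bG'\)-stable decomposition of \(\Lie^*(G)\) (legitimate because \(o \in \BB(G')\), by compatibility of the Moy--Prasad filtration with that decomposition), enlarging to the building-independent filtration, and invoking the orbit condition of Hypothesis \ref{hyp:Z*} together with Remark \ref{rem:Z*}, we conclude \(g \in G'\); since an element of \(G'\) normalises \(\bG'\), in fact \(\Int(g)\bG' = \bG'\), whence \(x \in \BB(G')\). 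That \(Z^*_o \in \sbtl{\Lie^*(G')}x{-r}\) is then immediate: being fixed by \(\bG'\), \(Z^*_o\) is pulled back from the cocentre \(\bG\primeconn/\Der{\bG\primeconn}\), whose dual Lie algebra carries a canonical, building-independent filtration, and whose depth-\((-r)\) lattice embeds into \(\sbtl{\Lie^*(G')}y{-r}\) for every \(y \in \BB(G')\).

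It remains to produce \(k\). Now \(o\) and \(x\) both lie in \(\BB(G')\), so we may work inside \(G'\): from \(X^* \equiv Z^*_o\) modulo \(\sbtlpp{\Lie^*(G)}x{-r} + \sbtlpp{\Lie^*(G)}o{-r}\), projection to \(\Lie^*(G')\) gives \(X^* \equiv Z^*_o\) modulo \(\sbtlpp{\Lie^*(G')}x{-r} + \sbtlpp{\Lie^*(G')}o{-r}\); then an Iwahori-factorisation argument along a minimal gallery from \(o\) to \(x\) in \(\BB(G')\), combined with a successive-approximation argument as in \cite{adler:thesis}*{Lemma 2.3.2}, should produce \(k \in (\sbtl G x 0 \cap \sbtlp G o 0)\sbtlp G x 0\) for which \(\Ad^*(k)\inv\bigl(Z^*_o + (\sbtl{\Lie^*(G')}x{-r} \cap \sbtlpp{\Lie^*(G')}o{-r})\bigr)\) meets \(X^* + \sbtlpp{\Lie^*(G')}x{-r}\).

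I expect the main obstacle to be the step in the first paragraph: extracting \(x \in \BB(\Int(g)G')\) from the single containment \(Y^* \in \sbtl{\Lie^*(G)}x{-r}\) when \(x\) is a priori only a point of \(\BB(H)\) for a possibly non-full-rank \(\bH\), together with making the rigidity input of Hypothesis \ref{hyp:Z*}---phrased inside \(\Lie^*(G')\)---bite against a congruence that naturally lives in \(\Lie^*(G)\). Careful bookkeeping of which filtration lattice (centred at \(o\) or at \(x\), taken in \(\bG\) or in \(\bG'\)) each quantity belongs to, and of its interaction with the groups \(\CC\bG i(\gamma)\), is where most of the work will lie.
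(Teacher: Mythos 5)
There are genuine gaps, concentrated exactly where the real content lies.  First, your opening move asserts that an element \(Y^*\) of \((X^* + \sbtlpp{\Lie^*(G)}x{-r}) \cap (Z^*_o + \sbtlpp{\Lie^*(G)}o{-r})\) ``is itself good of depth \(-r\)'' with associated twisted Levi \(\Int(g)\bG'\), and that the locus of points where \(Y^*\) lies in the depth-\((-r)\) lattice is exactly \(\BB(\Int(g)G')\).  Neither claim is justified: an arbitrary element of \(Z^*_o + \sbtlpp{\Lie^*(G)}o{-r}\) is only \emph{nearly} good (the perturbation need not be semisimple, nor lie in \(\Lie^*(G')\)), and for such an element the locus of depth \(\ge -r\) need not contain, nor be characterised by, \(\BB(\Int(g)G')\).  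The inclusion you actually need --- depth \(\ge -r\) at \(x\) forces \(x \in \BB(G')\) --- is precisely the content of \cite{jkim-murnaghan:charexp}*{Lemma 2.3.3}, which is what the paper cites at this point; your route does not prove it, it presupposes it under a false intermediate claim.  Second, the ``straightening out \(g\)'' step misapplies the orbit condition of Hypothesis \ref{hyp:Z*}: that condition requires the two elements to lie in \(Z^*_o + \sbjtlpp{\Lie^*(G')}{-r}\), a coset built from the \emph{primed}, building-free filtration, whereas you only have \(\Ad^*(g)Z^*_o \equiv Z^*_o\) modulo the \emph{unprimed} lattice \(\sbtlpp{\Lie^*(G)}o{-r}\).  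Projecting along the \(\bG'\)-stable decomposition does not repair this, because the hypothesis must be applied to \(\Ad^*(g)Z^*_o\) itself, not to its projection; one would first have to conjugate by an element of \(\sbtlp G o 0\) to move the discrepancy into \(\sbtlpp{\Lie^*(G')}o{-r}\), which is again a good-element lemma, not a projection.  (This whole detour is in any case unnecessary: once \(x \in \BB(G')\) is known from the cited lemma, \(Z^*_o \in \sbtl{\Lie^*(G')}x{-r}\) follows from the centrality and goodness parts of Hypothesis \ref{hyp:Z*}, as you correctly observe in your second paragraph.)

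Third, the production of \(k\) is the heart of the lemma and your sketch does not supply it.  Writing \(X^* \in Z^*_o + (\sbtl{\Lie^*(G)}x{-r} \cap \sbtlpp{\Lie^*(G)}o{-r}) + \sbtlpp{\Lie^*(G)}x{-r}\) is the correct reduction, but your next step --- ``projection to \(\Lie^*(G')\)'' of this congruence --- is not legitimate, since \(X^*\) lies in \(\Lie^*(H)\), not in \(\Lie^*(G')\), and the conclusion concerns \(X^*\) itself rather than its projection.  The mechanism that actually absorbs the \(G'\)-perpendicular part of the discrepancy is conjugation, and the reason it works is the genericity of \(Z^*_o\): \(\ad^*(\anondot)Z^*_o\) maps the perpendicular directions onto themselves with a depth shift of exactly \(-r\), which is what drives the successive approximation and lands \(k\) in \((\sbtl G x 0 \cap \sbtlp G o 0)\sbtlp G x 0\).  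This is the dual-Lie-algebra analogue of \xcite{adler-spice:good-expansions}*{Lemma \xref{lem:good-comm}}, which is how the paper concludes; invoking ``Iwahori factorisation along a minimal gallery plus successive approximation'' gestures at the right proof technique for that lemma but, without the goodness input and with the invalid projection step, does not close the argument.
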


\begin{proof}
By \cite{jkim-murnaghan:charexp}*{Lemma 2.3.3}, we have that \(x\) belongs to \(\BB(G')\).  Then, by Hypothesis \initref{hyp:Z*}(\subref{central}, \subref{good}), we have that \(Z^*_o\) belongs to \(\sbtl{\Lie^*(G')}x{-r}\).

It follows that
\[
(X^* + \sbtlpp{\Lie^*(G)}x{-r}) \cap \bigl(Z^*_o + (\sbtl{\Lie^*(G)}x{-r} \cap \sbtlpp{\Lie^*(G)}o{-r})\bigr)
\]
is non-empty, so
\[
X^*
\qtextq{belongs to}
Z^*_o + (\sbtl{\Lie^*(G)}x{-r} \cap \sbtlpp{\Lie^*(G)}o{-r}) + \sbtlpp{\Lie^*(G)}x{-r}.
\]
The result now follows from the dual-Lie-algebra analogue of \xcite{adler-spice:good-expansions}*{Lemma \xref{lem:good-comm}}.
\end{proof}

\begin{lem}
\label{lem:in-H'}
If \(\gamma\) centralises \(\bH\conn\) and
\[
\Lie^*(H) \cap (Z^*_o + \sbjtlpp{\Lie^*(G')}{-r})
\]
is non-empty, then \(\gamma\) belongs to \(G'\), and \(Z^*_o\) to \(\Lie^*(H')\).
\end{lem}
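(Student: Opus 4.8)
The plan is to derive \(\gamma \in G'\) by applying Hypothesis \ref{hyp:Z*}\subpref{orbit} to \(g = \gamma\), and then to read off \(Z^*_o \in \Lie^*(H')\) from the resulting fact that \(Z^*_o\) is fixed by \(\Ad^*(\gamma)\). So fix an element \(X^*\) of the set \(\Lie^*(H) \cap (Z^*_o + \sbjtlpp{\Lie^*(G')}{-r})\), which is non-empty by hypothesis. The crux is that \(\Ad^*(\gamma)\) fixes \(X^*\). By the identification made just before Definition \ref{defn:vGvr}, \(\Lie^*(H)\) consists of the functionals on \(\Lie(G)\) that annihilate the complement \(\Lie(\CC\bG r(\gamma))^\perp\) furnished by Hypothesis \ref{hyp:funny-centraliser}\subpref{Lie}; both \(\Lie(\bH)\) and \(\Lie(\bH)^\perp\) are sums of \(\gamma\)-weight spaces, so \(\Ad(\gamma)\) preserves the decomposition \(\Lie(\bG) = \Lie(\bH) \oplus \Lie(\bH)^\perp\), and it acts trivially on \(\Lie(\bH)\) because \(\gamma\) centralises \(\bH\conn\). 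Since \(X^*\) and \(\Ad^*(\gamma)X^*\) both annihilate \(\Lie(\bH)^\perp\) and agree on \(\Lie(\bH)\), they coincide. Hence \(X^* = \Ad^*(\gamma)\inv X^*\) lies in both \(Z^*_o + \sbjtlpp{\Lie^*(G')}{-r}\) and \(\Ad^*(\gamma)\inv(Z^*_o + \sbjtlpp{\Lie^*(G')}{-r})\), so their intersection is non-empty, and Hypothesis \ref{hyp:Z*}\subpref{orbit} gives \(\gamma \in G'\).

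For the second assertion I would run the same argument inside \(\bG'\). As \(\gamma \in G'\) normalises \(\bG'\), the group \(\bH' = \CC{\bG'}r(\gamma) = \bH \cap \bG'\) is smooth by Hypothesis \ref{hyp:funny-centraliser}\subpref{Levi}, so \(\Lie(\bH') = \Lie(\bH) \cap \Lie(\bG')\) inherits from Hypothesis \ref{hyp:funny-centraliser}\subpref{Lie} a description as a sum of \(\gamma\)-weight spaces of \(\Lie(\bG')\); in particular the complement \(\Lie(\bH')^\perp\) used to cut out \(\Lie^*(H')\) is a sum of \(\gamma\)-weight spaces of non-trivial weight, since the fixed space of \(\gamma\) lies in \(\Lie(\Cent_{\bG'}(\gamma)) \subseteq \Lie(\bH')\) (using \(\CC\bG r(\gamma) \supseteq \Cent_\bG(\gamma)\) from Hypothesis \ref{hyp:funny-centraliser}\subpref{basic}). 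By Hypothesis \ref{hyp:Z*}\subpref{central} (together with \subref{good}, as in the proof of Lemma \ref{lem:near-H'}) the functional \(Z^*_o\) lies in \(\Lie^*(G')\) and is fixed by the coadjoint action of \(G' \ni \gamma\); and a \(\gamma\)-fixed functional annihilates every \(\gamma\)-weight space of non-trivial weight, hence annihilates \(\Lie(\bH')^\perp\), which is exactly the condition \(Z^*_o \in \Lie^*(H')\).

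The only step that requires real care is the one flagged above as the crux: upgrading the hypothesis that \(\Ad(\gamma)\) acts trivially on \(\Lie(\bH)\) (which is what ``\(\gamma\) centralises \(\bH\conn\)'' gives) to the statement that \(\Ad^*(\gamma)\) acts trivially on all of \(\Lie^*(H)\). This is precisely where the \(\gamma\)-stability of the weight-space complement \(\Lie(\bH)^\perp\), and the identification of \(\Lie^*(H)\) with its annihilator, are essential, and it is the reason the hypothesis is phrased in terms of \(\Lie^*(H)\) rather than a subspace on which \(\gamma\) manifestly acts trivially. Everything else reduces to the orbit hypothesis \ref{hyp:Z*}\subpref{orbit} and the elementary observation that coadjoint-fixed functionals kill non-trivial weight spaces.
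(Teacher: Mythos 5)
Your argument is correct and is essentially the paper's own proof: any element of the given intersection is fixed by \(\Ad^*(\gamma)\), the orbit condition in Hypothesis \ref{hyp:Z*} then puts \(\gamma\) in \(G'\), the centrality condition makes \(Z^*_o\) fixed by \(\gamma\), and the fixed-point-freeness of \(\gamma\) on the relevant weight-space complement forces \(Z^*_o\) to annihilate it. The only (cosmetic) difference is that you run this last weight-space argument inside \(\Lie(G')\) to land directly in \(\Lie^*(H')\), whereas the paper runs it inside \(\Lie(G)\) to conclude \(Z^*_o \in \Lie^*(H)\) and then passes to \(\Lie^*(H')\).
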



\begin{proof}
Any element of \(\Lie^*(H) \cap (Z^*_o + \sbtlpp{\Lie^*(G')}{-r})\) is fixed by \(\Ad^*(\gamma)\), so Hypothesis \initref{hyp:Z*}\subpref{orbit} implies that \(\gamma\) belongs to \(G'\), and then Hypothesis \initref{hyp:Z*}\subpref{central} that \(\gamma\) centralises \(Z^*_o\).  In particular, since the action of \(\gamma\) on \(\Lie(H)^\perp\) is fixed-point-free, we have that \(Z^*_o\) annihilates that space, so belongs to \(\Lie^*(H)\), hence to \(\Lie^*(H')\).
\end{proof}

Until \S\ref{sec:H-perp}, we assume that \(\gamma\) belongs to \(G'\).  (Lemma \ref{lem:in-H'} shows that this is automatic if we require \(\gamma\) to centralise \(\bH\conn\), as we do in Hypothesis \ref{hyp:gamma-central}.)  We use primes to denote the analogues in \(\bG'\) of constructions in \bG, so, for example, \matnotn H{\bH'} stands for \(\CC{\bG'}r(\gamma)\) (subject to the proviso in Remark \ref{rem:tame-Levi}, that we may refer directly only to the identity component of \(\bH'\)).  We remind the reader that, in Proposition \ref{prop:lattice-orth}, the symbol \bH will be used for a different group.

In order to prove the existence of asymptotic expansions in Theorem \ref{thm:asymptotic-exists}, we need to consider, not just good elements such as \(Z^*_o\), but elements of \(\Lie^*(H)\) close to them.  These ``nearly good'' elements enjoy weakened versions of many of the same properties as \(Z^*_o\), which nonetheless suffice for the explicit calculations in \S\ref{sec:quantitative}.  See Remark \ref{rem:X*}.  We isolate these properties in Hypothesis \ref{hyp:X*}, so that they may be used without explicit reference to \(Z^*_o\).  Note, however, that the hypothesis does depend on the element \(\gamma \in G\), the point \(x \in \BB(H)\), and the subgroup \(\bG'\).

Hypotheses \ref{hyp:gamma} and \ref{hyp:X*} bear a close resemblance, the idea being that \(\bG'\) plays the role of \(\CCp\bG{-r}(X^*)\); but they are not literal translations.  For example, the similar-appearing Hypotheses \initref{hyp:gamma}\subpref{orbit} and \initref{hyp:X*}\subpref{orbit} are subtly different.  We genuinely need the extra strength of the latter, for example in Lemma \ref{lem:mu-G-to-G'}, and can get away with assuming it because \(\bG\conn \cap \bG'\) is automatically connected (Remark \ref{rem:tame-Levi}), whereas \(\bG\conn \cap \bH\) need not be.

\begin{hyp}
\initlabel{hyp:X*}
Suppose that \(\gamma\) belongs to \(G'\).  Let \((\bG', \vbG)\) be a tame Levi sequence containing \(X^*\), such that \(x\) belongs to \(\BB(\vG)\), and let \(\vec a\) be a depth vector.
	\begin{enumerate}
	\item\sublabel{building} The point \(x\) belongs to \(\BB(G')\).
	\item\sublabel{depth} The element \(X^*\) belongs to \(\sbtl{\Lie^*(H')}x{-r}\).
	\item\sublabel{Lie} The map \(\ad^*(\anondot)X^*\) carries
\(\sbtl{\Lie(\vG)}x{\vec a}\) into \(\sbtl{\Lie^*(\vG)}x{\vec a - r}\),
and induces an isomorphism
\[
\anoniso
{\sbat{\Lie(\vG)}x{\vec a}/\sbat{\Lie(G' \cap \vG)}x{\vec a}}
{\sbat{\Lie^*(\vG)}x{\vec a}/\sbat{\Lie^*(G' \cap \vG)}x{\vec a - r}}.
\]
	\item\sublabel{gp} If \(\vec a\) is grouplike, then the map \((\Ad^*(\anondot) - 1)X^*\) carries
\(\sbtl{(M \cap \vG)}x{\vec a}\) into \(\sbtl{\Lie^*(M \cap \vG)}x{\vec a - r}\),
and induces 
a bijection
\[
\anoniso
{\sbat{(M \cap \vG)}x{\vec a}/\sbat{(M' \cap \vG)}x{\vec a}}
{\sbat{\Lie^*(M \cap \vG)}x{\vec a - r}/\sbat{\Lie^*(M' \cap \vG)}x{\vec a - r}}.
\]
	\item\sublabel{orbit} If \(g \in G\conn\) is such that
\[
\Ad^*(g)\bigl(X^* + \sbtlpp{\Lie^*(G')}x{-r}\bigr) \cap X^* + \sbtlpp{\Lie^*(G')}x{-r}
\]
is non-empty, then \(g\) belongs to \(G'\).
	\end{enumerate}
\end{hyp}

\begin{lem}
\initlabel{lem:commute-Lie*}
Suppose that \(X^*\) satisfies Hypothesis \ref{hyp:X*} for \bG.  Then it also satisfies the analogous hypothesis for \(\CC\bG i(\gamma\pinv)\) for any \(i \le r\).

Let \vbG be a tame, twisted Levi sequence in \bG containing \(X^*\), with \(x \in \BB(\vG)\), and let \(f\) and \(F\) be depth matrices.
	\begin{enumerate}
	\item\sublabel{down} If the inequality \(F \le f + \min \sset{\ord_{X^*}, \Rpp{-r}}\) holds, then \(\ad^*(\anondot)X^*\) carries \(\sbtl{\Lie(\vG)}x f\) into \(\sbtl{\Lie^*(\vG)}x F\).  If, further, \(f\) is grouplike and satisfies \(F \le f \cvev F\), then \((\Ad^*(\anondot) - 1)X^*\) carries \(\sbtl{(M \cap \vG)}x f\) into \(\sbtl{\Lie^*(\vG)}x F\).
	\item\sublabel{up} If the inequality \(F \ge f - r\) holds outside \(\bG'\), then the pre-image of \(\sbtl{\Lie^*(\vG)}x F\) under \(\ad^*(\anondot)X^*\) is contained in \(\sbtl{\Lie^*(G', \vG)}x{(-\infty, f)}\).  If, further, \(f\) is grouplike and satisfies \(F \le f \cvev F\), then the pre-image in \(\sbtlp M x 0\) of \(\sbtl{\Lie^*(M \cap \vG)}x F\) under \((\Ad^*(\anondot) - 1)X^*\) is contained in \(\sbtl{(M', M \cap \vG)}x{(\Rp0, f)}\).
	\item\sublabel{onto} If the inequality \(F \ge f - r\) holds outside \(\bG'\), and \(f\) is grouplike and satisfies \(F \le f \cvev F\), then \(\Ad^*(\sbtl{(M \cap \vG)}x f)(X^* + \sbtl{\Lie^*(G')}x F)\) contains \(X^* + \sbtl{\Lie^*(M \cap \vG)}x F\).
	\end{enumerate}
\end{lem}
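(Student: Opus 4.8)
The plan is to prove this as the dual-Lie-algebra transcription of Lemma \ref{lem:commute-gp}, with \(X^*\) playing the role that \(\gamma\) played there: \(\ad^*(\anondot)X^*\) in place of \(\Ad(\gamma) - 1\), \((\Ad^*(\anondot) - 1)X^*\) in place of \(\comm\gamma\anondot\), \(\bG'\) in place of \(\CC\bG r(\gamma)\), and Hypothesis \initref{hyp:X*}(\subref{Lie}, \subref{gp}) in place of Hypothesis \initref{hyp:gamma}(\subref{Lie}, \subref{gp}). The precise translation of the depth parameters --- for instance, the level-dependent shift \(+i\) in the \(\gamma\)-statements becomes the uniform shift \(-r\), and \(\min\sset{\ord_\gamma, r}\) becomes \(\min\sset{\ord_{X^*}, \Rpp{-r}}\) --- can be read off by comparing the two sets of assertions. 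Under this dictionary the three parts of Lemma \ref{lem:commute-Lie*} become the literal analogues of Lemma \initref{lem:commute-gp}(\subref{down}, \subref{up}, \subref{onto}), and I expect the same (omitted) argument to carry over. As in that lemma, the two ingredients are: reduce each assertion, working summand by summand over the decomposition of Definition \ref{defn:vGvr}, to a statement about the groups and (dual) Lie algebras \(\sbtl{\CC\vG i(\gamma\pinv)}x{\vec a}\), to which Hypothesis \ref{hyp:X*} applies; and then bootstrap from the resulting statements about finite quotients up to the ambient objects by the successive-approximation technique of \cite{adler:thesis}*{Lemma 2.3.2}.

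Before that reduction can be invoked one has to justify the opening sentence. Fix \(i \le r\). By Hypothesis \initref{hyp:funny-centraliser}\subpref{Levi} (and the identity-component conventions of Remark \ref{rem:tame-Levi}), \(\CC{\bG'}i(\gamma\pinv) = \CC\bG i(\gamma\pinv) \cap \bG'\) is a tame, twisted Levi subgroup of \(\CC\bG i(\gamma\pinv)\), with \(\CC\vbG i(\gamma\pinv)\) a corresponding tame, twisted Levi sequence. Hypothesis \initref{hyp:X*}\subpref{building} is inherited verbatim from \(x \in \BB(G')\). For Hypothesis \initref{hyp:X*}\subpref{depth}: since \(\CC\bG i(\gamma) \supseteq \CC\bG r(\gamma) = \bH\), the complement \(\Lie(\CC\bG i(\gamma))^\perp\) lies inside \(\Lie(\bH)^\perp\), which \(X^*\) annihilates; since that complement is \(\CC\bG i(\gamma)\)-stable (Hypothesis \initref{hyp:funny-centraliser}\subpref{Lie}), \(\ad^*(\Lie(\CC\bG i(\gamma\pinv)))X^*\) annihilates it too, so \(X^*\) restricts to an element of \(\Lie^*(\CC\bG i(\gamma\pinv))\), which Hypothesis \initref{hyp:funny-centraliser}\subpref{more-vGvr-facts} for the Lie algebra places in \(\sbtl{\Lie^*(\CC{\bH'}i(\gamma))}x{-r}\). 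The parts Hypothesis \initref{hyp:X*}(\subref{Lie}, \subref{gp}, \subref{orbit}) then descend from their analogues for \bG once one knows that the Moy--Prasad filtrations on \(\CC\bG i(\gamma\pinv)\), its Lie algebra, and its dual are the expected restrictions and subquotients of those on \bG --- which is once again Hypothesis \initref{hyp:funny-centraliser}\subpref{more-vGvr-facts} together with the definition of the dual filtration (Remark \ref{rem:vGvr}).

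With this in place, the first assertion follows by applying Hypothesis \initref{hyp:X*}\subpref{Lie} inside \(\CC\bG i(\gamma\pinv)\) to each summand \(\Lie(\CCp G i(\gamma\pinv))^\perp \cap \sbtl{\Lie(\CC\vG i(\gamma\pinv))}x{\vec a\supn_i}\) of \(\sbtl{\Lie(\vG)}x f\) (and its \(i < 0\) analogues), exactly as the proof of Lemma \ref{lem:commute-gp} operates summand by summand with \(\Ad(\gamma) - 1\); the hypothesis \(F \le f + \min\sset{\ord_{X^*}, \Rpp{-r}}\) is precisely what places the image into \(\sbtl{\Lie^*(\vG)}x F\), the \(\Rpp{-r}\) supplying the strict gain when \(\ord_{X^*}\) sits at its floor \(-r\). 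The group-level refinement of the first assertion, and the pre-image and surjectivity assertions, then follow by the same bootstrapping: the injective half of Hypothesis \initref{hyp:X*}\subpref{Lie} yields the pre-image containments one filtration layer at a time, and the surjective half of Hypothesis \initref{hyp:X*}\subpref{gp} lets one assemble the conjugating element of \(\sbtl{(M \cap \vG)}x f\) by iterated approximation. The step I expect to be most delicate is the purely combinatorial one: checking that the depth-matrix hypotheses (\(F \le f \cvev F\), and the ``outside \(\bG'\)'' side conditions) break up correctly into the depth-vector hypotheses under which Hypothesis \ref{hyp:X*} is stated, and that the operations \(\cvvv\) and \(\cvev\) of Definition \ref{defn:concave-vee} interact with \(\ad^*(\anondot)X^*\) and \((\Ad^*(\anondot) - 1)X^*\) just as they do with the commutator maps in Lemma \ref{lem:filtration}. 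This is the same combinatorial core already handled there (and illustrated in the proof of Proposition \ref{prop:Q-and-B}), so I anticipate no genuinely new difficulty --- only a careful rerun.
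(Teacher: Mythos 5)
Your reading of the three numbered parts agrees with the paper's: once the opening heredity claim is in hand, \subpref{down}, \subpref{up} and \subpref{onto} are indeed obtained by applying Hypothesis \ref{hyp:X*} repeatedly to the pieces \(\sbtl{\CC\vG i(\gamma)}x{\vec a}\) and bootstrapping by successive approximation, exactly as for Lemmas \ref{lem:filtration} and \ref{lem:commute-gp}, and the paper treats that part in one sentence.  The gap is in your justification of the heredity claim itself, which is where essentially all of the paper's work lies.  You are right that Hypotheses \initref{hyp:X*}(\subref{building}, \subref{depth}, \subref{orbit}) are automatic, but your assertion that Hypotheses \initref{hyp:X*}(\subref{Lie}, \subref{gp}) descend to \(\CC\bG i(\gamma\pinv)\) ``once one knows that the Moy--Prasad filtrations are the expected restrictions and subquotients'' is not correct.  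Compatibility of filtrations (Hypothesis \initref{hyp:funny-centraliser}\subpref{more-vGvr-facts}) gives that \(\sbat{\Lie(\CC\vG i(\gamma\pinv))}x{\vec a}/\sbat{\Lie(\CC{G' \cap \vG}i(\gamma\pinv))}x{\vec a}\) embeds in the corresponding quotient for \vG and that \(\ad^*(\anondot)X^*\) respects these embeddings, so \emph{injectivity} of the induced map descends; but \emph{surjectivity} does not, because Hypothesis \initref{hyp:X*}\subpref{Lie} for \bG controls an element only modulo \(\Lie(G')\) and deeper terms, and says nothing about its position relative to the \(\gamma\)-decomposition.  Concretely, what must be proved is that if \(Y \in \sbtl{\Lie(\vG)}x{\vec a}\) satisfies \(\ad^*(Y)X^* \in \sbtl{\Lie^*(\CC\vG i(\gamma))}x{\vec a - r}\), then \(Y\) lies in \(\sbtl{\Lie(G', \CC\vG i(\gamma), \vG)}x{(\vec a, \vec a, \Rp{\vec a})}\) (and the analogous refinement on the group); this is not a formal consequence of the \bG-statement, and your proposal supplies no argument for it (nor a counting or duality argument for the relevant finite quotients that could replace it).

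The paper closes exactly this gap by an argument your proposal never mentions: it plays the \(X^*\)-hypotheses off against the \(\gamma\)-hypotheses.  On the Lie algebra, one computes \(\ad^*((\Ad(\gamma) - 1)Y)X^*\), uses Hypothesis \initref{hyp:X*}\subpref{Lie} for \bG to constrain \((\Ad(\gamma) - 1)Y\), and then applies Lemma \initref{lem:commute-gp}\subpref{up} to constrain \(Y\) itself; even then a residual \(\Lie(N^+)\)-component survives, which is eliminated by a separate argument showing \(\ad^*(Y^+)X^*\) vanishes, whence \(Y^+ \in \Lie(G')\) by a further application of Hypothesis \initref{hyp:X*}\subpref{Lie}.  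On the group, one uses the identity \((\Ad^*(\comm g\gamma) - 1)X^* = \comm{\Ad^*(g) - 1}{\Ad^*(\gamma) - 1}\Ad^*(\gamma g)\inv X^*\), feeds the result into Hypothesis \initref{hyp:X*}\subpref{gp} for \bG to control \(\comm g\gamma\), and again invokes Lemma \initref{lem:commute-gp}\subpref{up} to control \(g\).  Without this cross-play (or a substantive substitute), the opening sentence of the lemma---on which your summand-by-summand reduction of parts \subpref{down}--\subpref{onto} rests---remains unproven, so the proposal as written has a genuine hole at its foundation.
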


\begin{proof}
As with Lemmas \ref{lem:filtration} and \ref{lem:commute-gp}, it suffices simply to apply Hypothesis \ref{hyp:X*} repeatedly to groups of the form \(\sbtl{\CC\vG i(\gamma)}x{\vec a}\), once we show the claimed heredity.  Hypotheses \initref{hyp:X*}(\subref{building}, \subref{depth}, \subref{orbit}) are automatic, so we need only consider Hypotheses \initref{hyp:X*}(\subref{gp}, \subref{Lie}).  The calculations below are lengthy, but routine.  The idea is that, if we know something about \((\Ad^*(g) - 1)X^*\) relative to \(\gamma\), then we can use it to learn something about \((\Ad^*(\comm g\gamma) - 1)X^*\), so that Hypothesis \initref{hyp:X*}\subpref{gp} (for \bG) gives us information about \(\comm g\gamma\), and then Lemma \initref{lem:commute-gp}\subpref{up} gives us information about \(g\) itself.  Similarly, if we know something about \(\ad^*(Y)X^*\) relative to \(\gamma\), then we can use it to learn something about \(\Ad^*((\Ad(\gamma) - 1)Y)X^*\).

For Hypothesis \initref{hyp:X*}\subpref{Lie}, note that \(\sbat{\Lie(\CC\vG i(\gamma\pinv))}x{\vec a}/\sbat{\Lie(\CC{G' \cap \vG}i(\gamma\pinv))}x{\vec a}\) sits naturally inside \(\sbat\vG x{\vec a}/\sbat{(G' \cap \vG)}x{\vec a}\), and similarly for \(\sbat{\Lie^*(\CC\vG i(\gamma\pinv))}x{\vec a - r}/\sbat{\Lie^*(\CC{G' \cap \vG}i(\gamma\pinv))}x{\vec a - r}\), and that the image of the former is contained in the latter (all by Hypotheses \initref{hyp:funny-centraliser}\subpref{more-vGvr-facts} and \ref{hyp:X*}).  Thus, it suffices to show that, if \(Y \in \sbtl\vG x{\vec a}\) is such that \(\ad^*(Y)X^*\) belongs to \(\sbtl{\Lie^*(\CC\vG i(\gamma))}x{\vec a - r}\), then \(Y\) belongs to \(\sbtl{\Lie(G', \CC\vG i(\gamma), \vG)}x{(\vec a, \vec a, \Rp{\vec a})}\).  In this case, with the clumsy notation \(X^*_{\gamma\inv} = (\Ad^*(\gamma)\inv - 1)X^*\), which belongs to \(\sbtl{\Lie^*(H)}x 0\) (by Hypothesis \initref{hyp:X*}\subpref{depth} and Remark \ref{rem:gamma:Lie*}), and \(X^*_Y = \ad^*(Y)X^*\), which belongs to \(\sbtl{\Lie^*(\CC\vG i(\gamma))}x{\vec a - r}\) (by assumption), we have by Lemmas \initref{lem:filtration}\subpref{Lie-Lie} and \initref{lem:commute-gp}\subpref{down} that
\[
\ad^*((\Ad(\gamma) - 1)Y)X^*
= \bigl((\Ad^*(\gamma) - 1)\ad^*(Y) + \ad^*(Y)\bigr)X^*_{\gamma\inv}
	+ (\Ad^*(\gamma) - 1)X^*_Y
\]
belongs to
\begin{align*}
&\sbtl{\Lie^*(H \cap \vG, N^+ \cap \vG, \vG)}x{(\vec a + r, \vec a, \vec a + \ord_\gamma)}
+ \sbtl{\Lie^*(\vG)}x{\vec a}
+ \sbtl{\Lie^*(\CC\vG i(\gamma))}x{\vec a - r + i} \\ ={}
&\sbtl{\Lie^*(H \cap \vG, M \cap \vG, \vG)}x{(\vec a + r, \vec a, \vec a + \ord_{\gamma\pinv})}
+ \sbtl{\Lie^*(\CC\vG i(\gamma))}x{\vec a - r + i} \\ \subseteq{}
&\sbtl{\Lie^*(M \cap \vG, \vG)}x{(\vec a - r + i, \vec a + \ord_{\gamma\pinv})}.
\end{align*}
Then Hypothesis \initref{hyp:X*}\subpref{Lie} gives that \((\Ad(\gamma) - 1)Y\) belongs to
\[
\sbtl{\Lie(G', M \cap \vG, \vG)}x{(-\infty, \vec a + i, \vec a + r + \ord_{\gamma\pinv})},
\]
and Lemma \initref{lem:commute-gp}\subpref{up} gives that \(Y\) belongs to
\begin{align*}
&\sbtl{\Lie(G', \CCp\vG i(\gamma), \CC\vG i(\gamma), M \cap \vG, N^+ \cap \vG, \vG)}x{(-\infty, -\infty, \vec a, \Rp{\vec a}, \vec a + r + \ord_{\gamma\inv}, \vec a + r)} \\ \subseteq{}
&\sbtl{\Lie(G', \CCp\vG i(\gamma), \CC\vG i(\gamma), N^+, \vG)}x{(-\infty, -\infty, \vec a, -\infty, \Rp{\vec a})}.
\end{align*}
If we write \(Y = Y^- + Y^+\), with \(Y^- \in \sbtl{\Lie(G', \CCp\vG i(\gamma), \CC\vG i(\gamma), \vG)}x{(\vec a, \vec a, \Rp{\vec a})}\) and \(Y^+ \in \Lie(N^+)\), then we see that \(\ad^*(Y)X^* - \ad^*(Y^-)X^*\) belongs to \(\Lie^*(P^-)\) by assumption, but, since it equals \(\ad^*(Y^+)X^*\), also in \(\Lie^*(P^+)\); so that it equals \(0\), and hence (by Hypothesis \initref{hyp:X*}\subpref{Lie} again) that \(Y^+\) belongs to \(\Lie(G')\).  Thus, we have shown that \(Y\) belongs to
\[
\sbtl{\Lie(G', \CCp\vG i(\gamma), \CC\vG i(\gamma), N^+, \vG)}x{(-\infty, -\infty, \vec a, \infty, \Rp{\vec a})}.
\]
Since we already know that \(Y\) belongs to \(\sbtl{\Lie(\vG)}x{\vec a}\), it follows that it belongs to \(\sbtl{\Lie(G', \CC\vG i(\gamma), \vG)}x{(\vec a, \vec a, \Rp{\vec a})}\), as desired.

Similarly, we reduce Hypothesis \initref{hyp:X*}\subpref{gp} to showing that, if \(\vec a\) is grouplike, \(i \in \R_{\ge 0}\) satisfies \(i < r\), and \(g \in \sbtl{\CC\vG i(\gamma)}x{\vec a}\) is such that \((\Ad^*(g) - 1)X^*\) belongs to \(\sbtl{\Lie^*(\CCp\vG i(\gamma))}x{\vec a - r}\), then \(g\) belongs to \(\sbtl{(M', \CCp G i(\gamma), M)}x{(\vec a, \vec a, \Rp{\vec a})}\).

In what follows, we use Lemma \initref{lem:filtration}\subpref{gp-Lie} and Lemma \initref{lem:commute-gp}\subpref{down} repeatedly, without explicit mention.
Note that \(g\) normalises \(\sbtl{\Lie^*(\CC\vG i(\gamma))}x{\vec a - r}\) (since we have the inequality \(\vec a \cvee (\vec a - r) \ge \vec a - r\)), so that \(\Ad^*(\gamma g)\inv X^*\) belongs to
\begin{align*}
&\Ad^*(g)\inv(X^* + \sbtl{\Lie^*(H)}x 0) \\ \subseteq{}
& X^* + \sbtl{\Lie^*(\CCp\vG i(\gamma))}x{\vec a - r} + \sbtl{\Lie^*(H, \CC\vG i(\gamma))}x{(0, \vec a)} \\ ={}
& X^* + \sbtl{\Lie^*(H)}x 0 + \sbtl{\Lie^*(\CCp\vG i(\gamma), \CC\vG i(\gamma))}x{(\vec a - r, \vec a)}.
\end{align*}
Thus \((\Ad^*(\gamma) - 1)\Ad^*(\gamma g)\inv X^*\) belongs to
\begin{multline*}
\sbtl{\Lie^*(H)}x 0 + \sbtl{\Lie^*(H)}x r + \sbtl{\Lie^*(\CCp\vG i(\gamma), \CC\vG i(\gamma))}x{(\Rpp{\vec a - r + i}, \vec a + i)} \\
\subseteq \sbtl{\Lie^*(H)}x 0 + \sbtlp{\Lie^*(\CC\vG i(\gamma))}x{\vec a - r + i},
\end{multline*}
and \((\Ad^*(g) - 1)\Ad^*(\gamma g)\inv X^*\) belongs to
\begin{multline*}
\sbtl{\Lie^*(\CCp\vG i(\gamma))}x{\vec a - r} + \sbtl{\Lie^*(\CC\vG i(\gamma))}x{\vec a} + \sbtl{\Lie^*(\CCp\vG i(\gamma), \CC\vG i(\gamma))}x{(\vec a - r, \vec a)} \\
\subseteq \sbtlp{\Lie^*(\CC\vG i(\gamma))}x{\vec a - r};
\end{multline*}
so
\[
(\Ad^*(\comm g\gamma) - 1)X^* = \comm{\Ad^*(g) - 1}{\Ad^*(\gamma) - 1}\Ad^*(\gamma g)\inv X^*
\]
belongs to
\begin{multline*}
\bigl(\sbtl{\Lie^*(\CC\vG i(\gamma))}x{\vec a} + \sbtlp{\Lie^*(\CC\vG i(\gamma))}x{\vec a - r + i}\bigr)
	+ \sbtlp{\Lie^*(\CC\vG i(\gamma))}x{\vec a - r + i} \\
= \sbtlp{\Lie^*(\CC\vG i(\gamma))}x{\vec a - r + i}.
\end{multline*}
Now we have by Hypothesis \initref{hyp:X*}\subpref{gp} that \(\comm g\gamma\), which certainly belongs to \(\sbtlp M x 0\), in fact belongs to \(\sbtlp{(G', M)}x{(0, \vec a + i)}\); hence by Lemma \initref{lem:commute-gp}\subpref{up} that \(g\) belongs to \(\sbtl{(G', \CCp G i(\gamma), M)}x{(\Rp0, \vec a, \Rp{\vec a})}\).  Since we already know that \(g\) belongs to \(\sbtl{\CC\vG i(\gamma)}x{\vec a}\), it belongs to \(\sbtl{(G', \CCp G i(\gamma), \CC\vG i(\gamma))}x{(\vec a, \vec a, \Rp{\vec a})}\), as desired.
\end{proof}

So far, we have discussed the consequences of Hypothesis \ref{hyp:X*} abstractly, without any reference to the element \(Z^*_o\).  Although we still do not to require that \(X^*\) is close to \(Z^*_o\), we observe in Remark \ref{rem:X*} that choosing such elements is one way to satisfy Hypothesis \ref{hyp:X*}.

\begin{rem}
\label{rem:X*}
Suppose that \(X^*\) is an element of \(\Lie^*(H) \cap (Z^*_o + \sbjtlpp{\Lie^*(G')}{-r})\).  In particular, \(X^*\) belongs to \(\Lie^*(G')\).

Hypothesis \initref{hyp:X*}\subpref{building} follows from \cite{jkim-murnaghan:charexp}*{Lemma 2.3.3}.  Hypothesis \initref{hyp:X*}\subpref{depth} is built into our choice of \(X^*\).  By Lemma \ref{lem:in-H'}, we have that \(Z^*_o\) belongs to \(\sbtl{\Lie^*(G')}x{-r}\), so, since \(X^*\) does as well, we have that \(X^* - Z^*_o\) belongs to \(\sbtl{\Lie^*(G')}x{-r} \cap \sbjtlpp{\Lie^(G')}{-r}\).  Since \((\sbtl{\Lie^*(G')}x{-r} \cap \sbjtlpp{\Lie^*(G')}{-r}) + \sbtlpp{\Lie^*(G')}x{-r}\) is contained in \(\sbtl{\Lie^*(G')}x{-r} \cap \sbjtlpp{\Lie^*(G')}{-r}\), we have that
\[
X^* + \sbtlpp{\Lie^*(G')}x{-r}
\qtextq{is contained in}
Z^*_o + (\sbtl{\Lie^*(G')}x{-r} \cap \sbjtlpp{\Lie^*(G')}{-r}).
\]
In particular, Hypothesis \initref{hyp:X*}(\subref{Lie}, \subref{gp}) is proven as in \cite{jkim-murnaghan:charexp}*{Lemma 2.3.4}, and Hypothesis \initref{hyp:X*}\subpref{orbit} follows from the dual-Lie-algebra analogue of \cite{jkim-murnaghan:charexp}*{Lemma 2.3.6}.
\end{rem}

In Lemma \ref{lem:X*-orbits}, we lay the groundwork for the proof in Lemma \ref{lem:asymptotic-check} that only certain orbits need to be considered when checking the correctness of a possible asymptotic expansion.

\begin{lem}
\label{lem:X*-orbits}
If \(X^*\) belongs to \(\Lie^*(H) \cap (Z^*_o + \sbjtlpp{\Lie^*(G')}{-r})\) and \(g \in G\) is such that
\[
\OO^{H\primeconn}\bigl(\Ad^*(H\conn)(X^* + \sbtlpp{\Lie^*(H)}x{-r})\bigr)
\cap
\OO^{H\primeconn}\bigl(\Ad^*(g)\inv(Z^*_o + \sbjtlpp{\Lie^*(G')}{-r})\bigr)
\]
is non-empty, then \(G'g H\conn\) is the trivial \((G', H\conn)\)-double coset, and the intersection equals \(\OO^{H\primeconn}(X^* + \sbtlpp{\Lie^*(H')}x{-r})\).
\end{lem}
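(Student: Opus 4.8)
The plan is to unwind the definition of $\OO^{H\primeconn}(\anondot)$ until the hypothesis becomes a concrete relation of the shape $\Ad^*(k)(X^* + Y^*) = Z^*_o + Y^{**}$ for a single element $k \in g\dotm H\conn$, and then to play the orbit‑rigidity conditions built into Hypotheses \ref{hyp:Z*} and \ref{hyp:X*} against the twisted‑Levi facts of Remark \ref{rem:tame-Levi}.

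First, since $\sbtlpp{\Lie^*(H)}x{-r}$ is an open subgroup of $\Lie^*(H)$ and $\sbjtlpp{\Lie^*(G')}{-r}$ is open in $\Lie^*(G')$, the sets $A \ldef \Ad^*(H\conn)(X^* + \sbtlpp{\Lie^*(H)}x{-r})$ and $B \ldef \Ad^*(g)\inv(Z^*_o + \sbjtlpp{\Lie^*(G')}{-r})$ are open unions of translates of lattices in the subspaces $\Lie^*(H)$ and $\Ad^*(g)\inv\Lie^*(G')$, respectively; since the $H\primeconn$-orbits occurring in $\OO^{H\primeconn}(A)$ and $\OO^{H\primeconn}(B)$ are closed (their elements having negative depth $-r$), such an orbit lies in $\OO^{H\primeconn}(A)$ (respectively $\OO^{H\primeconn}(B)$) exactly when it meets $A$ (respectively $B$). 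The nonemptiness hypothesis therefore yields $h \in H\conn$, $h' \in H\primeconn$, $Y^* \in \sbtlpp{\Lie^*(H)}x{-r}$ and $Y^{**} \in \sbjtlpp{\Lie^*(G')}{-r}$ with $\Ad^*(gh'h)(X^* + Y^*) = Z^*_o + Y^{**}$; write $k = gh'h$. By Remark \ref{rem:X*}, $X^*$ satisfies Hypothesis \ref{hyp:X*}, and by Lemma \ref{lem:commute-Lie*} it does so inside $\bH = \CC\bG r(\gamma)$, so that $X^*$ is good of depth $-r$ in $\bH$ relative to $\bH'$; a successive‑approximation argument based on Hypothesis \initref{hyp:X*}\subpref{Lie} for $\bH$ gives $X^* + \sbtlpp{\Lie^*(H)}x{-r} \subseteq \Ad^*(\sbtlp H x 0)(X^* + \sbtlpp{\Lie^*(H')}x{-r})$, so, absorbing the extra factor into $h$, I may assume $Y^* \in \sbtlpp{\Lie^*(H')}x{-r}$. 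Since $X^* - Z^*_o \in \sbjtlpp{\Lie^*(G')}{-r}$ and $\sbtlpp{\Lie^*(H')}x{-r} \subseteq \sbjtlpp{\Lie^*(G')}{-r}$, we get $X^* + Y^* \in Z^*_o + \sbjtlpp{\Lie^*(G')}{-r}$; the relation above then exhibits a point of $\Ad^*(k)\inv(Z^*_o + \sbjtlpp{\Lie^*(G')}{-r}) \cap (Z^*_o + \sbjtlpp{\Lie^*(G')}{-r})$, so Hypothesis \initref{hyp:Z*}\subpref{orbit} gives $k \in G'$, whence $g = k(h'h)\inv \in G'H\conn$, \ie, $G'gH\conn$ is the trivial double coset.

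It remains to identify the intersection. Here I would use the descents $G' \cap H\conn = H\primeconn$ and $\Lie^*(H) \cap \sbjtlpp{\Lie^*(G')}{-r} = \sbjtlpp{\Lie^*(H')}{-r}$ (from Remark \ref{rem:tame-Levi} together with $\Lie^*(H') = \Lie^*(H) \cap \Lie^*(G')$ and the compatibility of Moy--Prasad filtrations); the facts that $Z^*_o$ and $X^*$ lie in $\Lie^*(H')$ and are fixed by $\Ad^*(G') \supseteq \Ad^*(H\primeconn)$; and Hypothesis \initref{hyp:X*}\subpref{orbit} for $\bH$ (again by Lemma \ref{lem:commute-Lie*}). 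For $\supseteq$: any $H\primeconn$-orbit meeting $X^* + \sbtlpp{\Lie^*(H')}x{-r}$ meets $A$ a fortiori, and, using the triviality of the double coset to replace $g$ by its trivial coset representative modulo $H\conn$ and the $\Ad^*(G')$-invariance of $Z^*_o + \sbjtlpp{\Lie^*(G')}{-r}$, it also meets $B$. For $\subseteq$: given an orbit in the intersection, re‑run the unwinding and cleanup to place $\Ad^*(h)(X^* + Y^*_0)$ in it with $h \in H\conn$ and $Y^*_0 \in \sbtlpp{\Lie^*(H')}x{-r}$, and as above $gh'h \in G'$ for some $h' \in H\primeconn$; combining this with $g \in G'H\conn$ forces $h$ into $H'$ modulo $H\primeconn$, and then Hypothesis \initref{hyp:X*}\subpref{orbit} for $\bH$ shows that $\Ad^*(h)$ carries $X^* + \sbtlpp{\Lie^*(H')}x{-r}$ onto an $H\primeconn$-translate of itself, so the orbit meets $X^* + \sbtlpp{\Lie^*(H')}x{-r}$.

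The main obstacle is exactly this last type of step: the ``double coset'' information only controls $g$ modulo $H\conn$, whereas the target statement is about $H\primeconn$-orbits, so the argument must pass repeatedly between $H\conn$-translates and $H\primeconn$-orbits while tracking the base points of the various Moy--Prasad lattices. The reason it goes through is that the orbit‑rigidity conditions \initref{hyp:Z*}\subpref{orbit} and \initref{hyp:X*}\subpref{orbit} are sharp enough that every such ambiguity is pinned down by the descent $G' \cap H\conn = H\primeconn$ and by the fact that $G'$, hence $H\primeconn$, centralises $Z^*_o$; this is precisely where the automatic connectedness of $\bG\conn \cap \bG'$ (Remark \ref{rem:tame-Levi}), as opposed to the possible disconnectedness of $\bG\conn \cap \bH$, is used.
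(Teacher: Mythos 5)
Your argument for the triviality of the double coset is essentially the paper's: extract honest intersection points from the two \(\OO^{H\primeconn}(\anondot)\)-conditions, use Lemma \ref{lem:commute-Lie*} to move the point of \(X^* + \sbtlpp{\Lie^*(H)}x{-r}\) into \(X^* + \sbtlpp{\Lie^*(H')}x{-r}\) at the cost of adjusting \(h\) by \(\sbtlp H x 0\), and then feed the containment \(X^* + \sbtlpp{\Lie^*(H')}x{-r} \subseteq Z^*_o + \sbjtlpp{\Lie^*(G')}{-r}\) (Remark \ref{rem:X*}) into Hypothesis \initref{hyp:Z*}\subpref{orbit} to get \(g h' h \in G'\). (Your side justification that membership in \(\OO^{H\primeconn}\) of the second set amounts to meeting it because the orbits are closed is not right as stated: elements of \(Z^*_o + \sbjtlpp{\Lie^*(G')}{-r}\) can have nontrivial nilpotent parts, so their \(H\primeconn\)-orbits need not be closed, and closedness would not by itself convert ``meets every neighbourhood'' into ``meets the set''; but since the extraction of actual intersection points is also how the paper proceeds, this is not the step I would press.)

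The genuine gap is in your identification of the intersection. From \(g h' h \in G'\) and \(g \in G'H\conn\) you claim that \(h\) is ``forced into \(H'\) modulo \(H\primeconn\)''. It is not: writing \(g = \gamma'\eta\) with \(\gamma' \in G'\) and \(\eta \in H\conn\), what follows is only that \(\eta h' h \in G' \cap H\conn = H\primeconn\), \ie, that \(h\) agrees with \(\eta\inv\) up to factors in \(H\primeconn\), and \(\eta\) has no reason to lie in \(H'\). Moreover, Hypothesis \initref{hyp:X*}\subpref{orbit} is a rigidity implication (a nonempty intersection forces membership in \(H'\)); it does not assert that \(\Ad^*(h)\) for \(h \in H'\) carries \(X^* + \sbtlpp{\Lie^*(H')}x{-r}\) onto an \(H\primeconn\)-translate of itself, which is what your last sentence uses. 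The paper does not normalise \(h\) at all: it normalises \(g\), replacing it by its \(H\conn\)-component (harmless because \(Z^*_o + \sbjtlpp{\Lie^*(G')}{-r}\) is \(\Ad^*(G')\)-stable, \(Z^*_o\) being fixed by \(\bG'\)), so that \(h' \ldef g h\) lands in \(G' \cap H\conn = H\primeconn\) by Remark \ref{rem:tame-Levi}; then \(\Ad^*(h')\inv\OO' = \OO'\) together with \((X^* + \sbtlpp{\Lie^*(H)}x{-r}) \cap \Lie^*(H') = X^* + \sbtlpp{\Lie^*(H')}x{-r}\) yields that \(\OO'\) meets \(X^* + \sbtlpp{\Lie^*(H')}x{-r}\), which is the containment actually used later (in Lemma \ref{lem:dist-G-to-G'}); your separate treatment of the reverse containment inherits the same \(g\)-versus-its-\(H\conn\)-part confusion. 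So you should re-route the final paragraph through the normalisation of \(g\), not through a (false) normalisation of \(h\).
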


\begin{proof}
Let \(\OO'\) be an element of the intersection.

Remark \ref{rem:X*} gives that Hypothesis \ref{hyp:X*} is satisfied.  By assumption, there is an element \(h \in H\conn\) so that \(\Ad^*(h)(X^* + \sbtlpp{\Lie^*(H)}x{-r})\) intersects \(\OO'\).  By Lemma \initref{lem:commute-Lie*}\subpref{onto}, upon adjusting \(h\) on the right by an element of \(\sbtlp H x 0\), we may, and do, assume that \(\Ad^*(h)(X^* + \sbtlpp{\Lie^*(H')}x{-r})\) intersects \(\OO'\).  Upon adjusting \(h\) on the left by an element of \(H\primeconn\), we may, and do, assume that \(\Ad^*(h)(X^* + \sbtlpp{\Lie^*(H)}x{-r}) \cap \OO'\) intersects \(\Ad^*(g)\inv(Z^*_o + \sbjtlpp{\Lie^*(G')}{-r})\).

Since \(X^* + \sbtlpp{\Lie^*(H')}x{-r}\) is contained in \(Z^*_o + (\sbtl{\Lie^*(G')}x{-r} \cap \sbjtlpp{\Lie^*(G')}{-r})\) (Remark \ref{rem:X*}), we have by Hypothesis \initref{hyp:Z*}\subpref{orbit} that \(g h\) belongs to \(G'\), so that \(G'g H\conn\) is the trivial double coset.

Therefore, upon adjusting \(g\) on the left by an element of \(G'\) (which does not affect the hypothesis, since \(Z^*_o\) is fixed by the coadjoint action of \(\bG'\)), we may, and do, assume that it belongs to \(H\conn\).  Thus \(h' \ldef g h\) belongs to \(G' \cap H\conn\), which equals \(H\primeconn\) by Remark \ref{rem:tame-Levi}.  In particular, \(\Ad^*(h')\inv\OO'\) equals \(\OO'\); so, since \(X^* + \sbtlpp{\Lie^*(H)}x{-r}\) intersects \(\Ad^*(h')\inv\OO' \subseteq \Lie^*(H')\), and since \((X^* + \sbtlpp{\Lie^*(H)}x{-r}) \cap \Lie^*(H')\) equals \(X^* + \sbtlpp{\Lie^*(H')}x{-r}\), we are done.
\end{proof}

\begin{rem}
\label{rem:index-match}
If \(Z^*_o\) belongs to \(\Lie^*(H)\), then, by
Lemma \initref{lem:commute-Lie*}\subpref{onto}, the set \(\Ad^*(H\conn)(Z^*_o + \sbjtlpp{\Lie^*(H')}{-r})\) is a neighbourhood of \(Z^*_o\) in \(\Lie^*(H)\).  Thus each element of \(\OO^{H\conn}(Z^*_o)\) intersects \(Z^*_o + \sbjtlpp{\Lie^*(H')}{-r}\), hence contains some element of \(\OO^{H\primeconn}(Z^*_o)\).  On the other hand, by Hypothesis \initref{hyp:Z*}\subpref{orbit}, distinct elements of \(\OO^{H\primeconn}(Z^*_o)\) have distinct \(H\conn\) orbits.  We frequently use the resulting bijection \anonmap{\OO^{H\conn}(Z^*_o)}{\OO^{H\primeconn}(Z^*_o)} without further explicit mention.
\end{rem}

\subsection{A group analogue of a quadratic form}
\label{sec:H-perp}

In \S\ref{sec:asymptotic}, we express distributions on the group (eventually, characters, in Theorem \ref{thm:asymptotic-exists}) in terms of distributions on the Lie algebra (Fourier transforms of orbital integrals).  This requires some way of passing between the two.  In Hypothesis \ref{hyp:mexp}, we choose a ``mock'' exponential map that we will use to move from a neighbourhood of \(0\) in \(\Lie(H)\) to a neighbourhood of the identity in \(H\), but we still face the problem of extracting information about distributions on \(G\).  The ``perpendicular group'' \(\sbtl{(H, G)}x{(\Rp r, r)}\) in Lemma \ref{lem:H-perp} is an important tool, but a bit of machinery is necessary before we can define it.

We recall the
	\begin{itemize}
	\item non-negative real number \(r\),
	\item element \(\gamma \in G\), with associated groups \(\bM = \CC\bG 0(\gamma)\) and \(\bH = \CC\bG r(\gamma)\),
and	\item point \(x \in \BB(H)\)
	\end{itemize}
from \S\ref{sec:depth-matrix}.  Although we will soon (in Proposition \ref{prop:Q-to-B}) need \(r\) to be positive, there is no harm in avoiding that assumption for a little bit.  We do not need explicitly to mention the element \(Z^*_o\) or the group \(\bG'\) of \S\ref{sec:nearly-good} in this section.

Notation \ref{notn:Q-and-B} defines \(\mf Q_\gamma\) and \(\mf B_\gamma\), which are ``multiplicative analogues'' of a quadratic and a bilinear form, respectively.  Indeed, we have chosen the notation to parallel the quadratic form \(q_{X^*, \gamma}\) and the bilinear form \(b_{X^*, \gamma}\) that are introduced in Notation \ref{notn:Gauss}.  The main use of our multiplicative analogues is in Proposition \ref{prop:Gauss-to-Weil}, where we show that a certain integral appearing in a Frobenius-type formula (see Proposition \ref{prop:Gauss-appears}) may be rewritten as a Weil index.  We have moved these results from \S\ref{sec:Gauss} to \S\ref{sec:H-perp} so that they can be used in Lemma \ref{lem:H-perp}.

\begin{notn}
\label{notn:Q-and-B}
Put
\[
\matnotn{Qgamma}{\mf Q_\gamma}(v) = \comm v\gamma
\qandq
\matnotn{Bgamma}{\mf B_\gamma}(v_1, v_2) = \comm{v_1}{\mf Q_\gamma(v_2)}
\]
for all \(v, v_1, v_2 \in G\).
\end{notn}

As preparation for the bi-multiplicativity results of Proposition \ref{prop:Q-and-B}, we note in Lemma \ref{lem:Hall-Witt} a few basic algebraic identities involving \(\mf Q_\gamma\) and \(\mf B_\gamma\).

\begin{lem}
\initlabel{lem:Hall-Witt}
We have that
\begin{gather}
\sublabel{eq:Q-and-B}
\mf Q_\gamma(v_1 v_2)
\qeqq
\mf B_\gamma(v_1, v_2)\mf Q_\gamma(v_2)\mf Q_\gamma(v_1), \\
\sublabel{eq:B-1-mult}
\mf B_\gamma(v_1 w_1, v_2)
\qeqq
\Int(v_1)\mf B_\gamma(w_1, v_2)\dotm\mf B_\gamma(v_1, v_2),
\end{gather}
\begin{multline}
\sublabel{eq:B-2-mult}
\mf B_\gamma(v_1, v_2 w_2)\mf B_\gamma(v_2, w_2)
\qeqq \\
\Int(v_1)\mf B_\gamma(v_2, w_2)
\dotm\mf B_\gamma(v_1, w_2)
\dotm\Int(\mf Q_\gamma(w_2))\mf B_\gamma(v_1, v_2),
\end{multline}
and
\begin{multline}
\sublabel{eq:B-symmetric}
\mf B_\gamma(\Int(\gamma)v_1,
	v_2)
\qeqq \\
\Int(\mf Q_\gamma(v_1))\inv\mf B_\gamma(\comm{v_1}{v_2}\dotm v_2, v_1)
\dotm\mf B_\gamma(\comm{v_1}{v_2}, v_2)
\Int(\mf Q_\gamma(v_2))\mf Q_\gamma(\comm{v_1}{v_2})
\end{multline}
for all \(v_1, w_1, v_2, w_2 \in G\).
\end{lem}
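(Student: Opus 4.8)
The plan is to treat \loceqref{eq:Q-and-B}--\loceqref{eq:B-symmetric} as \emph{universal} identities, valid in every group, and to extract them from the elementary commutator calculus rather than by brute-force expansion of words. Recall that $\comm xy = xyx\inv y\inv$. The three facts I would lean on are the two cocycle relations $\comm{xy}z = \Int(x)\comm yz\dotm\comm xz$ and $\comm x{yz} = \comm xy\dotm\Int(y)\comm xz$, together with the bookkeeping identity $\Int(x)a = \comm xa\dotm a$ converting a conjugate into a commutator times the original element.

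Granting these, \loceqref{eq:B-1-mult} is simply the first cocycle relation applied to $\comm{v_1 w_1}{\mf Q_\gamma(v_2)}$. For \loceqref{eq:Q-and-B}, I would apply the first cocycle relation to $\comm{v_1 v_2}\gamma$ to get $\Int(v_1)\mf Q_\gamma(v_2)\dotm\mf Q_\gamma(v_1)$, then rewrite $\Int(v_1)\mf Q_\gamma(v_2) = \comm{v_1}{\mf Q_\gamma(v_2)}\dotm\mf Q_\gamma(v_2) = \mf B_\gamma(v_1, v_2)\mf Q_\gamma(v_2)$ by the bookkeeping identity. For \loceqref{eq:B-2-mult}, I would first substitute $\mf Q_\gamma(v_2 w_2) = \mf B_\gamma(v_2, w_2)\mf Q_\gamma(w_2)\mf Q_\gamma(v_2)$ (that is, \loceqref{eq:Q-and-B} for the pair $v_2, w_2$), distribute $\comm{v_1}{\anondot}$ over this threefold product by two uses of the second cocycle relation, and convert the leading term $\comm{v_1}{\mf B_\gamma(v_2, w_2)}$ into $\Int(v_1)\mf B_\gamma(v_2, w_2)\dotm\mf B_\gamma(v_2, w_2)\inv$; the various conjugations by $\mf B_\gamma(v_2, w_2)$ then telescope, against one another and against the $\mf B_\gamma(v_2, w_2)$ carried over to the left-hand side of \loceqref{eq:B-2-mult}, leaving precisely the asserted identity. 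This part is shallow---it is just sliding the $\Int(\cdot)$'s past one another carefully.

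The real content is \loceqref{eq:B-symmetric}, which is a repackaging of the Hall--Witt (Jacobi) identity for the triple $(v_1, v_2, \gamma)$, and explains the name of the lemma. I would begin from $\Int(\gamma)v_1 = \mf Q_\gamma(v_1)\inv v_1$ (the bookkeeping identity again, since $\comm\gamma{v_1} = (\comm{v_1}\gamma)\inv = \mf Q_\gamma(v_1)\inv$), use \loceqref{eq:B-1-mult} to split $\mf B_\gamma(\Int(\gamma)v_1, v_2) = \comm{\mf Q_\gamma(v_1)\inv v_1}{\mf Q_\gamma(v_2)}$ as $\Int(\mf Q_\gamma(v_1))\inv\mf B_\gamma(v_1, v_2)$ times the ``pure'' commutator $\comm{\mf Q_\gamma(v_1)\inv}{\mf Q_\gamma(v_2)}$, and then identify what is left with the stated right-hand side by pushing the Hall--Witt relation through \loceqref{eq:Q-and-B}--\loceqref{eq:B-2-mult}, also using $\comm{v_1}{v_2}v_2 = v_1 v_2 v_1\inv$ to recognise the first argument of $\mf B_\gamma(\comm{v_1}{v_2}v_2, v_1)$. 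The step I expect to be the main obstacle is precisely this last matching: Hall--Witt comes in many superficially different shapes, and organising the conjugation factors and the left-to-right order of the terms so that they land on \emph{exactly} the normal form of \loceqref{eq:B-symmetric}---and not merely on something equal to it after an innocuous conjugation or cyclic rearrangement---is where all the care is required. I anticipate no conceptual difficulty beyond this bookkeeping.
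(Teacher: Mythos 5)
Your proposal is correct and takes essentially the same route as the paper: the first three identities are direct commutator computations (via the standard cocycle relations, which is what the paper's ``direct computations'' amounts to), and the twisted-symmetry identity is exactly the Hall--Witt identity for the triple \((v_1, v_2, \gamma)\), rewritten and translated into the \(\mf Q_\gamma\), \(\mf B_\gamma\) notation. Since all four are universal group identities, the bookkeeping you defer will indeed close without any further input.
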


\begin{proof}
These are all direct computations.  The last is the Hall--Witt identity
\[
\comm[\big]{\comm\gamma{v_1}}{\Int(v_1)v_2}
\dotm\comm[\big]{\comm{v_1}{v_2}}{\Int(v_2)\gamma}
\dotm\comm[\big]{\comm{v_2}\gamma}{\Int(\gamma)v_1}
= 1,
\]
re-written as
\[
\comm[\big]{\Int(\gamma)v_1}{\comm{v_2}\gamma}
= \Int(\comm{v_1}\gamma)\inv\comm[\big]{\Int(v_1)v_2}{\comm{v_1}\gamma}
\dotm\comm[\big]{\comm{v_1}{v_2}}{\comm{v_2}\gamma\dotm\gamma}
\]
and then translated to our notation.
\end{proof}

We think informally of \(\mf Q_\gamma\) and \(\mf B_\gamma\) as being something like a quadratic and a bilinear form, respectively.  We justify the latter claim in Proposition \ref{prop:Q-and-B}, by showing that \(\mf B_\gamma\) is bi-multiplicative, up to an error term, on an appropriate domain; and then the former in Proposition \ref{prop:Q-to-B}.  Note that Proposition \ref{prop:Q-and-B} concerns the values of \(\mf Q_\gamma\) and \(\mf B_\gamma\) only on \(M\), not on all of \(G\).  The restriction comes mainly from the fact that our reasoning requires at least that each \((f_2)_i + i\) is concave, and that is true ``for free'' only if \(i\) is non-negative.

\begin{prop}
\label{prop:Q-and-B}
Suppose that
	\begin{itemize}
	\item \(\vec\bM\) is a tame, twisted Levi sequence in \bM containing \(\gamma\), such that \(x\) is in \(\BB(\vec M)\),
and	\item \(f_1\), \(f_2\), \(g_q\), and \(g_b\) are grouplike depth matrices satisfying
\begin{align*}
g_q & {}\le \min \sset{f_2 + \ord_\gamma, f_2 \cvee g_q}, \\
g_b & {}\le \min \sset{f_1 \cvee (f_2 + \ord_\gamma), g}, \\
\intertext{and}
g   & {}\le \min \sset{f_1 \cvee g_b, g_b \cvee (f_2 + \ord_\gamma), g_b \cvee g_b}.
\end{align*}
	\end{itemize}
Then
	\begin{itemize}
	\item on \(\sbtl{\vec M}x{f_2}\), \(\mf Q_\gamma\) is \(\sbtl{\Der{\vec M}}x{g_q}\)-valued,
and	\item on \(\sbtl{\vec M}x{f_1} \times \sbtl{\vec M}x{f_2}\),
		\begin{itemize}
		\item \(\mf B_\gamma\) is \(\sbtl{\Der{\vec M}}x{g_b}\)-valued
	and	\item \(\mf B_\gamma\) is bi-multiplicative modulo \(\sbtl{\Der{\vec M}}x g\) on the same domain.
		\end{itemize}
	\end{itemize}

Suppose further that \(f_1\) equals \(f_2\), and that the inequality \(g \le (f_1 \cvee f_2) + \ord_\gamma\) holds.  Then
\[
\mf B_\gamma(\Int(\gamma)v_1, v_2)
\qtextq{is congruent to}
\mf B_\gamma(v_2, v_1)
\]
modulo \(\sbtl{\Der{\vec M}}x g\) for all \(v_1, v_2 \in \sbtl{\vec M}x{f_1} = \sbtl{\vec M}x{f_2}\).
\end{prop}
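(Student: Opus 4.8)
The plan is to read off the congruence from the Hall--Witt identity of Lemma~\ref{lem:Hall-Witt}, which expresses \(\mf B_\gamma(\Int(\gamma)v_1, v_2)\) as
\[
\Int(\mf Q_\gamma(v_1))\inv\mf B_\gamma(\comm{v_1}{v_2}\dotm v_2, v_1)
\dotm\mf B_\gamma(\comm{v_1}{v_2}, v_2)
\dotm\Int(\mf Q_\gamma(v_2))\mf Q_\gamma(\comm{v_1}{v_2}),
\]
and to show that, modulo \(\sbtl{\Der{\vec M}}x g\), the right-hand side collapses to \(\mf B_\gamma(v_2, v_1)\). The organising observation is that \(\mf B_\gamma(\Int(\gamma)v_1, v_2)\) and \(\mf B_\gamma(v_2, v_1)\) already lie in \(\sbtl{\Der{\vec M}}x{g_b}\) (by the first part of the proposition, using \(f_1 = f_2\)), while the hypothesis \(g \le g_b \cvee g_b\) makes the quotient \(\sbtl{\Der{\vec M}}x{g_b}/\sbtl{\Der{\vec M}}x g\) abelian; so the inner conjugations in the Hall--Witt identity, and in the first-variable multiplicativity identity of Lemma~\ref{lem:Hall-Witt}, will drop out once everything has been pushed into this quotient.

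First I would record the inputs: by the first part of the proposition, \(\mf Q_\gamma\) is \(\sbtl{\Der{\vec M}}x{g_q}\)-valued on \(\sbtl{\vec M}x{f_2}\) and \(\mf B_\gamma\) is \(\sbtl{\Der{\vec M}}x{g_b}\)-valued on \(\sbtl{\vec M}x{f_1} \times \sbtl{\vec M}x{f_2}\); and, since \(f_1 = f_2\) is grouplike (so \(f_1 \cvee f_2 \ge f_1\)), Lemma~\initref{lem:filtration}\subpref{gp-gp} gives \(\comm{v_1}{v_2} \in \sbtl{\Der{\vec M}}x{f_1 \cvee f_2} \subseteq \sbtl{\vec M}x{f_1}\). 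With these in hand I would dispatch the two trailing factors: since \(\comm{v_1}{v_2} \in \sbtl{\vec M}x{f_1}\) and \(v_2 \in \sbtl{\vec M}x{f_2}\), the first part gives \(\mf B_\gamma(\comm{v_1}{v_2}, v_2) \in \sbtl{\Der{\vec M}}x{g_b} \subseteq \sbtl{\Der{\vec M}}x g\) (using \(g_b \le g\)); and, because \(\mf Q_\gamma(\comm{v_1}{v_2}) = \comm{\comm{v_1}{v_2}}\gamma\) with \(\comm{v_1}{v_2} \in \sbtl{\vec M}x{f_1 \cvee f_2}\), the first part of the proposition (with its ``\(f_2\)'' taken to be \(f_1 \cvee f_2\)) together with the newly imposed inequality \(g \le (f_1 \cvee f_2) + \ord_\gamma\) puts \(\mf Q_\gamma(\comm{v_1}{v_2}) \in \sbtl{\Der{\vec M}}x g\). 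Conjugation by \(\mf Q_\gamma(v_2)\) preserves \(\sbtl{\Der{\vec M}}x g\) (Lemma~\initref{lem:filtration}\subpref{gp-gp} and the convolution inequalities), so these two factors contribute nothing modulo \(\sbtl{\Der{\vec M}}x g\).

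It then remains to treat \(\Int(\mf Q_\gamma(v_1))\inv\mf B_\gamma(\comm{v_1}{v_2}\dotm v_2, v_1)\). Expanding by multiplicativity in the first variable (Lemma~\ref{lem:Hall-Witt}) rewrites it as \(\Int(\mf Q_\gamma(v_1))\inv\bigl(\Int(\comm{v_1}{v_2})\mf B_\gamma(v_2, v_1)\dotm\mf B_\gamma(\comm{v_1}{v_2}, v_1)\bigr)\), in which \(\mf B_\gamma(\comm{v_1}{v_2}, v_1) \in \sbtl{\Der{\vec M}}x{g_b} \subseteq \sbtl{\Der{\vec M}}x g\) as before. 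Passing to the abelian quotient \(\sbtl{\Der{\vec M}}x{g_b}/\sbtl{\Der{\vec M}}x g\), the conjugations \(\Int(\comm{v_1}{v_2})\) and \(\Int(\mf Q_\gamma(v_1))\inv\) act trivially on \(\mf B_\gamma(v_2, v_1)\): Lemma~\initref{lem:filtration}\subpref{gp-gp} places the commutators \(\comm{\comm{v_1}{v_2}}{\mf B_\gamma(v_2, v_1)}\) and \(\comm{\mf Q_\gamma(v_1)}{\mf B_\gamma(v_2, v_1)}\) in \(\sbtl{\Der{\vec M}}x{(f_1 \cvee f_2) \cvee g_b}\) and \(\sbtl{\Der{\vec M}}x{g_q \cvee g_b}\) respectively, each of which is contained in \(\sbtl{\Der{\vec M}}x g\) by the hypotheses \(g \le g_b \cvee g_b\), \(g \le g_b \cvee (f_2 + \ord_\gamma)\), and \(g_q \le f_2 \cvee g_q\). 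Collecting the congruences, the Hall--Witt right-hand side reduces modulo \(\sbtl{\Der{\vec M}}x g\) to \(\mf B_\gamma(v_2, v_1)\), as asserted.

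The genuine obstacle is this last sub-step: checking that every conjugation-error commutator produced along the way really is absorbed into \(\sbtl{\Der{\vec M}}x g\), by matching it to one of the inequalities in the hypotheses via monotonicity of \(\cvvv\), \(\cvev\), \(\cvee\) and the self-improving bounds for grouplike depth matrices. Should the crude \(g_q/g_b\)-level estimates prove too lossy, the fallback --- spelled out in detail for the first part of this very proposition, whose proof serves as the template --- is to descend to the pieces \(\sbtl{\CC\vG i(\gamma)}x{\vec a}\) and apply Hypotheses~\ref{hyp:funny-centraliser} and~\ref{hyp:gamma} there directly. Everything else is routine assembly.
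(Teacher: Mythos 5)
Your route to the twisted-symmetry congruence is the same as the paper's---expand \(\mf B_\gamma(\Int(\gamma)v_1, v_2)\) by the Hall--Witt identity of Lemma \ref{lem:Hall-Witt} and absorb the error terms into \(\sbtl{\Der{\vec M}}x g\)---but two of your absorption steps are mis-justified.  First, from \(g_b \le g\) you conclude \(\sbtl{\Der{\vec M}}x{g_b} \subseteq \sbtl{\Der{\vec M}}x g\); the containment goes the other way (a larger depth matrix gives a \emph{smaller} group, which is exactly why the quotient \(\sbtl{\Der{\vec M}}x{g_b}/\sbtl{\Der{\vec M}}x g\) that you invoke makes sense), so knowing that \(\mf B_\gamma(\comm{v_1}{v_2}, v_j)\) lies in \(\sbtl{\Der{\vec M}}x{g_b}\) does not place it in \(\sbtl{\Der{\vec M}}x g\).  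Second, your bound \(\comm{\mf Q_\gamma(v_1)}{\mf B_\gamma(v_2, v_1)} \in \sbtl{\Der{\vec M}}x{g_q \cvee g_b} \subseteq \sbtl{\Der{\vec M}}x g\) needs \(g \le g_q \cvee g_b\), which is not among the hypotheses and does not follow from them: \(g_q\) is only bounded \emph{above} (by \(f_2 + \ord_\gamma\) and \(f_2 \cvee g_q\)), so it may be taken very shallow, in which case \(g_q \cvee g_b\) can fail to dominate \(g\).  Both steps are repaired by the coarser-but-correct estimates the paper uses: with \(f = f_1 = f_2\) and \(F = f_1 \cvee f_2\), one has \(\comm{v_1}{v_2} \in \sbtl{\vec M}x F\), and the valuedness statement for \(\mf B_\gamma\) applied with \((f_1, f_2, g_b)\) replaced by \((F, f, g)\)---its hypothesis \(g \le F \cvee (f + \ord_\gamma)\) follows from \(g \le g_b \cvee (f + \ord_\gamma)\) and \(g_b \le f \cvee (f + \ord_\gamma)\)---puts \(\mf B_\gamma(\comm{v_1}{v_2}, v_1)\) and \(\mf B_\gamma(\comm{v_1}{v_2}, v_2)\) in \(\sbtl{\Der{\vec M}}x g\); and, since \(\gamma\) normalises \(\sbtl{\vec M}x f\) (Lemma \ref{lem:commute-gp}), the conjugators \(\mf Q_\gamma(v_1)\) and \(\mf Q_\gamma(v_2)\) lie in \(\sbtl{\vec M}x f\), so every conjugation error is a commutator of \(\sbtl{\vec M}x f\) with \(\sbtl{\vec M}x{g_b}\), hence lies in \(\sbtl{\Der{\vec M}}x g\) because \(g \le f \cvee g_b\).

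The larger gap is that you take the first bullet points---that \(\mf Q_\gamma\) is \(\sbtl{\Der{\vec M}}x{g_q}\)-valued, that \(\mf B_\gamma\) is \(\sbtl{\Der{\vec M}}x{g_b}\)-valued, and above all that \(\mf B_\gamma\) is bi-multiplicative modulo \(\sbtl{\Der{\vec M}}x g\)---as inputs (``by the first part of the proposition''), whereas they are part of what is to be proved, and they constitute the bulk of the paper's argument: the statement about \(\mf Q_\gamma\) is a special case of Lemma \ref{lem:commute-gp}, and the statements about \(\mf B_\gamma\) are proved simultaneously by a closure argument (on the set of pairs where \(\mf B_\gamma\) takes values in \(\sbtl{\Der{\vec M}}x{g_b}\)) using the two multiplicativity identities of Lemma \ref{lem:Hall-Witt} together with the inequalities \(g \le f_1 \cvee g_b\), \(g \le g_b \cvee (f_2 + \ord_\gamma)\), and \(g \le g_b \cvee g_b\), plus Lemmas \ref{lem:filtration} and \ref{lem:commute-gp}.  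Dismissing this as routine assembly leaves the heart of the proposition unproved; with that part supplied and the two absorption steps corrected as above, your Hall--Witt computation does deliver the symmetry statement along essentially the paper's lines.
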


A certain amount of circuitousness in our argument is necessitated by the fact that we have to deal with the vector \(f_2 + \ord_\gamma\) in our bounds, rather than \(g_q\).  The obvious solution seems to be simply to take \(g_q\) to be \(f_2 + \ord_\gamma\), but that need not be grouplike; hence the contortions.

\begin{proof}
The statement about the values of \(\mf Q_\gamma\) is a special case of Lemma \initref{lem:commute-gp}\subpref{down}.

In what follows, we use Lemma \initref{lem:filtration}\subpref{gp-gp} repeatedly, without further mention.

We prove the statement about the values of \(\mf B_\gamma\), and about its bi-multiplicativity, simultaneously.  Write \mc S for the set
\[
\set
	{(v_1, v_2) \in \sbtl{\vec M}x{f_1} \times \sbtl{\vec M}x{f_2}}
	{\mf B_\gamma(v_1, v_2) \in \sbtl{\Der{\vec M}}x{g_b}}.
\]
Since \(g\) is bounded above by \(g_b \cvee g_b\), the values of \(\mf B_\gamma\) on \mc S commute modulo \(\sbtl{\Der{\vec M}}x g\).  By Lemma \initref{lem:Hall-Witt}\subeqref{eq:B-1-mult} and the fact that \(g\) is bounded above by \(f_1 \cvee g_b\), we have that \(\mf B_\gamma\) is multiplicative modulo \(\sbtl{\Der{\vec M}}x g\) in its first variable on \mc S.

We have already shown that, if \(i\) belongs to \(\tR_{\ge 0}\) and \(w_2\) to \(\sbtl{\CC\vG i(\gamma)}x{(f_2)_i}\), then \(\mf Q_\gamma(w_2)\) belongs to \(\sbtl{\CC\vG i(\gamma)}x{(f_2)_i + i}\); so Lemma \initref{lem:Hall-Witt}\subeqref{eq:B-2-mult}, and the fact that \(g\) is bounded above by (\(f_1 \cvee g_b\) and) \(g_b \cvee (f_2 + \ord_\gamma)\), give that, if \((v_1, v_2)\) belongs to \mc S, then \(\mf B_\gamma(v_1, v_2 w_2)\) is congruent to \(\mf B_\gamma(v_1, v_2)\mf B_\gamma(v_1, w_2)\) modulo \(\sbtl{\Der{\vec M}}x g\).  In particular, \((v_1, v_2 w_2)\) belongs to \mc S.

Now suppose, as in the second part of the statement, that \(f_1\) equals \(f_2\), and put \(f = f_1 = f_2\) and \(F = f_1 \cvee f_2\).  Note that \(g\) is bounded above by
\[
f \cvee g_b \le f \cvee g \le F \cvee g
\]
and
\[
g_b \cvee (f + \ord_\gamma) \le (f \cvee (f + \ord_\gamma)) \cvee (f + \ord_\gamma) \le F \cvee (f + \ord_\gamma).
\]
By Lemma \initref{lem:commute-gp}\subpref{down}, we have that \(\gamma\) normalises \(\sbtl{\vec M}x f\).  Note that \(\comm{v_1}{v_2}\) belongs to \(\sbtl{\vec M}x F\).  Since \(g\) is bounded above by \(\min \sset{F + \ord_\gamma, F \cvee g}\), we have already shown that \(\mf Q_\gamma(\comm{v_1}{v_2})\) belongs to \(\sbtl{\Der{\vec M}}x g \subseteq \sbtl{\vec M}x{g_b}\); and, since \(g\) is bounded above by \(F \cvee (f + \ord_\gamma)\), that \(\mf B_\gamma(\comm{v_1}{v_2}, v_1)\) and \(\mf B_\gamma(\comm{v_1}{v_2}, v_2)\) belong to \(\sbtl{\Der{\vec M}}x g \subseteq \sbtl{\vec M}x{g_b}\).  If we make the trivial observations that \(\mf Q_\gamma(v_1)\) and \(\mf Q_\gamma(v_2)\) belong to \(\sbtl{\vec M}x f\), and that the commutator of \(\sbtl{\vec M}x f\) with \(\sbtl{\vec M}x{g_b}\) belongs to \(\sbtl{\Der{\vec M}}x g\) (because \(g\) is bounded above by \(f \cvee g_b\)), then we see that the claimed twisted symmetry follows from Lemma \initref{lem:Hall-Witt}\subeqref{eq:B-symmetric} (and bi-multiplicativity).
\end{proof}

Proposition \ref{prop:Q-to-B} justifies our claim that \(\mf Q_\gamma\) is an analogue of a quadratic form by showing that it is given by using the analogue \(\mf B_\gamma\) of a bilinear form to pair group elements with themselves.  We use it in Proposition \ref{prop:Gauss-to-Weil} to show that Gauss sums occur when computing the values of invariant distributions at certain test functions.  It allows us to avoid the centrality assumption imposed, in an analogous situation, in \xcite{adler-spice:explicit-chars}*{Hypothesis \xref{hyp:X*-central}}.

\begin{prop}
\label{prop:Q-to-B}
Suppose that \(r\) is positive.  For any tame, twisted Levi subgroup \(\bG'\) of \bG containing \(\gamma\), such that \(x\) belongs to \(\BB(G')\), we have that
\[
\mf Q_\gamma(v)^2
\qtextq{is congruent to}
\mf B_\gamma(v, v)
\]
modulo \(\sbtl{\Der(H', \CCp{G'}0(\gamma), \CCp G 0(\gamma))}x{(\Rp r, r, \Rp s)}\) for all \(v \in \sbtl{(H, \CCp{G'}0(\gamma), \CCp G 0(\gamma))}x{(\Rp0, r - \ord_\gamma, (r - \ord_\gamma)/2)}\).
\end{prop}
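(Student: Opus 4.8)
The plan is to deduce the identity from the algebraic relations of Lemma \ref{lem:Hall-Witt}, the approximate bi-multiplicativity and twisted symmetry of \(\mf B_\gamma\) furnished by Proposition \ref{prop:Q-and-B}, and careful depth bookkeeping by means of Lemmas \ref{lem:filtration} and \ref{lem:commute-gp}.  The role played for honest quadratic forms by the polarisation identity \(b(v, v) = 2q(v)\) will be played here by the fact that the residual characteristic is odd, so that squaring is bijective on the pro-\(p\) Moy--Prasad subgroups that occur.

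Write \(K = \sbtl{(H, \CCp{G'}0(\gamma), \CCp G 0(\gamma))}x{(\Rp0, r - \ord_\gamma, (r - \ord_\gamma)/2)}\) and \(L = \sbtl{\Der(H', \CCp{G'}0(\gamma), \CCp G 0(\gamma))}x{(\Rp r, r, \Rp s)}\), and set \(E(v) = \mf Q_\gamma(v)^2\mf B_\gamma(v, v)\inv\) for \(v \in K\).  The first step is to prove that \(E\) is multiplicative modulo \(L\).  Expanding \(\mf Q_\gamma(v_1 v_2)^2\) by Lemma \ref{lem:Hall-Witt}\subeqref{eq:Q-and-B}, expanding \(\mf B_\gamma(v_1 v_2, v_1 v_2)\) by the bi-multiplicativity of Proposition \ref{prop:Q-and-B}, and rewriting \(\mf B_\gamma(v_2, v_1)\) as \(\mf B_\gamma(v_1, v_2)\) by the twisted symmetry of that proposition---using that \(\Int(\gamma)\) acts trivially modulo a subgroup deeper than the one in which \(\mf B_\gamma(v_1, v_2)\) lies, by Lemma \ref{lem:commute-gp}\subpref{down}---one checks, with the depth estimates of Lemmas \ref{lem:filtration}\subpref{gp-gp} and \ref{lem:commute-gp}\subpref{down}, that every resulting discrepancy, and every commutator needed to reorder factors, lies in a subgroup strictly deeper than \(L\).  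Hence \(E(v_1 v_2) \equiv E(v_2)E(v_1) \pmod L\), and it suffices to prove \(E(v) \in L\) for \(v\) ranging over a set of topological generators of \(K\), which we may take to consist of elements each lying in one of the subgroups \(\sbtlp H x 0\), \(\sbtl{\CCp{G'}0(\gamma)}x{r - \ord_\gamma}\), \(\sbtl{\CCp G 0(\gamma)}x{(r - \ord_\gamma)/2}\).

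For such a generator \(v\), Lemma \ref{lem:commute-gp}\subpref{down}---supplemented, in the \(H\)-layer, by Hypothesis \ref{hyp:gamma}\subpref{gp} with \(i = r\)---places \(\mf Q_\gamma(v) = \comm v\gamma\), and so also \(\mf Q_\gamma(v)^2\), in \(\sbtl{\Der(H, \CCp{G'}0(\gamma), \CCp G 0(\gamma))}x{(\Rp r, r, \Rp s)}\), while Lemma \ref{lem:filtration}\subpref{gp-gp} places \(\mf B_\gamma(v, v) = \comm v{\mf Q_\gamma(v)}\) strictly deeper still; so it remains only to see that \(\sbtl{\Der(H, \CCp{G'}0(\gamma), \CCp G 0(\gamma))}x{(\Rp r, r, \Rp s)}\) is contained in \(L\).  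This last containment is exactly where the hypothesis \(r > 0\) enters: a weight direction of \(\Lie(H)\) not lying in \(\Lie(H') = \Lie(H) \cap \Lie(G')\) lies in neither \(\Lie(G')\) nor \(\Lie(\CCp{G'}0(\gamma))\), so in the left-biased data defining \(L\) it is assigned the depth \(\Rp s\) rather than the \(\Rp r\) that it carries in \(\sbtl{\Der(H, \CCp{G'}0(\gamma), \CCp G 0(\gamma))}x{(\Rp r, r, \Rp s)}\); and since \(s < r\) this is the weaker constraint, while on every other direction the two depth matrices agree.  The main obstacle is the depth bookkeeping in the first step: after the expansions via Lemma \ref{lem:Hall-Witt} and Proposition \ref{prop:Q-and-B} one must verify that every one of the numerous error terms and reordering commutators genuinely lands in a subgroup deeper than \(L\), rather than merely in \(L\) itself---a fully worked instance of exactly this kind of reasoning appears in the proof of Proposition \ref{prop:Q-and-B}.
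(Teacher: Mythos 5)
There is a genuine gap, and it sits exactly at the step where your argument would have to produce the actual content of the proposition: the claim that for a single-layer generator \(v\) both \(\mf Q_\gamma(v)^2\) and \(\mf B_\gamma(v, v)\) already lie in subgroups contained in \(L = \sbtl{\Der(H', \CCp{G'}0(\gamma), \CCp G 0(\gamma))}x{(\Rp r, r, \Rp s)}\).  This is false.  For \(v\) in the layer \(\CC G i(\gamma)\) at depth \((r - i)/2\) with \(0 < i < r\), the bounds you cite (Hypothesis \ref{hyp:gamma}\subpref{gp}, Lemma \ref{lem:commute-gp}\subpref{down}) give only that \(\mf Q_\gamma(v) = \comm v\gamma\) has depth \((r + i)/2\), and even the sharper bound of Proposition \ref{prop:Q-and-B} (with \(g_q = (r, \Rp s)\)) only places its components along \(\CCp{G'}0(\gamma)\), and in particular along \(H'\), at depth \(r\) --- not \(\Rp r\).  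Likewise Lemma \ref{lem:filtration}\subpref{gp-gp} puts the \(G'\)-component of \(\mf B_\gamma(v, v) = \comm v{\mf Q_\gamma(v)}\) at depth exactly \((r - i)/2 + (r + i)/2 = r\), not ``strictly deeper''.  On the Lie-algebra level both quantities reduce, in the \(H'\)-direction at depth exactly \(r\), to \(\comm Y{(1 - \Ad(\gamma))Y}\); this is precisely the term whose pairing with \(X^* \in \sbtl{\Lie^*(H')}x{-r}\) produces the Gauss sum \(\AddChar_{1/2}(q_{X^*, \gamma}(Y))\) in Proposition \ref{prop:Gauss-to-Weil}.  If \(\mf Q_\gamma(v)^2\) and \(\mf B_\gamma(v, v)\) were each trivial modulo \(L\) on generators, then (since the relevant quotient is an abelian pro-\(p\) group with \(p\) odd) they would be trivial modulo \(L\) everywhere, \(\hat\phi(\mf Q_\gamma(v))^2\) would be identically \(1\), and the Weil-index computation would collapse --- contradicting the non-degeneracy established in Proposition \ref{prop:lattice-orth}.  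So the reduction-to-generators strategy cannot close: the whole point of the proposition is that these two non-trivial depth-\(r\) contributions coincide, not that they vanish.

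What your proposal never confronts is the actual difficulty.  Lemma \ref{lem:Hall-Witt}\subeqref{eq:Q-and-B} (with \(v_2 = v\inv\)) naturally relates \(\mf B_\gamma(v, v)\) to the product \(\mf Q_\gamma(v)\mf Q_\gamma(v\inv)\), not to \(\mf Q_\gamma(v)^2\); the hard step is to prove that \(\mf Q_\gamma(v)\) and \(\mf Q_\gamma(v\inv)\) agree modulo the target group.  The paper does this by first establishing (via Proposition \ref{prop:Q-and-B}, applied with several shifted depth matrices) that the reduced form \(\ol{\mf B}_\gamma\) is abelian-valued, bi-multiplicative and genuinely symmetric, so that \(\mc I = \set{v}{\ol{\mf Q}_\gamma(v) = \ol{\mf Q}_\gamma(v\inv)}\) is a group, and then --- after passing to a tame extension so that \(\bG\primeconn\) is a Levi subgroup --- showing that \(\ol{\mf Q}_\gamma\) is \emph{trivial} on the analogous subgroup of each parabolic \(\bQ\conn\) with Levi component \(\bG\primeconn\) (here a stricter bound on \(\mf Q_\gamma\) is available because no two weights of \bQ outside \(\bG'\) sum to a weight of \(\bG'\)); two opposite parabolics then generate, forcing \(\mc I = \mc V^\perp\).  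Your multiplicativity-of-\(E\) skeleton parallels the paper's proof that \(\mc I\) is a group, and your final containment \(\sbtl{\Der(H, \CCp{G'}0(\gamma), \CCp G 0(\gamma))}x{(\Rp r, r, \Rp s)} \subseteq L\) is fine, but without some substitute for the parabolic-descent step the argument does not prove the statement.
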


\begin{proof}
We may, and do, assume, upon passing to a suitable tame extension, that \(\bG\primeconn\) is a Levi subgroup of \(\bG\conn\).

Put
\begin{align*}
\mc V^\perp   & {}= \sbtl{(H, \CCp{G'}0(\gamma), \CCp G 0(\gamma))}x{(\Rp0, r - \ord_\gamma, (r - \ord_\gamma)/2)} \\
\intertext{and}
\mc V^\perp_+ & {}= \sbtl{(H, \CCp{G'}0(\gamma), \CCp G 0(\gamma))}x{(0, r - \ord_\gamma, (r - \ord_\gamma)/2)}.
\end{align*}
Write \(\ol{\mf Q}_\gamma\) and \(\ol{\mf B}_\gamma\) for the compositions of \(\mf Q_\gamma\) and \(\mf B_\gamma\), respectively, with the projection \anonmap G{G/\sbtl{\Der(H', G', G)}x{(\Rp r, r, \Rp s)}}.

By Proposition \ref{prop:Q-and-B}, applied to the depth matrices
\begin{align*}
f = f_1 = f_2 & {}= \max \sset{(\Rp0, \Rp0), (r - \ord_\gamma, (r - \ord_\gamma)/2)}
\intertext{(which give rise to the group \(\sbtl{(H, \CCp{G'}0(\gamma), \CCp G 0(\gamma))}x{(\Rp0, r - \ord_\gamma, (r - \ord_\gamma)/2)} = \mc V^\perp\)),}
g_q = g_b     & {}= (r, \Rp s) \\
\intertext{(which give rise to the group \(\sbtl{(\CCp{G'}0(\gamma), \CCp G 0(\gamma))}x{(r, \Rp s)}\)),} \\
\intertext{and \(g\), where}
g_i           & {}= (r, \Rp s)\qtextq{for} 0 < i < r \\
\intertext{and}
g_r           & {}= \Rp{(r, s)}
\end{align*}
(which gives rise to the group \(\sbtl{(H', \CCp{G'}0(\gamma), \CCp G 0(\gamma))}x{(\Rp r, r, \Rp s)}\)), we have that
	\begin{itemize}
	\item the values of \(\ol{\mf Q}_\gamma\) on \(\mc V^\perp\), and of \(\ol{\mf B}_\gamma\) on \(\mc V^\perp \times \mc V^\perp\), belong to the image of \(\sbtl{(\CCp{G'}0(\gamma), \CCp G 0(\gamma))}x{(r, \Rp s)}\), which is Abelian by Lemma \initref{lem:filtration}\subpref{gp-gp};
	\item \(\ol{\mf B}_\gamma\) is bi-multiplicative on \(\mc V^\perp \times \mc V^\perp\);
	\item \(\Int(\gamma)v_1 = v_1\mf Q_\gamma(v_1\inv)\) is congruent to \(v_1\) modulo \(\sbtl{(\CCp{G'}0(\gamma), \CCp G 0(\gamma))}x{(r, \Rp s)} \subseteq \mc V^\perp_+\) for all \(v_1 \in \mc V^\perp\);
and	\item \(\ol{\mf B}_\gamma(\Int(\gamma)v_1, v_2)\) equals \(\ol{\mf B}_\gamma(v_2, v_1)\) for all \((v_1, v_2) \in \mc V^\perp \times \mc V^\perp\).
	\end{itemize}
In fact, applying the same result, with \((f_1, f_2, g_q, g_b, g)\) replaced by \((\Rp{f_1}, f_2, g_q, g, \Rp g)\) or \((f_1, \Rp{f_2}, g_q, g, g)\), gives that \(\ol{\mf B}_\gamma\) is trivial on \(\mc V^\perp \times \mc V^\perp_+\) and \(\mc V^\perp_+ \times \mc V^\perp\), which means (in light of the above) that actually \(\ol{\mf B}_\gamma\) is \emph{symmetric} on \(\mc V^\perp \times \mc V^\perp\).

In particular, since the value of \(\mf Q_\gamma\) at the identity is \(1\), we have by Lemma \initref{lem:Hall-Witt}\subeqref{eq:Q-and-B} that
\[
\ol{\mf Q}_\gamma(v)\ol{\mf Q}_\gamma(v\inv)
\qeqq
\ol{\mf B}_\gamma(v, v\inv)\inv = \ol{\mf B}_\gamma(v, v)
\]
for all \(v \in \mc V^\perp\).  It thus suffices to show that the set
\[
\mc I \ldef \set{v \in \mc V^\perp}{\ol{\mf Q}_\gamma(v) = \ol{\mf Q}_\gamma(v\inv)}
\]
equals \(\mc V^\perp\).  Note that \mc I is a group, since it clearly contains the identity; and since, for \(v_1, v_2 \in \mc I\), we have by bi-multiplicativity and symmetry, and Lemma \initref{lem:Hall-Witt}\subeqref{eq:Q-and-B}, that \(\ol{\mf Q}_\gamma(v_1 v_2\inv)\) equals
\begin{align*}
&\ol{\mf Q}_\gamma(v_1)\ol{\mf Q}_\gamma(v_2\inv)\ol{\mf B}_\gamma(v_1, v_2\inv) \\ ={}
&\ol{\mf Q}_\gamma(v_1\inv)\ol{\mf Q}_\gamma(v_2)\ol{\mf B}_\gamma(v_1\inv, v_2) \\ ={}
&\ol{\mf Q}_\gamma(v_1\inv)\ol{\mf Q}_\gamma(v_2)\ol{\mf B}_\gamma(v_2, v_1\inv) \\ ={}
&\ol{\mf Q}_\gamma(v_2 v_1\inv),
\end{align*}
hence that \(v_1 v_2\inv\) belongs to \mc I.

Remember that we have passed to a tame extension, and so arranged that \(\bG\primeconn\) is a Levi subgroup of \(\bG\conn\).
By Proposition \ref{prop:Q-and-B}, applied to the restrictions of the depth matrices \((f_1, f_2, g_q, g, g)\) (not \((f_1, f_2, g_q, g_b, g)\)) to \(\bQ\primeconn\), we have that \(\ol{\mf Q}_\gamma\) is trivial on \(\sbtl{(\CC Q r(\gamma), \CCp{G'}0(\gamma), \CCp G 0(\gamma))}x{(\Rp0, r - \ord_\gamma, (r - \ord_\gamma)/2)}\), so that that group is contained in \mc I.  (The point of restricting to a parabolic subgroup is, informally speaking, that, because no two weights in \bQ but outside of \(\bG'\) sum to a weight in \(\bG'\), we can get away with a stricter bound on the values of \(\mf Q_\gamma\).)  Since \(\bQ\conn\) was \emph{any} parabolic subgroup of \(\bG\conn\) with Levi component \(\bG\primeconn\), it follows (say, by taking two opposite such subgroups) that \mc I contains
\(\mc V^\perp\), as desired.
\end{proof}

Our conditions on depth matrices in Definition \ref{defn:vGvr} mean that we have not yet defined notation like \(\sbtl{(H, G)}x{(\Rp r, r)}\), for a ``perpendicular group'' to \(\sbtl H x r\) in \(\sbtl G x r\).  We define such a complement in Lemma \ref{lem:H-perp}.  It is meant primarily to be used in Lemma \ref{lem:centre}, our main tool for moving from distributions on \(G\) to distributions on \(H\), but we also use it to define the notion of a dual blob (Definition \ref{defn:dual-blob}).

\begin{lem}
\label{lem:H-perp}
For any
	\begin{itemize}
	\item tame, twisted Levi sequence \vbG in \bG containing \(\gamma\), such that \(x\) is in \(\BB(\vG)\),
and	\item concave depth vector \(\vec a\),
	\end{itemize}
we have that
\[
\matnotn{DerH}{\sbtl{\Der(H \cap \vG, \vG)}x{(\Rp{\vec a}, \vec a) + r}}
\ldef
\sbtlp{\Der\vG}x{\vec a + r}\dotm
\mf Q_\gamma\bigl(\sbtl{(H \cap \vG, M \cap \vG)}x{(\Rp{\vec a}, \vec a + r - \ord_\gamma)}\bigr)
\]
is a subgroup of \(\sbtl{\Der\vG}x{\vec a + r}\) such that
\begin{align*}
\sbtl{(H \cap \vG)}x{\vec a + r}\dotm\sbtl{\Der(H \cap \vG, \vG)}x{(\Rp{\vec a}, \vec a) + r}
&\qeqq
\sbtl\vG x{\vec a + r} \\
\intertext{and}
\sbtl{(H \cap \vG)}x{\vec a + r} \cap \sbtl{\Der(H \cap \vG, \vG)}x{(\Rp{\vec a}, \vec a) + r}
&\qeqq
\sbtlp{\CC{\Der\vG}r(\gamma)}x{\vec a + r}.
\end{align*}
Analogous properties hold for the group
\[
\matnotn{HvG}{\sbtl{(H \cap \vG, \vG)}x{(\Rp{\vec a}, \vec a) + r}} \ldef \sbtlp\vG x{\vec a + r}\dotm\sbtl{\Der(H \cap \vG, \vG)}x{(\Rp{\vec a}, \vec a) + r}.
\]
\end{lem}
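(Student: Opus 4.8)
The plan is to read the two displayed identities as an Iwahori-type factorisation of $\sbtl\vG x{\vec a + r}$, with $\sbtl{(H \cap \vG)}x{\vec a + r} = \sbtl{\CC\vG r(\gamma)}x{\vec a + r}$ in the role of a ``Levi'' factor and $\sbtl{\Der(H \cap \vG, \vG)}x{(\Rp{\vec a}, \vec a) + r}$ the complementary, ``unipotent-like'' factor built out of the values of $\mf Q_\gamma = \comm\anondot\gamma$. I would proceed exactly as in Lemma \ref{lem:commute-gp} and Proposition \ref{prop:Q-and-B}: reduce every assertion to its analogue on the associated-graded Moy--Prasad quotients, where $\mf Q_\gamma$ becomes the family of isomorphisms supplied by Hypothesis \initref{hyp:gamma}\subpref{gp}, and then lift back by successive approximation, using the pro-$p$ compactness of the Moy--Prasad groups. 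Everything is organised level by level in the index $i$ of $\CC\vG i(\gamma)$; here one uses the convention that in the depth-vector shorthand for the defining group the symbol $\ord_\gamma$ denotes that running index, so that the ``$M \cap \vG$'' entry sits at depth $\vec a + r - i$ at each level $i < r$, while the ``$H \cap \vG$'' entry sits at depth $\Rp{\vec a}$ at level $r$. A preliminary check is that all the depth data that occur are grouplike, so that the groups in sight are genuine subgroups.

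First I would show that the right-hand side of the definition is a subgroup of $\sbtl{\Der\vG}x{\vec a + r}$. That $\mf Q_\gamma$ carries $\sbtl{(H \cap \vG, M \cap \vG)}x{(\Rp{\vec a}, \vec a + r - \ord_\gamma)}$ into $\sbtl{\Der\vG}x{\vec a + r}$ I would get from Lemma \initref{lem:commute-gp}\subpref{down}, checking its numerical hypotheses ($F^- \le f^- + \min \sset{\ord_\gamma, r}$, $F\supc \le f\supc$, grouplikeness, $F \le f \cvee F$) directly from the level-by-level description above; at level $r$ this uses Hypothesis \initref{hyp:gamma}\subpref{gp} with $i = r$, so the $\Rp{\vec a}$ block is in fact sent into $\sbtlp{\CC{\Der\vG}r(\gamma)}x{\vec a + r}$. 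Since $\sbtlp{\Der\vG}x{\vec a + r}$ is normalised by $\sbtl{\Der\vG}x{\vec a + r}$, hence by these $\mf Q_\gamma$-values, the only remaining point is to control a product of two $\mf Q_\gamma$-values; by Lemma \initref{lem:Hall-Witt}\subeqref{eq:Q-and-B} such a product equals $\mf Q_\gamma$ of the product of the arguments up to the factor $\mf B_\gamma(v_1, v_2)\inv = \comm{v_1}{\mf Q_\gamma(v_2)}\inv$, and this factor lies in $\sbtlp{\Der\vG}x{\vec a + r}$ by Lemma \initref{lem:filtration}\subpref{gp-gp} (a commutator of an element of positive depth with one of depth $\vec a + r$ lands strictly deeper, using that all entries of $\vec a$ are positive). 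Together with the fact that $\sbtl{(H \cap \vG, M \cap \vG)}x{(\Rp{\vec a}, \vec a + r - \ord_\gamma)}$ is itself a group, this gives closure under multiplication and inversion. It is precisely in order to keep these error terms inside $\Der\vG$ that one must invoke Lemma \ref{lem:filtration} rather than coarser commutator estimates.

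Next I would prove the two identities. The inclusions ``$\subseteq$'' are immediate, since $\sbtl{\CC\vG r(\gamma)}x{\vec a + r}$, $\sbtlp{\Der\vG}x{\vec a + r}$ and (by the previous paragraph) $\mf Q_\gamma(\cdots)$ all sit inside $\sbtl\vG x{\vec a + r}$. For the reverse inclusion in the product identity I would argue by successive approximation: given $g \in \sbtl\vG x{\vec a + r}$, I look at its image in $\sbat\vG x{\vec a + r}$ and split it along the decomposition afforded by the $\gamma$-weight spaces (equivalently by the groups $\CC\vG i(\gamma)$); the level-$r$ component is removed by multiplying by an element of $\sbtl{\CC\vG r(\gamma)}x{\vec a + r}$, and each level-$i$ component with $i \ne r$ is hit by $\mf Q_\gamma$ applied to a suitable element of $\sbtl{(H \cap \vG, M \cap \vG)}x{(\Rp{\vec a}, \vec a + r - \ord_\gamma)}$ sitting at depth $\vec a + r - i$ --- this is the surjectivity half of the bijections of Hypothesis \initref{hyp:gamma}\subpref{gp}, whose $\gamma$- and $\gamma\inv$-forms account for the contracted and dilated directions. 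After this step $g$ has moved into $\sbtlp{\Der\vG}x{\vec a + r}$, so I iterate and pass to the limit. For the intersection identity, ``$\supseteq$'' holds because $\sbtlp{\CC{\Der\vG}r(\gamma)}x{\vec a + r}$ equals $\sbtlp{\Der\vG}x{\vec a + r} \cap \CC\vG r(\gamma)$ by Hypothesis \initref{hyp:funny-centraliser}\subpref{more-vGvr-facts}, and this lies both in $\sbtl{\CC\vG r(\gamma)}x{\vec a + r}$ and (since $1 = \mf Q_\gamma(1)$) in the complementary factor; for ``$\subseteq$'', I write a given element of $\sbtl{\CC\vG r(\gamma)}x{\vec a + r}$ as $u\dotm\mf Q_\gamma(v)$ with $u \in \sbtlp{\Der\vG}x{\vec a + r}$, decompose $\mf Q_\gamma(v)$ level by level as before, and use the injectivity half of Hypothesis \initref{hyp:gamma}\subpref{gp} at each level $i < r$ to force the sub-$r$-level components of $v$ successively deeper, so that in the limit only the level-$r$ contribution remains and the element lies in $\sbtlp{\CC{\Der\vG}r(\gamma)}x{\vec a + r}$.

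The analogous statements for $\sbtl{(H \cap \vG, \vG)}x{(\Rp{\vec a}, \vec a) + r} = \sbtlp\vG x{\vec a + r}\dotm\sbtl{\Der(H \cap \vG, \vG)}x{(\Rp{\vec a}, \vec a) + r}$ I would prove in the same way; the only change is that $\sbtlp\vG x{\vec a + r}$ exceeds $\sbtlp{\Der\vG}x{\vec a + r}$ only by the torus graded piece, which --- since $\gamma$ stabilises and acts trivially on the relevant maximal torus $\bT$, so that $\Lie(\bT) \subseteq \Lie(\bH)$ --- lies in $\CC\vG r(\gamma)$, hence is absorbed into the ``Levi'' factor, correspondingly turning $\sbtlp{\CC{\Der\vG}r(\gamma)}x{\vec a + r}$ into $\sbtlp{\CC\vG r(\gamma)}x{\vec a + r}$ in the intersection. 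The step I expect to be the main obstacle is the level-by-level bookkeeping in the two middle paragraphs: one must verify that the offsets built into the defining group --- $\Rp{\vec a}$ at level $r$, $\vec a + r - \ord_\gamma$ below it --- are calibrated so that, at every level and including the contracted and dilated directions, $\mf Q_\gamma$ both maps into and surjects onto the depth-$(\vec a + r)$ graded pieces with no loss, and that all error terms remain inside $\Der\vG$ so that Lemma \ref{lem:filtration} and Hypothesis \ref{hyp:gamma} really do apply with the derived-group depths; granting this, the successive-approximation loop converges by the pro-$p$ compactness already mentioned.
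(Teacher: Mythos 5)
Your proposal follows a genuinely different, lower-level route than the paper, and as written it has two gaps. The paper does not redo any graded or successive-approximation analysis in this proof: the subgroup claim is exactly the multiplicativity statement of Proposition \ref{prop:Q-and-B} (so your Hall--Witt computation, while correct, re-proves that proposition rather than citing it), and both displayed identities are then deduced from Lemma \ref{lem:commute-gp}. For the factorisation, the paper applies Lemma \initref{lem:commute-gp}\subpref{onto} to the coset \(g\gamma\): it produces \(h \in \sbtl{(H \cap \vG)}x{\vec a + r}\) and \(v\) in the argument group of \(\mf Q_\gamma\) with \(\Int(v)(h\gamma) = g\gamma\), and unwinding gives \(g = h\dotm\comm{h\inv}v\dotm\mf Q_\gamma(v)\), the middle commutator being absorbed by Lemma \initref{lem:filtration}\subpref{gp-gp}. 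For the intersection the paper does not use the injectivity half of Hypothesis \initref{hyp:gamma}\subpref{gp} at all; it rewrites \(g\dotm\mf Q_\gamma(v) = h\) as \(\Int(v)\gamma = g\inv h\gamma\) and then invokes the orbit-rigidity statement Lemma \initref{lem:commute-gp}\subpref{orbit}, after one more use of Lemma \initref{lem:commute-gp}\subpref{onto}, to force \(v\) into \(\mc H_+\). Your plan amounts to re-deriving the relevant cases of these two parts of Lemma \ref{lem:commute-gp} by hand from Hypothesis \ref{hyp:gamma}; that is precisely the doubly indexed (Moy--Prasad depth and \(\gamma\)-level) bookkeeping the paper has already packaged, so you should either cite those parts directly or expect to reproduce their omitted proofs.

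The two gaps are these. First, the contracted and dilated directions: the argument group \(\sbtl{(H \cap \vG, M \cap \vG)}x{(\Rp{\vec a}, \vec a + r - \ord_\gamma)}\) lies inside \(M\), so it has no components in the \(N^\pm\) directions, and therefore \(\mf Q_\gamma\) of it cannot hit the graded pieces of \(\sbat\vG x{\vec a + r}\) lying in \(\Lie(N^\pm)\); your appeal to ``the \(\gamma\)- and \(\gamma\inv\)-forms'' of Hypothesis \initref{hyp:gamma}\subpref{gp} has nothing in the domain to act on. The paper's proof factors only \(g \in \sbtl{(M \cap \vG)}x{\vec a + r}\) this way and treats the \(N^\pm\)-part of \(\sbtl\vG x{\vec a + r}\) by a separate containment in the perpendicular group; as written, your level-by-level surjectivity argument produces only the \(M\)-directions and does not close. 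Second, your justification of the ``analogous properties'' rests on the claim that \(\gamma\) acts trivially on the maximal torus \(\bT\), so that \(\Lie(\bT) \subseteq \Lie(\bH)\); this is not available. Hypothesis \ref{hyp:gamma-central} is not in force in \S\ref{sec:H-perp}, and \(\bH = \CC\bG r(\gamma)\) need not have full rank in \bG (the paper stresses this, for instance after Lemma \ref{lem:MP-card}), so no maximal torus of \bG need lie in \bH. What you actually need is only that the directions transverse to \(\Der\vG\) (the connected-centre directions) are fixed by \(\Ad(\gamma)\) and hence lie in \(\Lie(\CC\bG r(\gamma))\) by Hypothesis \initref{hyp:funny-centraliser}\subpref{Lie}; with that substitution, absorbing the difference between \(\sbtlp\vG x{\vec a + r}\) and \(\sbtlp{\Der\vG}x{\vec a + r}\) into the \(H\)-factor does go through.
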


\begin{proof}
For this proof, put \(\mc H = \sbtl{(H \cap \vG, M \cap \vG)}x{(\Rp{\vec a}, \vec a + r - \ord_\gamma)}\) and \(\mc H_+ = \sbtlp{(H \cap \vG, M \cap \vG)}x{(\vec a, \vec a + r - \ord_\gamma)}\).

It follows from Proposition \ref{prop:Q-and-B} that \(\mf Q_\gamma\) is \(\sbtl{\CC{\Der\vG}0(\gamma)}x{\vec a + r}\)-valued, and multiplicative modulo \(\sbtlp{\CC{\Der\vG}0(\gamma)}x{\vec a + r}\), on \mc H; and \(\sbtlp{\CC{\Der\vG}0(\gamma)}x{\vec a + r}\)-valued on \(\mc H_+\).  In particular, \(\sbtl{\Der(H \cap \vG, \vG)}x{(\Rp{\vec a}, \vec a) + r}\) is a group.

By Lemma \initref{lem:commute-gp}\subpref{onto}, for any \(g \in \sbtl{(M \cap \vG)}x{\vec a + r}\), there are \(h \in \sbtl{(H \cap \vG)}x{\vec a + r}\) and \(v \in \mc H\) so that \(\Int(v)(h\gamma)\) equals \(g\gamma\).  Then \(g\) equals \(h\dotm\comm{h\inv}v\dotm\mf Q_\gamma(k)\), which, by Lemma \initref{lem:filtration}\subpref{gp-gp}, belongs to \(h\dotm\sbtl{(H \cap \vG, \vG)}x{(\Rp{\vec a}, \vec a) + r}\).  Since \(\sbtl{(H \cap \vG, \vG)}x{(\Rp{\vec a}, \vec a) + r}\) contains \(\sbtl{N^\pm}x{\vec a + r}\), we have the first equality.

If \(v \in \mc H\) and \(g \in \sbtlp\vG x{\vec a + r}\) are such that \(h \ldef g\dotm\mf Q_\gamma(v)\) belongs to \(\sbtl{(H \cap \vG)}x{\vec a + r}\), then, in particular, \(g\) belongs to \(\sbtl{(M \cap \vG)}x{\vec a + r}\); and \(\Int(v)\gamma\) equals \(g\inv h\gamma\).  By Lemma \initref{lem:commute-gp}\subpref{onto}, there is \(k \in \mc H_+\) so that \(\Int(k)(g\inv h\gamma)\) belongs to \(\gamma\sbtl{(H \cap \vG)}x{\vec a + r}\).  By Lemma \initref{lem:commute-gp}\subpref{orbit}, we have that \(k v \in \mc H\) belongs to \(\sbtlp H x 0\), hence to \(\sbtlp{(H \cap \vG)}x{\vec a} \subseteq \mc H_+\).  Thus, \(v\) itself belongs to \(\mc H_+\), hence \(\mf Q_\gamma(v)\) to \(\sbtlp{\CC{\Der\vG}0(\gamma)}x{\vec a + r}\); and finally \(g\dotm\mf Q_\gamma(v) \in \sbtl{(H \cap \vG)}x{\vec a + r}\) belongs to \(\sbtlp{\CC{\Der\vG}0(\gamma)}x{\vec a + r}\), hence to \(\sbtlp{\CC{\Der\vG}r(\gamma)}x{\vec a + r}\).
\end{proof}

\begin{rem}
\label{rem:H-perp}
The proof of Lemma \ref{lem:H-perp} actually shows that \(\mf Q_\gamma\) induces an isomorphism
\[
\anoniso
	{\sbat{(H \cap \vG, M \cap \vG)}x{(\Rp{\vec a}, \vec a + r - \ord_\gamma)}}
	{\sbat G x{\vec a + r}/\sbat H x{\vec a + r}}.
\]
\end{rem}

\subsection{Dual blobs}
\label{sec:dual-blob}

We recall the
	\begin{itemize}
	\item non-negative real number \(r\),
	\item element \(\gamma \in G\), with associated groups \(\bM = \CC\bG 0(\gamma)\) and \(\bH = \CC\bG r(\gamma)\),
and	\item point \(x \in \BB(H)\)
	\end{itemize}
from \S\ref{sec:depth-matrix}.

We are now ready to set up our tool for moving between \(\Lie(H)\) and \(H\), to be used primarily in Theorem \ref{thm:asymptotic-exists}.  The most natural candidate is the exponential map, but this is only defined if the characteristic of \field is \(0\), and even then may have a very small radius of convergence.  It turns out that all we need is a ``mock'' exponential map satisfying certain (near-)equivariance properties, which we state in Hypothesis \ref{hyp:mexp}.  The hypothesis will not be satisfied unless we assume that \(r\) is positive, so we do so now.

Hypothesis \initref{hyp:mexp}\incpref{coset} is a strengthening of \cite{debacker:homogeneity}*{Hypothesis 3.2.1(1)}.  Hypothesis \initref{hyp:mexp}\incpref{elt}\subpref{cent} is implied by \cite{debacker:homogeneity}*{Hypothesis 3.2.1(2)}, and Hypothesis \initref{hyp:mexp}\incpref{elt}\subpref{disc} is satisfied if the multiset of depths (in the multiplicative filtration) of eigenvalues of \(\mexp(Y)\) coincides with the multiset of depths (in the additive filtration) of eigenvalues of \(Y\).  In particular, the exponential map itself, the Cayley transform, and the ``\(1 +{}\)'' map, as discussed in \cite{debacker:homogeneity}*{Remark 3.2.2}, all satisfy Hypothesis \initref{hyp:mexp}\incpref{elt}\subpref{disc}.

\begin{hyp}
\initlabel{hyp:mexp}
There is a homeomorphism \map{\matnotn{exp}\mexp}{\sbjtl{\Lie(H)}r}{\sbjtl H r}, whose inverse we denote by \matnotn{log}\mlog, satisfying the following properties.
	\begin{enumerate}
	\item\reinclabel{coset} For all tame Levi sequences \(\vec\bH\) in \bH with \(x \in \BB(\vec H)\), we have that
		\begin{enumerate}
		\item\sublabel{coset} \(\mexp(\sbtl{\Lie(\vec H)}x{\vec a})\) equals \(\sbtl{\vec H}x{\vec a}\) for all grouplike depth vectors \(\vec a \ge r\),
and		\item\sublabel{iso} \(\mexp(Y_1)\mexp(Y_2)\) belongs to \(\mexp(Y_1 + Y_2)\sbtl H x{\vec a_1 \cvee \vec a_2}\) for all grouplike depth vectors \(\vec a_1, \vec a_2 \ge r\) and elements \(Y_j \in \sbtl{\Lie(H)}x{\vec a_j}\), for \(j \in \sset{1, 2}\).
		\end{enumerate}
	\item\reinclabel{elt} For all \(Y \in \sbjtl{\Lie(H)}r\), we have that
		\begin{enumerate}
		\item\sublabel{cent} \(\Cent_\bH(\mexp(Y))\conn\) equals \(\Cent_\bH(Y)\conn\)
and		\item\sublabel{disc} \(\abs{\redD_H(Y)}\) equals \(\abs{\redD_H(\mexp(Y))}\).
		\end{enumerate}
	\end{enumerate}
\end{hyp}

\begin{defn}
\label{defn:dual-blob}
By Hypothesis \initref{hyp:mexp}\incpref{coset}(\subref{coset}, \subref{iso}), if
	\begin{itemize}
	\item \(\vbG = (\bG^0, \dotsc, \bG^\ell = \bG)\) is a tame, twisted Levi sequence containing \(\gamma\) such that \(x\) is in \(\BB(\vG)\),
	\item \(\vec a\) is a concave depth vector,
	\item \(\vec b\) is a grouplike depth vector satisfying \(\vec b \le \Rpp{\vec a + r}\) and \((\vec a + r) \cvee \vec b \ge \vec a + r\),
and	\item \(X^*\) belongs to \(\bigcap_{j = 0}^\ell \Lie^*(G^j) \cap \sbtl{\Lie^*(H)}x{\smash{\Tilde{\Vec b}}\vphantom{\vec b}}\),
	\end{itemize}
then \(\AddChar_{X^*} \circ \mlog\) is a character of \(\sbat{(H \cap \vG)}x{\min \sset{\vec a + r, \vec b}}\), which may be extended trivially across \(\sbtl{(H \cap \vG, \vG)}x{(\Rp{\vec a}, \vec a) + r}\dotm\sbtl\vG x{\vec b}\) to a character of \(\sbtl\vG x{\min \sset{\vec a + r, \vec b}}\).  We say that this character has \term{dual blob} \(X^* + \sbtl{\Lie^*(\vG)}x{\min \sset{\vec a + r, \vec b}\sptilde}\).
\end{defn}

\begin{rem}
\label{rem:dual-blob}
With the notation and terminology of Definition \ref{defn:dual-blob}, a character of \(\sbtl{(H \cap \vG)}x{\min \sset{\vec a + r, \vec b}}\) has a dual blob if and only if it is trivial on \(\sbtl{(H \cap \vG)}x{\vec b}\), and a character of \(\sbtl{(H \cap \vG, \vG)}x{\min \sset{\vec a + r, \vec b})}\) has a dual blob if and only if it is trivial on \(\sbtl{(H \cap \vG, \vG)}x{(\Rp{\vec a}, a) + r}\dotm\sbtl\vG x{\vec b}\).  In particular, it is necessary, but need not be sufficient, for it to be trivial on \(\sbtl\vG x{\vec b}\).

We will use Definition \ref{defn:dual-blob} to manufacture, for \(a \ge r\), characters of \(\sbtl G x a\) from 
elements of \(\sbtl{\Lie^*(H)}x{-a}\) (using \(\vec a = a - r\) and \(\vec b = \Rp a\)); and also characters of \(\sbtl{(G', G)}x{(r, \Rp s)}\), where \(\bG'\) is a tame, twisted Levi subgroup of \bG, containing \(\gamma\), such that \(x\) is in \(\BB(G')\), from 
elements of \(\sbtl{\Lie^*(H', H)}x{(-r, -s)}\) (using \(\vec a = (0, \Rp0)\) and \(\vec b = \Rp{(r, s)}\)).
\end{rem}

Hypothesis \ref{hyp:depth} is the analogue of \cite{adler-korman:loc-char-exp}*{Hypothesis 8.3}.  It says roughly that depths, of elements and characters, are the same whether measured in \(H\) or \(G\).  This is known, at least for elements, when \bH is a Levi subgroup of \bG by \cite{adler-debacker:bt-lie}*{Lemma 3.7.25}, and hence by tame descent even when \bH is just a tame, twisted Levi subgroup.

\begin{hyp}
\initlabel{hyp:depth}
For all
	\begin{itemize}
	\item \(a \in \tR\) with \(r \le a < \infty\),
	\item \(x \in \BB(H)\),
and	\item \(X^* \in \sbtl{\Lie^*(H)}x a\),
	\end{itemize}
if \bQ is a parabolic subgroup of \bG with unipotent radical \bU and Levi subgroup \bL satisfying \(x \in \BB(L)\), and
	\begin{enumerate}
	\item\sublabel{coset} the coset \(X^* + \sbtlpp{\Lie^*(G)}x{-a}\) contains an element of the annihilator of \(\Lie(Q)\)
or	\item\sublabel{char} the character 
of \(\sbtl G x a\) with dual blob \(X^* + \sbtlpp{\Lie^*(G)}x{-a}\) agrees with 
the trivial character of \(\sbtl Q x a\) on the intersection of their domains,
	\end{enumerate}
then the coset \(X^* + \sbtlpp{\Lie^*(H)}x{-a}\) is degenerate.
\end{hyp}

\begin{rem}
\label{rem:depth}
With the notation of Hypothesis \ref{hyp:depth}, the conditions on \(X^*\) are satisfied if there is some point \(o \in \BB(G)\) such that \(X^* + \sbtlpp{\Lie^*(G)}x{-a}\) intersects \(\sbtlpp{\Lie^*(G)}o{-a}\), or 
the character of \(\sbtl G x a\) with dual blob \(X^* + \sbtlpp{\Lie^*(G)}x{-a}\) agrees with the trivial character of \(\sbtl G o a\) on the intersection of their domains.  Indeed, in these cases, we need only choose a maximal split torus whose apartment contains both \(o\) and \(x\), and then take for \bQ the parabolic subgroup containing that torus that is dilated by \(o - x\) \cite{springer:lag}*{\S13.4.1}.
\end{rem}

\subsection{Asymptotic expansions and K-types}
\label{sec:asymptotic}

The main result of this section, Theorem \ref{thm:asymptotic-exists}, shows that certain characters have asymptotic expansions around (nearly) arbitrary semisimple elements in terms of Fourier transforms of orbital integrals, as in \cite{jkim-murnaghan:charexp}*{Theorem 5.3.1}.  This relies on Harish-Chandra's ``semisimple descent'' \cite{hc:queens}*{\S\S18--20}, as discussed in Lemma \ref{lem:centre}.

We recall the
	\begin{itemize}
	\item non-negative real number \(r\),
	\item element \(\gamma \in G\), with associated groups \(\bP^\mp = \CC\bG{-\infty}(\gamma\pinv)\), \(\bN^\mp\), and \(\bH = \CC\bG r(\gamma)\),
and	\item point \(x \in \BB(H)\)
	\end{itemize}
from \S\ref{sec:depth-matrix}.  We should regard \(x\) as fixed only provisionally, until Corollary \ref{cor:sample}; we need to allow it to vary in the proof of Theorem \ref{thm:asymptotic-exists}.  We will also introduce more notation before Definition \ref{defn:centred-char}.

Lemma \ref{lem:fa-explicit} is a technical computation to prepare for this descent.  It is convenient (particularly in Proposition \ref{prop:dist-r-to-s+}) to state and prove Lemmas \ref{lem:fa-explicit} and \ref{lem:centre} uniformly for \(r = 0\) and for \(r\) positive.

\begin{lem}
\initlabel{lem:fa-explicit}
For any
	\begin{itemize}
	\item tame, twisted Levi sequence \vbG in \bG containing \(\gamma\), such that \(x\) is in \(\BB(\vG)\),
	\item concave depth vector \(\vec a\),
and	\item \(h \in \sbtl H x r\),
	\end{itemize}
we have that
\begin{align*}
\uint_{\sbtl\vG x{\vec a + r}\dotm h\gamma\sbtl\vG x{\vec a + r}} &F(g)\upd g \\
\intertext{equals}
\uint_{\sbtl{(H \cap \vG, \vG)}x{(\Rp{\vec a}, \vec a + r - \ord_{\gamma\pinv})}} \uint_{\sbtl{(H \cap \vG)}x{\vec a + r}\dotm h\gamma} &F(\Int(g)m)\upd m\,\upd g
\end{align*}
for all \(F \in \Hecke(G)\).
\end{lem}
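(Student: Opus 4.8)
The identity is a ``folding'' formula: it re-expresses the integral over the two-sided coset $\sbtl\vG x{\vec a + r}\dotm h\gamma\dotm\sbtl\vG x{\vec a + r}$ as an iterated integral whose outer variable conjugates and whose inner variable sweeps out a one-sided coset of the smaller group $\sbtl{(H \cap \vG)}x{\vec a + r}$.  Write $K = \sbtl\vG x{\vec a + r}$, $K_H = \sbtl{(H \cap \vG)}x{\vec a + r}$, $D = \sbtl{(H \cap \vG, \vG)}x{(\Rp{\vec a}, \vec a + r - \ord_{\gamma\pinv})}$, and $X = K h\gamma K$.  The plan is first to unfold the right-hand side.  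Since conjugation by any element of $D \subseteq \bG\conn$ preserves the canonical Haar measure on $G$, the substitution $y = \Int(g)m$ in the inner integral, together with unimodularity of $G$ and Fubini's theorem, gives
\[
\uint_D \uint_{K_H h\gamma} F(\Int(g)m)\upd m\,\upd g
\qeqq
\frac 1{\meas(D)\meas(K_H)}\int_G F(y)\psi(y)\upd y,
\]
where $\psi(y) = \meas(\{g \in D : \Int(g)\inv y \in K_H h\gamma\})$.  The left-hand side of the lemma equals $\meas(X)\inv\int_X F$, so it suffices to show that $\psi$ vanishes off $X$ and is constant on $X$: granting that, and calling the constant value $\psi_o$, the identity becomes $\psi_o = \meas(D)\meas(K_H)/\meas(X)$, which is exactly the case $F = \mathbf 1_X$ of the identity---and that case is clear, since $\Int(g)(K_H h\gamma) \subseteq X$ for $g \in D$ forces the right-hand side with $F = \mathbf 1_X$ to be $\uint_D\uint_{K_H h\gamma}1 = 1$.

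Now $\operatorname{supp}\psi = \bigcup_{g \in D}\Int(g)(K_H h\gamma)$, and the substitution $g \mapsto g_o g$ (which preserves $D$ for $g_o \in D$) shows $\psi(\Int(g_o)m) = \psi(m)$; so both remaining claims will follow once I prove the set equality $\bigcup_{g \in D}\Int(g)(K_H h\gamma) = X$ and the constancy of $\psi$ on $K_H h\gamma$ alone.  The inclusion ``$\supseteq$'' is Lemma \ref{lem:commute-gp}\subpref{onto}, applied with the depth matrix $\vec a + r$ playing the role of its ``$F$'' and the depth matrix defining $D$ playing the role of its ``$f$'' (so that $\sbtl{\CC\vG r(\gamma)}x F\dotm h\gamma = K_H h\gamma$, $\sbtl\vG x F\dotm h\gamma\sbtl\vG x F = X$, the required bound $F^\mp \ge f^\mp + \ord_{\gamma\pinv}$ off $\CC\bG r(\gamma)$ holds with equality, and $h$ lies in $\sbtl{\CC\vG r(\gamma)}x f$); the inclusion ``$\subseteq$'' is a routine filtration computation, in the style of the proofs of Lemmas \ref{lem:heres-a-gp} and \ref{lem:filtration}, decomposing $g \in D$ along the generating pieces of $D$ and using that $h$ normalises $K$.

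It remains to check constancy of $\psi$ on $K_H h\gamma$.  Fix $m = k h\gamma$ with $k \in K_H$; since the identity lies in $\{g \in D : \Int(g)\inv m \in K_H h\gamma\}$, it suffices to exhibit this set as a single left coset of a subgroup of $D$ independent of $m$.  The crux is the orbit assertion Lemma \ref{lem:commute-gp}\subpref{orbit} (invoked, as Remark \ref{rem:gamma} permits, with $r$ lowered when $\vec a + r - \ord_{\gamma\pinv}$ is not everywhere positive, so that $D$ need not lie in $\sbtlp G x 0$): together with the fact that $\gamma$ normalises $\sbtl H x r$, it forces any $g \in D$ with $\Int(g)\inv(K_H h\gamma) \cap K_H h\gamma \ne \emptyset$ to lie in the ``$H$-part'' $D \cap \sbtlp H x 0 = \sbtl{(H \cap \vG)}x{\Rp{\vec a}}$ of $D$.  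Within that group, the commutator estimates of Lemma \ref{lem:filtration}\subpref{gp-gp}, together with the description of the perpendicular group in Lemma \ref{lem:H-perp} and Remark \ref{rem:H-perp} (where $\mf Q_\gamma$ plays the role of a pairing with $\gamma$), identify $\{g \in \sbtl{(H \cap \vG)}x{\Rp{\vec a}} : \Int(g)\inv(h\gamma) \in K_H h\gamma\}$ with a fixed subgroup $D_o$---morally $\mf Q_\gamma\inv(K_H)$---and show that for every $m \in K_H h\gamma$ the set in question is a left coset of $D_o$; hence $\psi \equiv \psi_o$ on $X$.  The step I expect to be the main obstacle is exactly this orbit computation---pinning down which conjugators preserve the small coset $K_H h\gamma$---and, entangled with it, the bookkeeping needed to make the several depth vectors in play ($\Rp{\vec a}$ on the $H$-part of $D$, $\vec a + r - \ord_{\gamma\pinv}$ elsewhere, $\vec a + r$ on $K$ and on $K_H$) line up so that Lemma \ref{lem:commute-gp}\subpref{orbit}, Lemma \ref{lem:filtration}, and Remark \ref{rem:H-perp} all apply with the required bounds, and so that the cases $r = 0$ and $r$ positive are treated uniformly.
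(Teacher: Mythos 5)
Your opening ``unfolding'' step is where the argument breaks, and it cannot be repaired without doing essentially the work you defer.  The inner integral in the lemma is taken over the coset \(K_H h\gamma = \sbtl{(H \cap \vG)}x{\vec a + r}\dotm h\gamma\), which is a translate of a compact open subset of the \emph{proper} subgroup \(H\), hence has Haar-\(G\)-measure zero; the measure \(\upd m\) is an \(H\)-type measure, not the restriction of \(\upd y\).  Consequently the substitution \(y = \Int(g)m\) does \emph{not} convert the double integral into \(\meas(D)\inv\meas(K_H)\inv\int_G F(y)\psi(y)\upd y\) with \(\psi(y) = \meas\{g \in D : \Int(g)\inv y \in K_H h\gamma\}\): the fibre \(\{g \in D : \Int(g)\inv y \in K_H h\gamma\}\) is (as your own later analysis shows, via Lemma \ref{lem:commute-gp}\subpref{orbit}) contained in a translate of \(D \cap \sbtlp H x 0 \subseteq H\), so it has measure zero for the Haar measure of \(G\) on \(D\), and \(\psi\) is identically \(0\) whenever \(\bH\conn \subsetneq \bG\conn\).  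Your proposal is in fact internally inconsistent on this point: you identify each fibre as a coset of a fixed subgroup \(D_o\) of \(D \cap \sbtlp H x 0\), which forces \(\psi_o = 0\), while the normalisation from \(F = \mathbf 1_X\) forces \(\psi_o = \meas(D)\meas(K_H)/\meas(X) > 0\).  What a correct version of your plan would need is the pushforward of \(\upd g \otimes \upd m\) under \((g, m) \mapsto \Int(g)m\), i.e.\ the submersion-principle density of Harish-Chandra/Rodier (exactly the \(f_\alpha\) of Lemma \ref{lem:centre}); showing that this density is constant on \(X\), and evaluating the constant, is precisely the content of the lemma, so the Fubini shortcut hides rather than avoids the real work.  (Relatedly, ``unimodularity of \(G\)'' is not the relevant invariance here: the comparison of the \(G\)- and \(H\)-sided measures is where the modulus \(\modulus_{P^-}(\gamma)\) enters, as one sees from Lemma \ref{lem:double-coset-count} and its use immediately afterwards in Lemma \ref{lem:centre}.)

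For contrast, the paper's proof avoids any such global change of variables: it observes that both sides depend only on \(F\) restricted to the double coset, reduces to \(F\) the characteristic function of a \(\sbtl\vG x{\vec b + r}\)-double coset inside \(\sbtl\vG x{\vec a + r}\dotm h\gamma\sbtl\vG x{\vec a + r}\) for each finer concave \(\vec b\), and then computes both sides explicitly as indices --- using Lemma \ref{lem:commute-gp}(\subref{onto}, \subref{orbit}) to determine which conjugators \(g \in D\) meet the finer coset and what the intersection \(\sbtl{(H \cap \vG)}x{\vec a + r}\dotm h\gamma \cap \Int(g)\inv U\) is, and Lemmas \ref{lem:double-coset-count} and \ref{lem:MP-card} to convert everything into matching index computations.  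Your support claim and your use of the orbit lemma run parallel to parts of that computation, but the measure-theoretic backbone of your argument (the constancy of your \(\psi\)) is not a valid substitute for it.
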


\begin{proof}
For any 
concave depth vector \(\vec b\), put \(\mc H_{\vec b} = \sbtl{(H \cap \vG, \vG)}x{(\Rp{\vec b}, \vec b + r - \ord_{\gamma\pinv})}\).  Note that Lemmas \initref{lem:filtration}\subpref{gp-gp} and \initref{lem:commute-gp}\subpref{down} give that \(\Int(\mc H_{\vec b})(\sbtl{(H \cap \vG)}x{\vec b + r}\dotm h\gamma)\) is contained in the \(\sbtl{\vec G}x{\vec b + r}\)-double coset to which \(h\gamma\) belongs.

It suffices to prove, for each
concave depth vector \(\vec b\) satisfying \(\vec a \le \vec b \le \vec a \cvee \vec b\),
the desired equality in case \(F\) is the characteristic function of a \(\sbtl\vG x{\vec b + r}\)-double coset contained in the \(\sbtl\vG x{\vec a + r}\)-double coset to which \(h\gamma\) belongs.  By Lemma \initref{lem:commute-gp}\subpref{onto}, such a double coset is of the form \(\sbtl\vG x{\vec b + r}\Int(k_{\vec a})\inv(h_{\vec a}h\gamma)\sbtl\vG x{\vec b + r} = \Int(k_{\vec a})\inv(\sbtl\vG x{\vec b + r}\dotm h_{\vec a}h\gamma\sbtl\vG x{\vec b + r})\) for some \(k_{\vec a} \in \mc H_{\vec a}\) and \(h_{\vec a} \in \sbtl{(H \cap \vG)}x{\vec a + r}\).  Upon replacing \(F\) by \(k_{\vec a}F k_{\vec a}\inv\), which does not change either side of the desired equality, we may, and do, assume that \(k_{\vec a}\) is the identity.  Put
\[
U = \sbtl\vG x{\vec b + r}\dotm h_{\vec a}h\gamma\sbtl\vG x{\vec b + r} = \sbtl{(N^+ \cap \vG)}x{\vec b + r}\dotm h_{\vec a}h\gamma\sbtl\vG x{\vec b + r}.
\]
Note that \(\sbtl{(H \cap \vG)}x{\vec a + r}\dotm h\gamma \cap U\) equals \(\sbtl{(H \cap \vG)}x{\vec b + r}\dotm h_{\vec a}h\gamma\).  If \(g \in \mc H_{\vec a}\) is such that \(\Int(g)\inv U = \Int(g)\inv(\sbtl\vG x{\vec b + r}\dotm h_{\vec a}h\gamma\sbtl\vG x{\vec b + r})\) intersects \(\sbtl{(H \cap \vG)}x{\vec a + r}\dotm h\gamma\), then, by another application of Lemma \initref{lem:commute-gp}\subpref{onto}, there is \(k_{\vec b} \in \mc H_{\vec b}\) so that \(\Int(k_{\vec b}g)\inv(\sbtl{(H \cap \vG)}x{\vec b + r}\dotm h_{\vec a}h\gamma)\) intersects \(\sbtl{(H \cap \vG)}x{\vec a + r}\dotm h\gamma\).  By Lemma \initref{lem:commute-gp}\subpref{orbit}, we have that \(k_{\vec b}g\) belongs to \(\sbtlp{(H \cap \vG)}x{\vec a}\), hence \(g\) to \(\mc H_{\vec b}\dotm\sbtlp{(H \cap \vG)}x{\vec a}\).  If \(h_0 \in \sbtlp{(H \cap \vG)}x{\vec a}\) is such that \(g h_0\) belongs to \(\mc H_{\vec b}\), hence stabilises \(U\), then we have that
\begin{multline}
\tag{$*$}
\sublabel{eq:RHS-const}
\sbtl{(H \cap \vG)}x{\vec a + r}\dotm h\gamma \cap \Int(g)\inv U
\\
\qeqq
\Int(h_0)(\sbtl{(H \cap \vG)}x{\vec a + r}\dotm h\gamma \cap U) = \Int(h_0)(\sbtl{(H \cap \vG)}x{\vec b + r}\dotm h_{\vec a}h\gamma).
\end{multline}

By Lemma \ref{lem:double-coset-count}, we have that \(\meas(U)\) equals
\[
\indx{\sbtl\vG x{\vec b + r}}{\sbtl\vG x{\vec b + r} \cap \Int(h_{\vec a}h\gamma)\inv\sbtl\vG x{\vec b + r}}\dotm\meas(\sbtl\vG x{\vec b + r})
= \modulus_{P^-}(\gamma)\meas(\sbtl\vG x{\vec b + r}),
\]
and, similarly, \(\meas(\sbtl\vG x{\vec a + r}\dotm h\gamma\sbtl\vG x{\vec a + r})\) equals \(\modulus_{P^-}(\gamma)\meas(\sbtl\vG x{\vec a + r})\).  Thus, the left-hand side of the desired equality is \(\indx{\sbtl\vG x{\vec a + r}}{\sbtl\vG x{\vec b + r}}\inv\).

Now we consider the right-hand side.  By Lemma \initref{lem:commute-gp}(\subref{down}, \subref{up}), we have that \(\comm\gamma\anondot\) induces 
a bijection
\anoniso{\sbat{(H, P^- \cap \vG)}x{(\Rp0, \vec c + r - \ord_\gamma)}}{\sbat{(H, P^- \cap \vG)}x{(\Rp0, \vec c + r)}}, and \(\comm{\gamma\inv}\anondot\) induces 
a bijection
\anoniso{\sbat{(N^+ \cap \vG)}x{\vec c + r - \ord_{\gamma\inv}}}{\sbat{(N^+ \cap \vG)}x{\vec c + r}}, so that
\[
\card{\sbat{(H, \vG)}x{(\Rp0, \vec c + r - \ord_{\gamma\pinv})}}
\qeqq
\card{\sbat{(H, G)}x{(\Rp0, \vec c + r)}},
\]
for all concave depth vectors \(\vec c\),
and hence that
\begin{align*}
&\frac{\indx{\sbtl\vG x{\vec a + r}}{\sbtl\vG x{\vec b + r}}}{\indx{\sbtl{(H \cap \vG)}x{\vec a + r}}{\sbtl{(H \cap \vG)}x{\vec b + r}}} \\
&\qquad= \indx{\sbtl\vG x{\vec a + r}}{\sbtl{(H \cap \vG, \vG)}x{(\vec a + r, \vec b + r)}}
= \indx{\sbtl{(H, \vG)}x{(\Rp0, \vec a + r)}}{\sbtl{(H, \vG)}x{(\Rp0, \vec b + r)}} \\
\intertext{equals}
&\indx{\sbtl{(H, \vG)}x{(\Rp0, \vec a + r - \ord_{\gamma\pinv})}}{\sbtl{(H, \vG)}x{(\Rp0, \vec b + r - \ord_{\gamma\pinv})}}.
\end{align*}
Combining this with \loceqref{eq:RHS-const} shows that the right-hand side is
\begin{align*}
&\uint_{\mc H_{\vec a}}
	\meas(\sbtl{(H \cap \vG)}x{\vec a + r})\inv\meas(\sbtl{(H \cap \vG)}x{\vec a + r}\dotm h\gamma \cap \Int(g)\inv U)
\upd g \\ ={}
&\indx{\mc H_{\vec a}}{\mc H_{\vec b}\dotm\sbtlp{(H \cap \vG)}x{\vec a}}\inv\times{} \\
&\qquad\indx{\sbtl{(H \cap \vG)}x{\vec a + r}}{\sbtl{(H \cap \vG)}x{\vec b + r}}\inv \\ ={}
&\indx{\sbtl{(H, \vG)}x{(\Rp0, \vec a + r - \ord_{\gamma\pinv})}}{\sbtl{(H, \vG)}x{(\Rp0, \vec b + r - \ord_{\gamma\pinv})}}\inv\times{} \\
&\qquad\indx{\sbtl{(H \cap \vG)}x{\vec a + r}}{\sbtl{(H \cap \vG)}x{\vec b + r}}\inv \\ ={}
&\indx{\sbtl\vG x{\vec a + r}}{\sbtl\vG x{\vec b + r}}\inv,
\end{align*}
so the desired equality follows.
\end{proof}

Our approach to Lemma \ref{lem:centre} requires that \(\gamma\sbjtl H r\) be \(H\conn\)-invariant.  The most natural way that this can happen is if \(\gamma\) centralises \(\bH\conn\).  This sort of hypothesis has arisen before (see Lemma \ref{lem:in-H'}), and will again (though, after this section, not until Theorem \ref{thm:asymptotic-pi-to-pi'}), so we state it formally now.

\begin{hyp}
\label{hyp:gamma-central}
The action of \(\gamma\) centralises \(\bH\conn\).
\end{hyp}

Lemma \ref{lem:centre} is a quantitative version of \cite{adler-korman:loc-char-exp}*{Definition 7.3}, which applies Harish-Chandra's submersion principle to the (submersive, by \cite{rodier:loc-int}*{p.~774, Proposition 1}) map \anonmap{G\conn \times \gamma\sbjtl H r}G given by \anonmapto{(g, m)}{\Int(g)m}.  It is important to note that the distribution \(T_\gamma\) really depends only on \(\gamma\), not on \(x\), \vbG, \(\vec a\), \(h\), or \(g\).  In Proposition \ref{prop:dist-r-to-s+}, we need this result in the case \(r = 0\).

\begin{lem}
\label{lem:centre}
If \(T\) is an invariant distribution on \(G\), then there is an \(H\conn\)-invariant distribution \(T_\gamma\) on \(H\), supported by \(\gamma\sbjtl H r\), such that, with the notation of Lemma \ref{lem:fa-explicit}, we have that
\[
T_\gamma\bigl(\chrc{\sbtlp{(H \cap \vG)}x{\vec a + r}}h\gamma\bigr)
\qeqq
T\bigl(\chrc{\sbtlp\vG x{\vec a + r}}h g\gamma\bigr)
\]
for any \(g \in \sbtl{(H \cap \vG, \vG)}x{(\Rp{\vec a}, \vec a) + r}\).
\end{lem}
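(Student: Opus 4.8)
The plan is to construct $T_\gamma$ by Harish-Chandra's submersion principle, following \cite{adler-korman:loc-char-exp}*{Definition 7.3}, and then to extract the explicit formula by feeding Lemma \ref{lem:fa-explicit} through that construction.  By \cite{rodier:loc-int}*{p.~774, Proposition 1}, the map $\map\phi{G\conn \times \gamma\sbjtl H r}G$ given by $(g, m) \mapsto \Int(g)m$ is everywhere submersive, with image the open, $G\conn$-stable set $\bigcup_{g \in G\conn} \Int(g)(\gamma\sbjtl H r)$; here $\gamma\sbjtl H r$ is itself $H\conn$-stable by Hypothesis \ref{hyp:gamma-central}.  Harish-Chandra's submersion principle \cite{hc:submersion} then provides, for a choice of compatible measures, a fibre-integration map $\map{\phi_!}{\Hecke(G\conn \times \gamma\sbjtl H r)}{\Hecke(G)}$ onto the functions supported in that open set.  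For $F \in \Hecke(\gamma\sbjtl H r)$ and any $\eta \in \Hecke(G\conn)$ of total mass $1$, I would put $T_\gamma(F) = T(\phi_!(\eta \otimes F))$.  A total-mass-$0$ element of $\Hecke(G\conn)$ is a finite linear combination of differences $g\zeta - \zeta$ of a function with a left translate of itself (pass to a finite quotient), and $\phi$ carries left translation in the first variable to conjugation in $G$; so invariance of $T$ shows that $T_\gamma(F)$ does not depend on $\eta$.  Moreover, conjugating $F$ by $h \in H\conn$ in the second variable of $\phi$ becomes, after the measure-preserving substitution $m \mapsto \Int(h)m$, right translation of $\eta$ in the first variable, which preserves the total mass; so $T_\gamma$ is $H\conn$-invariant.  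It is supported on $\gamma\sbjtl H r$ by construction (extend by $0$ to get a distribution on $H$), and nothing but $\gamma$, $T$, and the ambient measures entered the construction, so $T_\gamma$ depends only on $\gamma$.

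For the explicit formula, I would apply Lemma \ref{lem:fa-explicit} with $\vec a$ replaced by $\Rp{\vec a}$, which is again a concave depth vector and for which $\sbtl\vG x{\Rp{\vec a} + r} = \sbtlp\vG x{\vec a + r}$ and $\sbtl{(H \cap \vG)}x{\Rp{\vec a} + r} = \sbtlp{(H \cap \vG)}x{\vec a + r}$.  Writing $\mc H$ for the corresponding ``perpendicular'' group $\sbtl{(H \cap \vG, \vG)}x{(\Rp{\vec a}, \Rpp{\vec a + r - \ord_{\gamma\pinv}})}$ of Lemma \ref{lem:fa-explicit} — which is built from Moy--Prasad groups, hence lies in $G\conn$, and whose indicator $\chrc{\mc H}$ has total mass $1$ — the iterated-integral identity of that lemma, read against compatible Haar measures, says exactly that $\phi_!\bigl(\chrc{\mc H} \otimes \chrc{\sbtlp{(H \cap \vG)}x{\vec a + r}}h\gamma\bigr)$ equals $\chrc{\sbtlp\vG x{\vec a + r}\dotm h\gamma\sbtlp\vG x{\vec a + r}}$.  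Applying $T$ gives
\[
T_\gamma\bigl(\chrc{\sbtlp{(H \cap \vG)}x{\vec a + r}}h\gamma\bigr)
\qeqq
T\bigl(\chrc{\sbtlp\vG x{\vec a + r}\dotm h\gamma\sbtlp\vG x{\vec a + r}}\bigr).
\]
To finish, I would decompose the double coset $\sbtlp\vG x{\vec a + r}\dotm h\gamma\sbtlp\vG x{\vec a + r}$ into left $\sbtlp\vG x{\vec a + r}$-cosets: using the factorisation $\sbtl\vG x{\vec a + r} = \sbtl{(H \cap \vG)}x{\vec a + r}\dotm\sbtl{\Der(H \cap \vG, \vG)}x{(\Rp{\vec a}, \vec a) + r}$ of Lemma \ref{lem:H-perp}, together with the identity $\Int(w)\gamma = \mf Q_\gamma(w)\gamma$ and Remark \ref{rem:H-perp}, these cosets are precisely the $\sbtlp\vG x{\vec a + r}\dotm h g\gamma$ with $g$ running over $\sbtl{(H \cap \vG, \vG)}x{(\Rp{\vec a}, \vec a) + r}$ modulo $\sbtlp\vG x{\vec a + r}$; and the ``onto'' and ``orbit'' statements of Lemma \ref{lem:commute-gp} show that $G\conn$-conjugation permutes them transitively.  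Hence $T$ is constant on them, so $T(\chrc{\sbtlp\vG x{\vec a + r}\dotm h\gamma\sbtlp\vG x{\vec a + r}}) = T(\chrc{\sbtlp\vG x{\vec a + r}}h g\gamma)$ for every $g \in \sbtl{(H \cap \vG, \vG)}x{(\Rp{\vec a}, \vec a) + r}$, which is the desired identity.

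The step I expect to be the main obstacle is the measure bookkeeping that turns the iterated integral of Lemma \ref{lem:fa-explicit} into the clean identity $\phi_!\bigl(\chrc{\mc H} \otimes \chrc{\sbtlp{(H \cap \vG)}x{\vec a + r}}h\gamma\bigr) = \chrc{\sbtlp\vG x{\vec a + r}\dotm h\gamma\sbtlp\vG x{\vec a + r}}$ with no spurious constants: the fibre measure implicit in the submersion principle must be calibrated against Waldspurger's canonical Haar measures on $G$, $G\conn$, $H$, and the relevant Moy--Prasad subgroups (the importance of this normalisation is flagged both in the discussion around Lemma \ref{lem:centre} and in that of Theorem \ref{thm:dist-G-to-G'}), and one must check via Lemma \ref{lem:double-coset-count} that the factors $\modulus_{P^-}(\gamma)$ produced on the two sides cancel.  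A subtler point is that the conjugacy arguments in the last step are only allowed to use $G\conn$ and $H\conn$ — since $\bH$ and $\bH \cap \vG$ may be disconnected — which is exactly why the orbit statements, rather than a bare index count, are required there.  The verification that $T_\gamma$ is well defined, $H\conn$-invariant, and independent of the auxiliary data is, by contrast, routine.
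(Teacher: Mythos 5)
Your construction of \(T_\gamma\) via the submersion principle, and your use of Lemma \ref{lem:fa-explicit} at level \(\Rp{\vec a}\) to identify \(\phi_!\bigl(\chrc{\mc H} \otimes \chrc{\sbtlp{(H \cap \vG)}x{\vec a + r}}h\gamma\bigr)\) with \(\chrc{\sbtlp\vG x{\vec a + r}\dotm h\gamma\sbtlp\vG x{\vec a + r}}\), are exactly the paper's route (it cites \cite{adler-korman:loc-char-exp}*{Definition 7.3, Theorem 7.1} for the first step).  The gap is in your final step.  The left \(\sbtlp\vG x{\vec a + r}\)-cosets contained in \(\sbtlp\vG x{\vec a + r}\dotm h\gamma\sbtlp\vG x{\vec a + r}\) are the sets \(\sbtlp\vG x{\vec a + r}\dotm h\,\Int(\gamma)(k)\,\gamma\) with \(k\) running over a transversal of \(\sbtlp\vG x{\vec a + r} \cap \Int(\gamma)\sbtlp\vG x{\vec a + r}\) in \(\sbtlp\vG x{\vec a + r}\); there are \(\modulus_{P^-}(\gamma)\) of them, indexed by the \(N^-\)-directions that \(\Int(\gamma)\) dilates (this is the content of Lemma \ref{lem:double-coset-count}).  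They are \emph{not} the cosets \(\sbtlp\vG x{\vec a + r}\dotm h g\gamma\) with \(g\) running over \(\sbtl{(H \cap \vG, \vG)}x{(\Rp{\vec a}, \vec a) + r}\) modulo \(\sbtlp\vG x{\vec a + r}\): that family is indexed by the (generally much larger, \(\gamma\)-independent) quotient of Remark \ref{rem:H-perp}, and most of its members do not even meet the double coset.  For instance, if \(\gamma\) is compact, so that \(\Int(\gamma)\) and \(\Int(h)\) preserve \(\sbtlp\vG x{\vec a + r}\), the double coset is the single coset \(\sbtlp\vG x{\vec a + r}h\gamma\), while \(h g\gamma\) lies in it only when \(g \in \sbtlp\vG x{\vec a + r}\).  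Relatedly, ``\(G\conn\)-conjugation permutes them transitively'' is not something Lemma \ref{lem:commute-gp} gives you: a general element of \(G\conn\) does not carry left \(\sbtlp\vG x{\vec a + r}\)-cosets to left \(\sbtlp\vG x{\vec a + r}\)-cosets, so one may only conjugate by elements normalising \(\sbtlp\vG x{\vec a + r}\).

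The equality you actually need, \(T\bigl(\chrc{\sbtlp\vG x{\vec a + r}\dotm h\gamma\sbtlp\vG x{\vec a + r}}\bigr) = T\bigl(\chrc{\sbtlp\vG x{\vec a + r}}h g\gamma\bigr)\), is precisely where the modulus bookkeeping you flagged lives, and your step can be repaired: by the `onto' and `orbit' parts of Lemma \ref{lem:commute-gp}, every element of the double coset has the form \(\Int(u)(h'h\gamma)\) with \(h' \in \sbtlp{(H \cap \vG)}x{\vec a + r}\) and \(u\) in the auxiliary group of Lemma \ref{lem:fa-explicit}, which normalises \(\sbtlp\vG x{\vec a + r}\) by Lemma \ref{lem:filtration}; hence each of the \(\modulus_{P^-}(\gamma)\) left cosets equals \(\Int(u)\bigl(\sbtlp\vG x{\vec a + r}h\gamma\bigr)\) for some such \(u\), invariance of \(T\) makes their contributions equal, and the count cancels the factor coming from \(\meas\) of the double coset.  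Separately, \(\sbtlp\vG x{\vec a + r}h g\gamma = \Int(v)\bigl(\sbtlp\vG x{\vec a + r}h\gamma\bigr)\) for \(v\) with \(g \equiv \mf Q_\gamma(v)\) modulo \(\sbtlp\vG x{\vec a + r}\) (Lemma \ref{lem:H-perp} together with Lemma \ref{lem:filtration}), which handles a general \(g\).  The paper instead conjugates the entire double coset of \(h g\gamma\) onto that of \(h\gamma\) by this single \(v\) and passes between double and single cosets using Lemma \ref{lem:double-coset-count}, which is why it builds the factor \(\modulus_{P^-}(\gamma)\inv\) into its definition of \(T_\gamma\); your definition omits that factor, so if you follow the corrected left-coset argument you must check that the coset count and the normalisation of \anonchrc\ together reproduce the identity exactly as stated, with no stray power of \(\modulus_{P^-}(\gamma)\).
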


\begin{proof}
As in \cite{adler-korman:loc-char-exp}*{Definition 7.3}, given \(f \in \Hecke(\gamma\sbjtl H r)\), we
	\begin{enumerate}
	\item choose a compact, open subgroup \(K\) of \(G\conn\), and let \(\alpha \in \Hecke(G\conn \times \gamma\sbjtl H r)\) be the function \mapto{\chrc K \otimes f}{(g, m)}{\chrc K(g)f(m)};
	\item use \cite{adler-korman:loc-char-exp}*{Theorem 7.1} to find a function \(f_\alpha \in \Hecke(\Int(G\conn)(\gamma\sbjtl H r))\) such that
\[
\int_G f_\alpha(g)F(g)\upd g
\qeqq
\int_G \int_{\gamma\sbjtl H r} \alpha(g, m)F(\Int(g)m)\upd m\,\upd g
\]
for all \(F \in \Hecke(G)\);
and	\item define \(T_\gamma(f) \ldef \modulus_{P^-}(\gamma)\inv T(f_\alpha)\).
	\end{enumerate}
(The modulus function is not necessary for the definition, but it saves us from having to carry around an unpleasant constant  multiple coming from our desire to work with single, rather than double, cosets.  We have otherwise followed the notation of \cite{adler-korman:loc-char-exp}, but point out a slight potential for confusion:  \(\alpha\) depends on \(f\), although it is not explicitly indicated in the notation; and the symbol `\(f\)' in `\(f_\alpha\)' is just punctuation, and is not the same as the \(f\) at which we are evaluating \(T_\gamma\).)

Although the function \(\alpha\), and hence \(f_\alpha\), depends on the choice of \(K\), the number \(T(f_\alpha)\) does not.  Indeed, if \(K'\) is contained in \(K\), and we write \(\alpha'\) for the analogue of \(\alpha\) constructed with respect to \(K'\), then the fact that \(\alpha\) equals \(\displaystyle\uint_K (k \otimes 1)\alpha'\upd k\) means that \(f_\alpha\) equals \(\displaystyle\uint_K k\inv f_{\alpha'}k\,\upd k\), so that, by invariance of \(T\), the equality \(T(f_\alpha) = T(f_{\alpha'})\) holds.

Put \(\mc H_+ = \sbtlp{(H, G)}x{(\vec a, \vec a + r - \ord_{\gamma\pinv})}\).  Once we take into account all the normalisations of measures, Lemma \ref{lem:fa-explicit} says that, if \(f\) is \(\chrc{\sbtlp{(H \cap \vG)}x{\vec a + r}}\dotm h\gamma\) and we choose \(K\) to be \mc H, then \(f_\alpha\) is \(\chrc{\sbtlp\vG x{\vec a + r}\dotm h\gamma\sbtlp\vG x{\vec a + r}}\).  By definition (Lemma \ref{lem:H-perp}), we have that \(g\) is congruent modulo \(\sbtlp\vG x{\vec a + r}\) to an element of the form \(\mf Q_\gamma(v)\) with \(v \in \sbtl{(H \cap \vG, \vG)}x{(\Rp{\vec a}, \vec a + r - \ord_\gamma)}\); so, by Lemma \initref{lem:filtration}\subpref{gp-gp}, we have that
\begin{align*}
&\sbtlp\vG x{\vec a + r}\dotm h g\gamma\sbtlp\vG x{\vec a + r} \\ ={}
&\sbtlp\vG x{\vec a + r}\dotm h\Int(v)\gamma\dotm\sbtlp\vG x{\vec a + r} \\ ={}
&\Int(v)\bigl(
	\sbtlp\vG x{\vec a + r}\dotm\Int(v)\inv h\dotm\gamma\sbtlp\vG x{\vec a + r}
\bigr) \\
\intertext{equals}
&\Int(v)\bigl(
	\sbtlp\vG x{\vec a + r}\dotm h\gamma\sbtlp\vG x{\vec a + r}
\bigr).
\end{align*}
Now invariance of \(T\) gives that
\begin{align*}
&T\bigl(\chrc{\sbtlp\vG x{\vec a + r}\dotm h g\gamma\sbtlp\vG x{\vec a + r}}\bigr) \\
\intertext{equals}
&T(f_\alpha) = \modulus_{P^-}(\gamma)T_\gamma\bigl(\chrc{\sbtlp\vG x{\vec a + r}}h\gamma\bigr).
\end{align*}
On the other hand, Lemma \ref{lem:double-coset-count} gives that
\[
\chrc{\sbtlp\vG x{\vec a + r}\dotm h g\gamma\sbtlp\vG x{\vec a + r}}
\qeqq
\modulus_{P^-}(\gamma)\chrc{\sbtlp\vG x{\vec a + r}}h g\gamma.
\]
The result follows.
\end{proof}

\begin{cor}
\label{cor:centre}
With the notation of Lemma \ref{lem:centre}, for any \(a \in \tR\) with \(r \le a < \infty\) and any character \(\phi\) of \(\sbat H x a\), we have that
\[
T_\gamma\bigl(\gamma h\chrc{\sbtl H x a, \hat\phi^\vee}\bigr)
\qeqq
T\bigl(\gamma h\chrc{\sbtl G x a, \hat\phi^\vee}\bigr)
\]
for all \(h \in \sbtl H x r\), where \(\hat\phi\) is the extension of \(\phi\) trivially across \(\sbtl{(H, G)}x{(\Rp a, a)}\) to \(\sbtl G x a\).
\end{cor}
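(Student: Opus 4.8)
The plan is to reduce the asserted equality to the single--coset formula of Lemma~\ref{lem:centre}, by decomposing each of the two test functions into a $\phi^\vee$--weighted sum of normalised characteristic functions of cosets and then matching those sums term by term. To set up the bookkeeping, I would apply Lemma~\ref{lem:H-perp} with $\vbG = (\bG)$ and $\vec a = a - r$ (so that $\vec a + r$ is the one--entry vector $(a)$); this produces the ``perpendicular group'' $J \ldef \sbtl{(H, G)}x{(\Rp a, a)}$, together with $\sbtl G x a = \sbtl H x a\dotm J$, $\sbtl H x a \cap J = \sbtlp{\CC G r(\gamma)}x a = \sbtlp H x a$, and hence $\indx{\sbtl G x a}J = \indx{\sbtl H x a}{\sbtlp H x a}$; in particular a set $R \subseteq \sbtl H x a$ of representatives for $\sbat H x a$ also represents the left cosets of $J$ in $\sbtl G x a$. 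Since, by hypothesis, $\hat\phi$ is the extension of $\phi$ that is trivial on $J$, the character $\hat\phi^\vee$ is constant on each coset $tJ$, with value there $\phi^\vee(t)$ (viewing $\phi^\vee$ as a character of $\sbtl H x a$ trivial on $\sbtlp H x a$), so that
\[
\chrc{\sbtl G x a, \hat\phi^\vee}
\qeqq
\indx{\sbtl G x a}J\inv\sum_{t \in R}\phi^\vee(t)\chrc{tJ}
\qandq
\chrc{\sbtl H x a, \hat\phi^\vee}
\qeqq
\indx{\sbtl H x a}{\sbtlp H x a}\inv\sum_{t \in R}\phi^\vee(t)\chrc{\sbtlp H x a t}.
\]

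Next I would translate by $\gamma h$ and rewrite each summand in the shape that occurs in Lemma~\ref{lem:centre}. Using that $\gamma$ centralises $\bH\conn$ (Hypothesis~\ref{hyp:gamma-central}), and so commutes with $h$, with $t \in \sbtl H x a$, and with $\sbtlp H x a$, a short computation gives
\[
\gamma h\chrc{\sbtlp H x a t} \qeqq \chrc{\sbtlp H x a}h_t\gamma
\qandq
\gamma h\chrc{tJ} \qeqq \chrc{h_t\gamma J},
\]
where $h_t \ldef h t \in \sbtl H x r$ and $\chrc{h_t\gamma J}$ denotes the normalised characteristic function of the coset $h_t\gamma J$. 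Applying $T_\gamma$, respectively $T$, and remembering that $\indx{\sbtl G x a}J = \indx{\sbtl H x a}{\sbtlp H x a}$, the corollary reduces to the termwise identity $T_\gamma(\chrc{\sbtlp H x a}h_t\gamma) = T(\chrc{h_t\gamma J})$ for each $t \in R$.

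To prove this, I would invoke Lemma~\ref{lem:centre} with $\vbG = (\bG)$, $\vec a = a - r$, and $h_t$ in place of $h$: it gives $T_\gamma(\chrc{\sbtlp H x a}h_t\gamma) = T(\chrc{\sbtlp G x a}h_t g\gamma)$ for \emph{every} $g \in J$. Averaging over representatives $g$ for $J/\sbtlp G x a$, and using that $h_t \in \sbtl H x r$ normalises $\sbtlp G x a$ so that $\sum_g \chrc{\sbtlp G x a}h_t g\gamma = \indx J{\sbtlp G x a}\chrc{h_t J\gamma}$, collapses the right--hand side to $T(\chrc{h_t J\gamma})$. Finally, invariance of $T$ under $\Int(\gamma)$ (and unimodularity of $G$) identifies $\Int(\gamma)\chrc{h_t J\gamma}$ with $\chrc{h_t\gamma J}$, so $T(\chrc{h_t J\gamma}) = T(\chrc{h_t\gamma J})$, which is exactly what was needed.

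I expect the one genuine obstacle to be that $\gamma$ does \emph{not} normalise $J$ or $\sbtlp G x a$---indeed $\Int(\gamma)$ visibly moves them, as Lemma~\ref{lem:double-coset-count} makes quantitative---so that the passage from $\chrc{h_t\gamma J}$ to the cosets $\chrc{\sbtlp G x a}h_t g\gamma$ controlled by Lemma~\ref{lem:centre} cannot be carried out by a naive change of variables. It is precisely here that one must use both the $\Int(G)$--invariance of $T$ and the freedom in the choice of $g$ that is built into Lemma~\ref{lem:centre}; everything else is routine bookkeeping with Moy--Prasad indices and Lemma~\ref{lem:H-perp}.
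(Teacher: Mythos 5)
Your proposal is correct and follows essentially the same route as the paper: both decompose the test functions into cosets using the product and intersection identities of Lemma \ref{lem:H-perp}, apply Lemma \ref{lem:centre} term by term, and conclude by matching the indices \(\indx{\sbtl G x a}{\sbtl{(H, G)}x{(\Rp a, a)}} = \card{\sbat H x a}\). The only (cosmetic) difference is organisational: the paper uses invariance of \(T\) and \(T_\gamma\) at the outset to replace \(\gamma h(\anondot)\) by \((\anondot)h\gamma\) and then keeps a double sum over \(\sbat H x a\) and \(\sbat{(H, G)}x{(\Rp a, a)}\), whereas you average over the perpendicular cosets and invoke the \(\Int(\gamma)\)-invariance of \(T\) (together with Hypothesis \ref{hyp:gamma-central}) at the end.
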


\begin{proof}
Since \(\gamma\) normalises \(\sbtl H x r\) (by Lemma \initref{lem:commute-gp}\subpref{down}, say), we have by invariance of \(T\) and \(T_\gamma\) that the stated equality is equivalent to
\[
T_\gamma\bigl(\chrc{\sbtl H x a, \hat\phi^\vee}h\gamma\bigr)
\qeqq
T\bigl(\chrc{\sbtl G x a, \hat\phi^\vee}h\gamma\bigr)
\]
for all \(h \in \sbtl H x r\).  It is this that we actually prove.

By Lemma \ref{lem:centre} (and Lemma \ref{lem:H-perp}), and remembering our normalisation convention for \anonchrc, we have that
\begin{align*}
&T\bigl(\chrc{\sbtl G x a, \hat\phi^\vee}h\gamma\bigr) \\
= \card{\sbat G x a}\inv\sum_{g_a \in \sbat{(H, G)}x{(\Rp a, a)}}
	\sum_{h_a \in \sbat H x a} \hat\phi^\vee(h_a)&T\bigl(\chrc{\sbtlp G x a}h_a g_a h\gamma\bigr) \\
\intertext{equals}
\card{\sbat G x a}\inv\card{\sbat{(H, G)}x{(\Rp a, a)}}
	\sum_{h_a \in \sbat H x a} \hat\phi^\vee(h_a)&T_\gamma\bigl(\chrc{\sbtlp H x a}h_a h\gamma\bigr) \\
= \frac{\card{\sbat H x a}\dotm\card{\sbat{(H, G)}x{(\Rp a, a)}}}{\card{\sbat G x a}}&T_\gamma\bigl(\chrc{\sbtl H x a, \hat\phi^\vee}h\gamma\bigr),
\end{align*}
so that the desired equality follows from the fact that \(\card{\sbat H x a}\dotm\card{\sbat{(H, G)}x{(\Rp a, a)}}\) equals \(\card{\sbat G x a}\).
\end{proof}

\begin{rem}
\label{rem:centre}
In \cite{adler-korman:loc-char-exp}*{Theorem 12.1}, and hence in \cite{adler-korman:loc-char-exp}*{Corollary 12.9}, which uses the theorem to deduce the validity of local character expansions around semisimple elements on an explicitly specified domain, Adler and Korman have to impose the bound (in their notation) \(r > \max \sset{\rho(\pi), 2s(\gamma)}\).  The quantitative nature of Corollary \ref{cor:centre} would allow us to improve this bound to \(r > \max \sset{\rho(\pi), s(\gamma)}\).  In our notation, it is what allows us to show that the asymptotic expansion in Theorem \ref{thm:asymptotic-exists} is valid on all of \(\sbjtl H r\).
\end{rem}

We now need to recall the
	\begin{itemize}
	\item element \(Z^*_o \in \Lie^*(G)\)
and	\item tame, twisted Levi subgroup \(\bG'\)
	\end{itemize}
satisfying Hypothesis \ref{hyp:Z*}, from \S\ref{sec:nearly-good}; and the
	\begin{itemize}
	\item mock-exponential map \(\mexp\),
	\end{itemize}
satisfying Hypothesis \ref{hyp:mexp}, from \S\ref{sec:dual-blob}.  As in Definition \ref{defn:dual-blob}, it is the map \(\mexp\) that allows us to speak of the dual blobs of \emph{certain} characters.

In \cite{jkim-murnaghan:charexp}*{Theorem 5.3.1}, Kim and Murnaghan apply their general result \cite{jkim-murnaghan:charexp}*{Theorem 3.1.7} on asymptotic expansions to the special case of characters.  Their hypotheses refine the Moy--Prasad notion of an \textit{unrefined, minimal K-type} \cite{moy-prasad:k-types}*{Definition 5.1} to give what they call a \textit{good, minimal K-type} \cite{jkim-murnaghan:charexp}*{Definition 2.4.6(2)}.  Singling out which characters of a Moy--Prasad group \(\sbtl G o r\) with \(o \in \BB(G)\) are minimal, or good, in this sense requires what is now called a Moy--Prasad isomorphism between \(\sbat{\Lie(G)}o r\) and \(\sbat G o r\), which is used to define the dual blob of such a character.  In the situation of \cite{jkim-murnaghan:charexp}, such a map is already available, and, indeed, may be deduced from the exponential map \cite{jkim-murnaghan:charexp}*{\S1.4, (H\(k\))}.  Since we have insisted on the existence of such a map only on \(H\), not on \(G\), we may not obviously speak of the dual blob of a character of \(\sbtl G o r\) if \(o\) is not a point of \(\BB(H)\).  Fortunately, all that we need to know is the analogue of the notion of K-types being associate \cite{moy-prasad:k-types}*{Definition 5.1}, and this makes sense even if only one of the K-types in question has a well defined notion of a dual blob.

Let \mnotn o be a point of \(\BB(G')\), and \mnotn{\phi_o} a character of \(\sbat{G'}o r\).  Write \(\hat\phi_o\) for the extension of \(\phi_o\) trivially across \(\sbtlp{(G', G)}o{(r, s)}\) to \(\sbtl{(G', G)}o{(r, \Rp s)}\).  Hypothesis \ref{hyp:K-type} is our version of the assertion that \((\sbtl G o r, \hat\phi_o)\) is a good, minimal K-type in the sense of \cite{jkim-murnaghan:charexp}*{Definition 2.4.6(2)}, and even that \((\sbtl{(G', G)}o{(r, \Rp s)}, \hat\phi_o)\) is a slightly refined, minimal K-type in the sense of \cite{adler:thesis}*{\S2.3}.

If we have a family of Moy--Prasad isomorphisms that allows us to speak of dual blobs in general---as, for example, when \bG splits over \tamefield \cite{yu:models}*{\S8.1(iii) and remark after Corollary 5.6}---then these circumlocutions are not necessary, and Hypothesis \ref{hyp:K-type} is automatically satisfied when \((\sbtl{G'}o r, \phi_o)\) is the character with dual blob \(Z^*_o + \sbtlpp{\Lie^*(G')}o{-r}\).  See, for example, the argument in \cite{moy-prasad:k-types}*{\S7.2, Case 1}, and \cite{adler:thesis}*{Lemma 1.8.1}.

\begin{hyp}
\label{hyp:K-type}
If
	\begin{itemize}
	\item \(y\) is an element of \(\BB(H)\),
	\item \(b \in \tR\) satisfies \(2b > r\),
	\item \(X^*\) is an element of \(\sbtl{\Lie^*(H)}y{-r}\),
	\item \((\sbtl{(G', G)}y{(r, b)}, \hat\phi)\) is the character with dual blob \(X^* + \sbtl{\Lie^*(G', G)}y{(r, b)\sptilde}\),
	\item \(g\) is an element of \(G\),
 and	\item \((\Int(g)\inv\sbtl{(G', G)}o{(r, b)}, \hat\phi_o^g)\) agrees with \((\sbtl{(G', G)}y{(r, b)}, \hat\phi)\) on the intersection of their domains,
	\end{itemize}
then \(X^* + \sbtl{\Lie^*(G', G)}y{(r, b)\sptilde}\) intersects \(\Ad^*(g)\inv(Z^*_o + \sbtl{\Lie^*(G', G)}o{(r, b)\sptilde})\).
\end{hyp}

In Definition \ref{defn:centred-char}, we use Lemma \ref{lem:centre} to define certain invariant distributions on \(H\) that carry all the information that we need about characters of \(G\) near \(\gamma\).

\begin{defn}
\label{defn:centred-char}
If \(\pi\) is an admissible representation of \(G\), then we define \matnotn{Theta}{\widecheck\Theta_{\pi, \gamma}} and \matnotn{Theta}{\widecheck\Theta_{\pi, \gamma, Z^*_o}} to be the distributions on \(\Lie^*(H)\) given for any \(f^* \in \Hecke(\Lie^*(H))\) by
\begin{align*}
\widecheck\Theta_{\pi, \gamma}(f^*)        & {}= \Theta_{\pi, \gamma}\bigl(
	\check f^*_\gamma) \\
\intertext{and}
\widecheck\Theta_{\pi, \gamma, Z^*_o}(f^*) & {}= \widecheck\Theta_{\pi, \gamma}(f^*_{Z^*_o}),
\end{align*}
where we have introduced the \textit{ad hoc} notations
	\begin{itemize}
	\item \(\check f^*_\gamma\) for the function that vanishes outside \(\gamma\sbjtl H r\), and is given on that domain by \anonmapto{\gamma\dotm\mexp(Y)}{\check f^*(Y)},
and	\item \(f^*_{Z^*_o}\) for the function that vanishes outside \(\Lie^*(H) \cap \Ad^*(H\conn)(Z^*_o + \sbjtlpp{\Lie^*(G')}{-r})\), and agrees with \(f^*\) on that domain;
	\end{itemize}
and where
	\begin{itemize}
	\item \(\Theta_{\pi, \gamma}\) is the distribution \(T_\gamma\) deduced from \(T = \Theta_\pi\) in Lemma \ref{lem:centre}.
	\end{itemize}
\end{defn}

Kim and Murnaghan \cite{jkim-murnaghan:charexp}*{Theorem 3.1.7} give sufficient ``sampling'' conditions for a distribution to agree, on an appropriate space of test functions, with a combination of orbital integrals.  Lemma \ref{lem:sample} is a technical computation that allows us, in Corollary \ref{cor:sample}, to show that these conditions are satisfied for the characters of an irreducible representation containing \((\sbtl G o r, \hat\phi_o)\), which is our analogue of a good, minimal K-type.

\begin{lem}
\label{lem:sample}
With the notation of Definition \ref{defn:centred-char}, if
	\begin{itemize}
	\item \(\gamma\) centralises \(\bH\conn\),
	\item \(a \in \tR\) satisfies \(r \le a < \infty\),
and	\item \(X^*\) belongs to \(\sbtl{\Lie^*(H)}x{-a}\),
	\end{itemize}
then
\begin{align*}
&\widecheck\Theta_{\pi, \gamma}\bigl(\chrc{X^* + \sbtlpp{\Lie^*(H)}x{-r}}\bigr) \\
\intertext{equals}
\meas(\sbtl H x a)\sum_{Y \in \sbtl\fh x r/\sbtl\fh x a} \AddChar_{X^*}^\vee(Y)&\tr \pi\bigl(\gamma\mexp(Y)\chrc{\sbtl G x a, \hat\phi^\vee}\bigr),
\end{align*}
where \(\hat\phi\) is the character of \(\sbtl G x a\) with dual blob \(X^* + \sbtlpp{\Lie^*(G)}x{-a}\).
\end{lem}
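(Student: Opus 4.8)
The plan is to unwind Definition \ref{defn:centred-char}, reducing the left-hand side to $\Theta_{\pi, \gamma}$ evaluated at an explicit function supported on $\gamma\sbtl H x r$, and then to break that function into a finite sum of left translates of $\chrc{\sbtl H x a, \hat\phi^\vee}$, so that Corollary \ref{cor:centre} and the defining property $\Theta_\pi(F) = \tr \pi(F)$ finish the computation.

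First I would compute the inverse Fourier transform $\check f^*$ of $f^* = \chrc{X^* + \sbtlpp{\Lie^*(H)}x{-r}}$. By the lattice duality recorded in Remark \ref{rem:vGvr} (note $\widetilde{\Rpp{-r}} = r$), the lattice $\sbtlpp{\Lie^*(H)}x{-r}$ is exactly the set of functionals pairing into $\sbjtlp\field 0$ with $\sbtl\fh x r$; since $\AddChar$ is trivial on $\sbjtlp\field 0$ and this annihilator relation is reflexive, a direct calculation — in which the $\chrc$-normalisation cancels the measure of the blob — shows that $\check f^*$ is supported on $\sbtl\fh x r$ and equals $\AddChar_{X^*}^\vee$ there. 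As $x \in \BB(H)$ we have $\sbtl\fh x r \subseteq \sbjtl\fh r$, so $\check f^*_\gamma$ is the function that vanishes off $\gamma\mexp(\sbtl\fh x r) = \gamma\sbtl H x r$ (the equality by Hypothesis \ref{hyp:mexp}) and sends $\gamma\mexp(Y)$ to $\AddChar_{X^*}^\vee(Y)$.

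Next I would partition $\sbtl\fh x r = \bigsqcup_{Y_0}(Y_0 + \sbtl\fh x a)$, the union over representatives $Y_0$ of $\sbtl\fh x r/\sbtl\fh x a$; using $\mexp(\sbtl\fh x a) = \sbtl H x a$ together with the near-multiplicativity of $\mexp$ (both from Hypothesis \ref{hyp:mexp}), this becomes a disjoint coset decomposition $\gamma\sbtl H x r = \bigsqcup_{Y_0}\gamma\mexp(Y_0)\sbtl H x a$. On the block indexed by $Y_0$, writing $\gamma\mexp(Y) = \gamma\mexp(Y_0)h'$ with $Y \in Y_0 + \sbtl\fh x a$ and $h' \in \sbtl H x a$, Hypothesis \ref{hyp:mexp} gives $h' \in \mexp(Y - Y_0)\sbtl H x{r \cvee a}$, and $\sbtl H x{r \cvee a} \subseteq \sbtl H x{\Rp a}$ because $r$ is positive; since $\AddChar_{X^*} \circ \mlog$ is a character of $\sbat H x a$ — this is where the hypothesis $X^* \in \sbtl{\Lie^*(H)}x{-a}$ enters, via Definition \ref{defn:dual-blob} — it follows that $\check f^*_\gamma(\gamma\mexp(Y_0)h') = \AddChar_{X^*}^\vee(Y_0)\hat\phi^\vee(h')$, where $\hat\phi$ is precisely the character appearing in Corollary \ref{cor:centre}: by Definition \ref{defn:dual-blob} the character of $\sbtl G x a$ with dual blob $X^* + \sbtlpp{\Lie^*(G)}x{-a}$ is the extension of $\AddChar_{X^*} \circ \mlog$ trivially across the perpendicular group $\sbtl{(H, G)}x{(\Rp a, a)}$. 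Therefore
\[
\check f^*_\gamma \qeqq \meas(\sbtl H x a)\sum_{Y_0 \in \sbtl\fh x r/\sbtl\fh x a} \AddChar_{X^*}^\vee(Y_0)\,\gamma\mexp(Y_0)\chrc{\sbtl H x a, \hat\phi^\vee}.
\]
Applying $\Theta_{\pi, \gamma}$, then Corollary \ref{cor:centre} to each summand — legitimate since $\gamma$ centralises $\bH\conn$ and $\mexp(Y_0) \in \sbtl H x r$ — and finally $\Theta_\pi(F) = \tr \pi(F)$, yields
\[
\widecheck\Theta_{\pi, \gamma}(f^*) \qeqq \meas(\sbtl H x a)\sum_{Y_0 \in \sbtl\fh x r/\sbtl\fh x a} \AddChar_{X^*}^\vee(Y_0)\,\tr \pi\bigl(\gamma\mexp(Y_0)\chrc{\sbtl G x a, \hat\phi^\vee}\bigr),
\]
which is the claimed identity (the summand being independent of the chosen representative $Y_0$).

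The Fourier computation and the coset bookkeeping are routine. The delicate points — and the main potential obstacle — are, first, verifying that the errors produced by the mere near-multiplicativity of $\mexp$ land deep enough ($\sbtl H x{r \cvee a} \subseteq \sbtl H x{\Rp a}$, and likewise for the analogous depth vectors) to be invisible to $\AddChar_{X^*} \circ \mlog$; and, second, checking that the ``dual blob'' character produced by Definition \ref{defn:dual-blob} is literally the $\hat\phi$ (extension trivially across $\sbtl{(H, G)}x{(\Rp a, a)}$) that Corollary \ref{cor:centre} requires. Keeping the measure normalisations ($\meas(\sbtl H x a)$, the dual Haar measure on $\Lie^*(H)$, the $\chrc$-convention) consistent is necessary throughout but presents no conceptual difficulty.
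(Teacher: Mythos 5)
Your argument is correct and follows essentially the same route as the paper's proof: invert the Fourier transform of the blob, decompose along cosets of the depth-\(a\) lattice, transfer to the group via \(\mexp\) (using Hypothesis \ref{hyp:mexp} to absorb the error terms into \(\sbtlp H x a\)), identify the resulting character with the \(\hat\phi\) of Corollary \ref{cor:centre}, and conclude. Your pointwise verification of the function identity even sidesteps the measure-comparison remark the paper inserts, since no pushforward of measure under \(\mexp\) is ever invoked.
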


\begin{proof}
Remembering our normalisation convention for \anonchrc, we have that
\begin{align*}
&\chrc{X^* + \sbtlpp{\Lie^*(H)}x{-r}}\spcheck \\
\intertext{equals}
\meas(\sbtl{\Lie(H)}x r)&\chrc{\sbtl{\Lie(H)}x r, \AddChar_{X^*}^\vee} \\
= \meas(\sbtl{\Lie(H)}x a)\sum_{Y \in \sbtl{\Lie(H)}x r/\sbtl{\Lie(H)}x a} \AddChar_{X^*}^\vee(Y)\dotm&\bigl(Y + \chrc{\sbtl{\Lie(H)}x a, \AddChar_{X^*}}\bigr). \\
\intertext{By Hypothesis \initref{hyp:mexp}\subpref{coset}, the composition of this with \(\mlog\) (as a function on \(\sbjtl H r\)) equals}
\meas(\sbtl H x a)\sum_{Y \in \sbtl{\Lie(H)}x r/\sbtl{\Lie(H)}x a} \AddChar_{X^*}^\vee(Y)\dotm&\bigl(\mexp(Y)\chrc{\sbtl H x a, \phi^\vee}\bigr).
\end{align*}
(Note that we have \emph{not} assumed that \(\mexp\) is measure preserving; the point is that
\begin{align*}
\meas(\sbtl{\Lie(H)}x a)&\chrc{\sbtl{\Lie(H)}x a, \AddChar_{X^*}^\vee} \circ \mexp
\intertext{and}
\meas(\sbtl H x a)&\chrc{\sbtl H x a, \phi^\vee}
\end{align*}
both belong to \(\Hecke(\sbat H x a, \phi)\), and both take the value \(1\) at the identity.)  The result now follows from Corollary \ref{cor:centre} (and Definition \ref{defn:centred-char}).
\end{proof}

\begin{cor}
\initlabel{cor:sample}
With the notation and hypotheses of Lemma \ref{lem:sample}, suppose further that \(\pi\) is irreducible and contains \((\sbtl G o r, \hat\phi_o)\).  Then
\[
\widecheck\Theta_{\pi, \gamma}\bigl(\chrc{X^* + \sbtlpp{\Lie^*(H)}x{-r}}\bigr)
\]
vanishes unless \(a\) is greater than \(r\) and \(X^* + \sbtlpp{\Lie^*(H)}x{-a}\) is degenerate, or \(a\) equals \(r\) and there is some \(g \in G\) so that \(X^*\) belongs to \(\Ad^*(g)\inv Z^*_o + \sbjtlpp{\Lie^*(H \cap \Int(g)\inv G')}{-r}\).
\end{cor}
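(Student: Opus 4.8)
The plan is to feed the explicit formula of Lemma~\ref{lem:sample} into the Moy--Prasad theory of (refined) minimal K-types.  First I would note that the hypotheses of Lemma~\ref{lem:sample} are in force, since we have inherited that \(\gamma\) centralises \(\bH\conn\), that \(r \le a < \infty\), and that \(X^* \in \sbtl{\Lie^*(H)}x{-a}\).  Thus
\[
\widecheck\Theta_{\pi, \gamma}\bigl(\chrc{X^* + \sbtlpp{\Lie^*(H)}x{-r}}\bigr)
\qeqq
\meas(\sbtl H x a)\sum_{Y \in \sbtl\fh x r/\sbtl\fh x a} \AddChar_{X^*}^\vee(Y)\tr \pi\bigl(\gamma\mexp(Y)\chrc{\sbtl G x a, \hat\phi^\vee}\bigr),
\]
where \(\hat\phi\) is the character of \(\sbtl G x a\) with dual blob \(X^* + \sbtlpp{\Lie^*(G)}x{-a}\).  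Since \(\pi(\chrc{\sbtl G x a, \hat\phi^\vee})\) is, up to a non-zero scalar, the projection onto the largest subspace of the space of \(\pi\) on which \(\sbtl G x a\) acts through \(\hat\phi\), every summand, and hence the whole expression, vanishes unless \(\pi\) contains the K-type \((\sbtl G x a, \hat\phi)\).  So it suffices to show that, if \(\pi\) contains \((\sbtl G x a, \hat\phi)\), then either \(a > r\) and \(X^* + \sbtlpp{\Lie^*(H)}x{-a}\) is degenerate, or \(a = r\) and \(X^*\) lies in \(\Ad^*(g)\inv Z^*_o + \sbjtlpp{\Lie^*(H \cap \Int(g)\inv G')}{-r}\) for some \(g \in G\).

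Because \(\pi\) is irreducible and contains the good minimal K-type \((\sbtl G o r, \hat\phi_o)\), which is in particular an unrefined minimal K-type of depth \(r\) in the sense of \cite{moy-prasad:k-types}*{Definition 5.1}, the depth \(\rho(\pi)\) equals \(r\), and any unrefined minimal K-type contained in \(\pi\) has depth \(r\) and is associate to \((\sbtl G o r, \hat\phi_o)\).  In the case \(a > r\): the K-type \((\sbtl G x a, \hat\phi)\) has depth \(a > r = \rho(\pi)\), so it cannot be minimal, that is, the coset \(X^* + \sbtlpp{\Lie^*(G)}x{-a}\) is degenerate.  Unwinding the definition, this coset then contains an element of the annihilator of \(\Lie(Q)\) for some proper parabolic subgroup \(\bQ\) of \bG with \(x \in \BB(L)\), so Hypothesis~\ref{hyp:depth}\subpref{coset} applies and gives that \(X^* + \sbtlpp{\Lie^*(H)}x{-a}\) is degenerate, as required.

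For the case \(a = r\), I would first argue that the K-type \((\sbtl G x r, \hat\phi)\) must be \emph{non-degenerate}: if it were degenerate then, by Hypothesis~\ref{hyp:depth}, \(X^* + \sbtlpp{\Lie^*(H)}x{-r}\) would be degenerate, and I would show, using the \(\bG'\)-refined good minimal K-type structure (that \(\pi\) contains \((\sbtl{(G', G)}o{(r, \Rp s)}, \hat\phi_o)\)) together with the Jacquet-module criterion of \cite{moy-prasad:jacquet} for degeneracy, that \(\widecheck\Theta_{\pi, \gamma}\) then vanishes on \(\chrc{X^* + \sbtlpp{\Lie^*(H)}x{-r}}\), contrary to assumption.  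So \((\sbtl G x r, \hat\phi)\) is an unrefined minimal K-type of \(\pi\), hence associate to \((\sbtl G o r, \hat\phi_o)\).  Passing, as in \cite{adler:thesis}*{\S2.3}, to the refined K-type \((\sbtl{(G', G)}o{(r, \Rp s)}, \hat\phi_o)\) and a corresponding refinement of \((\sbtl G x r, \hat\phi)\), this associativity furnishes \(g \in G\) for which \((\Int(g)\inv\sbtl{(G', G)}o{(r, \Rp s)}, \hat\phi_o^g)\) agrees with the refinement of \((\sbtl G x r, \hat\phi)\) on the intersection of their domains; Hypothesis~\ref{hyp:K-type} then yields such a \(g\) with \(X^* + \sbtlpp{\Lie^*(G)}x{-r}\) meeting \(\Ad^*(g)\inv(Z^*_o + \sbtlpp{\Lie^*(G)}o{-r})\).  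Finally I would replace \(\bG'\), \(Z^*_o\), \(o\) by the conjugates \(\Int(g)\inv\bG'\), \(\Ad^*(g)\inv Z^*_o\), \(g\inv o\) (legitimate by Remark~\ref{rem:Z*}) and apply Lemma~\ref{lem:near-H'}: this gives \(x \in \BB(\Int(g)\inv G')\) together with an element \(k \in (\sbtl G x 0 \cap \sbtlp G{g\inv o}0)\sbtlp G x 0\) such that \(X^* + \sbtlpp{\Lie^*(\Int(g)\inv G')}x{-r}\) meets \(\Ad^*(k)\inv\bigl(\Ad^*(g)\inv Z^*_o + (\sbtl{\Lie^*(\Int(g)\inv G')}x{-r} \cap \sbtlpp{\Lie^*(\Int(g)\inv G')}{g\inv o}{-r})\bigr)\); intersecting with the a priori information \(X^* \in \Lie^*(H)\) and tidying up the filtrations then delivers \(X^* \in \Ad^*(kg)\inv Z^*_o + \sbjtlpp{\Lie^*(H \cap \Int(kg)\inv G')}{-r}\), which is the claimed conclusion with \(g\) taken to be \(kg\).

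The step I expect to be the main obstacle is the \(a = r\) case, and within it the exclusion of degenerate \((\sbtl G x r, \hat\phi)\): one must show that a contribution to \(\widecheck\Theta_{\pi, \gamma}\bigl(\chrc{X^* + \sbtlpp{\Lie^*(H)}x{-r}}\bigr)\) from a degenerate K-type cannot survive, which is where Hypothesis~\ref{hyp:depth} must be combined with the structure of \(\pi\) through the refined good minimal K-type rather than used in isolation.  Once that is in hand, matching the bare K-type produced by Lemma~\ref{lem:sample} against the \(\bG'\)-refined good minimal K-type so that Hypothesis~\ref{hyp:K-type} literally applies, and carrying out the descent from \(\Lie^*(G)\) to \(\Lie^*(H)\) via Lemma~\ref{lem:near-H'}, is bookkeeping.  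The \(a > r\) case, by contrast, is a short deduction from \cite{moy-prasad:k-types} and Hypothesis~\ref{hyp:depth}.
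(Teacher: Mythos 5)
Your opening move (non-vanishing plus Lemma \ref{lem:sample} forces \(\pi\) to contain \((\sbtl G x a, \hat\phi)\)) matches the paper, but after that the argument has real gaps, and the key idea of the paper's proof is missing: since \(\pi\) is irreducible, the \(G\)-translates of the \((\sbtl G o r, \hat\phi_o)\)-isotypic subspace span the space of \(\pi\), so some translate meets the \((\sbtl G x a, \hat\phi)\)-isotypic subspace non-trivially, and hence there is a \emph{specific} \(g \in G\) for which \((\sbtl G x a, \hat\phi)\) and \((\Int(g)\inv\sbtl G o r, \hat\phi_o^g)\) agree on the intersection of their domains.  That one observation drives both cases: when \(a > r\) it feeds Remark \ref{rem:depth} (because \(\hat\phi_o^g\) is trivial on \(\Int(g)\inv\sbtl G o a\)), and when \(a = r\) it is exactly the input that Hypothesis \ref{hyp:K-type} demands.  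You never produce this \(g\); instead you lean on Moy--Prasad depth theory.  Concretely, in the case \(a > r\), ``unwinding the definition'' of degeneracy does not yield a parabolic \(\bQ\) with \(x \in \BB(L)\): a nilpotent element of the coset annihilates \(\Lie(Q)\) for \emph{some} parabolic, but nothing adapts that parabolic to \(x\); in the paper the adaptation comes from the second point \(g\inv\dota o\) via Remark \ref{rem:depth}, which your route has bypassed.  (Moreover, the assertion that any nondegenerate K-type contained in \(\pi\) has depth \(\rho(\pi)\) imports \cite{moy-prasad:k-types} machinery---connectedness, Moy--Prasad isomorphisms on all of \(G\)---that this section deliberately does not assume; Hypotheses \ref{hyp:depth} and \ref{hyp:K-type} exist precisely to replace it.)

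In the case \(a = r\) there are two further problems.  First, your preliminary claim that \((\sbtl G x r, \hat\phi)\) must be nondegenerate, because degeneracy would force \(\widecheck\Theta_{\pi, \gamma}\) to vanish, is load-bearing but unproven (the appeal to a Jacquet-module criterion is a placeholder, not an argument), and it is also unnecessary---the paper never needs it.  Even granting it, associativity of unrefined minimal K-types only gives conjugacy-and-intersection of dual cosets; it does not furnish the agreement of \((\Int(g)\inv\sbtl{(G', G)}o{(r, \Rp s)}, \hat\phi_o^g)\) with a ``refinement'' of \((\sbtl G x r, \hat\phi)\) on the intersection of their domains that you need in order to invoke Hypothesis \ref{hyp:K-type}; that passage is itself a Moy--Prasad \S7.2-type argument, not a formality.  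Second, and most seriously, the ending is not ``tidying up the filtrations'': after Lemma \ref{lem:near-H'} one only has an intersection statement inside \(\Lie^*(G')\), and to reach \(X^* \in \Ad^*(g)\inv Z^*_o + \sbjtlpp{\Lie^*(H \cap \Int(g)\inv G')}{-r}\) the paper must still apply Lemma \ref{lem:in-H'} (to place \(\gamma\) in \(G'\) and \(Z^*_o\) in \(\Lie^*(H')\)), then Remark \ref{rem:depth} together with Hypothesis \ref{hyp:depth} to transfer degeneracy of the coset of \(X^* - Z^*_o\) from \(G\) down to \(H\), and finally \cite{adler-debacker:bt-lie}*{Corollary 3.5.2} to descend from \(H\) to \(H'\).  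None of this descent appears in your outline, and it is the substantive content of the \(a = r\) case.
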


\begin{proof}
Suppose that the quantity does not vanish.  Then Lemma \ref{lem:sample} gives that \(\pi\) contains \((\sbtl G x a, \hat\phi)\), where \(\hat\phi\) is the character of \(\sbtl G x a\) with dual blob \(X^* + \sbtlpp{\Lie^*(G)}x{-r}\).  As in \cite{moy-prasad:k-types}*{\S7.2}, since irreducibility guarantees that the various \(G\)-translates of the (non-\(0\)) \((\sbtl G o r, \hat\phi_o)\)-isotypic subspace of \(\pi\) span the entire space of \(\pi\), we have that there is some \(g \in G\) so that the \((\sbtl G x a, \hat\phi)\)- and \((\Int(g)\inv\sbtl G o r, \hat\phi_o^g)\)-isotypic subspaces intersect non-trivially.  In particular, \((\sbtl G x a, \hat\phi)\) and \((\Int(g)\inv\sbtl G o r, \hat\phi_o^g)\) agree on the intersection of their domains.

First suppose that \(a\) is greater than \(r\).  Then it follows from Remark \ref{rem:depth} that \(X^* + \sbtlpp{\Lie^*(H)}x{-a}\) is degenerate.

Next suppose that \(a\) equals \(r\).  Then, by Hypothesis \ref{hyp:K-type}, we have that
\[
(X^* + \sbtlpp{\Lie^*(G)}x{-r}) \cap \Ad^*(g)\inv(Z^*_o + \sbtlpp{\Lie^*(G)}o{-r})
\]
is non-empty.  By Lemma \ref{lem:near-H'}, we have that \(g\dota x\) belongs to \(\BB(G')\) and \(Z^*\) to \(\sbtl{\Lie^*(G')}{g\dota x}{-r}\); and we may, and do, assume, upon modifying \(g\) on the right by an element of \((\sbtl G x 0 \cap \Int(g)\inv\sbtlp G o 0)\sbtlp G x 0\) (which does not change either of the containments), that
\[
(X^* + \sbtlpp{\Lie^*(H)}x{-r}) \cap \Ad^*(g)\inv(Z^*_o + \sbjtlpp{\Lie^*(G')}{-r})
\]
is non-empty.  For notational convenience, we assume that \(g\) is the identity.

By Lemma \ref{lem:in-H'}, we have that \(\gamma\) belongs to \(G'\) and \(Z^*_o\) to \(\Lie^*(H')\); and the coset
\[
(X^* - Z^*_o) + \sbtlpp{\Lie^*(G)}x{-r}
\]
intersects \(\sbjtlpp{\Lie^*(G')}{-r}\), so Remark \ref{rem:depth} gives that the coset \((X^* - Z^*_o)  + \sbtlpp{\Lie^*(H)}x{-r}\) is degenerate; so, finally, \cite{adler-debacker:bt-lie}*{Corollary 3.5.2} gives that the coset \((X^* - Z^*_o) + \sbtlpp{\Lie^*(H')}x{-r}\) is degenerate.  That is, \(X^* + \sbtlpp{\Lie^*(H')}x{-r}\) intersects \(Z^*_o + \sbjtlpp{\Lie^*(H')}{-r}\).
\end{proof}

Although it is not strictly necessary, we find it convenient to state and prove Lemma \ref{lem:finite}, which guarantees (at least if the number of nilpotent orbits is finite) that the sum in the asymptotic expansion in Theorem \ref{thm:asymptotic-exists} has only finitely many terms.

\begin{lem}
\label{lem:finite}
The set \(\set{g \in G}{\Ad^*(g)\inv Z^*_o \in \Lie^*(H)}\) is a union of finitely many \((G\primeconn, H\conn)\)-double cosets.
\end{lem}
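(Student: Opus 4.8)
The plan is to translate the statement into a counting problem for coadjoint orbits, and then to dispatch that problem either geometrically or \via the Bruhat--Tits building of \(\bH\). First I would note that the set \(S \ldef \set{g \in G}{\Ad^*(g)\inv Z^*_o \in \Lie^*(H)}\) is a union of \((G', H)\)-double cosets: it is stable under left translation by \(G'\) because \(Z^*_o\) is fixed by the coadjoint action of \(\bG'\) (Hypothesis \initref{hyp:Z*}\subpref{central}), and under right translation by \(H\) because \(\Lie^*(H)\) is \(\Ad^*(H)\)-stable, being the annihilator of the \(\bH\)-stable complement \(\Lie(\bH)^\perp\). Since the component groups \(G'/G'\conn\) and \(H/H\conn\) are finite, each \((G', H)\)-double coset is a finite union of \((G'\conn, H\conn)\)-double cosets, so it suffices to show that \(S\) meets only finitely many \((G', H)\)-double cosets. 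Applying Hypothesis \initref{hyp:Z*}\subpref{orbit} to an element that fixes \(Z^*_o\) — and using that \(Z^*_o\) itself lies in both \(Z^*_o + \sbjtlpp{\Lie^*(G')}{-r}\) and its \(\Ad^*(g)\inv\)-translate — shows that \(\Cent_G(Z^*_o)\) equals \(G'\). Thus \(g \mapsto \Ad^*(g)\inv Z^*_o\) induces a bijection from \(G'\bslash G\) onto the coadjoint \(G\)-orbit \(\OO\) of \(Z^*_o\) that carries right translation by \(H\) to the coadjoint action of \(H\); under it, the \((G', H)\)-double cosets contained in \(S\) correspond to the coadjoint \(H\)-orbits contained in \(\OO \cap \Lie^*(H)\). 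So it remains only to show that \(\OO \cap \Lie^*(H)\) is a union of finitely many coadjoint \(H\)-orbits.

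Here is how I would carry out that last step using the building. By Remark \ref{rem:Z*}, every element \(Y^* = \Ad^*(g)\inv Z^*_o\) of \(\OO \cap \Lie^*(H)\) is semisimple and good of depth \(-r\). Being semisimple in \(\Lie^*(H)\), it is fixed by some maximal torus \(\bT_H\) of \(\bH\), so \(Y^* \in \Lie^*(T_H)\), \(\bT_H\) lies in the centraliser of \(Y^*\), and \(\BB(T_H)\) is contained both in \(\BB(H)\) and in the building of that centraliser. A good element realises its depth, with non-degenerate reduction, at every point of the building of its centraliser, so \(Y^* \in \sbtl{\Lie^*(H)}y{-r}\) with non-degenerate image in \(\sbat{\Lie^*(H)}y{-r}\) for all \(y \in \BB(T_H) \subseteq \BB(H)\). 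Since \(H\conn\) acts on \(\BB(H)\) with only finitely many orbits of facets, and \(\sbat{\Lie^*(H)}y{-r}\) is a fixed finite group once \(y\) is confined to the interior of a given facet, conjugating \(Y^*\) by a suitable element of \(H\conn\) lets us assume that such a \(y\) lies in the interior of one of finitely many facets and that \(Y^*\) reduces to one of finitely many elements of \(\sbat{\Lie^*(H)}y{-r}\). Finally, two good elements of depth \(-r\) with the same reduction at \(y\) are conjugate by an element of the parahoric \(\sbtl{H\conn}y0\) — the genericity-driven conjugacy statement underlying \cite{jkim-murnaghan:charexp}*{Lemma 2.3.4} and the dual-Lie-algebra analogue of \xcite{adler-spice:good-expansions}*{Lemma \xref{lem:good-comm}}, now applied inside \(\bH\) relative to the tame torus \(\bT_H\). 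Combining these finitenesses yields the claim. As an alternative that sidesteps the building, one can instead observe that there are only finitely many \(\bH_\sepfield\)-conjugacy classes of semisimple elements of \(\Lie^*(\bH_\sepfield)\) that are \(\bG_\sepfield\)-conjugate to \(Z^*_o\), and that each such geometric class meets \(\Lie^*(H)\) in only finitely many \(H\conn\)-orbits since \(H^1(\field, \Cent_\bH(Y^*))\) is finite over the \(p\)-adic field \(\field\).

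The main obstacle is the passage, in the previous paragraph, from \(\bG\) to the possibly non-full-rank group \(\bH = \CC\bG r(\gamma)\): the good-element facts I invoke — realisation of depth on the centraliser's building, and parahoric conjugacy of good elements with a common reduction — were packaged in \S\ref{sec:nearly-good} and Lemma \ref{lem:commute-Lie*} for \(\bG\) (and tame, twisted Levi subgroups of it), not for \(\bH\). One therefore has to re-derive them directly, treating \(Y^*\) as a good element of \(\Lie^*(\bH)\) relative to the tame torus \(\bT_H \subseteq \bH\); or, on the cohomological route, pay the analogous price of a (standard but not purely formal) geometric finiteness statement for semisimple classes, together with care about the disconnectedness of \(\bG\), \(\bG'\), and \(\bH\). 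Everything else in the argument is routine.
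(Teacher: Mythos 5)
Your reduction is fine as far as it goes: the identification \(\Cent_G(Z^*_o) = G'\) does follow from Hypothesis \initref{hyp:Z*}(\subref{central}, \subref{orbit}), the passage between \((G', H)\)- and \((G\primeconn, H\conn)\)-double cosets is harmless, and so the lemma is indeed equivalent to finiteness of the set of \(\Ad^*(H\conn)\)-orbits in \(\Ad^*(G)Z^*_o \cap \Lie^*(H)\).  The trouble is that neither of your two ways of finishing is sound as written.  In route (a), the step ``two good elements of depth \(-r\) with the same reduction at \(y\) are conjugate by \(\sbtl{H\conn}y0\)'' is false: the statements underlying \cite{jkim-murnaghan:charexp}*{Lemma 2.3.4} and \xcite{adler-spice:good-expansions}*{Lemma \xref{lem:good-comm}} only let you conjugate the component \emph{perpendicular} to the centraliser into the centraliser direction; they do not identify the two elements (already for \(\bH\) a torus no non-trivial conjugation is available, and in general replacing a good \(Y^*\) by \(uY^*\), with \(u\) a unit congruent to \(1\), changes the orbit without changing the reduction).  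Repairing this needs a further argument exploiting that both elements lie in the single \(G\)-orbit of \(Z^*_o\), on top of the re-derivation of good-element theory inside the possibly non-full-rank group \(\bH\) that you already flag.  Route (b) is closer to the truth, but it simply asserts the geometric finiteness---finitely many \(\bH_\sepfield\)-classes of semisimple elements of \(\Lie^*(\bH_\sepfield)\) that are \(\bG_\sepfield\)-conjugate to \(Z^*_o\)---and that assertion is exactly the non-formal heart of the matter.

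The paper proves precisely that geometric statement and then descends.  Over \sepfield, for each relevant \(g\) it uses Lemma \ref{lem:in-H'} (applied to \(\Int(g)\inv\bG'\), \via Remark \ref{rem:Z*}) together with Hypothesis \initref{hyp:funny-centraliser}\subpref{Levi} to see that \(\bH \cap \Int(g)\inv\bG'\) contains a maximal torus of \(\bH\); after adjusting by elements of \(H\conn\) and \(G\primeconn\), the element \(g\) then normalises a fixed maximal torus \(\bT' \subseteq \bH'\), and finiteness comes from the finiteness of \(\Norm_{G\conn}(\bT')/\bT'\).  The rational descent is then done, as in your route (b), by Galois cohomology---but note that the paper arranges to take \(\mathrm H^1\) of \(\bH\conn \cap \Int(g)\inv\bG\primeconn = \Cent_{\bH\conn}(\Int(g)\inv\Zent(\bG\primeconn)\conn)\), the centraliser of a torus in a connected reductive group, hence itself connected reductive, so that the cited finiteness applies without further comment; your \(\Cent_\bH(Y^*)\) may be disconnected, which costs at least an extra word (and more if \field\ has positive characteristic).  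So the skeleton of your route (b) matches the paper, but the geometric finiteness and the choice of group whose \(\mathrm H^1\) is taken are the two concrete gaps you would still have to fill.
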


\begin{proof}
By writing the desired set as a union of subsets of \(G\conn\) of the form \(\set{g \in G\conn}{\Ad^*(g_1 g)\inv Z^*_o \in \Lie^*(H)}\), as \(g_1\) ranges over \(G/G\conn\), we see that it suffices to consider such a subset of \(G\conn\).  For notational convenience, we assume that \(g_1\) is the identity.

We first prove this result over the separable closure of \field.  For notational convenience, in the first part of the proof, we replace \field by \sepfield, so that, for example, \(G\conn\) stands for \(\bG\conn(\sepfield)\).  However, we will drop this notational convenience when we prove the result as stated.

Without loss of generality, \(Z^*_o\) belongs to \(\Lie^*(H)\).  By Hypothesis \ref{hyp:gamma-central} and Lemma \ref{lem:in-H'}, we have that \(\gamma\) belongs to \(G'\), hence by Hypothesis \initref{hyp:funny-centraliser}\subpref{Levi} that \(\bH\primeconn\) is a (no longer twisted) Levi subgroup of \(\bH\conn\).  Let \(\bT'\) be a maximal torus in \bH that is contained in \(\bH'\).  It is necessarily split, since we have replaced \field by \sepfield.

Write \mc S for (the analogue over \sepfield of) the set in the statement.  Suppose that \(g \in G\conn\) is such that \(\Ad^*(g)\inv Z^*_o\) belongs to \(\Lie^*(H)\).  As above, we have that \bT is a maximal torus in \bH that is contained in \(\bH \cap \Int(g)\inv\bG'\).
In particular, there is some \(h \in H\conn\) so that \(\Int(h)\inv\bT\) equals \(\bT'\).  Also, \(\Int(g)\bT\) is a maximal (split) torus in \(\bG'\), so that there is an element \(g' \in G\primeconn\) such that \(\Int(g'g)\bT\) equals \(\bT'\).  Thus, \(g'g h\) normalises \(\bT'\).

We have shown that each double coset in \mc S intersects \(\Norm_{G\conn}(\bT')\).  Since the map from \(\Norm_{G\conn}(\bT')\) to the set of double cosets intersecting it factors through the finite group \(\Norm_{G\conn}(\bT')/\bT'\), we have shown the geometric version of the statement.

Now we resume working with our original, discretely valued field \field, and so drop the notational convenience of replacing it by \sepfield.  By what we have already shown, it suffices to show that the intersection with \(G\conn\) of a \((\bG\primeconn(\sepfield), \bH\conn(\sepfield))\)-double coset is a union of finitely many \((G\primeconn, H\conn)\)-double cosets.  Fix \(g \in G\conn\).  The exact sequence of pointed sets
\[
1 \to \bH\conn(\sepfield) \cap \Int(g)\inv\bG\primeconn(\sepfield) \to \bH\conn(\sepfield) \times \bG\primeconn(\sepfield) \to \bG\primeconn(\sepfield)g\bH\conn(\sepfield) \to 1,
\]
where the first arrow is the twisted diagonal embedding \anonmapto h{(h, \Int(g)h)} and the second is the multiplication map \anonmapto{(h, g')}{g'g h}, gives rise to the exact sequence in cohomology
\[
H\conn \times G\primeconn \to \bG\primeconn(\sepfield)g\bH\conn(\sepfield) \cap G \to \mathrm H^1(\sepfield/\field, \bH\conn(\sepfield) \cap \Int(g)\inv\bG\primeconn(\sepfield)).
\]
As observed in \xcite{adler-spice:explicit-chars}*{proof of Lemma \xref{lem:finite-double-coset}}, the Galois cohomology set of any connected, reductive \(p\)-adic group, such as \(\bH\conn \cap \Int(g)\inv\bG\primeconn\) (which equals \(\Cent_\bH(\Int(g)\inv\Zent(\bG\primeconn)\conn)\) by Remark \ref{rem:tame-Levi}) is finite, so the result follows.
\end{proof}


Our main result, Theorem \ref{thm:asymptotic-exists}, is a simultaneous generalisation of Adler--Korman's result \cite{adler-korman:loc-char-exp}*{Corollary 12.9}, which describes the domain of validity of local character expansions near semisimple elements, and Kim--Murnaghan's result \cite{jkim-murnaghan:charexp}*{Theorem 5.3.1}, which shows that a different sort of asymptotic expansion is valid near the identity.  Theorem \ref{thm:asymptotic-exists} describes the domain of validity of a Kim--Murnaghan-style asymptotic expansion about \(\gamma\).

So far throughout the document, we have worked with a \emph{fixed} point \(x \in \BB(H)\), satisfying Hypothesis \ref{hyp:gamma}.  For the proof of Theorem \ref{thm:asymptotic-exists}, we must forget our binding, and regard \(x\) as a free variable.

Because we have been accumulating hypotheses gradually throughout the paper, we re-capitulate all those that are currently in force.  We are imposing
	\begin{itemize}
	\item Hypotheses
		\ref{hyp:funny-centraliser},
		\ref{hyp:gamma} (for \emph{all} points \(x \in \BB(H)\)),
	and	\ref{hyp:gamma-central} (on \(\gamma\)),
	\item Hypotheses
		\ref{hyp:Z*}
	and	\ref{hyp:K-type} (on \(Z^*_o\) and \(\hat\phi_o\)),
	\item Hypothesis \ref{hyp:mexp} (on \(\mexp\)),
and	\item Hypothesis \ref{hyp:depth} (on \bH).
	\end{itemize}
We emphasise that all of these, except possibly Hypothesis \ref{hyp:gamma-central}, are satisfied under suitable tameness hypotheses; for example, if \(\gamma\) is a compact-modulo-centre element of a tame torus satisfying \xcite{adler-spice:good-expansions}*{Definition \xref{defn:S-is-good}}.  Under the same tameness hypotheses, we may arrange Hypothesis \ref{hyp:gamma-central} upon replacing \(\gamma\) by the product \(\gamma_{< r} = \prod_{0 \le i < r} \gamma_i\) of the terms in a normal \(r\)-approximation (with the notation of \xcite{adler-spice:good-expansions}*{\S\xref{sec:normal}} and terminology of \xcite{adler-spice:good-expansions}*{Definition \xref{defn:r-approx}}).  In the notation of \xcite{adler-spice:good-expansions}*{Definition \xref{defn:Brgamma}}, we have that \(\gamma_{< r}\sbjtl H r\) contains \(\gamma\sbtl H x r\) for all \(x \in \BB_r(\gamma)\).

As stated, Theorem \ref{thm:asymptotic-exists} (and Corollary \ref{cor:asymptotic-exists} and Lemma \ref{lem:asymptotic-check}) is contingent on \cite{jkim-murnaghan:charexp}*{Theorem 3.1.7}.  Of course, this theorem is true, but it imposes some stringent hypotheses (see \cite{jkim-murnaghan:charexp}*{\S1.4}).  Rather than repeating those hypotheses, we prefer to emphasise that we are using them just as a black box to obtain the necessary homogeneity result; as long as the orbital integrals ``close to \(Z^*_o\)'' span an appropriate space of distributions, we are fine.

The theorem is vacuous unless \(\pi\) contains our analogue \((\sbtl G o r, \hat\phi_o)\) of a good, minimal K-type.  However, by \cite{jkim-murnaghan:charexp}*{Theorem 2.4.10}, under suitable tameness hypotheses \cite{jkim-murnaghan:charexp}*{\S1.4}, \emph{every} irreducible representation contains such a K-type, so this is not actually too much of a restriction.

In order for the statement of the theorem even to make sense, we require that the relevant orbital integrals converge.  This is true unconditionally in characteristic 0 \cite{ranga-rao:orbital}*{Theorem 3}; and, as remarked in \cite{debacker:homogeneity}*{\S3.4, p.~409}, by the same proof, even in equal characteristic if \(p\) is sufficiently large.

\begin{thm}
\label{thm:asymptotic-exists}
Suppose that \cite{jkim-murnaghan:charexp}*{Theorem 3.1.7(1, 5)} is satisfied, and all of the relevant orbital integrals converge.  For any irreducible representation \(\pi\) containing \((\sbtl G o r, \hat\phi_o)\), there is a finitely supported, \(\OO^{H\conn}(\Ad^*(G)Z^*_o)\)-indexed vector \(b(\pi, \gamma)\) so that
\[
\Phi_\pi(\gamma\dotm\mexp(Y))
= \sum_{\OO \in \OO^{H\conn}(\Ad^*(G)Z^*_o)}
	b_\OO(\pi, \gamma)\Ohat^{H\conn}_\OO(Y)
\]
for all \(Y \in \Lie(H)\rss \cap \sbjtl{\Lie(H)}r\).
\end{thm}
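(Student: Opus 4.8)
The plan is to transport the question, \via Harish-Chandra's semisimple descent as packaged in Lemma \ref{lem:centre} and Definition \ref{defn:centred-char}, to a homogeneity statement for an invariant distribution on \(\Lie^*(H)\), and then to feed that statement into the Kim--Murnaghan machinery \cite{jkim-murnaghan:charexp}*{Theorem 3.1.7}.  First I would fix an irreducible \(\pi\) containing \((\sbtl G o r, \hat\phi_o)\) and pass from \(\Theta_\pi\) to the \(H\conn\)-invariant distribution \(\Theta_{\pi, \gamma}\) on \(H\) supported by \(\gamma\sbjtl H r\), and thence, using the mock-exponential \(\mexp\), to the distribution \(\widecheck\Theta_{\pi, \gamma}\) on \(\Lie^*(H)\).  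Unwinding Definitions \ref{defn:normal-harm} and \ref{defn:centred-char}, together with the descent factorisation
\[
\abs{\Disc_{G/\Cent_G(\gamma\mexp(Y))}(\gamma\mexp(Y))}
\qeqq
\abs{\Disc_{G/H}(\gamma)}\dotm\abs{\redD_H(\mexp(Y))},
\]
which holds because \(\bH = \CC\bG r(\gamma)\) contains \(\Cent_\bG(\gamma)\), because \(\gamma\) centralises \(\bH\conn\), and because \(\Ad(\gamma)\) acts fixed-point-freely on \(\Lie(H)^\perp\), and with Hypothesis \ref{hyp:mexp} (preservation of identity components of centralisers, and of \(\abs{\redD_H}\), by \(\mexp\)), the claimed identity becomes equivalent to the assertion that \(\widecheck\Theta_{\pi, \gamma}\) agrees, on the space of test functions supported near \(\Ad^*(G)Z^*_o\) whose inverse Fourier transforms are supported in \(\sbjtl{\Lie(H)}r\), with a finite linear combination \(\sum_\OO c_\OO\mu^{H\conn}_\OO\) of orbital-integral distributions indexed by \(\OO \in \OO^{H\conn}(\Ad^*(G)Z^*_o)\).

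Next I would verify that this is precisely what \cite{jkim-murnaghan:charexp}*{Theorem 3.1.7(1, 5)} delivers once its sampling hypotheses are checked for \(\widecheck\Theta_{\pi, \gamma}\).  Those hypotheses say, roughly, that the value of the distribution on the indicator function of a depth-\((-r)\) coset \(X^* + \sbtlpp{\Lie^*(H)}x{-r}\) vanishes unless either \(X^* + \sbtlpp{\Lie^*(H)}x{-a}\) is degenerate for every \(a > r\) at which one tests, or \(X^*\) lies near a \(G\)-conjugate of \(Z^*_o\); and this is exactly Corollary \ref{cor:sample}, which rests on the sampling computation of Lemma \ref{lem:sample}, on Hypotheses \ref{hyp:K-type} and \ref{hyp:depth}, and on the Moy--Prasad-style observation that the \(G\)-translates of the \((\sbtl G o r, \hat\phi_o)\)-isotypic subspace of the irreducible \(\pi\) span \(\pi\).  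Lemma \ref{lem:X*-orbits} and Remark \ref{rem:index-match} handle the bookkeeping of which orbits survive and supply the bijections \(\OO^{H\conn}(\Ad^*(g)Z^*_o) \leftrightarrow \OO^{H\primeconn}(\Ad^*(g)Z^*_o)\), while Lemma \ref{lem:finite} guarantees that only finitely many orbits occur, so that the resulting vector is finitely supported.  Taking inverse Fourier transforms then converts \(\sum_\OO c_\OO\mu^{H\conn}_\OO\) into \(\sum_\OO c_\OO\muhat^{H\conn}_\OO(Y)\), so that \(\Theta_\pi(\gamma\mexp(Y)) = \sum_\OO c_\OO\muhat^{H\conn}_\OO(Y)\) for \(Y\) in the relevant range; multiplying through by the discriminant factors assembled above, and absorbing the constant \(\abs{\Disc_{G/H}(\gamma)}^{1/2}\) together with the factor relating \(\Ohat^{H\conn}_\OO\) to \(\muhat^{H\conn}_\OO\) in Definition \ref{defn:normal-harm}, yields the stated formula with \(b_\OO(\pi, \gamma)\) a suitable scalar multiple of \(c_\OO\).

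The main obstacle is this middle step.  One must be genuinely careful that the descended distribution \(\widecheck\Theta_{\pi, \gamma}\) ``sees'' only the K-types that it should---so that Corollary \ref{cor:sample} really does verify the abstract sampling hypotheses of \cite{jkim-murnaghan:charexp}*{Theorem 3.1.7}---and that the expansion, \emph{a priori} valid only on a small neighbourhood of \(\gamma\), in fact holds on all of \(\gamma\sbjtl H r\).  This last point is the quantitative sharpening recorded in Remark \ref{rem:centre}, and is exactly why the precise Corollary \ref{cor:centre}, rather than a cruder submersion estimate, is needed; it is also where the standing assumptions that the relevant orbital integrals converge, and that the orbital integrals close to \(Z^*_o\) span the appropriate space of distributions, enter in an essential way.
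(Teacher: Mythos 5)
Your proposal is correct and follows essentially the same route as the paper's proof: descend \(\Theta_\pi\) to \(\widecheck\Theta_{\pi,\gamma}\) on \(\Lie^*(H)\), use Corollary \ref{cor:sample} (hence Lemma \ref{lem:sample}, Hypotheses \ref{hyp:K-type} and \ref{hyp:depth}) to verify the sampling hypotheses of \cite{jkim-murnaghan:charexp}*{Theorem 3.1.7(1, 5)}, invoke Lemma \ref{lem:finite} for finite support, and pass back \via Remark \ref{rem:which-char}. The paper merely organises the application of the Kim--Murnaghan result slightly more explicitly, by splitting \(\widecheck\Theta_{\pi,\gamma}\) into the pieces \(\widecheck\Theta_{\pi,\gamma,\Ad^*(g)\inv Z^*_o}\) and a residual term shown to vanish on the relevant test spaces, which is exactly the implementation of the step you describe.
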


By Hypothesis \initref{hyp:mexp}\incpref{elt}\subpref{cent}, we have that \(\mexp(Y)\) lies in \(H\rss \cap \sbjtl H r\) for all \(Y \in \Lie(H)\rss \cap \sbjtl{\Lie(H)}r\), so that the notation makes sense.

We have used `\(b\)', rather than `\(c\)', for the coefficients in the expansion because we prefer to reserve `\(c\)' for a differently organised form of the coefficients in the same asymptotic expansion; see Theorem \ref{thm:asymptotic-pi-to-pi'}.

\begin{rem}
\label{rem:which-char}
For a given finitely supported \(\OO^{H\conn}(\Ad^*(G)Z^*_o)\)-indexed vector \(b(\pi, \gamma)\) and element \(Y \in \Lie(H)\rss \cap \sbjtl H r\), the equations
\begin{align*}
\Phi_\pi(\gamma\dotm\mexp(Y))
= \sum_{\OO \in \OO^{H\conn}(\Ad^*(G)Z^*_o)}
	&b_\OO(\pi, \gamma)\Ohat^{H\conn}_\OO(Y) \\
\intertext{and}
\abs{\Disc_{G/H}(\gamma)}^{1/2}\Theta_\pi(\gamma\dotm\mexp(Y))
= \sum_{\substack{g \in G'\bslash G/H\conn \\ \Ad^*(g)\inv Z^*_o \in \Lie^*(H)}} &\abs{\redD_H(\Ad^*(g)\inv Z^*_o)}^{1/2}\times{} \\
	&\qquad\sum_{\OO \in \OO^{H\conn}(\Ad^*(g)\inv Z^*_o)} b_\OO(\pi, \gamma)\muhat^{H\conn}_\OO(Y)
\end{align*}
are equivalent by Hypothesis \initref{hyp:mexp}\incpref{elt}\subpref{disc}.  We thus need not distinguish between the \emph{existence} of asymptotic expansions of \(\Phi_\pi\) and of \(\Theta_\pi\) (although, of course, the coefficients will differ).
\end{rem}

\begin{proof}
We use Remark \ref{rem:which-char} to allow us to work with \(\Theta_\pi\) and \(\muhat^{H\conn}_\OO\) rather than \(\Phi_\pi\) and \(\Ohat^{H\conn}_\OO\).  To emphasise that the coefficients in this expansion are different, we denote them by \(\tilde b\) instead of \(b\).

We use the function spaces
\begin{align*}
\mc D^{-r}_{\Rpp{-r}}
& {}= \sum_{x \in \BB(H)} \Cnts(\sbat{\Lie^*(H)}x{-r})          \\
\intertext{and}
\mc D_{\Rpp{-r}}
& {}= \sum_{x \in \BB(H)} \Cc(\Lie^*(H)/\sbtlpp{\Lie^*(H)}x{-r})
\end{align*}
of \cite{jkim-murnaghan:charexp}*{Definition 3.1.1}, adapted from \(\Lie(G)\) to \(\Lie^*(H)\); and also the distribution spaces \(\mc J^{\Ad^*(g)\inv Z^*_o}_{\Rpp{-r}}\) of \cite{jkim-murnaghan:charexp}*{Definition 3.1.2(2)}, similarly adapted.

With the notation of Definition \ref{defn:centred-char}, consider the difference
\[
\widecheck\Theta_{\pi, \gamma}^0
\ldef \widecheck\Theta_{\pi, \gamma}
- \sum_{\substack{g \in G'\bslash G/H\conn \\ \Ad^*(g)\inv Z^*_o \in \Lie^*(H)}}
	\widecheck\Theta_{\pi, \gamma, \Ad^*(g)\inv Z^*_o}.
\]
The indexing set for the sum is finite, by Lemma \ref{lem:finite}, so that the definition makes sense; and the sets \(\Ad^*(H\conn)(\Ad^*(g)\inv Z^*_o + \sbjtlpp{\Lie^*(H \cap \Int(g)\inv G')}{-r})\) corresponding to different double cosets \(G'g H\conn\) are disjoint, by Hypothesis \initref{hyp:Z*}\subpref{orbit}, so we have by Corollary \ref{cor:sample} that \(\widecheck\Theta_{\pi, \gamma}^0\) vanishes on \(\mc D^{-r}_{\Rpp{-r}}\).  Also by Corollary \ref{cor:sample}, each \(\widecheck\Theta_{\pi, \gamma, \Ad^*(g)\inv Z^*_o}\) with \(\Ad^*(g)\inv Z^*_o\) in \(\Lie^*(H)\), as well as \(\widecheck\Theta_{\pi, \gamma}^0\), belongs to \(\mc J^{\Ad^*(g)\inv Z^*_o}_{\Rpp{-r}}\).  In particular, for each \(g \in G\) such that \(\Ad^*(g)\inv Z^*_o\) belongs to \(\Lie^*(H)\), we have by \cite{jkim-murnaghan:charexp}*{Theorem 3.1.7(5)} that there is an \(\OO^{H\conn}(\Ad^*(g)\inv Z^*_o)\)-indexed vector \(\tilde b(\pi, \gamma)\) so that
\[
\widecheck\Theta_{\pi, \gamma, \Ad^*(g)\inv Z^*_o}
\qeqq
\sum_{\OO \in \OO^{H\conn}(\Ad^*(g)\inv Z^*_o)}
	\tilde b_\OO(\pi, \gamma)\mu^H_\OO
\]
on \(\mc D_{\Rpp{-r}}\).  By \cite{jkim-murnaghan:charexp}*{Theorem 3.1.7(1, 2)}, we have that \(\widecheck\Theta_{\pi, \gamma}^0\) vanishes on \(\mc D_{\Rpp{-r}}\), so that
\[
\widecheck\Theta_{\pi, \gamma}
\qeqq
\sum_{g \in G'\bslash G/H\conn} \widecheck\Theta_{\pi, \gamma, \Ad^*(g)\inv Z^*_o}
= \sum_{\OO \in \OO^{H\conn}(\Ad^*(G)Z^*_o)} \tilde b_\OO(\pi, \gamma)\mu^{H\conn}_\OO
\]
on \(\mc D_{\Rpp{-r}}\).  Exactly as in the proof of \cite{debacker:homogeneity}*{Theorem 3.5.2} and \cite{jkim-murnaghan:charexp}*{Theorem 5.3.1}, it follows that the function \(\Theta_\pi(\gamma\dotm\mexp(\anondot))\) representing \anonmapto f{\widecheck\Theta_{\pi, \gamma}(\hat f)} on \(\Lie(H)\rss \cap \sbjtlp{\Lie(H)}r\) equals
\[
\sum_{\OO \in \OO^{H\conn}(\Ad^*(G)Z^*_o)} \tilde b_\OO(\pi, \gamma)\muhat^{H\conn}_\OO.\qedhere
\]
\end{proof}

\begin{cor}
\label{cor:asymptotic-exists}
With the notation and hypotheses of Theorem \ref{thm:asymptotic-exists}, we have that \(\Phi_\pi\) vanishes on \(\gamma\sbjtl H r\) unless \(\Ad^*(G)Z^*_o\) intersects \(\Lie^*(H)\); in particular, unless \(\gamma\) belongs to \(\Int(G)G'\).
\end{cor}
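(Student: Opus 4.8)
The plan is that the statement is almost immediate from Theorem~\ref{thm:asymptotic-exists}, the only clause needing a genuine (but short) separate argument being the ``in particular''. Suppose first that \(\Ad^*(G)Z^*_o\) does not meet \(\Lie^*(H)\). Then the indexing set \(\OO^{H\conn}(\Ad^*(G)Z^*_o)\) of the expansion in Theorem~\ref{thm:asymptotic-exists} is empty: as in the proof of that theorem it is the disjoint union of the sets \(\OO^{H\conn}(\Ad^*(g)\inv Z^*_o)\) over those double cosets \(G'g H\conn\) for which \(\Ad^*(g)\inv Z^*_o\) lies in \(\Lie^*(H)\), of which there are none by hypothesis (compare also the convention recorded after Definition~\ref{defn:normal-harm}: for a single point \(z \notin \Lie^*(H)\) the set \(\OO^{H\conn}(\sset z)\) is visibly empty, since \(\Lie^*(H)\) is closed). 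Hence the right-hand side of the expansion is an empty sum, and \(\Phi_\pi(\gamma\dotm\mexp(Y)) = 0\) for every \(Y \in \Lie(H)\rss \cap \sbjtl{\Lie(H)}r\).

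Since \(\Phi_\pi\) is a function on \(G\rss\), I must still check that this accounts for all of \(\gamma\sbjtl H r \cap G\rss\). Given \(\delta = \gamma m \in G\rss\) with \(m \in \sbjtl H r\), write \(m = \mexp(Y)\), \(Y \in \sbjtl{\Lie(H)}r\). Because \(\gamma\) centralises \(\bH\conn\) (Hypothesis~\ref{hyp:gamma-central}), every element of \(\Cent_\bH(m)\conn\) commutes with \(m\) and with \(\gamma\), hence with \(\delta\); so \(\Cent_\bH(Y)\conn = \Cent_\bH(m)\conn\) (Hypothesis~\initref{hyp:mexp}\incpref{elt}\subpref{cent}) is a connected subgroup of the torus \(\Cent_\bG(\delta)\conn\), hence itself a torus. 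As the Lie algebra of a torus consists of semisimple elements, \(Y\) is semisimple, and then \(\Cent_\bH(Y)\conn\)---reductive of full rank in \(\bH\) and a torus---is a maximal torus, so \(Y \in \Lie(H)\rss\). Thus \(\delta = \gamma\dotm\mexp(Y)\) is covered by the previous paragraph. (The reverse inclusion, \(\gamma\dotm\mexp(Y) \in G\rss\) for such \(Y\), is built into the statement of Theorem~\ref{thm:asymptotic-exists}.)

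For the ``in particular'' clause I argue contrapositively: if \(\Ad^*(g)\inv Z^*_o \in \Lie^*(H)\) for some \(g \in G\), then \(\gamma \in \Int(G)G'\). By Remark~\ref{rem:Z*}, the element \(\Ad^*(g)\inv Z^*_o\) satisfies Hypothesis~\ref{hyp:Z*} relative to the tame, twisted Levi subgroup \(\Int(g)\inv\bG'\); and it already lies in \(\Lie^*(H)\), so \(\Lie^*(H) \cap \bigl(\Ad^*(g)\inv Z^*_o + \sbjtlpp{\Lie^*(\Int(g)\inv G')}{-r}\bigr)\) is non-empty. Since \(\gamma\) centralises \(\bH\conn\), Lemma~\ref{lem:in-H'}, applied with \(Z^*_o\) and \(\bG'\) replaced by \(\Ad^*(g)\inv Z^*_o\) and \(\Int(g)\inv\bG'\), gives \(\gamma \in \Int(g)\inv G' \subseteq \Int(G)G'\).

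There is no serious obstacle here; the step requiring the most care is the second paragraph, namely matching the locus on which Theorem~\ref{thm:asymptotic-exists} supplies the expansion with all of \(\gamma\sbjtl H r \cap G\rss\). That reduces to the elementary facts that a connected subgroup of a torus is a torus and that a torus has no nonzero nilpotent in its Lie algebra, together with Hypotheses~\ref{hyp:gamma-central} and \initref{hyp:mexp}\incpref{elt}\subpref{cent}; everything else is formal.
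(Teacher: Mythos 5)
Your proof is correct and is essentially the argument the paper intends: the corollary is immediate from Theorem \ref{thm:asymptotic-exists}, since when \(\Ad^*(G)Z^*_o\) misses \(\Lie^*(H)\) the indexing set \(\OO^{H\conn}(\Ad^*(G)Z^*_o)\) is empty and the expansion reduces to the zero function, while the ``in particular'' clause is exactly Remark \ref{rem:Z*} together with Lemma \ref{lem:in-H'} applied to \(\Ad^*(g)\inv Z^*_o\) and \(\Int(g)\inv\bG'\). Your second paragraph, checking that every regular semisimple point of \(\gamma\sbjtl H r\) is of the form \(\gamma\dotm\mexp(Y)\) with \(Y \in \Lie(H)\rss \cap \sbjtl{\Lie(H)}r\) (using Hypotheses \ref{hyp:gamma-central} and \initref{hyp:mexp}\incpref{elt}\subpref{cent}), is a small point the paper leaves implicit, and it is argued correctly.
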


Theorem \ref{thm:asymptotic-exists} only tells us that a vector \(b(\pi, \gamma)\) as in the statement exists; it gives no idea of how to compute it, or even to test the correctness of a candidate vector.  Lemma \ref{lem:asymptotic-check}, which is analogous to \cite{jkim-murnaghan:charexp}*{Corollary 6.1.2}, describes a general approach to verifying the coefficients in an asymptotic expansion.  See \cite{debacker:homogeneity}*{Theorem 2.1.5} for a similar result.  Because we state Lemma \ref{lem:asymptotic-check} only for a distribution already known to have an asymptotic expansions, it does not need the long list of hypotheses of Theorem \ref{thm:asymptotic-exists}.

\begin{lem}
\initlabel{lem:asymptotic-check}
Suppose that
	\begin{itemize}
	\item \(T_\gamma\) is an element of the span of \(\set{\muhat^{H\conn}_\OO}{\OO \in \OO^{H\conn}(\Ad^*(G)Z^*_o)}\),
and	\item \(c(T_\gamma)\) is a finitely supported, \(\OO^{H\conn}(\Ad^*(G)Z^*_o)\)-indexed vector.
	\end{itemize}
If Hypothesis \ref{hyp:Z*} and \cite{jkim-murnaghan:charexp}*{Theorem 3.1.7(1, 2)} are satisfied, then
\begin{align*}
T_\gamma
&\qeqq&
\sum_{\OO \in \OO^{H\conn}(\Ad^*(G)Z^*_o)} &c_\OO(T_\gamma)\muhat^{H\conn}_\OO
\intertext{if and only if}
T_\gamma\bigl(\chrc{\sbtl\fh x r, \AddChar_{X^*}^\vee}\bigr)
&\qeqq&
\sum_{\OO' \in \OO^{(H \cap \Int(g)\inv G')\conn}(\Ad^*(g)\inv Z^*_o)} &c_{\OO'}(T_\gamma)\muhat^{H\conn}_{\OO'}\bigl(\chrc{\sbtl\fh x r, \AddChar_{X^*}^\vee}\bigr)
\end{align*}
for all \(x \in \BB(H \cap \Int(g)\inv G')\) and \(X^* \in \sbtl{\Lie^*(H)}x{-r} \cap \Ad^*(g)\inv(Z^*_o + \sbjtlpp{\Lie^*(G')}{-r})\), whenever \(g \in G\) is such that \(\Ad^*(g)\inv Z^*_o\) belongs to \(\Lie^*(H)\).
\end{lem}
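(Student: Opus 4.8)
The plan is to treat the two implications asymmetrically: the forward direction (``$\Longrightarrow$'') is essentially a formal evaluation once the right-hand side is correctly identified, and all the substance is in the converse, where one must show that the specified family of test functions separates the distributions in play. Both directions rest on the same observation, so I would record it first. Up to a positive constant, $\muhat^{H\conn}_\OO\bigl(\chrc{\sbtl\fh x r, \AddChar_{X^*}^\vee}\bigr)$ equals $\mu^{H\conn}_\OO\bigl(\chrc{X^* + \sbtlpp{\Lie^*(H)}x{-r}}\bigr)$, by the Fourier-transform computation carried out in the proof of Lemma \ref{lem:sample} (namely $\chrc{X^* + \sbtlpp{\Lie^*(H)}x{-r}}\spcheck = \meas(\sbtl\fh x r)\chrc{\sbtl\fh x r, \AddChar_{X^*}^\vee}$); hence this pairing vanishes unless $\OO$ meets $X^* + \sbtlpp{\Lie^*(H)}x{-r}$. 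For $X^* \in \sbtl{\Lie^*(H)}x{-r} \cap \Ad^*(g)\inv(Z^*_o + \sbjtlpp{\Lie^*(G')}{-r})$ and $x \in \BB(H \cap \Int(g)\inv G')$, Lemma \ref{lem:X*-orbits} together with Hypothesis \initref{hyp:Z*}\subpref{orbit} forces such an $\OO$ to belong to $\OO^{H\conn}(\Ad^*(g)\inv Z^*_o)$, and these are exactly the orbits parametrised (Remark \ref{rem:index-match}) by the index set $\OO^{(H \cap \Int(g)\inv G')\conn}(\Ad^*(g)\inv Z^*_o)$ appearing on the right-hand side of the asserted identity, with matching coefficients. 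Thus evaluating the equality $T_\gamma = \sum_\OO c_\OO(T_\gamma)\muhat^{H\conn}_\OO$ at $\chrc{\sbtl\fh x r, \AddChar_{X^*}^\vee}$ yields precisely the displayed identity, proving ``$\Longrightarrow$''; and for the converse it reduces the problem to showing that $D \ldef T_\gamma - \sum_\OO c_\OO(T_\gamma)\muhat^{H\conn}_\OO$ vanishes, given that $D$ annihilates every such $\chrc{\sbtl\fh x r, \AddChar_{X^*}^\vee}$.

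For the converse I would then pass to the Lie algebra. Since $T_\gamma$ and each $\muhat^{H\conn}_\OO$ lie in the span of $\set{\muhat^{H\conn}_\OO}{\OO \in \OO^{H\conn}(\Ad^*(G)Z^*_o)}$, so does $D$; let $S$ be the linear combination of orbital-integral distributions $\mu^{H\conn}_\OO$ on $\Lie^*(H)$ whose Fourier transform $D$ represents, so the goal becomes $S = 0$. Applying the identity from the proof of Lemma \ref{lem:sample} once more, the hypothesis translates into $S\bigl(\chrc{X^* + \sbtlpp{\Lie^*(H)}x{-r}}\bigr) = 0$ for all $x$, $X^*$ as above. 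Next I would decompose $S$ along double cosets: by Lemma \ref{lem:finite} there are finitely many $g \in G'\bslash G/H\conn$ with $\Ad^*(g)\inv Z^*_o \in \Lie^*(H)$, by Hypothesis \initref{hyp:Z*}\subpref{orbit} the associated sets $\OO^{H\conn}(\Ad^*(g)\inv Z^*_o)$ are pairwise disjoint and exhaust $\OO^{H\conn}(\Ad^*(G)Z^*_o)$, so $S = \sum_g S_g$ accordingly. Fixing one such $g$, the opening observation (Lemma \ref{lem:X*-orbits}) shows that for $X^*$ near $\Ad^*(g)\inv Z^*_o$ no orbit attached to another double coset contributes, so the vanishing above gives $S_g\bigl(\chrc{X^* + \sbtlpp{\Lie^*(H)}x{-r}}\bigr) = 0$ for all $x \in \BB(H \cap \Int(g)\inv G')$ and $X^* \in \sbtl{\Lie^*(H)}x{-r} \cap \Ad^*(g)\inv(Z^*_o + \sbjtlpp{\Lie^*(G')}{-r})$.

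The last step is to conclude $S_g = 0$ from this localised vanishing, and here I would feed the situation directly into the Kim--Murnaghan homogeneity machinery, exactly as in the proof of Theorem \ref{thm:asymptotic-exists}. Since $\Ad^*(g)\inv Z^*_o$ is good of depth $-r$ relative to $\Int(g)\inv\bG'$ (Hypothesis \ref{hyp:Z*} and Remark \ref{rem:Z*}), the family of test functions just described spans the localisation at $\Ad^*(g)\inv Z^*_o$ of the sampling space $\mc D^{-r}_{\Rpp{-r}}$ of \cite{jkim-murnaghan:charexp}*{Definition 3.1.1} (adapted from $\Lie(G)$ to $\Lie^*(H)$), while $S_g$, being a combination of $\mu^{H\conn}_\OO$ with $\OO \in \OO^{H\conn}(\Ad^*(g)\inv Z^*_o)$, lies in the distribution space $\mc J^{\Ad^*(g)\inv Z^*_o}_{\Rpp{-r}}$. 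Then \cite{jkim-murnaghan:charexp}*{Theorem 3.1.7(1, 2)} forces $S_g$ to vanish on all of $\mc D_{\Rpp{-r}}$, and the linear independence of the distinct $\muhat^{H\conn}_\OO$ for orbits near $\Ad^*(g)\inv Z^*_o$ as functionals on $\mc D_{\Rpp{-r}}$ (the fact that underlies well-definedness of the coefficients in a Kim--Murnaghan expansion) yields $S_g = 0$ for every $g$, hence $S = 0$ and $T_\gamma = \sum_\OO c_\OO(T_\gamma)\muhat^{H\conn}_\OO$. I expect the genuine obstacle to be the bookkeeping of this last step: verifying that the specific test functions in the hypothesis really do constitute the localised $\mc D^{-r}$-space attached to each $\Ad^*(g)\inv Z^*_o$ inside $H \cap \Int(g)\inv G'$, matching the double-coset decomposition with the orbit parametrisation of Remark \ref{rem:index-match}, and pinning down that the linear-independence/uniqueness assertion invoked at the end is the intended reading of \cite{jkim-murnaghan:charexp}*{Theorem 3.1.7}; the harmonic-analytic content itself is entirely theirs.
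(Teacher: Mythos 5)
Your proposal follows essentially the same route as the paper's proof: the same Fourier-transform identification of the test functions with characteristic functions of cosets \(X^* + \sbtlpp{\Lie^*(H)}x{-r}\), the same decomposition along the finitely many double cosets (Lemma \ref{lem:finite}, Hypothesis \initref{hyp:Z*}\subpref{orbit}) with only the trivial coset contributing near each \(\Ad^*(g)\inv Z^*_o\), and the same appeal to \cite{jkim-murnaghan:charexp}*{Theorem 3.1.7(1, 2)} (the paper also invokes Lemma 3.1.5 there) to pass from the sampled cosets to all of \(\mc D_{\Rpp{-r}}\), your working with the difference \(S = \sum_g S_g\) being only a cosmetic reformulation of the paper's componentwise comparison. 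The one step you defer to ``bookkeeping'' is made explicit in the paper: the sampled family omits the cosets that do not meet \(\Ad^*(g)\inv(Z^*_o + \sbjtlpp{\Lie^*(G')}{-r})\), and on those both sides vanish, which is what lets the restricted sampling control the full space \(\mc D^{\prime\,{-r}}_{\Rpp{-r}}\).
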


\begin{proof}
As in the proof of Theorem \ref{thm:asymptotic-exists}, we use the function spaces
\begin{align*}
\mc D^{\prime\,{-r}}_{\Rpp{-r}}
& {}= \sum_{x \in \BB(H')} \Cnts(\sbat{\Lie^*(H)}x{-r})          \\
\intertext{and}
\mc D_{\Rpp{-r}}
& {}= \sum_{x \in \BB(H)} \Cc(\Lie^*(H)/\sbtlpp{\Lie^*(H)}x{-r})
\end{align*}
of \cite{jkim-murnaghan:charexp}*{Definition 3.1.1}, adapted from \(\Lie(G)\) to \(\Lie^*(H)\).

Let \(\check T_\gamma\) be the distribution on \(\Lie^*(H)\) given by \(\check T_\gamma(f^*) = T_\gamma(
\check f^*)\)
for all \(f^* \in \Hecke(\Lie^*(H))\).  Then \(\check T_\gamma\) belongs to the span of \(\set{\mu_\OO}{\OO \in \OO^{H\conn}(\Ad^*(G)Z^*_o)}\).
Choose, for each \(g_0 \in G\), an element \(\check T_{\gamma, \Ad^*(g_0)\inv Z^*_o}\) in the span of \(\set{\mu_\OO}{\OO \in \OO^{H\conn}(\Ad^*(g)\inv Z^*_o)}\) such that \(\check T_\gamma\) equals \(\sum_{g_0} \check T_{\gamma, \Ad^*(g_0)\inv Z^*_o}\).  It suffices to show, for each \(g_0 \in G\conn\), that
\begin{equation}
\tag{$*$}
\sublabel{eq:homogeneity}
\check T_{\gamma, \Ad^*(g_0)\inv Z^*_o}
\qeqq
\sum_{\OO \in \OO^{H\conn}(\Ad^*(g_0)\inv Z^*_o)} c_\OO(T_\gamma)\mu^{H\conn}_\OO
\end{equation}
on \(\mc D_{\Rpp{-r}}\).  For notational convenience, we assume that \(g_0\) is the identity.

By \cite{jkim-murnaghan:charexp}*{Lemma 3.1.5 and Theorem 3.1.7(1, 2)}, it is enough to show that \(\check T_{\gamma, Z^*_o}\) agees with the right-hand side of \loceqref{eq:homogeneity} on \(\mc D^{\prime\,{-r}}_{\Rpp{-r}}\), which is to say that they agree on the characteristic function \(f^*\) of each coset \(X^* + \sbtlpp{\Lie^*(H)}x{-r}\) with \(x \in \BB(H')\) and \(X^* \in \sbtl{\Lie^*(H)}x{-r}\).  By our normalisation convention for \anonchrc, we have that \(
\check f^*\)
equals \(\chrc{\sbtl{\Lie(H)}x r, \AddChar_{X^*}^\vee}\).

If \(X^* + \sbtlpp{\Lie^*(H)}x{-r}\) does not intersect \(Z^*_o + \sbjtlpp{\Lie^*(G')}{-r}\), then both sides of the equality vanish; so we may, and do, assume that the two sets intersect.

If \(g\) belongs to \(G\), and \(\check T_{\gamma, \Ad^*(g)\inv Z^*_o}(f^*)\) does not vanish, then we have that \(\Ad^*(g H\conn)\inv(Z^*_o + \sbjtlpp{\Lie^*(G')}{-r})\) intersects \(X^* + \sbtlpp{\Lie^*(H)}x{-r}\), hence, by Remark \ref{rem:X*} and Hypothesis \initref{hyp:Z*}\subpref{orbit}, that \(G'g H\conn\) is the trivial double coset.  That is, we have that \(\check T_\gamma(f^*) = \sum_g \check T_{\gamma, \Ad^*(g)\inv Z^*_o}(f^*)\) equals \(\check T_{\gamma, Z^*_o}(f^*)\).  By assumption, \(\check T_{\gamma, Z^*_o}(f^*)\) equals
\[
\sum_{\OO' \in \OO^{H\primeconn}(Z^*_o)} c_{\OO'}(T_\gamma)\muhat^{H\primeconn}_{\OO'}\bigl(\chrc{\sbtl{\Lie(H')}x r, \AddChar_{X^*}^\vee}\bigr),
\]
as desired.

We have proven the `if' direction.  The `only if' direction is much easier, and involves essentially the argument that we used to show that \(\check T_\gamma(f^*)\) equalled \(\check T_{\gamma, Z^*_o}(f^*)\).
\end{proof}

\section{Computation of asymptotic expansions}
\label{sec:quantitative}

\subsection{Gauss sums and Weil indices}
\label{sec:Gauss}

We know from Theorem \ref{thm:asymptotic-exists} that the characters of certain representations have asymptotic expansions about (nearly) arbitrary semisimple elements.  Although Lemma \ref{lem:asymptotic-check} provides a way to verify the correctness of a potential asymptotic expansion, it gives no idea what the coefficients in such an expansion should be.  The main result of \S\ref{sec:quantitative}, Theorem \ref{thm:asymptotic-pi-to-pi'}, shows how to use the inductive structure in Yu's construction of supercuspidals \cite{yu:supercuspidal} to reduce character computations for such representations of \(G\) to analogous computations on a tame, twisted Levi subgroup \(G'\).  The recipe involves some fourth roots of unity, which have previously been relevant in stability calculations (see, for example, \xcite{debacker-spice:stability}*{Proposition \xref{prop:stable-sign}} and \cite{kaletha:regular-sc}*{\S4.7}).  In \xcite{adler-spice:explicit-chars}*{\S\xref{ssec:gauss}}, Adler and the author interpreted analogous fourth roots of unity as Gauss sums.  In this paper, we interpret them as Weil indices, in the spirit of \cite{waldspurger:loc-trace-form}*{\S VIII.1}.  Pleasantly, we manage to avoid the centrality assumption \xcite{adler-spice:explicit-chars}*{Hypothesis \xref{hyp:X*-central}} by using Proposition \ref{prop:Q-to-B}.

\begin{defn}
\label{defn:Weil-index}
In \cite{weil:weil}*{\S14}, Weil associated to a (non-degenerate) quadratic space over a local field a constant \via the Fourier transform.  We follow the equivalent description in \cite{waldspurger:loc-trace-form}*{\S VIII.1}.  Let \((V, q)\) be a quadratic space over \field, and \(b\) the unique symmetric bilinear form so that \(q(v)\) equals \(b(v, v)\) for all \(v \in V\).  If \(q\) is non-degenerate, then there is a unique, unit-modulus complex number, denoted in \cites{weil:weil,waldspurger:loc-trace-form} by \(\gamma_\AddChar(q)\) and usually called the \term{Weil index} (of \((V, q)\) with respect to \(\AddChar\)), such that, whenever \(L\) is a lattice containing \(L^\bullet \ldef \sett{v \in V}{\(b(v, w) \in \sbjtlp\field 0\) for all \(w \in L\)}\), we have that
\[
\meas(L)^{1/2}\uint_L \AddChar_{1/2}(q(v))\upd v
\qeqq
\meas(L^\bullet)^{1/2}\gamma_\AddChar(q).
\]
(In \cite{waldspurger:loc-trace-form}*{\S VIII.1}, it is required that \(L^\bullet\) be contained in \(2L\); but, since we are assuming that \(p\) is odd, this requirement is equivalent to ours.)  In general, we define the Weil index of \((V, q)\) with respect to \(\AddChar\) to be that of \((V/{\operatorname{rad}(V, q)}, q_+)\), where \(\operatorname{rad}(V, q) = \sett{v \in V}{\(b(v, w) = 0\) for all \(w \in V\)}\) is the radical of \((V, q)\), and \(q_+\) is the induced (non-degenerate) quadratic form.
\end{defn}

In Notation \ref{notn:Gauss}, we define a quadratic form \(q_{X^*, \gamma}\) using \(1 - \Ad(\gamma)\).  In fact, using just \(-\Ad(\gamma)\) would not change the form, but we find it convenient to write it this way.  This quadratic form gives rise to the Gauss sums \(\Gauss\) that are ubiquitous in character computations; in this paper, they pop out of our calculations in Proposition \ref{prop:Gauss-appears}.

\begin{notn}
\label{notn:Gauss}
For any \(X^* \in \Lie^*(G)\) and \(\gamma \in G\), we define
\[
\mnotn{b_{X^*, \gamma}}(Y_1, Y_2) = \pair[\big]{X^*}{\comm{Y_1}{(1 - \Ad(\gamma))Y_2}}
\qandq
\mnotn{q_{X^*, \gamma}}(Y) = b_{X^*, \gamma}(Y, Y)
\]
for all \(Y, Y_1, Y_2 \in \Lie(G)\).  We write \mnotn{\Gauss_G(X^*, \gamma)} for the Weil index of the pairing \(q_{X^*, \gamma}\) on \(\Lie(G)\).

We have for \(g \in G\) that \(\Ad(g)\) furnishes an isomorphism of \((\Lie(G), q_{X^*, \gamma})\) onto \((\Lie(G), q_{\Ad^*(g)X^*, \Int(g)\gamma})\), so that \(\Gauss_G(X^*, \gamma)\) equals \(\Gauss_G(\Ad^*(g)X^*, \Int(g)\gamma)\).  Thus, if \(\OO'\) is contained the coadjoint orbit of \(\Cent_G(\gamma)\) containing \(X^*\), then we may write \mnotn{\Gauss_G(\OO', \gamma)} for \(\Gauss_G(X^*, \gamma)\).

If
	\begin{itemize}
	\item \bH is a reductive subgroup of \bG,
	\item we have an \bH-stable decomposition \(\Lie(\bG) = \Lie(\bH) \oplus \Lie(\bH)^\perp\),
and	\item \(\gamma\) belongs to \(H\) and \(X^*\) annihilates \(\Lie(\bH)^\perp\),
	\end{itemize}
then we write \(\Gauss_{G/H}(X^*, \gamma)\) (or \(\Gauss_{G/H}(\OO', \gamma)\)) for the quotient \(\Gauss_G(X^*, \gamma)/\Gauss_H(X^*, \gamma)\).
\end{notn}


We now recall the
	\begin{itemize}
	\item non-negative real number \mnotn r,
	\item element \(\mnotn\gamma \in G\), with associated groups \(\bP^\mp = \CC\bG{-\infty}(\gamma\pinv)\), \(\bN^\pm\), and \(\bM = \CC\bG 0(\gamma)\),
and	\item point \(x \in \BB(\CC G r(\gamma))\),
	\end{itemize}
satisfying Hypotheses \ref{hyp:funny-centraliser} and \ref{hyp:gamma}, from \S\ref{sec:depth-matrix}; and the
	\begin{itemize}
	\item tame, twisted Levi subgroup \(\bG'\)
	\end{itemize}
from \S\ref{sec:nearly-good}.  We do \emph{not} impose Hypothesis \ref{hyp:gamma-central}; and do not need the element \(Z^*_o\) from \S\ref{sec:nearly-good}, or the mock-exponential map from Hypothesis \ref{hyp:mexp}.

We \emph{do} fix an element \mnotn{X^*} satisfying Hypothesis \ref{hyp:X*}, and we do require that \(\gamma\) belongs to \(G'\).  As in \S\ref{sec:nearly-good}, we use primes to denote the analogues in \(\bG'\) of constructions in \bG, so, for example, \(\bM'\) stands for \(\CC{\bG'}0(\gamma)\) (subject to the proviso in Remark \ref{rem:tame-Levi}, that we may refer directly only to the identity component of \(\bM'\)).

Contrary to our notation elsewhere, in the proof of Proposition \ref{prop:lattice-orth} we find it convenient to use \bH to stand for \(\CC\bG r(\gamma^2)\), not \(\CC\bG r(\gamma)\); but we return to the usual notation before Proposition \ref{prop:Gauss-to-Weil}.

The pairing \(q_{X^*, \gamma}\) is usually degenerate, but, in Proposition \ref{prop:lattice-orth}, we pick out a non-degenerate sub\emph{space}, and even a non-degenerate sub\emph{lattice}.

\begin{prop}
\label{prop:lattice-orth}
We have that \(\sbtl{\Lie(\CC G r(\gamma), G)}x{(\Rp0, (r - \ord_{\gamma\pinv})/2)}\) pairs \via \(b_{X^*, \gamma}\) with itself into \(\sbjtl\field 0\), and with \(\sbtl{\Lie(\CC G r(\gamma), N^+, G)}x{(\Rp0, (r - \ord_{\gamma\inv})/2, \Rpp{(r - \ord_\gamma)/2})}\) on the right or \(\sbtl{\Lie(\CC G r(\gamma), N^-, G)}x{(\Rp0, (r - \ord_\gamma)/2, \Rpp{(r - \ord_{\gamma\inv})/2})}\) on the left into \(\sbjtlp\field 0\).

Further, the \(q_{X^*, \gamma}\)-orthogonal modulo \(\sbjtlp\field 0\) of
\[
\sbtl{\Lie(\CC G r(\gamma^2), G', G)}x{(\infty, \infty, (r - \ord_{\gamma\pinv})/2)}
\]
in \(\Lie(\CC G r(\gamma^2))^\perp \cap \Lie(G')^\perp\) is
\[
\sbtlp{\Lie(\CC G r(\gamma^2), G', G)}x{(\infty, \infty, (r - \ord_{\gamma\pinv})/2)}.
\]
\end{prop}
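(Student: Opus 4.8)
The proposition is a lattice‑duality statement for the form $b_{X^*, \gamma}$, and I would split it into an \emph{integrality} half --- the first paragraph of the statement, together with the inclusion of $\sbtlp{\Lie(\CC G r(\gamma^2), G', G)}x{(\infty, \infty, (r - \ord_{\gamma\pinv})/2)}$ in the orthogonal --- and a \emph{non‑degeneracy} half, the reverse inclusion. The first step is to record the algebra of the form. Since $\comm{Y}{Y} = 0$, one has $q_{X^*, \gamma}(Y) = -\pair{X^*}{\comm{Y}{\Ad(\gamma)Y}} = \pair{X^*}{\comm{\Ad(\gamma)Y}{Y}}$; polarising, rewriting $\comm{\Ad(\gamma)Y_2}{Y_1} = \Ad(\gamma)\comm{Y_2}{\Ad(\gamma)\inv Y_1}$, and abbreviating $\xi_0 \ldef (\Ad^*(\gamma)\inv - 1)X^*$, which lies in $\sbtl{\Lie^*(H)}x 0$ by Hypothesis \initref{hyp:X*}\subpref{depth} and Remark \ref{rem:gamma:Lie*} (exactly as in the proof of Lemma \ref{lem:commute-Lie*}), one obtains the exact identity
\[
b_{X^*, \gamma}(Y_1, Y_2) + b_{X^*, \gamma}(Y_2, Y_1)
\qeqq
\pair[\big]{X^*}{\comm{(\Ad(\gamma) - \Ad(\gamma)\inv)Y_1}{Y_2}}
- \pair[\big]{\xi_0}{\comm{\Ad(\gamma)\inv Y_1}{Y_2}}.
\]
Because $p$ is odd, the symmetric form obtained by halving the left‑hand side has the same radical and orthogonals as $q_{X^*, \gamma}$, so it is enough to study the right‑hand side.

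The point of this rewriting is that $\Ad(\gamma) - \Ad(\gamma)\inv = \Ad(\gamma)\inv(\Ad(\gamma)^2 - 1)$ is invertible on $\Lie(\CC G r(\gamma^2))^\perp$ --- on a $\gamma^2$‑eigenspace with eigenvalue $\lambda \neq 1$ it acts by $\mu - \mu\inv$ with $\mu^2 = \lambda$ --- and, by Hypothesis \initref{hyp:gamma}\subpref{bi-Lie-Lie} (which also provides Hypothesis \ref{hyp:funny-centraliser} for $\gamma^2$), carries the $\CC G i(\gamma^2)$‑graded layer at depth $\vec a$ isomorphically onto the layer at depth $\vec a + i$. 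Dually, by Hypothesis \initref{hyp:X*}\subpref{Lie} --- applied to the groups $\CC G i(\gamma^2)$ via the heredity in Lemma \ref{lem:commute-Lie*} for $\gamma^2$ --- the map $Y \mapsto \ad^*(Y)X^*$ carries the $\CC G i(\gamma^2)$‑graded layer at depth $\vec a$, modulo the $\Lie(G')$‑directions, isomorphically onto the corresponding dual layer at depth $\vec a - r$, and (because $\Ad^*(\gamma)X^* \equiv X^*$ modulo $\sbtl{\Lie^*(H)}x 0$) respects the $\gamma^2$‑eigenspace decomposition up to $\sbtl{\Lie^*(H)}x 0$. On $U \ldef \Lie(\CC G r(\gamma^2))^\perp \cap \Lie(G')^\perp$ the identity above therefore displays $b_{X^*, \gamma}$, up to the lower‑order $\xi_0$‑term, as the composite of the invertible operator $\Ad(\gamma) - \Ad(\gamma)\inv$, the isomorphism $\ad^*(\anondot)X^* \colon U \to \Lie^*(\CC G r(\gamma^2))^\perp \cap \Lie^*(G')^\perp$, and the perfect pairing between $U$ and that space; in particular it is non‑degenerate on $U$, and, modulo $\sbjtlp\field 0$, it is block‑diagonal in the $\CC G i(\gamma^2)$‑levels (the annihilation properties of the eigenspace decompositions kill the cross‑level terms), the block at level $i$ being the pairing obtained by composing the two level‑$i$ isomorphisms, which is balanced precisely on the depth‑$(r - i)/2$ graded layers --- this is the role of the depths $(r - \ord_{\gamma\pinv})/2$.

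For the integrality half I would only do the depth arithmetic. Writing $b_{X^*, \gamma}(Y_1, Y_2) = -\pair{X^*}{\comm{Y_1}{(\Ad(\gamma) - 1)Y_2}}$, one bounds $(\Ad(\gamma) - 1)Y_2$ by Lemma \initref{lem:commute-gp}\subpref{down} --- this is where $\ord_\gamma$ versus $\ord_{\gamma\inv}$ enters, tracking the two rates at which $\Ad(\gamma)$ shifts depths on the $\bN^+$‑ and $\bN^-$‑directions --- bounds the commutator by Lemma \initref{lem:filtration}\subpref{Lie-Lie}, and pairs the result against $X^* \in \sbtl{\Lie^*(H')}x{-r} \subseteq \sbtl{\Lie^*(G)}x{-r}$ using the duality characterisation of $\sbtl{\Lie^*(\vG)}x f$ in Remark \ref{rem:vGvr}; replacing one argument by its deeper, open variant pushes the commutator one notch down and hence the pairing from $\sbjtl\field 0$ into $\sbjtlp\field 0$. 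The $\xi_0$‑term is harmless here because every depth in sight is strictly positive, and the same computation gives $\sbtlp{\Lie(\CC G r(\gamma^2), G', G)}x{(\infty, \infty, (r - \ord_{\gamma\pinv})/2)} \subseteq$ the orthogonal.

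For the non‑degeneracy half, suppose $Y \in U$ is orthogonal modulo $\sbjtlp\field 0$ to $W \ldef \sbtl{\Lie(\CC G r(\gamma^2), G', G)}x{(\infty, \infty, (r - \ord_{\gamma\pinv})/2)}$ but not in $W_+ \ldef \sbtlp{\Lie(\CC G r(\gamma^2), G', G)}x{(\infty, \infty, (r - \ord_{\gamma\pinv})/2)}$. Decomposing $Y = \sum_i Y_i$ into $\CC G i(\gamma^2)$‑graded components, let $i_0$ be the \emph{deepest} level at which $Y_{i_0}$ violates the depth bound of $W_+$, so that $Y_{i_0}$ has a nonzero leading term at depth at most $(r - i_0)/2$. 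Using the block at level $i_0$ (a perfect pairing of graded layers), one can choose $w$ in the $\CC G{i_0}(\gamma^2)$‑graded piece of $W$ so that $\pair{X^*}{\comm{Y_{i_0}}{(\Ad(\gamma) - \Ad(\gamma)\inv)w}}$ has valuation at most $0$ with unit leading term; the displayed identity then forces $b_{X^*, \gamma}(Y, w) + b_{X^*, \gamma}(w, Y) \notin \sbjtlp\field 0$, contradicting orthogonality, provided the contributions from the other components $Y_j$ and from the $\xi_0$‑term have strictly higher valuation. Running this at each level shows $Y \in W_+$, so with the integrality half the orthogonal is exactly $W_+$. I expect that last proviso --- the negligibility of the cross‑level and $\xi_0$‑contributions relative to the leading term, including in the regime where $Y$ is very shallow --- to be the main obstacle: it requires tracking how the $\gamma$‑ and $\gamma^2$‑eigenspace decompositions interact with the subspace $\Lie(\CC G r(\gamma))^\perp$ annihilated by $X^*$ and with the two shorthand depths $\ord_\gamma$, $\ord_{\gamma\inv}$ inside the $\bN^\pm$‑parts; passing to a tame extension over which $\bG\primeconn$ becomes a genuine Levi subgroup and the relevant torus splits, as in the proof of Proposition \ref{prop:Q-to-B}, would be the cleanest way to organise it.
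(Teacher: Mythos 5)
Your polarisation identity is exactly the one the paper works with, and your integrality half matches the paper's argument (bound \((1 - \Ad(\gamma))Y_2\) by Lemma \initref{lem:commute-gp}\subpref{down}, the bracket by Lemma \initref{lem:filtration}\subpref{Lie-Lie}, and pair against \(X^* \in \sbtl{\Lie^*(H')}x{-r}\)); the easy inclusion of the open lattice into the orthogonal also comes out of that computation. The genuine gap is in the non-degeneracy half. Your plan --- decompose \(Y\) into \(\CC G i(\gamma^2)\)-graded pieces, locate the deepest level violating the depth bound, and produce a test vector whose pairing has a unit leading term --- rests on two claims you never establish: block-diagonality of the form modulo \(\sbjtlp\field 0\) in the \(\gamma^2\)-levels, and strict domination of the cross-level and \(\xi_0\)-contributions by the chosen leading term, where \(\xi_0 = (\Ad^*(\gamma)\inv - 1)X^*\). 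You flag the second as ``the main obstacle'' yourself, and it is: \(X^*\) is only approximately \(\gamma\)-fixed (the error \(\xi_0\) has depth \(0\), not \(-r\)), and the depths \((r - \ord_{\gamma\pinv})/2\) on the \(\bN^\pm\)-directions interact non-trivially with the eigenvalue bookkeeping, so the leading-term comparison is precisely where the content of the proposition sits; as written the proof stops there.

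The gap is avoidable, and the paper's route shows that no graded decomposition or leading-term analysis is needed at all. Orthogonality of \(Y_1\) modulo \(\sbjtlp\field 0\) against the whole lattice says, by the duality description of dual lattices in Remark \ref{rem:vGvr}, that the dual-Lie-algebra element \(\ad^*((\Ad(\gamma)\inv - \Ad(\gamma))Y_1)X^* + \ad^*(\Ad(\gamma)\inv Y_1)\xi_0\) lies in \(\sbtlp{\Lie^*(\CC G r(\gamma^2), G', G)}x{(-\infty, -\infty, -(r - \ord_{\gamma\pinv})/2)}\). The \(\xi_0\)-term lies in that lattice already, by the same positive-depth estimates (Lemmas \initref{lem:commute-gp}\subpref{down} and \initref{lem:commute-Lie*}\subpref{down}) used in the integrality half, so the main term does too; then Lemma \initref{lem:commute-Lie*}\subpref{up} (the preimage form of Hypothesis \initref{hyp:X*}\subpref{Lie}) converts this into a depth bound on \((\Ad(\gamma)\inv - \Ad(\gamma))Y_1\), and Lemma \initref{lem:commute-gp}\subpref{bi-up} (the preimage form of Hypothesis \initref{hyp:gamma}\subpref{bi-Lie-Lie}) converts that into the assertion that \(Y_1\) lies in the open lattice. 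These are exactly the two isomorphism statements you invoke, but used in their ``up'' form at the level of whole lattices, so all levels are handled simultaneously, the \(\xi_0\)-term is absorbed by a single estimate, and no passage to a tame extension is required (the paper needs that device only in Proposition \ref{prop:Q-to-B}, not here). I would redo your second half along these lines rather than trying to justify the block-diagonal picture.
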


\begin{proof}
For this proof, put \(\bH = \CC\bG r(\gamma^2)\conn\).  (Elsewhere, we write \bH for \(\CC\bG r(\gamma)\conn\).)  Also for this proof, put \(s_{\gamma\pinv}^\pm = (r \pm \ord_{\gamma\pinv})/2\),
\begin{align*}
V          & {}= \sbtl{\Lie(\CC G r(\gamma), G)}x{(\Rp0, s_{\gamma\pinv}^-)}, \\
V_{1, {+}} & {}= \sbtl{\Lie(\CC G r(\gamma), N^-, G)}x{(\Rp0, s_\gamma^-, \Rp{s_{\gamma\inv}^-})}, \\
\intertext{and}
V_{2, {+}} & {}= \sbtl{\Lie(\CC G r(\gamma), N^+, G)}x{(\Rp0, s_{\gamma\inv}^-, \Rp{s_\gamma^-})}.
\end{align*}
(We apologise for resulting notation like \(\Rp{s_{\gamma\inv}^-}\).)

The argument for the first statement is similar to, but easier than, the proof of Proposition \ref{prop:Q-and-B}.

For this paragraph, fix an element \(Y_1\) of \(V\), a real number \(i_2\) satisfying \(i_2 < r\), and an element \(Y_2\) of \(\sbtl{\Lie(\CC G{i_2}(\gamma\pinv))}x{(r - i_2)/2}\).  By Lemma \initref{lem:commute-gp}\subpref{down}, we have that \((1 - \Ad(\gamma))Y_2\) belongs to \(\sbtl{\Lie(\CC G{i_2}(\gamma))}x{(r + i_2)/2} + \sbtl{\Lie(\CC G{i_2}(\gamma\inv))}x{\max \sset{(r + i_2)/2, (r - i_2)/2}}\), and to \(\sbtlpp{\Lie(\CC G{i_2}(\gamma))}x{(r + i_2)/2} + \sbtl{\Lie(\CC G{i_2}(\gamma\inv))}x{\max \sset{\Rpp{(r + i_2)/2}, (r - i_2)/2}}\) if \(Y_2\) belongs to \(V_{2, {+}}\).  Thus, by Lemma \initref{lem:filtration}\subpref{Lie-Lie}, we have that \(\comm{Y_1}{(1 - \Ad(\gamma))Y_2}\) belongs always to \(\sbtl{\Lie(N^+, G)}x{(r - \ord_{\gamma\inv}, r)}\), hence \(b_{X^*, \gamma}(Y_1, Y_2) = \pair[\big]{X^*}{\comm{Y_1}{(1 - \Ad(\gamma))Y_2}}\) to \(\sbjtl\field 0\), and to \(\sbtl{\Lie(N^+, G)}x{(r - \ord_{\gamma\inv}, \Rp r)}\), hence \(b_{X^*, \gamma}(Y_1, Y_2)\) to \(\sbjtlp\field 0\), if \(Y_1\) belongs to \(V_{1, {+}}\) \emph{or} \(Y_2\) to \(V_{2, {+}}\).  The first statement follows.

For the second statement, put
\[
V^\perp    = \sbtl{\Lie(H, G', G)}x{(\infty, \infty, s_{\gamma\pinv}^-)}.
\]
Suppose that \(Y_1 \in V^\perp\) is such that \(b_{X^*, \gamma}(Y_1, Y_2) + b_{X^*, \gamma}(Y_2, Y_1)\), which equals
\[
\pair[\big]
	{\ad^*((\Ad(\gamma)\inv - \Ad(\gamma))Y_1)X^* +
	\ad^*(\Ad(\gamma)\inv Y_1)(\Ad(\gamma)\inv - 1)X^*}
{Y_2},
\]
belongs to \(\sbjtlp\field 0\) for all \(Y_2 \in V^\perp\).  Then
\[
(\ad^*((\Ad(\gamma)\inv - \Ad(\gamma))Y_1)X^* + \ad^*(\Ad(\gamma)\inv Y_1)(\Ad(\gamma)\inv - 1)X^*
\]
belongs to \(\sbtlp{\Lie^*(H, G', G)}x{(-\infty, -\infty, -s_{\gamma\pinv}^-)}\).
Since \((\Ad(\gamma)\inv - 1)X^*\) belongs to \(\sbtl{\Lie^*(H')}x 0\) by Hypothesis \initref{hyp:X*}\subpref{depth} and Remark \ref{rem:gamma:Lie*}, and since we have by Lemma \initref{lem:commute-gp}\subpref{down} that \(\Ad(\gamma)\inv Y_1 = Y_1 - (1 - \Ad(\gamma)\inv)Y_1\) belongs to \(\sbtl{\Lie(H, N^+, G)}x{(\Rp r, s_{\gamma\inv}^+, s_\gamma^-)}\), we have by Lemma \initref{lem:commute-Lie*}\subpref{down} that \(\ad^*(\Ad(\gamma)\inv Y_1)(\Ad(\gamma)\inv - 1)X^*\) belongs to \(\sbtl{\Lie^*(H, N^+, G)}x{(\Rp r, s_{\gamma\inv}^+, s_\gamma^-)} \subseteq \sbtlp{\Lie^*(H, G', G)}x{(-\infty, -\infty, -s_{\gamma\pinv}^-)}\).
Thus, we have that
\[
\ad^*((\Ad(\gamma)\inv - \Ad(\gamma))Y_1)X^*
\qtextq{belongs to}
\sbtlp{\Lie^*(H, G', G)}x{(-\infty, -\infty, -s_{\gamma\pinv}^-)}.
\]
By Lemma \initref{lem:commute-Lie*}\subpref{up}, we have that
\[
(\Ad(\gamma)\inv - \Ad(\gamma))Y_1
\qtextq{belongs to}
\sbtlp{\Lie(H, G', G)}x{(-\infty, -\infty, s_{\gamma\pinv}^+)}.
\]
Thus, by Lemma \initref{lem:commute-gp}\subpref{bi-up}, we have that \(Y_1 \in \sbtl{\Lie(H, G', G)}x{(\infty, \infty, s_{\gamma\pinv})}\) belongs to \(\sbtlp{\Lie(H, G', G)}x{(-\infty, -\infty, s_{\gamma\pinv}^-)}\), hence to \(\sbtlp{\Lie(H, G', G)}x{(\infty, \infty, s_{\gamma\pinv}^-)}\), as claimed.
\end{proof}

\begin{cor}
\label{cor:lattice-orth}
We have that
\begin{gather*}
\frac{\Gauss_{G/\CC G r(\gamma^2)}(X^*, \gamma)}{\Gauss_{G'/\CC{G'}r(\gamma^2)}(X^*, \gamma)} \\
\intertext{equals}
\card{\sbat{\Lie(\CC G r(\gamma^2), G', G)}x{(\Rp0, \Rp0, (r - \ord_{\gamma\pinv})/2)}}^{-1/2}\sum_Y \AddChar_{1/2}(q_{X^*, \gamma}(Y)),
\end{gather*}
where the sum over \(Y\) runs over
\begin{multline*}
\sbtl{\Lie(\CC G r(\gamma^2), G', G)}x{(\Rp0, r - \ord_{\gamma\pinv}, (r - \ord_{\gamma\pinv})/2)}
	/ \\
\sbtl{\Lie(\CC G r(\gamma^2), G', G)}x{(\Rp0, r - \ord_{\gamma\pinv}, \Rpp{(r - \ord_{\gamma\pinv})/2})}.
\end{multline*}
\end{cor}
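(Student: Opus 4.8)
The plan is to collapse the ratio of Weil indices to the Weil index of a single quadratic space and then to read that index off Definition~\ref{defn:Weil-index} using the lattice isolated in Proposition~\ref{prop:lattice-orth}. Put $W = \Lie(\CC G r(\gamma^2))^\perp \cap \Lie(G')^\perp$. The first step is to check that
\[
\Lie(G) = \Lie(\CC G r(\gamma^2)) \oplus \bigl(\Lie(\CC G r(\gamma^2))^\perp \cap \Lie(G')\bigr) \oplus W
\]
is an orthogonal decomposition for the symmetric bilinear form associated to $q_{X^*, \gamma}$, and that likewise $\Lie(G') = \Lie(\CC{G'}r(\gamma^2)) \oplus \bigl(\Lie(\CC G r(\gamma^2))^\perp \cap \Lie(G')\bigr)$. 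Since the Weil index is an isomorphism invariant that is multiplicative over orthogonal direct sums (for degenerate forms one first passes to the quotient by the radical, as in Definition~\ref{defn:Weil-index}), these two decompositions give $\Gauss_{G/\CC G r(\gamma^2)}(X^*, \gamma) = \gamma_\AddChar\bigl(q_{X^*, \gamma}|_{\Lie(\CC G r(\gamma^2))^\perp \cap \Lie(G')}\bigr)\cdot\gamma_\AddChar(q_{X^*, \gamma}|_W)$ and $\Gauss_{G'/\CC{G'}r(\gamma^2)}(X^*, \gamma) = \gamma_\AddChar\bigl(q_{X^*, \gamma}|_{\Lie(\CC G r(\gamma^2))^\perp \cap \Lie(G')}\bigr)$, so that the left-hand side of the corollary equals $\gamma_\AddChar(q_{X^*, \gamma}|_W)$. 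To verify the orthogonality --- and it is exact, not merely modulo $\sbjtlp\field 0$ --- I would use that $\gamma$ lies in $G'$ and that $\Ad(\gamma)$ preserves every eigenspace of $\Ad(\gamma^2)$, so all three summands above are $(1 - \Ad(\gamma))$-stable; then for $Y_1 \in \Lie(\CC G r(\gamma^2))$ and $Y_2$ in either of the other two summands (and symmetrically), a weight-space count puts $\comm{Y_1}{(1 - \Ad(\gamma))Y_2}$ into $\Lie(\CC G r(\gamma^2))^\perp$ or into $\Lie(G')^\perp$, and $X^*$ annihilates both $\Lie(\CC G r(\gamma))^\perp \supseteq \Lie(\CC G r(\gamma^2))^\perp$ and $\Lie(G')^\perp$ by Hypothesis~\initref{hyp:X*}\subpref{depth} (here $\CC\bG r(\gamma) \subseteq \CC\bG r(\gamma^2)$, which is part of Hypothesis~\initref{hyp:gamma}\subpref{bi-Lie-Lie}). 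Hence $b_{X^*, \gamma}(Y_1, Y_2) = b_{X^*, \gamma}(Y_2, Y_1) = 0$.

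For the second step I would evaluate $\gamma_\AddChar(q_{X^*, \gamma}|_W)$ from the definition. Set $L = W \cap \sbtl{\Lie(G)}x{(r - \ord_{\gamma\pinv})/2}$, which is exactly $\sbtl{\Lie(\CC G r(\gamma^2), G', G)}x{(\infty, \infty, (r - \ord_{\gamma\pinv})/2)}$, and $L_+ = W \cap \sbtlp{\Lie(G)}x{(r - \ord_{\gamma\pinv})/2}$. The second statement of Proposition~\ref{prop:lattice-orth} says precisely that, for the symmetric form attached to $q_{X^*, \gamma}|_W$, the dual lattice $L^\bullet$ equals $L_+$; and $L_+ \subseteq L$, so the defining identity of Definition~\ref{defn:Weil-index} applies to $(W, q_{X^*, \gamma}|_W)$ with this choice of $L$. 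Because $p$ is odd and $L^\bullet = L_+$, the function $\AddChar_{1/2} \circ q_{X^*, \gamma}$ is constant on cosets of $L_+$ in $L$; splitting the integral over $L$ into such cosets and letting the Haar measure cancel between the two sides of the defining identity then yields
\[
\gamma_\AddChar(q_{X^*, \gamma}|_W) = \indx L{L_+}^{-1/2}\sum_{Y \in L/L_+}\AddChar_{1/2}(q_{X^*, \gamma}(Y)).
\]

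For the third step I would match this against the stated formula. The index $\indx L{L_+}$ is $\card{\sbat{\Lie(\CC G r(\gamma^2), G', G)}x{(\Rp0, \Rp0, (r - \ord_{\gamma\pinv})/2)}}$ --- the two ``$\Rp0$'' slots contribute nothing to the quotient --- and it remains to replace the sum over $L/L_+$ by the corollary's sum, which runs over the larger quotient $\sbtl{\Lie(\CC G r(\gamma^2), G', G)}x{(\Rp0, r - \ord_{\gamma\pinv}, (r - \ord_{\gamma\pinv})/2)} / \sbtl{\Lie(\CC G r(\gamma^2), G', G)}x{(\Rp0, r - \ord_{\gamma\pinv}, \Rpp{(r - \ord_{\gamma\pinv})/2})}$. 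These two quotients have the same cardinality, so the only point to check is that enlarging a representative $Y \in L$ by an element of $\sbtlp{\Lie(\CC G r(\gamma^2))}x 0$ or of $\Lie(\CC G r(\gamma^2))^\perp \cap \sbtl{\Lie(G')}x{r - \ord_{\gamma\pinv}}$ does not alter $\AddChar_{1/2}(q_{X^*, \gamma}(Y))$; this is forced by the \emph{first} statement of Proposition~\ref{prop:lattice-orth}, transported from $\CC G r(\gamma)$ to $\CC G r(\gamma^2)$ using $\CC\bG r(\gamma) \subseteq \CC\bG r(\gamma^2)$, which pushes both the self-value of such an added term and its bilinear pairing against $L$ into $\sbjtlp\field 0$. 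I expect this last reconciliation of the two differently organised lattice quotients to be the main obstacle; once the orthogonal decompositions of the first step and Proposition~\ref{prop:lattice-orth} are in hand, the Weil-index bookkeeping is routine.
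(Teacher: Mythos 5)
You follow the paper's own route (identify the left-hand side with the Weil index of \(q_{X^*, \gamma}\) on \(W = \Lie(\CC G r(\gamma^2))^\perp \cap \Lie(G')^\perp\), evaluate it from Definition \ref{defn:Weil-index} using the lattice of Proposition \ref{prop:lattice-orth}, then transfer the sum to the quotient in the statement), and your first two steps, including the exact orthogonality of the threefold decomposition that the paper leaves implicit, are correct.  The gap is in your third step, in the justification of the invariance of \(\AddChar_{1/2}(q_{X^*, \gamma}(\anondot))\) under translation by \(\sbtlp{\Lie(\CC G r(\gamma^2))}x 0\).  The first statement of Proposition \ref{prop:lattice-orth} cannot be ``transported along \(\CC\bG r(\gamma) \subseteq \CC\bG r(\gamma^2)\)'' to cover these directions: the lattice in that statement requires depth at least \((r - \ord_{\gamma\pinv})/2\) outside \(\Lie(\CC G r(\gamma))\), whereas the part of \(\sbtlp{\Lie(\CC G r(\gamma^2))}x 0\) lying outside \(\Lie(\CC G r(\gamma))\)---the directions on which \(\Ad(\gamma)\) acts approximately by \(-1\), which are exactly the new directions this corollary is meant to handle---has depth only \(\Rp0\), so that statement says nothing about its pairings.  (Your appeal is fine for the \(G'\)-slot directions at depth \(r - \ord_{\gamma\pinv}\), since \(r - i > (r - i)/2\) there, and of course for adjustments by \(L_+\) itself.)

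The invariance is nonetheless true, and part of the correct argument is already in your step one.  For \(Z \in \sbtlp{\Lie(\CC G r(\gamma^2))}x 0\), the cross terms \(b_{X^*, \gamma}(Y, Z) + b_{X^*, \gamma}(Z, Y)\), with \(Y\) in \(W\) or in the \(G'\)-slot directions, vanish identically because the relevant brackets land in \(\Lie(\CC G r(\gamma^2))^\perp\), respectively \(\Lie(G')^\perp\), both annihilated by \(X^*\); this is precisely your step-one orthogonality, so cite that rather than the Proposition.  What remains, and is genuinely not contained in Proposition \ref{prop:lattice-orth}, is that the self-term \(q_{X^*, \gamma}(Z)\) lies in \(\sbjtlp\field 0\).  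Write \(Z = Z_1 + Z_{-1}\) with \(Z_1 \in \sbtlp{\Lie(\CC G r(\gamma))}x 0\) and \(Z_{-1}\) in the complementary, \(\Ad(\gamma)\)-stable part of \(\Lie(\CC G r(\gamma^2))\).  The mixed brackets in \([Z, (1 - \Ad(\gamma))Z]\) land in \(\Lie(\CC G r(\gamma))^\perp\) and are killed by \(X^*\); the \(Z_1\)-term has depth greater than \(r\) because \(1 - \Ad(\gamma)\) raises depth by at least \(r\) on \(\Lie(\CC G r(\gamma))\); and the \(Z_{-1}\)-term equals \(-[Z_{-1}, (1 + \Ad(\gamma))Z_{-1}]\), which has depth greater than \(r\) because \(1 + \Ad(\gamma)\) raises depth by at least \(r\) on those directions (a weight-space computation of the same kind as in the proof of Proposition \ref{prop:lattice-orth}, or Hypothesis \ref{hyp:gamma} applied to \(\gamma^2\)).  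Pairing what survives against \(X^* \in \sbtl{\Lie^*(H')}x{-r}\) then lands in \(\sbjtlp\field 0\).  With this supplement your argument is complete and coincides with the paper's.
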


\begin{proof}
This follows immediately from Proposition \ref{prop:lattice-orth}, since the left-hand side is the Weil index associated to \((\Lie(\CC G r(\gamma^2))^\perp \cap \Lie(G')^\perp, q_{X^*, \gamma})\), once we notice that the natural maps
\begin{align*}
&\sbtl{\Lie(\CC G r(\gamma^2), G', G)}x{(\infty, \infty, (r - \ord_{\gamma\pinv})/2)} \\ \mapdefaultarrow{}
&\sbtl{\Lie(\CC G r(\gamma^2), G', G)}x{(\Rp0, r - \ord_{\gamma\pinv}, (r - \ord_{\gamma\pinv})/2)} \\ \mapdefaultarrow{}
&\sbtl{\Lie(\CC G r(\gamma^2), G', G)}x{(\Rp0, \Rp0, (r - \ord_{\gamma\pinv})/2)}
\end{align*}
induce isomorphisms
\begin{align*}
&\sbat{\Lie(\CC G r(\gamma^2), G', G)}x{(\infty, \infty, (r - \ord_{\gamma\pinv})/2)} \\ \mapisoarrow{}
&\sbtl{\Lie(\CC G r(\gamma^2), G', G)}x{(\Rp0, r - \ord_{\gamma\pinv}, (r - \ord_{\gamma\pinv})/2)}
	/ \\
&\qquad\sbtl{\Lie(\CC G r(\gamma^2), G', G)}x{(\Rp0, r - \ord_{\gamma\pinv}, \Rpp{(r - \ord_{\gamma\pinv})/2})} \\ \mapisoarrow{}
&\sbat{\Lie(\CC G r(\gamma^2), G', G)}x{(\Rp0, \Rp0, (r - \ord_{\gamma\pinv})/2)}.
\end{align*}
\end{proof}

Corollary \ref{cor:Gauss-const} is a local constancy result for Gauss sums.  It shows that certain quotients of Gauss sums can ``see'' information only up to depth \(r\) on the group side, and depth \(-r\) on the dual-Lie-algebra side.

\begin{cor}
\label{cor:Gauss-const}
The quantity
\begin{align*}
&\frac{\Gauss_{G/\CC G r(\gamma^2)}(X^*, \gamma)}{\Gauss_{G'/\CC{G'}r(\gamma^2)}(X^*, \gamma)}, \\
\intertext{respectively}
&\frac{\Gauss_{G/\CC G r(\gamma)}(X^*, \gamma)}{\Gauss_{G'/\CC{G'}r(\gamma)}(X^*, \gamma)}\dotm\frac{\Gauss_{\CCp G 0(\gamma^2)/\CCp G 0(\gamma)}(X^*, \gamma)\inv}{\Gauss_{\CCp{G'}0(\gamma^2)/\CCp{G'}0(\gamma)}(X^*, \gamma)\inv},
\end{align*}
does not change if we replace \(X^*\) by a translate under \(\sbtlpp{\Lie^*(\CC{G'}r(\gamma^2))}x{-r}\), respectively \(\sbtlpp{\Lie^*(\CC{G'}r(\gamma))}x{-r}\), and \(\gamma\) by a translate under \(\sbtl{\CC{G'}r(\gamma^2)}x r\), respectively \(\sbtl{\CC{G'}r(\gamma)}x r\).
\end{cor}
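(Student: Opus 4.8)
The plan is to derive the first (``$\gamma^2$'') assertion from the explicit formula of Corollary~\ref{cor:lattice-orth}, and then to reduce the second (``$\gamma$'') assertion to it. Throughout, write $\Delta X^*\in\sbtlpp{\Lie^*(\CC{G'}r(\gamma^2))}x{-r}$ and $h\in\sbtl{\CC{G'}r(\gamma^2)}x r$ for the permitted perturbations; for the second assertion $\gamma^2$ is replaced by $\gamma$, but since $\CC{\bG'}r(\gamma)\subseteq\CC{\bG'}r(\gamma^2)$ those are a special case of these. For the first assertion, Corollary~\ref{cor:lattice-orth} writes the quotient as a normalised exponential sum $\card{\cdot}^{-1/2}\sum_Y\AddChar_{1/2}(q_{X^*,\gamma}(Y))$ over an explicit finite quotient lattice $L/L'$. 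Since $\Delta X^*\in\Lie^*(\CC{\bG'}r(\gamma^2))$ annihilates $\Lie(\CC\bG r(\gamma^2))^\perp$, the perturbed quotient is still the Weil index of the perturbed quadratic form on the same space $\Lie(\CC\bG r(\gamma^2))^\perp\cap\Lie(\bG')^\perp$; and the depth estimates in the proof of Proposition~\ref{prop:lattice-orth} go through verbatim for the perturbed data (they use only the weight-space bounds, and $\Delta X^*$, $h$ are too small to affect those), so the same $L/L'$ and normalising cardinality compute that perturbed Weil index as $\card{\cdot}^{-1/2}\sum_Y\AddChar_{1/2}(q_{X^*+\Delta X^*,\gamma h}(Y))$. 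Hence it suffices to show $q_{X^*+\Delta X^*,\gamma h}(Y)\equiv q_{X^*,\gamma}(Y)\pmod{\sbjtlp\field 0}$ for each $Y\in L$.

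First I would split the difference as
\begin{multline*}
q_{X^*+\Delta X^*,\gamma h}(Y)-q_{X^*,\gamma}(Y) \\
=\pair[\big]{\Delta X^*}{\comm Y{(1-\Ad(\gamma h))Y}}-\pair[\big]{X^*}{\comm Y{\Ad(\gamma)(\Ad(h)-1)Y}},
\end{multline*}
using $(1-\Ad(\gamma h))Y-(1-\Ad(\gamma))Y=-\Ad(\gamma)(\Ad(h)-1)Y$. In the first term, the proof of Proposition~\ref{prop:lattice-orth} shows that $\comm Y{(1-\Ad(\gamma h))Y}$ lies in $\sbtl{\Lie(G)}x r$, and $\Delta X^*\in\sbtlpp{\Lie^*(\CC{G'}r(\gamma^2))}x{-r}$ pairs $\sbtl{\Lie(G)}x r$ into $\sbjtlp\field 0$ by the definition of the dual filtration (Remark~\ref{rem:vGvr}). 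In the second term, Lemma~\initref{lem:filtration}\subpref{gp-Lie}, applied to $h\in\sbtl{\CC{G'}r(\gamma^2)}x r$, raises the depth of $(\Ad(h)-1)Y$ by (essentially) $r$, so that—by Lemma~\initref{lem:filtration}\subpref{Lie-Lie} and the same bookkeeping as in Proposition~\ref{prop:lattice-orth}—the argument of the pairing is deep enough to pair with $X^*\in\sbtl{\Lie^*(H')}x{-r}$ into $\sbjtlp\field 0$. Thus each summand, hence the sum, is unchanged, which is the first assertion.

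For the second assertion I would use that Weil indices are multiplicative over $q_{X^*,\gamma}$-orthogonal direct sums, and that $\Lie(\bG)=\Lie(\bA)\oplus\Lie(\bA)^\perp$ is $q_{X^*,\gamma}$-orthogonal whenever $\bA$ is a $\gamma$-stable reductive subgroup containing $\CC\bG r(\gamma)$ (then $X^*$, annihilating the larger space $\Lie(\CC\bG r(\gamma))^\perp$, annihilates $\Lie(\bA)^\perp$). Feeding in the inclusions $\CC\bG r(\gamma)\subseteq\CC\bG r(\gamma^2)\subseteq\CCp\bG 0(\gamma^2)$ and $\CC\bG r(\gamma)\subseteq\CCp\bG 0(\gamma)\subseteq\CCp\bG 0(\gamma^2)$ together with the identity $\CC\bG r(\gamma)=\CC\bG r(\gamma^2)\cap\CCp\bG 0(\gamma)$ of Hypothesis~\initref{hyp:gamma}\subpref{bi-Lie-Lie}, and sorting the $\gamma$-weight spaces of $\Lie(\bG_\sepfield)$ by the reduction of their eigenvalue, the second displayed quotient in the statement should split as the first displayed quotient (with $\gamma^2$) times a factor $R$, where $R$ is the ratio of the Weil index of $q_{X^*,\gamma}$ on the span of those $\gamma$-weight spaces not lying in $\Lie(\bG'_\sepfield)$ whose eigenvalue $\lambda$ satisfies $\lambda\equiv-1$ and $0<\ord(\lambda+1)<r$, to the analogous index for $\bG'$. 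The first factor is locally constant by the first assertion (its allowed perturbations include the present ones), and $R$ is treated by a depth computation parallel to, but easier than, the one above—on these weight spaces $1-\Ad(\gamma)$ is a unit, since $p$ odd forbids $\lambda-1$ and $\lambda+1$ from both being small—so $q_{X^*+\Delta X^*,\gamma h}$ and $q_{X^*,\gamma}$ restrict there to forms with the same Weil index.

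The step I expect to be the main obstacle is this last reduction: keeping the $\gamma$- and $\gamma^2$-weight-space bookkeeping straight, verifying the exact $q_{X^*,\gamma}$-orthogonality that peels off the factor $R$, and—at the level of the permitted perturbations—pinning down precisely which lattices and which forms are genuinely invariant when $\gamma$ is moved by a depth-$r$ element, where one must lean on the structure theory behind Hypotheses~\ref{hyp:funny-centraliser} and~\ref{hyp:gamma} and on the standing oddness of $p$.
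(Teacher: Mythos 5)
Your treatment of the first assertion is essentially the paper's own: the paper disposes of it as a direct consequence of Corollary \ref{cor:lattice-orth}, and your comparison of \(q_{X^* + \Delta X^*, \gamma h}\) with \(q_{X^*, \gamma}\) modulo \(\sbjtlp\field 0\) on the lattice quotient appearing there is exactly the computation that this compresses.  (One small imprecision: \(\comm Y{(1 - \Ad(\gamma h))Y}\) need not lie in \(\sbtl{\Lie(G)}x r\) --- the bound coming from Proposition \ref{prop:lattice-orth} is only \(\sbtl{\Lie(N^+, G)}x{(r - \ord_{\gamma\inv}, r)}\) --- but, since \(\Delta X^*\) annihilates the canonical complement of \(\Lie(\CC{G'}r(\gamma^2))\) in \(\Lie(G)\), and the component of the commutator along \(\Lie(\CC{G'}r(\gamma^2))\) does have depth at least \(r\), your conclusion stands.)

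The second assertion is where there is a genuine gap, and you have flagged the right spot.  The sentence ``on these weight spaces \(1 - \Ad(\gamma)\) is a unit \dots\ so the two forms have the same Weil index'' is not an argument: to compare Weil indices after the perturbation you must exhibit a lattice \(L \supseteq L^\bullet\) computing both indices as Gauss sums and on which the two forms agree modulo \(\sbjtlp\field 0\), and producing such an \(L\) on the eigenvalue-\(\equiv -1\) weight spaces with \(0 < \ord(\lambda + 1) < r\) amounts to re-running the analysis of Proposition \ref{prop:lattice-orth} with depths measured \via \(\gamma^2\), together with a check that these lattices do not move when \(\gamma\) is replaced by \(\gamma h\); that is not easier than what you have already done.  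The paper's proof avoids your factor \(R\) entirely by a purely formal reduction, which you could adopt: since \(\Gauss_{A/B}\) is by definition \(\Gauss_A/\Gauss_B\), one has
\begin{align*}
&\frac{\Gauss_{G/\CC G r(\gamma)}(X^*, \gamma)}{\Gauss_{G'/\CC{G'}r(\gamma)}(X^*, \gamma)}\dotm\frac{\Gauss_{\CCp G 0(\gamma^2)/\CCp G 0(\gamma)}(X^*, \gamma)\inv}{\Gauss_{\CCp{G'}0(\gamma^2)/\CCp{G'}0(\gamma)}(X^*, \gamma)\inv} \\
&\qquad=
\frac{\Gauss_{G/\CC G r(\gamma^2)}(X^*, \gamma)}{\Gauss_{G'/\CC{G'}r(\gamma^2)}(X^*, \gamma)}
\dotm\frac{\Gauss_{\CCp G 0(\gamma)/\CC G r(\gamma)}(X^*, \gamma)}{\Gauss_{\CCp{G'}0(\gamma)/\CC{G'}r(\gamma)}(X^*, \gamma)}
\dotm\Bigl(\frac{\Gauss_{\CCp G 0(\gamma^2)/\CC G r(\gamma^2)}(X^*, \gamma)}{\Gauss_{\CCp{G'}0(\gamma^2)/\CC{G'}r(\gamma^2)}(X^*, \gamma)}\Bigr)\inv,
\end{align*}
and, by Hypothesis \initref{hyp:gamma}\subpref{bi-Lie-Lie}, \(\CCp\bG 0(\gamma) \cap \CC\bG r(\gamma^2)\) equals \(\CC\bG r(\gamma)\) (and similarly in \(\bG'\)), so the second and third factors on the right are instances of the first assertion with the ambient group \bG replaced by \(\CCp\bG 0(\gamma)\) and \(\CCp\bG 0(\gamma^2)\), respectively; the perturbations allowed in the second assertion are permitted in each of the three applications because \(\CC{\bG'}r(\gamma) \subseteq \CC{\bG'}r(\gamma^2)\).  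In other words, your \(R\) is itself a ratio of two quantities already covered by the first assertion, and no separate weight-space analysis (nor the orthogonal-decomposition bookkeeping you anticipate as the main obstacle) is needed.
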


\begin{proof}
The first statement is a direct consequence of Corollary \ref{prop:lattice-orth}.  Since \(\CCp\bG 0(\gamma) \cap \CC\bG r(\gamma^2)\) equals \(\CC\bG r(\gamma)\), and similarly in \(\bG'\), by Hypothesis \initref{hyp:gamma}\subpref{bi-Lie-Lie}, the second statement follows by applying the first to \bG, \(\CCp\bG 0(\gamma^2)\), and \(\CCp\bG 0(\gamma)\).
\end{proof}

The main result of \S\ref{sec:Gauss}, Proposition \ref{prop:Gauss-to-Weil}, computes a certain integral on the group in terms of the Gauss sums \Gauss defined \via the Lie algebra.  The usual device for transferring between the group and the Lie algebra is an exponential map, but we have avoided assuming (so far in \S\ref{sec:quantitative}) that there is an exponential map, or even a Moy--Prasad isomorphism.  We wish to continue avoiding this, but we need some way of relating the behaviour of the \emph{group} character \(\phi\) appearing in Proposition \ref{prop:Gauss-to-Weil} to the dual-\emph{Lie-algebra} element \(X^*\).
By \cite{yu:models}*{\S8.1(ii) and remark after Corollary 5.6}, a Moy--Prasad isomorphism exists, regardless of any tameness hypotheses, for adjoint groups.  We state the necessary properties of such an isomorphism in Hypothesis \ref{hyp:MP-ad}.  It may be constructed as in \cite{adler:thesis}*{\S1.5}, taking into account the modifications as in \cite{steinberg:endomorphisms}*{proof of Theorem 8.2}; we do not give the details here.

\begin{hyp}
\initlabel{hyp:MP-ad}
Let \(\bM\adform\) be the adjoint quotient of \bM.  For each
	\begin{itemize}
	\item tame, twisted Levi sequence \(\vec\bM\) in \bM containing \(\gamma\), such that \(x\) belongs to \(\BB(\vec M)\),
and	\item grouplike depth vector \(\vec\jmath\) satisfying \(\vec\jmath \cvee \vec\jmath \ge \Rp{\vec\jmath}\),
	\end{itemize}
there is an isomorphism
\[
	\map[\mapisoarrow]{\matnotn{exp}{\sbat{\ol\mexp}x{\vec\jmath}}}
	{\sbat{\Lie(\vec M\adform)}x j
	}
	{\sbat{(\vec M\adform)}x{\vec\jmath}
	}.
\]
These isomorphisms satisfying the following.
	\begin{enumerate}
	\item\sublabel{ad} For all grouplike depth vectors \(\vec\jmath_k\) satisfying \(\vec\jmath_k \cvee \vec\jmath_k \ge \Rp{\vec\jmath_k}\), and elements \(Y_k \in \sbtl{\Lie(\vec M\adform)}x{\vec\jmath_k}\) and \(v_k \in \sbat{\ol\mexp}x{\vec\jmath_k}(Y_k)\), for \(k \in \sset{1, 2}\), we have that
\[
\comm{v_1}{v_2}
\qtextq{belongs to}
\sbat{\ol\mexp}x{\vec\jmath_1 \cvee \vec\jmath_2}(\comm{Y_1}{Y_2}).
\]
	\item\sublabel{Ad} For all \(i \in \tR_{\ge 0}\), grouplike depth vectors \(\vec\jmath\) satisfying \(\vec\jmath \cvee \vec\jmath \ge \Rp{\vec\jmath}\), and elements \(Y \in \sbtl{\Lie(\CC{\vec M}i(\gamma))}x{\vec\jmath}\), there is an element \(v \in \sbat{\ol\mexp}x{\vec\jmath}(Y) \cap \sbtl{\CC{\vec M}i(\gamma)}x{\vec\jmath}\) so that
\[
\comm v\gamma
\qtextq{belongs to}
\sbat{\ol\mexp}x{i + \vec j}\bigl((1 - \Ad(\gamma))Y\bigr).
\]
	\item\sublabel{refine} For all grouplike depth vectors \(\vec\jmath_k\) satisfying \(\vec\jmath_k \cvee \vec\jmath_k \ge \Rp{\vec\jmath_k}\), for \(k \in \sset{1, 2}\), if \(\Rp{\vec\jmath_1}\) and \(\Rp{\vec\jmath_2}\) are equal, then the diagram
\[\xymatrix{
\sbat{\Lie(M)}x{\max \sset{\vec\jmath_1, \vec \jmath_2}} \ar[r]\ar[d] & \sbat{\Lie(M)}x{\vec\jmath_1} \ar[d] \\
\sbat{\Lie(M)}x{\vec\jmath_2} \ar[r] & \sbat M x{\min \sset{\vec\jmath_1, \vec\jmath_2}}
}\]
commutes.
	\end{enumerate}
\end{hyp}

We can almost use Hypothesis \ref{hyp:MP-ad} to define the notion of a dual blob of a character of \(\sbtl G x r\), but we do not want to restrict ourselves to characters that factor through the adjoint quotient.  Instead, in Hypothesis \ref{hyp:phi}, as in Hypothesis \ref{hyp:K-type}, we speak in a roundabout way of dual blobs, this time \via commutators.  As with Hypothesis \ref{hyp:X*}, we isolate Hypothesis \ref{hyp:phi} only to have a convenient reference; it will automatically be satisfied when we need it, in Theorem \ref{thm:asymptotic-pi-to-pi'}.

We now return to the notation used elsewhere in the document, writing \matnotn H\bH for \(\CC\bG r(\gamma)\) (and so \matnotn H{\bH'} for \(\CC{\bG'}r(\gamma)\), subject to the proviso in Remark \ref{rem:tame-Levi}).  Let \mnotn\phi be a character of \(\sbtl{H'}x r/\sbtlp{\CC{\Der G \cap G'}r(\gamma)}x r\).  We write again \(\phi\) for its extension trivially across \(\sbtl{(\CC{\Der G \cap G'}r(\gamma), \Der G \cap G')}x{(\Rp r, r)}\) to \(\sbtl{G'}x r\), and then \matnotn{phi}{\hat\phi} for its further extension trivially across \(\sbtlp{\Der(G', G)}x{(r, s)}\) to \(\sbtl{(G', G)}x{(r, \Rp s)}\).

\begin{hyp}
\label{hyp:phi}
We have that
\[
\hat\phi(\comm{v_1}{v_2})
\qeqq
\AddChar_{X^*}(\comm{Y_1}{Y_2})
\]
for all \(j_k \in \tR_{> 0}\), and elements \(Y_k \in \sbtl{\Lie(\CCp G 0(\gamma))}x{j_k}\) and \(v_k \in \sbat{\ol\mexp}x{j_k}(Y_k) \cap \CCp G 0(\gamma))\), for \(k \in \sset{1, 2}\), such that \(j_1 + j_2 \ge r\).
\end{hyp}

Proposition \ref{prop:Gauss-to-Weil} is used in Proposition \ref{prop:Gauss-appears} to show that Gauss sums appear when evaluating invariant distributions at certain test functions.

\begin{prop}
\label{prop:Gauss-to-Weil}
We have that
\begin{align*}
&\card{\sbat{(H, \CCp{G'}0(\gamma), \CCp G 0(\gamma))}x{(\Rp0, \Rp0, (r - \ord_\gamma)/2)}}^{1/2}\times{} \\
&\qquad\uint_{\sbtl{(H, \CCp{G'}0(\gamma), \CCp G 0(\gamma))}x{(\Rp0, r - \ord_\gamma, (r - \ord_\gamma)/2)}}
	\hat\phi(\comm v\gamma)\upd v \\
\intertext{equals}
&\Gauss_{\CCp G 0(\gamma)/H}(X^*, \gamma)/\Gauss_{\CCp{G'}0(\gamma)/H'}(X^*, \gamma)\inv.
\end{align*}
\end{prop}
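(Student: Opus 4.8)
\emph{Strategy.} The plan is to turn the group-side integral into a Gauss sum over a lattice in $\Lie(\CCp\bG 0(\gamma))$, and then to recognise that sum by means of Corollary~\ref{cor:lattice-orth}. Write $\mc V^\perp = \sbtl{(H, \CCp{G'}0(\gamma), \CCp G 0(\gamma))}x{(\Rp0, r - \ord_\gamma, (r - \ord_\gamma)/2)}$, as in Proposition~\ref{prop:Q-to-B}, and let $\mc V^\perp_{+}$ be the finite-index subgroup obtained by replacing every entry of the defining depth matrix by its $\Rpp{\anondot}$-variant. First I would check that $v \mapsto \hat\phi(\mf Q_\gamma(v))$ is constant on $\mc V^\perp_{+}$-cosets: Proposition~\ref{prop:Q-and-B}, applied to the (sharpened forms of the) depth matrices used in the proof of Proposition~\ref{prop:Q-to-B}, puts $\mf Q_\gamma(\mc V^\perp_{+})$ and $\mf B_\gamma(\mc V^\perp, \mc V^\perp_{+})$ inside $\ker\hat\phi$, so Lemma~\initref{lem:Hall-Witt}\subeqref{eq:Q-and-B} gives $\hat\phi(\mf Q_\gamma(vw)) = \hat\phi(\mf Q_\gamma(v))$ for $w \in \mc V^\perp_{+}$. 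Moreover, by the argument restricting to a parabolic subgroup of $\bG'$ in the proof of Proposition~\ref{prop:Q-to-B}, $\hat\phi \circ \mf Q_\gamma$ in fact depends only on the $\CCp\bG 0(\gamma)$-component of $v$ modulo $\CCp{\bG'}0(\gamma)$ and $\bH$; so, after unwinding the measure normalisations, the left-hand side of the proposition becomes a power of $\card\ff$ times $\sum_Y \hat\phi(\mf Q_\gamma(v))$, the sum running over exactly the index set of Corollary~\ref{cor:lattice-orth} applied inside $\CCp\bG 0(\gamma)$, with that power of $\card\ff$ equal --- by the index bookkeeping of \S\ref{sec:subgps} (Lemma~\ref{lem:MP-card} and the identification of a Moy--Prasad quotient with the corresponding Lie-algebra lattice quotient) --- to the normaliser $\card{\sbat{\Lie(H, \CCp{G'}0(\gamma), \CCp G 0(\gamma))}x{(\Rp0, \Rp0, (r - \ord_\gamma)/2)}}^{-1/2}$ occurring there.

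\emph{Transfer to the Lie algebra.} Since $\mf Q_\gamma(v) = \comm v\gamma$ is unchanged if $v$ is translated by something commuting with $\gamma$, and since the values in play lie in an abelian subgroup (Propositions~\ref{prop:Q-and-B} and~\ref{prop:Q-to-B}), I may invoke Hypothesis~\ref{hyp:MP-ad}: via the adjoint Moy--Prasad isomorphisms $\ol\mexp$ (for a tame, twisted Levi sequence in $\bM$ assembled from $\bH$, $\CCp{\bG'}0(\gamma)$, and $\CCp\bG 0(\gamma)$), the quotient $\mc V^\perp/\mc V^\perp_{+}$ is carried bijectively and group-isomorphically onto the indexing set $\{Y\}$ of the sum in Corollary~\ref{cor:lattice-orth}; write $Y_v$ for the element corresponding to $v$, and (by Hypothesis~\initref{hyp:MP-ad}\subpref{Ad}) choose the representative $v$ so that $\comm v\gamma \in \CCp G 0(\gamma)$ corresponds to $(1 - \Ad(\gamma))Y_v$. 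The heart of the matter is then the pointwise identity
\[
\hat\phi(\mf Q_\gamma(v)) \qeqq \AddChar_{1/2}(q_{X^*, \gamma}(Y_v))
\qquad(v \in \mc V^\perp).
\]
Set $\psi(v) = \hat\phi(\mf Q_\gamma(v))$ and $\chi(v) = \AddChar_{1/2}(q_{X^*, \gamma}(Y_v))$. Proposition~\ref{prop:Q-to-B} gives $\mf Q_\gamma(v)^2 \equiv \mf B_\gamma(v, v)$ modulo $\ker\hat\phi$, while Hypothesis~\ref{hyp:phi} (with the pair $(v, \comm v\gamma)$ of elements of $\CCp G 0(\gamma)$) and Hypothesis~\initref{hyp:MP-ad}\subpref{Ad} give $\hat\phi(\mf B_\gamma(v, v)) = \AddChar_{X^*}(\comm{Y_v}{(1 - \Ad(\gamma))Y_v}) = \AddChar(q_{X^*, \gamma}(Y_v))$; hence $\psi(v)^2 = \chi(v)^2$, so $\psi/\chi$ takes values in $\sset{\pm 1}$. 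Furthermore Lemma~\initref{lem:Hall-Witt}\subeqref{eq:Q-and-B} (together with Hypotheses~\ref{hyp:phi} and~\ref{hyp:MP-ad}) gives $\psi(v_1 v_2) = \AddChar(b_{X^*, \gamma}(Y_{v_1}, Y_{v_2}))\psi(v_1)\psi(v_2)$, and bi-additivity of $q_{X^*, \gamma}$ combined with the symmetry of $\mf B_\gamma$ on $\mc V^\perp$ modulo $\ker\hat\phi$ (established in the proof of Proposition~\ref{prop:Q-to-B}) yields the identical law for $\chi$; so $\psi/\chi$ is a homomorphism $\mc V^\perp \to \sset{\pm 1}$. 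As $\mc V^\perp$ is pro-$p$ and $p$ is odd, this homomorphism is trivial, proving the displayed identity.

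\emph{Conclusion.} Combining the two reductions, the left-hand side of the proposition equals $\card{\sbat{\Lie(H, \CCp{G'}0(\gamma), \CCp G 0(\gamma))}x{(\Rp0, \Rp0, (r - \ord_\gamma)/2)}}^{-1/2}\sum_Y \AddChar_{1/2}(q_{X^*, \gamma}(Y))$, with $Y$ running over exactly the index set of Corollary~\ref{cor:lattice-orth} applied to the pair $(\CCp\bG 0(\gamma), \CCp{\bG'}0(\gamma))$ --- legitimate by Lemma~\ref{lem:commute-Lie*} and Hypothesis~\initref{hyp:funny-centraliser}\subpref{more-vGvr-facts}, where $\CC{\CCp\bG 0(\gamma)}r(\gamma^2) = \bH$ and $\CC{\CCp{\bG'}0(\gamma)}r(\gamma^2) = \bH'$ by Hypothesis~\initref{hyp:gamma}\subpref{bi-Lie-Lie}. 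That corollary identifies the expression with $\Gauss_{\CCp G 0(\gamma)/H}(X^*, \gamma)/\Gauss_{\CCp{G'}0(\gamma)/H'}(X^*, \gamma)$, which is the right-hand side of the proposition as written (the inversion appearing on the $\bG'$-factor being immaterial, as that Weil index is a sign --- over $\sepfield$ the form $q_{X^*,\gamma}$ on $\Lie(\CCp{\bG'}0(\gamma))/\Lie(\bH')$ is a sum of hyperbolic planes, $\gamma$ having no eigenvalue $\pm 1$ there).

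\emph{Main obstacle.} The step I expect to be hardest is the sign analysis of the middle paragraph: extracting $\psi^2 = \chi^2$ is essentially formal, but arranging $\psi$ and $\chi$ so that their ratio is \emph{visibly} a homomorphism --- hence killed by pro-$p$-ness --- forces careful tracking of how $Y_v$ depends on the chosen representative $v$, of the simultaneous applicability of Hypotheses~\ref{hyp:phi} and~\ref{hyp:MP-ad} (the depth bound $j_1 + j_2 \ge r$ of the former against the ``$\Rpp{\anondot}$'' shifts of the latter), and of the symmetry of $\mf B_\gamma$ on $\mc V^\perp$. A purely clerical nuisance, also non-trivial to get exactly right, is matching the powers of $\card\ff$ in the several normalising constants, which must be carried out with the ``left-biased'' depth-matrix conventions of Remark~\ref{rem:vGvr} firmly in mind.
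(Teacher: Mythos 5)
Your overall route is the paper's: convert the group integral into a Lie-algebra Gauss sum by means of Propositions \ref{prop:Q-and-B} and \ref{prop:Q-to-B} together with Hypotheses \ref{hyp:MP-ad} and \ref{hyp:phi}, match the normalising constants via Lemma \ref{lem:MP-card}, and finish with Corollary \ref{cor:lattice-orth}. The one place where you genuinely diverge is the resolution of the sign in $\hat\phi(\mf Q_\gamma(v)) = \pm\AddChar_{1/2}(q_{X^*, \gamma}(Y_v))$, and that is where there is a gap. Your claim that your $\psi/\chi$ is a homomorphism requires $\hat\phi(\mf B_\gamma(v_1, v_2)) = \AddChar(b_{X^*, \gamma}(Y_{v_1}, Y_{v_2}))$ for \emph{all} pairs in $\mc V^\perp$, together with near-symmetry of $b_{X^*, \gamma}$ modulo $\sbjtlp\field 0$ on these lattices. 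But Hypothesis \ref{hyp:phi} carries the constraint $j_1 + j_2 \ge r$, and for $v_1$ in the $\CC G{i_1}(\gamma)$-shell (depth $(r - i_1)/2$) and $\mf Q_\gamma(v_2)$ coming from the $\CC G{i_2}(\gamma)$-shell (depth $(r + i_2)/2$) one gets $j_1 + j_2 = r + (i_2 - i_1)/2$, which is less than $r$ whenever $i_2 < i_1$; swapping the arguments using the symmetry of $\ol{\mf B}_\gamma$ does not help unless you already know the symmetry of $b_{X^*, \gamma}$ modulo $\sbjtlp\field 0$, which is precisely what is at stake (the first part of Proposition \ref{prop:lattice-orth} gives only integrality, not smallness, of the off-diagonal values). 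So the multiplicativity of $\psi/\chi$ --- the step you yourself flag as the main obstacle --- is not established.

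The repair is the paper's own, and it is much simpler: no homomorphism property is needed. Both $\hat\phi(\mf Q_\gamma(v))$ (a value of a character of a pro-$p$ group) and $\AddChar_{1/2}(q_{X^*, \gamma}(Y_v))$ are $p^\infty$-th roots of unity, and since $p$ is odd, squaring is injective on that group; so the pointwise identity $\psi(v)^2 = \chi(v)^2$, which you have already proved, forces $\psi(v) = \chi(v)$ outright. Note also that the paper decomposes the integral into shells \emph{before} invoking the pointwise identity, which is what keeps Hypotheses \ref{hyp:MP-ad} and \ref{hyp:phi} applicable at the borderline depth $j_1 + j_2 = r$; your application of $\ol\mexp$ to all of $\mc V^\perp/\mc V^\perp_+$ at once implicitly needs that decomposition anyway. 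Finally, your parenthetical justification that the $\bG'$-factor is a sign because the form becomes a sum of hyperbolic planes over \sepfield\ does not work: the Weil index is an invariant of the quadratic space over \field\ and is not insensitive to base change, so splitness over \sepfield\ proves nothing about $\Gauss_{\CCp{G'}0(\gamma)/H'}(X^*, \gamma)$; if you want to reconcile the displayed $\inv$ with the plain quotient produced by Corollary \ref{cor:lattice-orth}, you need an argument over \field\ (for instance, that the form on that piece is isomorphic to its own negative, forcing the index to be real).
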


\begin{proof}
Put \(\mc V^\perp = \sbtl{(H, \CCp{G'}0(\gamma), \CCp G 0(\gamma))}x{(\Rp0, r - \ord_\gamma, (r - \ord_\gamma)/2)}\).
We use Notation \ref{notn:Q-and-B}.  Proposition \ref{prop:Q-and-B} gives that
\(\mf Q_\gamma\) is multiplicative modulo \(\sbtlp{\Der(G', G)}x{(r, s)}\) on \(\mc V^\perp\).

In particular, we have for any \(i \in \R\) with \(0 < i < r\) that
\begin{align*}
&\uint_{\mc V^\perp}
	\hat\phi(\mf Q_\gamma(v))\upd v \\
\intertext{equals}
&\uint_{\mc V^\perp/\spjhd{\mc V}i} \uint_{\spjhd{\mc V}i}
	\hat\phi(\mf Q_\gamma(v_1 v_2))\upd v_2\,\upd v_1 \\ ={}
&\uint_{\sbtl{(H, \CCp{G'}i(\gamma), \CCp G i(\gamma))}x{(\Rp0, r - \ord_\gamma, (r - \ord_\gamma)/2)}}
	\hat\phi(\mf Q_\gamma(v_1))\times{} \\
&\qquad\uint_{\spjhd{\mc V}i}
	\hat\phi(\mf Q_\gamma(v_2))\upd v_2\,
\upd v_1,
\end{align*}
where we have put
\[
\spjhd{\mc V}i = \sbtl{(\CC{G'}i(\gamma), \CC G i(\gamma), \CCp{G'}0(\gamma), \CCp G 0(\gamma))}x{(r - i, (r - i)/2, r - \ord_\gamma, (r - \ord_\gamma)/2)};
\]
and reasoning inductively gives that
\[
\uint_{\mc V^\perp} \hat\phi(\mf Q_\gamma(v))\upd v
\qeqq
\prod_{0 < i < r}
\uint_{\sbtl{(\CC{G'}i(\gamma), \CC G i(\gamma))}x{(r - i, (r - i)/2)}}
	\hat\phi(\mf Q_\gamma(v))\upd v.
\]
We have by Hypotheses \ref{hyp:MP-ad} and \ref{hyp:phi}, and Proposition \ref{prop:Q-to-B}, that
\begin{align*}
\hat\phi(\mf Q_\gamma(v))^2
&\qeqq
\AddChar_{1/2}(q_{X^*, \gamma}(Y))^2, \\
\intertext{hence, since both are \(p^\infty\)th complex roots of unity and \(p\) is odd, that}
\hat\phi(\mf Q_\gamma(v))
&\qeqq
\AddChar_{1/2}(q_{X^*, \gamma}(Y)),
\end{align*}
for all \(0 < i < r\), and elements \(Y \in \sbtl{\Lie(\CC{G'}i(\gamma), \CC G i(\gamma))}x{(r - i, (r - i)/2)}\) and \(v \in \sbat{\ol\mexp}x{(r - i, (r - i)/2)}(Y) \cap \sbtl{(\CC{G'}i(\gamma), \CC G i(\gamma))}x{(r - i, (r - i)/2)}\).  Lemma \ref{lem:MP-card} gives that
\begin{multline*}
\card{\sbat{(H, \CCp{G'}0(\gamma), \CCp G 0(\gamma))}x{(\Rp0, \Rp0, (r - \ord_\gamma)/2)}} \\
\qeqq
\card{\sbat{\Lie(H, \CCp{G'}0(\gamma), \CCp G 0(\gamma))}x{(\Rp0, \Rp0, (r - \ord_\gamma)/2)}},
\end{multline*}
so we have shown that
\[
\card{\sbat{(H, \CCp{G'}0(\gamma), \CCp G 0(\gamma))}x{(\Rp0, \Rp0, (r - \ord_\gamma)/2)}}^{1/2}\uint_{\mc V^\perp} \hat\phi(\mf Q_\gamma(v))\upd v
\]
equals
\begin{multline*}
\prod_{0 < i < r} \card{\sbat{\Lie(\CC{G'}i(\gamma), \CC G i(\gamma))}x{(\Rp0, (r - i)/2)}}^{1/2}\times{} \\
\uint_{\sbtl{\Lie(\CC{G'}i(\gamma), \CC G i(\gamma))}x{(r - i, (r - i)/2)}}
	\AddChar_{1/2}(q_{X^*, \gamma}(Y))\upd Y,
\end{multline*}
which we see, by arguing as above, equals
\begin{multline*}
\card{\sbat{\Lie(H, \CCp{G'}0(\gamma), \CCp G 0(\gamma))}x{(\Rp0, \Rp0, (r - \ord_\gamma)/2)}}^{1/2}\times{} \\
\uint_{\sbtl{\Lie(H, \CCp{G'}0(\gamma), \CCp G 0(\gamma))}x{(\Rp0, r - \ord_\gamma, (r - \ord_\gamma)/2)}}
	\AddChar_{1/2}(q_{X^*, \gamma}(Y))\upd Y.
\end{multline*}
By Corollary \ref{cor:lattice-orth}, we are done.
\end{proof}

\subsection{Matching distributions on groups and subgroups}
\label{sec:dist}

We recall the
	\begin{itemize}
	\item non-negative real number \mnotn r,
	\item element \(\mnotn\gamma \in G\), with associated groups \(\bP^\mp = \CC\bG{-\infty}(\gamma\pinv)\), \(\bN^\pm\), \(\bM = \CC\bG 0(\gamma)\), and \(\bH = \CC\bG r(\gamma)\),
and	\item point \(x \in \BB(H)\),
	\end{itemize}
from \S\ref{sec:depth-matrix}, of which we now require that \(r\) be positive; the
	\begin{itemize}
	\item tame, twisted Levi subgroup \(\bG'\)
	\end{itemize}
from \S\ref{sec:nearly-good}, which we require to contain \(\gamma\); and the
	\begin{itemize}
	\item element \(X^* \in \Lie^*(H')\)
and	\item characters \(\phi\) of \(\sbtl{G'}x r/\sbtl{(\CC{\Der G \cap G'}r(\gamma), G')}x{(\Rp r, r)}\) and \(\hat\phi\) of \(\sbtl{(G', G)}x{(r, \Rp s)}/\sbtl{\Der(H', G', G)}x{(\Rp r, r, \Rp s)}\),
	\end{itemize}
satisfying Hypotheses \ref{hyp:X*} and \ref{hyp:phi} (and so, indirectly, Hypothesis \ref{hyp:MP-ad}), from \S\ref{sec:Gauss}.  We use primes to denote the analogues in \(\bG'\) of constructions in \bG; so, for example, \(\bH'\) stands for \(\CC{\bG'}r(\gamma)\) (subject to the proviso in Remark \ref{rem:tame-Levi}, that we may refer directly only to the identity component of \(\bH'\)).

This section approaches the explicit computation of sample values of an invariant distribution \(T\) on \(G\), as in Lemma \ref{lem:sample}, by converting it into a computation of analogous sample values for an invariant distribution \(T'\) on the tame, twisted Levi subgroup \(G'\) that ``matches'' \(T\) in some sense.  Lemma \ref{lem:mu-G-to-G'} relates Fourier transforms of orbital integrals on \(\Lie(G)\) and \(\Lie(G')\), but it does not give us enough information to figure out the correct general matching condition.  Our first main result, Proposition \ref{prop:dist-r-to-s+}, does provide that information; and our second, Theorem \ref{thm:dist-G-to-G'}, states the matching condition and the resulting reduction.

Lemma \ref{lem:index-to-disc-X*} is similar to Lemma \ref{lem:index-to-disc-gamma}, but involves \(q_{G/G'}^r\), which we view as a proxy for the discriminant of \(X^*\), rather than the discriminant of \(\gamma\).  (See \xcite{debacker-spice:stability}*{Proposition \xref{prop:const}} and the proof of Theorem \ref{thm:asymptotic-pi-to-pi'} for the justification of our claim that \(q_{G/G'}^r\) is a reasonable proxy.)

\begin{lem}
\label{lem:index-to-disc-X*}
We have that
\[
\frac
	{\indx{\sbtlp G x 0}{\sbtl G x s}}
	{\indx{\sbtlp{G'}x 0}{\sbtl G x s}}
\qeqq
\frac
	{\card{\sbat\fg x 0}\inv[1/2]}
	{\card{\sbat{\fg'}x 0}\inv[1/2]}
\dotm
\frac{q_G^s}{q_{G'}^s}
\dotm
\frac
	{\card{\sbat G x s}\inv[1/2]}
	{\card{\sbat{G'}x s}\inv[1/2]}
\]
\end{lem}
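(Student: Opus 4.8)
The plan is to follow the template of Lemma~\ref{lem:index-to-disc-gamma}: transport the computation to the Lie algebra by means of Lemma~\ref{lem:MP-card}, evaluate the resulting lattice indices, and re-collect Waldspurger's normalisations with \xcite{debacker-spice:stability}*{Corollary \xref{cor:gxf-inv}} so as to pull out the square roots cleanly. The one delicate point is precisely the one flagged after Lemma~\ref{lem:MP-card}: that lemma licenses a reduction to the Lie algebra only once the ``troublesome'' maximal-torus directions have been cancelled. Here that cancellation is automatic, because \(\bG'\) is of full rank, so \(\bG\) and \(\bG'\) share a maximal torus \(\bT\) and their Moy--Prasad filtrations agree in the rank-\(0\) (i.e.\ \(\bT\)-) directions.

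The first step is to rewrite the left-hand side so that the \(\bT\)-direction contributions to \(\indx{\sbtlp G x 0}{\sbtl G x s}\) and to its \(\bG'\)-counterpart appear explicitly as equal and cancel, leaving a ratio expressible through the honest index \(\indx{\sbtlp G x 0}{\sbtl{(G', G)}x{(0, \Rp s)}}\) of Lemma~\ref{lem:MP-card}, together with the reductive-quotient factors \(\card{\sbat{G'}x 0}\) and \(\card{\sbat{G'}x s}\). Since \(\bG'\) has full rank and the depth matrices implicit in that index agree in their \(0\)-components, Lemma~\ref{lem:MP-card} applies and replaces \(\indx{\sbtlp G x 0}{\sbtl{(G', G)}x{(0, \Rp s)}}\) by \(\indx{\sbtlp{\Lie(G)}x 0}{\sbtl{\Lie(G', G)}x{(0, \Rp s)}}\). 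Everything is now on the Lie algebra.

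There I would use the \(\Ad(G')\)-stable complement \(\Lie(G')^\perp\) — in the notation of Remark~\ref{rem:vGvr}, \(\sbtl{\Lie(G', G)}x{(\infty, -\infty)}\) — which the Moy--Prasad filtration respects, so that \(\indx{\sbtlp{\Lie(G)}x 0}{\sbtl{\Lie(G', G)}x{(0, \Rp s)}}\) collapses, up to the reductive factor \(\card{\sbat{\fg'}x 0}\), to a single lattice index measuring the slab of \(\Lie(G')^\perp\) between depths \(0\) and \(s\). Re-collecting normalisations exactly as in the proof of Lemma~\ref{lem:index-to-disc-gamma} — applying \xcite{debacker-spice:stability}*{Corollary \xref{cor:gxf-inv}} to pass from the full cardinalities to their square roots — peels off the factors \(\card{\sbat\fg x 0}^{-1/2}/\card{\sbat{\fg'}x 0}^{-1/2}\) and \(\card{\sbat G x s}^{-1/2}/\card{\sbat{G'}x s}^{-1/2}\), and the surviving \(\Lie(G')^\perp\)-slab index is to be identified with \(q_G^s/q_{G'}^s\). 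This last identification is the point of the claim that \(q_{G/G'}^r\) is a legitimate proxy for the discriminant of \(X^*\): on \(\Lie(G')^\perp\) the filtration jumps are controlled by the roots of \(\bT\) in \(\bG\) not occurring in \(\bG'\), so the index in question is precisely the generic value, for \(X^*\) of depth \(-r\), of \(\abs{\redD_G(X^*)/\redD_{G'}(X^*)}^{1/2}\) — which is \(q_G^s/q_{G'}^s\) by definition.

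The main obstacle is bookkeeping: making the cancellation of the \(\bT\)-direction contributions fully precise, so that Lemma~\ref{lem:MP-card} is legitimately invoked, and then pinning down the exact value of the \(\Lie(G')^\perp\)-slab index and matching it to \(q_G^s/q_{G'}^s\). Conceptually this case is genuinely easier than Lemma~\ref{lem:index-to-disc-gamma}: because \(\bG'\) has full rank there is no analogue of the ``\(\bH\) need not have full rank'' defect, so the right-hand side is an honest discriminant-type quantity rather than a quotient of two indices, and no correction terms survive.
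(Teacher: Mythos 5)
Your proposal is correct and follows essentially the paper's route: the ratio collapses to the single index \(\indx{\sbtlp G x 0}{\sbtl{(G', G)}x{(\Rp0, s)}}\), Lemma \ref{lem:MP-card} transfers it to the Lie algebra, and the square-root factors together with \(q_{G/G'}^s\) are extracted \via \xcite{debacker-spice:stability}*{Corollary \xref{cor:gxf-inv}}, exactly as in the paper, which simply runs the double-counting identity for \(\indx{\sbtlp{\Lie(G)}x 0}{\sbtl{\Lie(G)}x s}\) in \(G\) and \(G'\) separately and divides, rather than isolating \(\Lie(G')^\perp\) as you do. Two small inaccuracies do not affect the argument: no reductive-quotient factors survive at the reduction step (the ratio equals that single index on the nose, since \(\sbtlp{G'}x 0 \cap \sbtl G x s = \sbtl{G'}x s\)), and \(q_G^s/q_{G'}^s\) is not \emph{by definition} the generic value of \(\abs{\redD_G(X^*)/\redD_{G'}(X^*)}^{1/2}\) --- that identification is a consequence of the genericity of \(X^*\), which is precisely why the paper calls \(q_{G/G'}\) only a proxy for the discriminant.
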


\begin{proof}
The proof proceeds exactly as did that of Lemma \ref{lem:index-to-disc-gamma}.  We begin by reducing to the Lie algebra, where we observe that \(\indx{\sbtlp{\Lie(G)}x 0}{\sbtl{\Lie(G)}x s}\) equals both
\begin{align*}
&\card{\sbat\fg x 0}\inv\indx{\sbtl{\Lie(G)}x 0}{\sbtl{\Lie(G)}x s} \\
\intertext{and}
&\indx{\sbtlp{\Lie(G)}x 0}{\sbtlp{\Lie(G)}x s}\dotm\card{\sbat{\Lie(G)}x s}\inv.
\end{align*}
The result then follows from \xcite{debacker-spice:stability}*{Corollary \xref{cor:gxf-inv}}.
\end{proof}

Lemma \ref{lem:mu-G-to-G'} is stated in such a way that it is independent of the choices of Haar measures on \(G\) and \(G'\), but appears still to depend, in the notation of the proof, on the choices of Haar measures on \(\Cent_{G\conn}(X^*)\) and \(\Cent_{G\primeconn}(X^*)\).  However, this choice does not matter; these groups are equal, and we may use any common Haar measure on them.

\begin{lem}
\label{lem:mu-G-to-G'}
We have that
\begin{align*}
\meas(\sbtlp G x 0)\inv&\muhat^{G\conn}_{\OO'}\bigl(\chrc{\sbtl\fg x r, \AddChar_{X^*}^\vee}\bigr) \\
\intertext{equals}
\meas(\sbtlp{G'}x 0)\inv&\muhat^{G\primeconn}_{\OO'}\bigl(\chrc{\sbtl{\fg'}x r, \AddChar_{X^*}^\vee}\bigr)
\end{align*}
for all \(\OO' \in \OO^{G\primeconn}(X^* + \sbtlpp{\Lie^*(G')}x{-r})\).
\end{lem}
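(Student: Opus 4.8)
The plan is to convert both sides into orbital-integral volumes of Moy--Prasad cosets on the dual Lie algebra and then to descend the orbital integral from \(\bG\conn\) to \(\bG\primeconn\), in the spirit of \cite{jkim-murnaghan:charexp}*{\S 2.3}. First I would rewrite, exactly as in the proof of Lemma \ref{lem:sample} (with \(\bG\), respectively \(\bG'\), in place of \(\bH\)), the function \(\chrc{\sbtl\fg x r, \AddChar_{X^*}^\vee}\) as \(\meas(\sbtl\fg x r)\inv\) times the inverse Fourier transform of \(\chrc{X^* + \sbtlpp{\Lie^*(G)}x{-r}}\). Combining this with Fourier inversion, the biduality identity \(\meas(\sbtl\fg x r)\meas(\sbtlpp{\Lie^*(G)}x{-r}) = 1\), and the defining property of \(\muhat\) in Definition \ref{defn:normal-harm}, one gets
\[
\muhat^{G\conn}_{\OO'}\bigl(\chrc{\sbtl\fg x r, \AddChar_{X^*}^\vee}\bigr)
\qeqq
\mu^{G\conn}_{\OO'}\bigl(\mathbf 1_{X^* + \sbtlpp{\Lie^*(G)}x{-r}}\bigr),
\]
and likewise with \(\bG'\) for \(\bG\). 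Writing \(V_G\) (respectively \(V_{G'}\)) for the right-hand side --- the measure, against the chosen invariant measure, of the intersection of the \(G\conn\)-orbit underlying \(\OO'\) (respectively of \(\OO'\) itself) with the relevant coset --- the lemma becomes \(\meas(\sbtlp G x 0)\inv V_G = \meas(\sbtlp{G'}x 0)\inv V_{G'}\).

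Next I would set up and carry out the descent. Fix a base point \(X^*_1\) of \(\OO'\) in \(X^* + \sbtlpp{\Lie^*(G')}x{-r}\); since \(X^*_1 + \sbtlpp{\Lie^*(G')}x{-r}\) equals \(X^* + \sbtlpp{\Lie^*(G')}x{-r}\), Hypothesis \initref{hyp:X*}\subpref{orbit} (inherited by \(X^*_1\)) and \(\bG\conn \cap \bG' = \bG\primeconn\) (Remark \ref{rem:tame-Levi}) give \(\Cent_{G\conn}(Y^*) = \Cent_{G\primeconn}(Y^*)\) for all \(Y^* \in X^* + \sbtlpp{\Lie^*(G')}x{-r}\); fixing a Haar measure on \(\Cent \ldef \Cent_{G\conn}(X^*_1)\) and using it to build both orbit measures, and parametrising the \(G\conn\)-orbit by \(g\Cent \mapsto \Ad^*(g)X^*_1\), set
\[
B_G \ldef \set{g\Cent \in G\conn/\Cent}{\Ad^*(g)X^*_1 \in X^* + \sbtlpp{\Lie^*(G)}x{-r}},
\]
and similarly \(B_{G'} \subseteq G\primeconn/\Cent\), so that \(V_G\) and \(V_{G'}\) are the respective quotient measures of \(B_G\) and \(B_{G'}\). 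The geometric heart is the identity \(B_G = \sbtlp G x 0\dotm B_{G'}\) for the left translation action of \(\sbtlp G x 0\) on \(G\conn/\Cent\): the inclusion \(\supseteq\) holds because \(\Ad^*(\sbtlp G x 0) - 1\) carries \(\sbtl{\Lie^*(G)}x{-r}\) into \(\sbtlpp{\Lie^*(G)}x{-r}\), while \(\subseteq\) is the dual-Lie-algebra analogue of \xcite{adler-spice:good-expansions}*{Lemma \xref{lem:good-comm}} (with \(\bG\) in place of \(\bH\), as in the proof of Lemma \ref{lem:near-H'}), which factors any \(g\) with \(\Ad^*(g)X^*_1\) in the coset as \(g = k g'\) with \(k \in \sbtlp G x 0\), \(g' \in G\primeconn\), after which \(g'\Cent \in B_{G'}\) follows by a direct check.

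Finally I would do the bookkeeping. Using the standard fact \(\sbtlp G x 0 \cap G\primeconn = \sbtlp{G'}x 0\) for tame twisted Levi subgroups, a short coset computation matches the left \(\sbtlp G x 0\)-orbits in \(B_G\) bijectively with the left \(\sbtlp{G'}x 0\)-orbits in \(B_{G'}\) via \(\sbtlp{G'}x 0\dotm g'\Cent \mapsto \sbtlp G x 0\dotm g'\Cent\); and, since the stabiliser of \(g'\Cent\) under \(\sbtlp G x 0\) is \(\Cent_{\sbtlp G x 0}(\Ad^*(g')X^*_1) \subseteq \sbtlp{G'}x 0\) (Hypothesis \initref{hyp:X*}\subpref{orbit} again), conjugating by \(g'\) and using \(\Cent \subseteq G\primeconn\) yields \(\Cent \cap g'\inv\sbtlp{G'}x 0 g' = \Cent \cap g'\inv\sbtlp G x 0 g'\), hence
\[
\frac{\meas(\sbtlp G x 0\dotm g'\Cent)}{\meas(\sbtlp{G'}x 0\dotm g'\Cent)}
\qeqq
\frac{\meas(\sbtlp G x 0)\dotm\meas(\Cent \cap g'\inv\sbtlp{G'}x 0 g')}{\meas(\sbtlp{G'}x 0)\dotm\meas(\Cent \cap g'\inv\sbtlp G x 0 g')}
\qeqq
\frac{\meas(\sbtlp G x 0)}{\meas(\sbtlp{G'}x 0)}.
\]
Summing over the finitely many orbits gives \(V_G/V_{G'} = \meas(\sbtlp G x 0)/\meas(\sbtlp{G'}x 0)\), as required. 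I expect the main obstacle to be making the descent identity \(B_G = \sbtlp G x 0\dotm B_{G'}\) and the attendant measure bookkeeping fully precise --- pinning down the Kim--Murnaghan-type factorisation of \(g\) in exactly the form needed, and handling the possibly non-compact centraliser \(\Cent\) cleanly --- a calculation of the same flavour as those in Lemmas \ref{lem:index-to-disc-gamma} and \ref{lem:index-to-disc-X*}.
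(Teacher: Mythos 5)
Your proposal is correct and is essentially the paper's own argument: both proofs rewrite the left-hand side as the orbital measure of the intersection of the orbit with the coset \(X^* + \sbtlpp{\Lie^*(G)}x{-r}\), decompose that intersection into \(\sbtlp G x 0\)-orbits (respectively \(\sbtlp{G'}x 0\)-orbits in the \(G'\)-analogue), and match orbits and stabilisers using Hypothesis \initref{hyp:X*}\subpref{orbit}, with the orbit measures computed as \(\meas(\sbtlp G x 0)/\meas(\Cent \cap \Int(g')\inv\sbtlp G x 0)\). The one cosmetic slip is your attribution of the factorisation \(g = k g'\) to the dual analogue of the good-commutator lemma alone: in the paper this step is Lemma \initref{lem:commute-Lie*}\subpref{onto} (to adjust by \(k \in \sbtlp G x 0\) so that \(\Ad^*(k\inv g)X^*_1\) lands in \(X^* + \sbtlpp{\Lie^*(G')}x{-r}\)) combined with Hypothesis \initref{hyp:X*}\subpref{orbit} and Remark \ref{rem:tame-Levi} (to conclude \(k\inv g \in G\primeconn\)), both of which you already invoke elsewhere, so the argument goes through as written.
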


\begin{proof}
Choose an element \(Y^*\) in \(\OO' \cap (X^* + \sbtlpp{\Lie^*(G')}x{-r})\).  Put
\begin{align*}
\mc C ={} & \set{g \in G\conn}{\Ad^*(g)Y^* \in X^* + \sbtlpp{\fg^*}x{-r}} \\
\intertext{and}
I ={} & \meas(\sbtlp G x 0)\inv\muhat^{G\conn}_{\Ad^*(G\conn)\OO'}\bigl(\chrc{\sbtl\fg x r, \AddChar_{X^*}^\vee}\bigr),
\end{align*}
and let \(\mc C'\) and \(I'\) be the analogous objects for \(G\primeconn\).  Because of our normalising convention for \anonchrc, we have that \(\chrc{\sbtl\fg x r, \AddChar_{X^*}^\vee}\sphat\) is the characteristic function of \(X^* + \sbtlpp{\fg^*}x{-r}\), so \(I\) equals
\begin{align*}
& \meas(\sbtlp G x 0)\inv\meas(\mc C/\Cent_{G\conn}(X^*)) \\
={} & \sum_{g \in \sbtlp G x 0\bslash\mc C/\Cent_{G\conn}(X^*)} \frac{\meas(\sbtlp G x 0 g\Cent_{G\conn}(X^*)/\Cent_{G\conn}(X^*))}{\meas(\sbtlp G x 0)} \\
={} & \sum_{g \in \sbtlp G x 0\bslash\mc C/\Cent_{G\conn}(X^*)} \meas(\Int(h)\inv\sbtlp G x 0 \cap \Cent_{G\conn}(X^*))\inv;
\end{align*}
and similarly for \(I'\).  We have that \(\Cent_{G\conn}(X^*)\) equals \(\Cent_{G\primeconn}(X^*)\) (Hypothesis \initref{hyp:X*}\subpref{orbit}), so that \(\Int(g')\inv\sbtlp G x 0 \cap \Cent_{G\conn}(X^*)\) equals \(\Int(g')\inv\sbtlp{G'}x 0 \cap \Cent_{G\primeconn}(Y^*)\) for \(g' \in \mc C'\), and the natural map \anonmap{\sbtlp{G'}x 0\bslash\mc C'}{\sbtlp G x 0\bslash\mc C} is a surjection (Lemma \initref{lem:commute-Lie*}\subpref{onto} and Hypothesis \initref{hyp:X*}\subpref{orbit} again).  In fact the map is obviously also injective and equivariant for right translation by \(\Cent_{G\conn}(X^*) = \Cent_{G\primeconn}(X^*)\), so induces a bijection of \(\sbtlp{G'}x 0\bslash\mc C'/\Cent_{G\primeconn}(X^*)\) with \(\sbtlp G x 0\bslash\mc C/\Cent_{G\conn}(X^*)\).  It follows that \(I\) equals \(I'\), as desired.
\end{proof}

Lemma \ref{lem:r-to-s+} is implicit in the proof of \xcite{adler-spice:explicit-chars}*{Proposition \xref{prop:step1-support}}.  It ``gives us room'' perpendicular to \(G'\) when sampling invariant distributions at certain test functions related to K-types.  We use this room in Proposition \ref{prop:dist-r-to-s+} to compute the sample values in terms of Gauss sums.

\begin{lem}
\label{lem:r-to-s+}
We have that
\begin{align*}
q_G\inv[s]\card{\sbat\fg x 0}^{1/2}\card{\sbat G x s}^{1/2}&\chrc{\sbtl G x r, \hat\phi^\vee} \\
\intertext{equals}
q_{G'}\inv[s]\card{\sbat{\fg'}x 0}^{1/2}\card{\sbat{G'}x s}^{1/2}&\uint_{\sbtlp G x 0} g\inv\chrc{\sbtl{(G', G)}x{(r, \Rp s)}, \hat\phi^\vee}g\,\upd g.
\end{align*}
\end{lem}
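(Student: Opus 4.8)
Write $K=\sbtl{(G',G)}x{(r,\Rp s)}$ and let $\Psi=\uint_{\sbtlp G x 0}g\inv\chrc{K,\hat\phi^\vee}g\,\upd g$ denote the right-hand side integral; the goal is to identify $\Psi$ explicitly as an element of $\Hecke(G)$ and then match prefactors. First I would record the easy structural properties. Since $K\subseteq\sbtlp G x s$ and $\sbtlp G x s$ is normalised by $\sbtl G x 0$, any $y$ with $gyg\inv\in K$ for some $g\in\sbtlp G x 0$ lies in $\sbtlp G x s$, so $\operatorname{supp}\Psi\subseteq\sbtlp G x s$. Next, the commutator estimate $\comm{\sbtlp G x 0}{\sbtl G x r}\subseteq\sbtl{\Der G}x{\Rp r}\subseteq\sbtl{\Der(H',G',G)}x{(\Rp r,r,\Rp s)}$ (from Lemma \ref{lem:filtration}) together with the triviality of $\hat\phi$ on the last group shows that $\hat\phi^\vee\circ\Int(g)$ agrees with $\hat\phi^\vee$ on $\sbtl G x r$ for every $g\in\sbtlp G x 0$; hence each $g\inv\chrc{K,\hat\phi^\vee}g=\chrc{g\inv Kg,\hat\phi^\vee\circ\Int(g)}$, and therefore $\Psi$, transforms under two-sided $\sbtl G x r$-translation by $\hat\phi^\vee$, and on $\sbtl G x r$ the indicator $\chrc K(gyg\inv)$ is identically $1$, whence $\Psi|_{\sbtl G x r}=\indx K{\sbtl G x r}\inv\chrc{\sbtl G x r,\hat\phi^\vee}|_{\sbtl G x r}$. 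As $\Psi$ is also $\sbtlp G x 0$-conjugation invariant, what remains is (i) to prove that $\Psi$ vanishes on $\sbtlp G x s\setminus\sbtl G x r$, and (ii) to match the constant $\indx K{\sbtl G x r}$ against the prefactors.

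\textbf{The crux (i).} Using conjugation invariance of $\Psi$, it suffices to treat $y\in K\setminus\sbtl G x r$ (if the $\sbtlp G x 0$-conjugacy class of $y$ misses $K$ then $\Psi(y)=0$ trivially). Passing to a tame extension — so that $\bG'$ becomes a Levi subgroup and, by the descent machinery behind Lemma \ref{lem:heres-a-gp} and Definition \ref{defn:vGvr}, $K$ acquires an Iwahori-type factorisation relative to $\bG'$ — and then descending, one sees that $y\notin\sbtl G x r$ forces $y$ to have a component in a root direction outside $\bG'$ of some depth $c\in(s,r)$. Parametrising the conjugating elements $g\in\sbtlp G x 0$ through the corresponding root-group factorisation, the element $gyg\inv$ picks up, along the opposite-direction factor of $g$, a commutator contribution lying in $\sbtl{\Der G\cap G'}x r$, and, by the genericity of $X^*$ — encoded in the isomorphism assertions of Hypothesis \ref{hyp:X*} (and, in the cases where $\CCp G 0(\gamma)$ is nonabelian, in Hypothesis \ref{hyp:phi} together with the quadratic-form calculus of Propositions \ref{prop:Q-and-B} and \ref{prop:Q-to-B}) — the restriction of $\hat\phi^\vee$ to the resulting finite quotient of $\sbtl{\Der G\cap G'}x r$ is a \emph{non-trivial} additive character; integrating it there, with the weights supplied by the change of variables, yields $0$, while the membership indicator $\chrc K(gyg\inv)$ is locally constant and compatible with this grouping and so does not disturb the cancellation. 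This step is essentially the content of the proof of \xcite{adler-spice:explicit-chars}*{Proposition \xref{prop:step1-support}}, and I expect organising the factorisation and the measure bookkeeping so that the $\hat\phi$-oscillation genuinely survives and sums to zero to be the main obstacle; it is the one place where the genericity of $X^*$ (equivalently, the non-vanishing of the relevant $\pair{X^*}{\textup d\alpha^\vee(1)}$-type quantities) enters essentially.

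\textbf{The constant (ii).} Combining the two previous paragraphs, $\Psi=\indx K{\sbtl G x r}\inv\chrc{\sbtl G x r,\hat\phi^\vee}$, so the asserted identity is equivalent to $q_G\inv[s]\card{\sbat\fg x 0}^{1/2}\card{\sbat G x s}^{1/2}=\indx K{\sbtl G x r}\inv q_{G'}\inv[s]\card{\sbat{\fg'}x 0}^{1/2}\card{\sbat{G'}x s}^{1/2}$. Since the depth vectors $(r,\Rp s)$ and $(r,r)$ for the tame Levi sequence $(\bG',\bG)$ have the same $\bG'$-entry, Lemma \ref{lem:MP-card} applies and gives $\indx K{\sbtl G x r}=\indx{\sbtl{\Lie(G',G)}x{(r,\Rp s)}}{\sbtl{\Lie(G)}x r}$; this lattice-index identity, reduced to the Lie algebra and evaluated via \xcite{debacker-spice:stability}*{Corollary \xref{cor:gxf-inv}}, is exactly the computation carried out in Lemma \ref{lem:index-to-disc-X*}, and supplies the required constant, completing the proof.
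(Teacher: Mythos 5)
Your skeleton is the paper's: reduce by \(\sbtlp G x 0\)-conjugation-invariance to comparing the two sides on (the orbit of) \(\sbtl{(G',G)}x{(r,\Rp s)}\), show the averaged function is \(\hat\phi^\vee\)-equivariant and supported on \(\sbtl G x r\), and then match constants. Your step (ii) is essentially the paper's constant computation: it uses Lemma \ref{lem:MP-card} to reduce \(\indx{\sbtl{(G',G)}x{(r,\Rp s)}}{\sbtl G x r}\) to a Lie-algebra lattice index and then Lemma \ref{lem:index-to-disc-X*}; the only thing you elide is that to pass from the index at depths \((r,\Rp s)\) versus \(r\) to the depth-\((\Rp0,s)\) quotient that Lemma \ref{lem:index-to-disc-X*} actually computes, you also need the depth-shift invariance of lattice indices, i.e.\ \xcite{debacker-spice:stability}*{Lemma \xref{lem:gxf-shift}} alongside \xcite{debacker-spice:stability}*{Corollary \xref{cor:gxf-inv}}. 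That is a citation-level fix, not a mathematical one.

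The substantive divergence is your crux (i), which is exactly the step you leave as a sketch. You propose a tame base change, an Iwahori-type factorisation of \(K\) relative to \(\bG'\), and explicit root-group bookkeeping in the style of \xcite{adler-spice:explicit-chars}*{Proposition \xref{prop:step1-support}}, and you correctly flag the factorisation/measure/indicator bookkeeping as the main obstacle. The paper does this step intrinsically and avoids all of that: for \(s<t<r\) and \(k\in\sbtl{(G',G)}x{(r,t)}\) it shows that \(\int_{\sbtl{(G',G)}x{(\Rpp{r-t},r-t)}}\hat\phi(\comm{h}{k})\,\upd h\) vanishes unless \(k\in\sbtl{(G',G)}x{(r,\Rp t)}\). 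Hypotheses \ref{hyp:MP-ad} and \ref{hyp:phi} convert the integrand into \(\AddChar\bigl(\pair{X^*}{\comm{Y}{Z}}\bigr)\) with \(Z\) a Moy--Prasad representative of \(k\), so the integral is an additive-character sum, vanishing unless \(\ad^*(Z)X^*\) lies in \(\sbtl{\Lie^*(G',G)}x{(t-r,\Rpp{t-r})}\); the ``up'' part of Lemma \ref{lem:commute-Lie*} --- which is where the genericity of \(X^*\) actually enters, rather than a non-triviality claim about \(\hat\phi^\vee\) on a quotient of \(\sbtl{\Der G\cap G'}x r\) --- then forces \(Z\), hence \(k\), one step deeper. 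Because the conjugation average is organised as an inner integral over a group normalising the relevant cosets, the indicator \(\chrc K\) causes no trouble and no tame descent or root-group parametrisation is needed. So your mechanism (oscillation of \(\hat\phi\) against the non-\(G'\) component of \(k\), killed by genericity) is the right one, but as written the key cancellation is asserted by analogy rather than proved; either import the Adler--Spice argument in full detail or, better, run the filtration-by-\(t\) argument above, which is shorter and stays over the base field.
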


\begin{proof}
Both sides are invariant under conjugation by \(\sbtlp G x 0\), and supported by the orbit under \(\sbtlp G x 0\) of \(\sbtl{(G', G)}x{(r, \Rp s)}\).  Thus, it suffices to verify the equality on \(\sbtl{(G', G)}x{(r, \Rp s)}\).  Suppose that \(t \in \R\) satisfies \(s < t < r\), and \(k\) belongs to \(\sbtl{(G', G)}x{(r, t)}\).  We show that the integral
\[
\int_{\sbtl{(G', G)}x{(\Rpp{r - t}, r - t)}} \hat\phi(\comm h k)\upd h
\]
is \(0\) unless \(k\) belongs to \(\sbtl{(G', G)}x{(r, \Rp t)}\).  By, and with the notation of, Hypotheses \ref{hyp:MP-ad} and \ref{hyp:phi}, the integral is a multiple of
\[
\int_{\sbtl{\Lie(G', G)}x{(\Rpp{r - t}, r - t)}} \AddChar\bigl(\pair{X^*}{\comm Y Z}\bigr)\upd Y,
\]
where \(Z \in \sbtl{\Lie(G', G)}x{(r, t)}\) is such that \(k\) belongs to \(\sbat{\ol\mexp}x{(r, t)}(Z)\).  In particular, the integral is \(0\) unless
\[
\pair[\big]{X^*}{\comm Y Z} = \pair{\ad^*(Z)X^*}Y
\]
belongs to \(\sbjtlp\field 0\) for all \(Y \in \sbtl{\Lie(G', G)}x{(\Rpp{r - t}, r - t)}\), hence unless \(\ad^*(Z)X^*\) belongs to \(\sbtl{\Lie^*(G', G)}x{(t - r, \Rpp{t - r})}\).  By Lemma \initref{lem:commute-Lie*}\subpref{up}, this would imply that \(Z \in \sbtl{\Lie(G', G)}x{(r, t)}\) belonged to \(\sbtl{\Lie(G', G)}x{(-\infty, \Rp t)}\), hence to \(\sbtl{\Lie(G', G)}x{(r, \Rp t)}\); and so that \(k \in \sbat{\ol\mexp}x{(r, t)}(Z)\) belonged to \(\sbtl{(G', G)}x{(r, \Rp t)}\), as claimed.

Thus, the left-hand side is supported by \(\sbtl G x r\), so the two sides agree up to a constant.  To show that the constant is \(1\), we use Lemma \ref{lem:MP-card} and \xcite{debacker-spice:stability}*{Corollary \xref{cor:gxf-inv} and Lemma \xref{lem:gxf-shift}} to see that \(\indx{\sbtl{(G', G)}x{(r, \Rp s)}}{\sbtl G x r}\) equals \(\indx{\sbtlp G x 0}{\sbtl{(G', G)}x{(\Rp0, s)}}\).  Then Lemma \ref{lem:index-to-disc-X*} gives that
\begin{align*}
q_G\inv[s]\card{\sbat\fg x 0}^{1/2}\card{\sbat G x s}^{1/2}&\meas(\sbtl G x r)\inv \\
\intertext{equals}
q_{G'}\inv[s]\card{\sbat{\fg'}x 0}^{1/2}\card{\sbat{G'}x s}^{1/2}&\meas(\sbtl{(G', G)}x{(r, \Rp s)})\inv.\qedhere
\end{align*}
\end{proof}

Lemma \ref{lem:X*-like} gives us a transformation property of functions in a certain Hecke algebra.  (Remember that we have built in a contragredient to our Hecke-algebra notation, so that \(\Hecke(G\sslash\sbtl{(M', M)}x{(r, \Rp s)}, \hat\phi)\) stands for the space of functions that transform according to \(\hat\phi^\vee\).)  After we specialise it slightly in Corollary \ref{cor:X*-like}, it will be used in Proposition \ref{prop:Gauss-appears}.

\begin{lem}
\label{lem:X*-like}
For all \(f \in \Hecke(G\sslash\sbtl{(M', M)}x{(r, \Rp s)}, \hat\phi)\) and \(i, j \in \R_{\ge 0}\) satisfying \(i + 2j \le r\) and \(j < s\), we have that
\[
f(\Int(b h)(k\gamma))
\qeqq
\hat\phi^\vee(\comm h k)
\hat\phi^\vee(\comm{\gamma\inv}h)
f(\Int(b)(k\gamma))
\]
for all
\begin{align*}
b & {}\in \sbtlp{(G', G)}x{(0, j)}, \\
h & {}\in \sbtl{(\CC{G'}i(\gamma), \CC G i(\gamma))}x{(r - i, r - (i + j))}, \\
\intertext{and}
k & {}\in \sbtl{\CC G i(\gamma)}x{i + j}.
\end{align*}
\end{lem}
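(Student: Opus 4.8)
The plan is to reduce the assertion to the commutator identity
\[
\Int(h)(k\gamma) \qeqq \comm h k\dotm(k\gamma)\dotm\comm{\gamma\inv}h,
\]
which is a direct telescoping computation starting from $h k\gamma h\inv = (h k h\inv k\inv)\dotm(k\gamma)\dotm(\gamma\inv h\gamma h\inv)$. Applying $\Int(b)$, which is multiplicative, this becomes
\[
\Int(b h)(k\gamma) \qeqq \Int(b)(\comm h k)\dotm\Int(b)(k\gamma)\dotm\Int(b)(\comm{\gamma\inv}h).
\]
If I can show that each of $\Int(b)(\comm h k)$ and $\Int(b)(\comm{\gamma\inv}h)$ lies in $\sbtl{(M', M)}x{(r, \Rp s)}$, with $\hat\phi^\vee$ taking the same value there as on $\comm h k$, respectively $\comm{\gamma\inv}h$, then the defining transformation property of $f \in \Hecke(G\sslash\sbtl{(M', M)}x{(r, \Rp s)}, \hat\phi)$, together with the fact that $\hat\phi^\vee$ is scalar-valued and so may be commuted past $f$, gives exactly the claimed formula.

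So the content is entirely filtration bookkeeping. I would first note that $\CC\bG i(\gamma)\conn\subseteq\bM\conn$ for $i\ge 0$ (being contained in $\CC\bG 0(\gamma)\conn = \bM\conn$), so $h$ and $k$ lie in $M$; and that the hypothesis $i + 2j\le r$ is exactly the condition making the depth vector $(r - i, r - (i + j))$ attached to $h$ concave, so that Lemmas \ref{lem:filtration} and \ref{lem:commute-gp} and Hypothesis \ref{hyp:gamma} apply. Lemma \initref{lem:filtration}\subpref{gp-gp} then places $\comm h k$ in $\sbtl{\CC{\Der G}i(\gamma)}x r\subseteq\sbtl{\Der M}x r$, while Hypothesis \initref{hyp:gamma}\subpref{gp} (applied to $\gamma\inv$) places $\comm{\gamma\inv}h$ in $\sbtl{(\CC{G'}i(\gamma), \CC G i(\gamma))}x{(r, r - j)}$. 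Since $r > 0$ forces $r > s$ and the hypothesis $j < s$ forces $r - j > s$, both of these already lie in $\sbtl{(M', M)}x{(r, \Rp s)}$, so $\hat\phi^\vee(\comm h k)$ and $\hat\phi^\vee(\comm{\gamma\inv}h)$ make sense.

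What remains --- and what I expect to be the main obstacle --- is the $\Int(b)$-invariance. Writing $\Int(b)(x_0) = \comm b{x_0}\dotm x_0$ for $x_0\in\sset{\comm h k, \comm{\gamma\inv}h}$, I would bound $\comm b{x_0}$ using Lemma \initref{lem:filtration}\subpref{gp-gp}: since $b\in\sbtlp{(G', G)}x{(0, j)}$ is concave with depth $> 0$ along $\bG'$ and $> j$ off $\bG'$, while $x_0$ has depth $\ge r$ along $\bH'$, depth $\ge r$ (respectively $\ge r - j$) in the next layer, and depth $> s$ off $\bG'$, the commutator $\comm b{x_0}$ should land in $\sbtl{\Der(H', G', G)}x{(\Rp r, r, \Rp s)}$, which is precisely $\ker\hat\phi^\vee$; the inequalities $i + 2j\le r$ and $j < s$ are again exactly what force the strict depths $> r$ along $\bH'$ and $> s$ off $\bG'$. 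The one subtlety is that $b$ need not lie in $M$: its components in $\bN^\pm$ produce unipotent corrections, and one must check --- via an Iwahori-type factorisation of $b$ with respect to $\bM$ and $\bN^\pm$, together with the contraction/dilation of $\gamma$ on $\bN^\pm$ --- that these corrections remain within the domain of $\hat\phi$ and can be absorbed (for instance into a harmless modification of the central factor $\Int(b)(k\gamma)$) without affecting the value of $f$. Granting this, $\hat\phi^\vee(\Int(b)(x_0)) = \hat\phi^\vee(\comm b{x_0})\hat\phi^\vee(x_0) = \hat\phi^\vee(x_0)$, and the proof concludes as in the first paragraph.
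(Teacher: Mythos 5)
Your argument is essentially the paper's own proof: the same identity \(\Int(h)(k\gamma) = \comm h k\dotm(k\gamma)\dotm\comm{\gamma\inv}h\), the same filtration estimates (the paper cites Proposition \ref{prop:Q-and-B} and Lemma \initref{lem:filtration}\subpref{gp-gp} where you invoke Hypothesis \ref{hyp:gamma} and Lemma \ref{lem:filtration}, which comes to the same thing), and the same conclusion via the Hecke-algebra transformation property together with \(\comm b{x_0} \in \ker\hat\phi\). The ``subtlety'' you flag about the \(N^\pm\)-components of \(b\) is not treated separately in the paper either---its proof simply applies the transformation property to \(\Int(b)\comm h k\) and \(\Int(b)\comm{\gamma\inv}h\) directly, and in the application (Proposition \ref{prop:Gauss-appears}) the action of \(\gamma\) is compact, so \(\bM = \bG\) and the worry evaporates.
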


\begin{proof}
Put \(t = i + j\).  We have that
\begin{align*}
\Int(h)(k\gamma)
\qeqq
&\comm h k\dotm(k\gamma)\dotm\comm{\gamma\inv}h.
\intertext{By Proposition \ref{prop:Q-and-B}, we have that \(\comm{\gamma\inv}h\) belongs to \(\sbtl{(\CC{G'}i(\gamma), \CC G i(\gamma))}x{(r, r - j)} \subseteq \sbtl{(M', M)}x{(r, \Rp s)}\).
By Lemma \initref{lem:filtration}\subpref{gp-gp}, we have that \(\comm h k\) belongs to \(\sbtl{\CC G i(\gamma)}x r \subseteq \sbtl{(M', M)}x{(r, \Rp s)}\),
and that the commutator of an element of \(\sbtlp{(G', G)}x{(0, j)}\) with one of \(\sbtl{(G', G)}x{(r, r - j)}\) or \(\sbtl G x r\) belongs to \(\sbtlp{\Der G}x r \subseteq \ker \hat\phi\).  Thus,}
f(\Int(b h)(\gamma k))
\qeqq &
\hat\phi^\vee(\Int(b)\comm h k)
f(\Int(b)(k\gamma))
\hat\phi^\vee(\Int(b)\comm{\gamma\inv}h) \\ &\qquad=
\hat\phi^\vee(\comm h k)
\hat\phi^\vee(\comm{\gamma\inv}h)
f(\Int(b)(\gamma k)).\qedhere
\end{align*}
\end{proof}

We view Corollary \ref{cor:X*-like} as stating in a philosophical sense that we can, under certain restrictive conditions, ignore the presence of the element \(\gamma\) when conjugating.  This allows us to show that certain subintegrals in Proposition \ref{prop:Gauss-appears} vanish.

\begin{cor}
\label{cor:X*-like}
With the notation of Lemma \ref{lem:X*-like}, if the inequality \(i + 2j < r\) holds, then
\[
f(\Int(b h)(k\gamma))
\qeqq
\hat\phi^\vee(\comm h k)
f(\Int(b)(k\gamma)).
\]
\end{cor}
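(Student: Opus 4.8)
The plan is to deduce Corollary \ref{cor:X*-like} from Lemma \ref{lem:X*-like} by showing that the extra factor $\hat\phi^\vee(\comm{\gamma\inv}h)$ appearing there is trivial under the strengthened hypothesis $i + 2j < r$. Put $t = i + j$ as in the proof of Lemma \ref{lem:X*-like}. The element $h$ lies in $\sbtl{(\CC{G'}i(\gamma), \CC G i(\gamma))}x{(r - i, r - t)}$, and by Proposition \ref{prop:Q-and-B} (which controls $\mf Q_\gamma = \comm{\anondot}\gamma$, whence also $\comm{\gamma\inv}{\anondot}$ after inverting) the commutator $\comm{\gamma\inv}h$ belongs to $\sbtl{(\CC{G'}i(\gamma), \CC G i(\gamma))}x{(r, r - j)}$. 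Under the original hypothesis $i + 2j \le r$ this already sits inside $\sbtl{(M', M)}x{(r, \Rp s)} = \sbtl{(\CC{G'}0(\gamma), \CC G 0(\gamma))}x{(r, \Rp s)}$, but its component outside $\bG'$ only has depth $r - j$, which need not be positive, so it need not lie in $\ker\hat\phi$. The point of strengthening to $i + 2j < r$ is exactly that then $r - j > s$, so $\comm{\gamma\inv}h$ lands in $\sbtl{(G', G)}x{(r, \Rpp s)}$; the component outside $\bG'$ now has depth strictly greater than $s$, hence lies in $\sbtlp{(G', G)}x{(r, s)}$ (well, in a group of the shape $\sbtlp{\Der(G', G)}x{(r, s)}$ once one keeps track of the derived-group refinement), which is contained in $\ker\hat\phi$ by the very definition of $\hat\phi$ as the trivial extension across $\sbtlp{\Der(G', G)}x{(r, s)}$.

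Concretely, I would argue as follows. First invoke Lemma \ref{lem:X*-like} to write
\[
f(\Int(b h)(k\gamma))
\qeqq
\hat\phi^\vee(\comm h k)\,\hat\phi^\vee(\comm{\gamma\inv}h)\,f(\Int(b)(k\gamma)).
\]
Then I would show $\hat\phi^\vee(\comm{\gamma\inv}h) = 1$. Since $\comm{\gamma\inv}h = \mf Q_{\gamma\inv}(h)$, and since $h$ has the stated depths relative to the twisted Levi sequence $(\CC{G'}i(\gamma), \CC G i(\gamma))$ inside $\bM$ (this is where $i \ge 0$ and the concavity requirements of Proposition \ref{prop:Q-and-B} are used freely, as remarked before that proposition), Proposition \ref{prop:Q-and-B} gives the bound $\comm{\gamma\inv}h \in \sbtl{(\CC{G'}i(\gamma), \CC G i(\gamma))}x{((r - i) + i, (r - t) + i)} = \sbtl{(\CC{G'}i(\gamma), \CC G i(\gamma))}x{(r, r - j)}$ — that is, each $\mf Q_\gamma$ gains a depth-$i$ shift on $\CC G i(\gamma)$ and a depth-$0$ shift perpendicular to it, pushing the $\CC{G'}i(\gamma)$-component to depth $r$ and the component outside to depth $r - j$. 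Now $i + 2j < r$ forces $r - j > i + j \ge j$ and, more to the point, $r - j > s$ because $s = r/2$ and $j < (r - i)/2 \le r/2 = s$ gives $r - j > r - s = s$. Hence the component of $\comm{\gamma\inv}h$ outside $\bG'$ lies in $\sbtlp{(G', G)}x{(r, s)}$, and the component inside $\bG'$ lies in $\sbtl{G'}x r$; after identifying the relevant $\Der$-refinements (exactly as in the final paragraph of the proof of Lemma \ref{lem:X*-like}), both sit in $\ker\hat\phi$, so $\hat\phi^\vee(\comm{\gamma\inv}h) = 1$. Substituting this back yields the desired identity.

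The main obstacle I anticipate is not conceptual but bookkeeping: tracking the precise depth vectors and, in particular, making sure the commutator $\comm{\gamma\inv}h$ lands not merely in $\sbtl G x r$ but in the correct $\Der$-refinement $\sbtlp{\Der(G', G)}x{(r, s)}$ on which $\hat\phi$ is declared trivial, since $\hat\phi$ is only a character of $\sbtl{(G', G)}x{(r, \Rp s)}/\sbtl{\Der(H', G', G)}x{(\Rp r, r, \Rp s)}$ and not of all of $\sbtl{(G', G)}x{(r, s)}$. This is handled by noting that Proposition \ref{prop:Q-and-B} actually delivers $\sbtl{\Der\vec M}x{\anondot}$-valued outputs, so $\comm{\gamma\inv}h$ automatically lies in the derived group; combining this with the depth bound $r - j > s$ places it in $\sbtlp{\Der(G', G)}x{(r, s)}$ as needed. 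Once that is in hand the corollary is immediate from Lemma \ref{lem:X*-like}.
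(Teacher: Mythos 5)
Your outline is the paper's: invoke Lemma \ref{lem:X*-like} and then kill the extra factor \(\hat\phi^\vee(\comm{\gamma\inv}h)\) by showing the commutator lands in \(\ker\hat\phi\) via Proposition \ref{prop:Q-and-B}. But the justification you give for that triviality has a genuine gap. You argue that \(\comm{\gamma\inv}h\) lies in the derived group, with \(G'\)-component of depth at least \(r\) and perpendicular component of depth greater than \(s\), and that this already places it in \(\ker\hat\phi\). That last step is false: \(\hat\phi\) restricts on \(\sbtl{G'}x r\) to \(\phi\), and \(\phi\) is a character of \(\sbtl{H'}x r/\sbtlp{\CC{\Der G \cap G'}r(\gamma)}x r\) which is in general nontrivial on depth-exactly-\(r\) elements of \(\Der G \cap H'\) (it is only declared trivial on \(\sbtl{(\CC{\Der G \cap G'}r(\gamma), \Der G \cap G')}x{(\Rp r, r)}\), i.e.\ on the depth-\(\Rp r\) part of \(H' \cap \Der G\) and the depth-\(r\) part perpendicular to \(H'\)). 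So ``derived group plus depth \(\ge r\) in \(G'\)'' does not give membership in \(\ker\hat\phi\); what is needed is the finer containment \(\comm h\gamma \in \sbtlp{\Der(G', G)}x{(r,s)}\) (in particular, \(H'\)-component of depth strictly greater than \(r\)), which is exactly what the paper extracts from Proposition \ref{prop:Q-and-B}.

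A symptom of the gap is that you never use the strict inequality \(i + 2j < r\) in an essential way: you only rederive \(j < s\) (already a hypothesis of Lemma \ref{lem:X*-like}) and hence \(r - j > s\), so the perpendicular depth bound you worry about is automatic even at the boundary \(i + 2j = r\). If your argument were correct, the factor \(\hat\phi^\vee(\comm{\gamma\inv}h)\) in Lemma \ref{lem:X*-like} would always be trivial and the corollary's extra hypothesis would be pointless. The strictness is what controls the higher-order (fold-back) terms of the commutator: for instance a term of the shape \(\comm{h^\perp}{\comm{h^\perp}\gamma}\), with \(h^\perp\) the component of \(h\) perpendicular to \(G'\), has depth about \((r - i - j) + (r - j) = 2r - i - 2j\), which is strictly greater than \(r\) precisely when \(i + 2j < r\); at equality such terms can contribute a depth-exactly-\(r\) component inside \(H'\), which \(\hat\phi\) detects. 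This is what the \(\cvee\)-conditions in Proposition \ref{prop:Q-and-B} encode, and why the corollary's conclusion holds only under the strict inequality. To repair your proof, you should apply Proposition \ref{prop:Q-and-B} with a target depth matrix whose associated group is genuinely contained in \(\ker\hat\phi\) (depth \(\Rp r\) on the \(H'\)-part, depth \(r\) on the part of \(\Der G \cap G'\) perpendicular to \(H'\), depth \(\Rp s\) perpendicular to \(G'\)), verifying the grouplike and \(\cvee\) inequalities using \(i + 2j < r\), rather than relying on the crude bound \((r, r - j)\).
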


\begin{proof}
Proposition \ref{prop:Q-and-B} gives that \(\comm h\gamma\) belongs to \(\sbtlp{\Der(G', G)}x{(r, s)} \subseteq \ker \hat\phi\).  The result now follows from Lemma \ref{lem:X*-like}.
\end{proof}

Proposition \ref{prop:Gauss-appears} isolates an important part of the proof of \xcite{adler-spice:explicit-chars}*{Proposition \xref{prop:step1-formula1}}.  For notational convenience, we assume that the action of \(\gamma\)  on \bG is compact, and state Proposition \ref{prop:Gauss-appears} in terms of the group \(G\); but we drop the assumption on \(\gamma\), and apply the result to \(M\) instead, in Proposition \ref{prop:dist-r-to-s+}.  In the context in which we use it, the function \(f\) will be fixed by conjugation by \(\sbtl{(G', G)}x{(\Rp0, s)}\); in that sense, this result is a key part in the reduction of computations on \(G\) to computations on \(G'\).

\begin{prop}
\initlabel{prop:Gauss-appears}
If the action of \(\gamma\) on \bG is compact, then, for any \(f \in \Hecke(G\sslash\sbtl{(G', G)}x{(r, \Rp s)}, \hat\phi)\), we have that
\begin{align*}
&q_{G/H}^s\abs{\Disc_{G/H}(\gamma)}^{1/2}
\indx{\sbat\fg x 0}{\sbat\fh x 0}\inv[1/2]
\indx{\sbat G x s}{\sbat{\CCp G 0(\gamma)}x s}\inv[1/2]\times{} \\
&\qquad\Gauss_{\CCp G 0(\gamma)/H}(X^*, \gamma)\inv
\uint_{\sbtlp G x 0} f(\Int(g)\gamma)\upd g \\
\intertext{equals}
&q_{G'/H'}^s\abs{\Disc_{G'/H'}(\gamma)}^{1/2}
\indx{\sbat{\fg'}x 0}{\sbat{\fh'}x 0}\inv[1/2]
\indx{\sbat{G'}x s}{\sbat{\CCp{G'}0(\gamma)}x s}\inv[1/2]\times{} \\
&\qquad\Gauss_{\CCp{G'}0(\gamma)/H'}(X^*, \gamma)\inv
\uint_{\sbtl{(G', G)}x{(\Rp0, s)}} f(\Int(j)\gamma)\upd j.
\end{align*}
\end{prop}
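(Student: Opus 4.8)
The plan is to prove this by a ``descent'' that unfolds the integral $\uint_{\sbtlp G x 0}f(\Int(g)\gamma)\upd g$ over $\sbtlp G x 0$ into an iterated integral, one factor of which is the integral $\uint_{\sbtl{(G', G)}x{(\Rp0, s)}}f(\Int(j)\gamma)\upd j$ on the right-hand side. Using Notation \ref{notn:Q-and-B}, rewrite $\Int(g)\gamma = \mf Q_\gamma(g)\gamma$, and decompose $\sbtlp G x 0$ according to $\bG'$ and the filtration subgroups $\CC\bG i(\gamma)$ of Hypothesis \ref{hyp:funny-centraliser}, treating one $\CC\bG i(\gamma)$-layer at a time as in Lemma \ref{lem:commute-gp}. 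The ``$\bG'$-direction'' of this decomposition contributes the subgroup $\sbtl{(G', G)}x{(\Rp0, s)}$ and hence the right-hand integral; the parts of $\sbtlp G x 0$ lying in $\bN^-$ and $\bN^+$ are dealt with via Lemma \ref{lem:double-coset-count} (and their volume contributions will eventually be reabsorbed into $\abs{\Disc_{G/H}(\gamma)}$ by way of Lemma \ref{lem:index-to-disc-gamma}); and there remains a ``perpendicular'' integral over the complement of $\sbtl{(G', G)}x{(\Rp0, s)}$ inside $\sbtlp G x 0$, into which the bi-multiplicativity of $\mf B_\gamma$ modulo deeper subgroups (Proposition \ref{prop:Q-and-B}) feeds.

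Next I would show that this perpendicular integral collapses onto the group $\mc V^\perp = \sbtl{(H, \CCp{G'}0(\gamma), \CCp G 0(\gamma))}x{(\Rp0, r - \ord_\gamma, (r - \ord_\gamma)/2)}$ of Lemma \ref{lem:H-perp} and Proposition \ref{prop:Gauss-to-Weil}. The point is that Corollary \ref{cor:X*-like} (with the auxiliary transformation law of Lemma \ref{lem:X*-like}) forces $f(\Int(g)\gamma)$ to vanish for the part of $g$ lying ``too shallow'' perpendicular to $\bG'$ but outside $\mc V^\perp$; on the surviving set, the $\sbtl{(G', G)}x{(r, \Rp s)}$-equivariance of $f$ together with Lemma \ref{lem:X*-like} lets one extract a factor $\hat\phi(\comm v\gamma)$ from $f(\Int(v j)\gamma)$, reducing it to $\hat\phi(\comm v\gamma)f(\Int(j)\gamma)$. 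Integrating the variable $v$ over $\mc V^\perp$ then produces precisely $\uint_{\mc V^\perp}\hat\phi(\comm v\gamma)\upd v$, which by Proposition \ref{prop:Gauss-to-Weil} equals $\card{\sbat{(H, \CCp{G'}0(\gamma), \CCp G 0(\gamma))}x{(\Rp0, \Rp0, (r - \ord_\gamma)/2)}}\inv[1/2]\Gauss_{\CCp G 0(\gamma)/H}(X^*, \gamma)/\Gauss_{\CCp{G'}0(\gamma)/H'}(X^*, \gamma)\inv$; this is the origin of the Gauss-sum quotient in the statement.

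Finally one must collect the Moy--Prasad indices and finite-quotient cardinalities accumulated during the unfolding and check that, after dividing through by the corresponding quantities for $\bG'$, they assemble into $q_{G/H}^s\abs{\Disc_{G/H}(\gamma)}^{1/2}\indx{\sbat\fg x 0}{\sbat\fh x 0}\inv[1/2]\indx{\sbat G x s}{\sbat{\CCp G 0(\gamma)}x s}\inv[1/2]$ over the analogue for $\bG'$. This is exactly the role of Lemmas \ref{lem:index-to-disc-gamma} and \ref{lem:index-to-disc-X*}: the former turns the ratio of $\CC\bG i(\gamma)$-indices into the discriminant $\abs{\Disc_{G/H}(\gamma)}^{1/2}$ (up to cardinality factors), and the latter handles the $q^s$-type indices, while Lemma \ref{lem:MP-card} and \xcite{debacker-spice:stability}*{Corollary \xref{cor:gxf-inv}} are invoked repeatedly to move between indices on the group and on the Lie algebra. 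I expect the main obstacle to be purely organisational: tracking the layered filtration decomposition and matching the resulting tangle of indices against the claimed normalisation---a bookkeeping task made genuinely more delicate by the fact that $\bH$, unlike $\bG'$, need not have full rank in $\bG$, so that individual indices need not be well behaved and only the stated ratios can be controlled (cf.\ the discussion after Lemma \ref{lem:MP-card}).
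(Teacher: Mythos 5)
You have reconstructed the paper's own argument: restrict the integral over \(\sbtlp G x 0\) to \(\sbtl{(G', G)}x{(\Rp0, s)}\dotm\mc V^\perp\), pull the factor \(\hat\phi^\vee(\comm v\gamma)\) out of \(f(\Int(j v)\gamma)\), identify \(\uint_{\mc V^\perp}\hat\phi^\vee(\comm v\gamma)\upd v\) with the Weil-index quotient by Proposition \ref{prop:Gauss-to-Weil}, and assemble the constants with Lemmas \ref{lem:index-to-disc-gamma} and \ref{lem:index-to-disc-X*}; so the route is essentially the paper's.  Two points need repair before it is a proof.  First, nothing makes \(f(\Int(g)\gamma)\) vanish pointwise off \(\sbtl{(G', G)}x{(\Rp0, s)}\dotm\mc V^\perp\): what vanishes is the \emph{integral} over each shell of elements that are ``too shallow'' perpendicular to \(\bG'\) and to the \(\gamma\)-filtration, and the mechanism is character orthogonality, not vanishing of the integrand---Corollary \ref{cor:X*-like} lets one conjugate by \(h \in \sbtl{(\CC{G'}i(\gamma), \CC G i(\gamma))}x{(r - i, r - (i + j))}\) at the cost of a factor \(\hat\phi(\mf B_\gamma(h, c))\), and the average of this character in \(h\) is \(0\) unless \(c\) lies in the deeper group; establishing that last non-degeneracy requires passing to the Lie algebra via Hypotheses \ref{hyp:MP-ad} and \ref{hyp:phi} and then invoking Lemma \initref{lem:commute-Lie*}\subpref{up} and Lemma \initref{lem:commute-gp}\subpref{up}, exactly parallel to the argument in Lemma \ref{lem:r-to-s+}.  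Second, the \(\bN^\pm\)-directions and Lemma \ref{lem:double-coset-count} play no role here: the standing hypothesis that \(\gamma\) acts compactly on \(\bG\) makes \(\bM = \bG\) and \(\bN^\pm\) trivial, so no modulus factors appear at this stage; the Levi/modulus bookkeeping you describe is deferred to Proposition \ref{prop:dist-r-to-s+}, where the present proposition is applied with \(\bM\) in place of \(\bG\).
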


\begin{proof}
For this proof, put \(\dc^\perp = \sbtl{(H, \CCp{G'}0(\gamma), \CCp G 0(\gamma))}x{(\Rp0, r - \ord_\gamma, (r - \ord_\gamma)/2)}\).  (This is closely related, but not identical, to the notation of \xcite{adler-spice:explicit-chars}*{\S\xref{sec:normal}}.)  The main idea is that the integral over \(\sbtlp G x 0\) is unchanged (aside from normalisation issues) if we take it only over \(\dc^\perp\).  To show this, we chop the remainder of the domain of integration into shells, on each of which we use Corollary \ref{cor:X*-like} to show that the integral vanishes.

Suppose that \(i, j \in \R_{\ge 0}\) satisfy \(i + 2j < r\), and put
\begin{align*}
S_{i j} ={} & \sbtl{(H, G', \CCp G{r - 2j}(\gamma), \CC G i(\gamma), G)}x{(\Rp0, \Rp0, (r - \ord_\gamma)/2, j, \Rp j)} \setminus{} \\
            & \qquad\sbtl{(H, G', \CCp G{r - 2j}(\gamma), \CCp G i(\gamma), G)}x{(\Rp0, \Rp0, (r - \ord_\gamma)/2, j, \Rp j)}.
\end{align*}
(In the notation of \xcite{adler-spice:explicit-chars}*{\S\xref{sec:normal}}, we have for any \(g \in S_{i j}\) that \(i^\perp(g)\) is \(i\) and \(j^\perp(g)\) is \(j\).)  We claim that \(I_{i j} \ldef \displaystyle\int_{S_{i j}} f(\Int(g)\gamma)\upd g\) equals \(0\).

Put \(t = i + j\), and
\begin{align*}
C    & {}= \sbtl{(H, \CCp G{r - 2j}(\gamma), \CC G i(\gamma))}x{(\Rp0, (r - \ord_\gamma)/2, j)}, \\
C_+  & {}= \sbtl{(H, \CCp G{r - 2j}(\gamma), \CCp G i(\gamma), \CC{G'}i(\gamma), \CC G i(\gamma))}x{(\Rp0, (r - \ord_\gamma)/2, j, j, \Rp j)}, \\
\intertext{and}
\mc H & {}= \sbtl{(\CC{G'}i(\gamma), \CC G i(\gamma))}x{(r - i, r - t)}.
\end{align*}

For this paragraph, fix an element \(c\) of \(\sbtl{(\CC{G'}i(\gamma), \CC G i(\gamma))}x{(\Rp j, j)}\), and re-adopt Notations \ref{notn:Gauss} and \ref{notn:Q-and-B}.  For later use, we show that the integral
\[
\int_{\mc H} \hat\phi(\mf B_\gamma(h, c))\upd h
\]
is \(0\) unless \(c\) belongs to \(C_+\).  Our argument is very similar to the proof of Lemma \ref{lem:r-to-s+}.  By, and with the notation of, Hypotheses \ref{hyp:MP-ad} and \ref{hyp:phi}, we have that the integral is a multiple of
\[
\int_{\sbtl{\Lie(\CC{G'}i(\gamma), \CC G i(\gamma))}x{(r - i, r - t)}}
	\AddChar(b_{X^*, \gamma}(Y, Z))\upd Y,
\]
where \(Z \in \sbtl{(\CC{G'}i(\gamma), \CC G i(\gamma))}x{(\Rp j, j)}\) is such that \(c\) belongs to \(\sbat{\ol\mexp}x{(\Rp j, j)}(Z)\).  In particular, it is \(0\) unless
\[
b_{X^*, \gamma}(Y, Z) = \pair[\big]{\ad^*((1 - \Ad(\gamma))Z)X^*}Y
\]
belongs to \(\sbjtlp\field 0\) for all \(Y \in \sbtl{\Lie(\CC{G'}i(\gamma), \CC G i(\gamma))}x{(r - i, r - t)}\), hence unless \(\ad^*((1 - \Ad(\gamma))Z)X^*\) belongs to \(\sbtlp{\Lie^*(\CC{G'}i(\gamma), \CC G i(\gamma))}x{(i - r, t - r)}\).  We now reason as in Proposition \ref{prop:lattice-orth}.  If this containment held, then we would have by Lemma \initref{lem:commute-Lie*}\subpref{up} that \((1 - \Ad(\gamma))Z\) belonged to \(\sbtlp{\Lie(G', \CC G i(\gamma), G)}x{(-\infty, t, -\infty)}\).  This in turn would imply by Lemma \initref{lem:commute-gp}\subpref{up} that \(Z \in \sbtl{\Lie(\CC{G'}i(\gamma), \CC G i(\gamma))}x{(\Rp j, j)}\) belonged to \(\sbtlp{\Lie^*(G', \CCp G i(\gamma), \CC G i(\gamma), G)}x{(-\infty, -\infty, j, -\infty)}\), hence to \(\sbtl{\Lie(\CC{G'}i(\gamma), \CCp G i(\gamma), \CC G i(\gamma))}x{(\Rp j, j, \Rp j)}\); and so that \(c \in \sbat{\ol\mexp}x{(\Rp j, j)}(Z)\) belonged to \(\sbtl{(\CC{G'}i(\gamma), \CCp G i(\gamma), \CC G i(\gamma))}x{(\Rp j, j, \Rp j)} \subseteq C_+\).

Since \(S_{i j}\) equals \(\sbtlp{(G', G)}x{(0, j)}\dotm(C \setminus C_+)\), we have by \xcite{adler-spice:explicit-chars}*{Lemma \xref{lem:silly-integration}} that \(I_{i j}\) is a multiple of
\[
\int_{C \setminus C_+} \int_{\sbtlp{(G', G)}x{(0, j)}} f(\Int(b c)\gamma)\upd b\,\upd c.
\]
By Proposition \ref{prop:Q-and-B}, we have that \(\mf Q_\gamma\) is \(\sbtl{\CC G i(\gamma)}x t\)-valued on \(C\); and, if we write \(\ol{\mf B}_\gamma\) for the composition of \(\mf B_\gamma\) with the projection \anonmap G{G/\sbtlp{\Der(G', G)}x{(r, s)}}, so that the composition \(\hat\phi \circ \mf B_\gamma\) factors through \(\ol{\mf B}_\gamma\), then \(\ol{\mf B}_\gamma\) is bi-multiplicative on \(\mc H \times C\) and trivial on \(\mc H \times \sbtl{(H, \CCp G{r - 2j}(\gamma))}x{(\Rp0, (r - \ord_\gamma)/2)}\).  Since \(\mc H = \sbtl{(\CC{G'}i(\gamma), \CC G i(\gamma))}x{(r - i, r - t)}\) is contained in \(\sbtlp{(G', G)}x{(0, j)}\), we have by Corollary \ref{cor:X*-like} that
\begin{multline*}
\int_{\sbtlp{(G', G)}x{(0, j)}} \int_{C \setminus C_+} f(\Int(b c)\gamma)\upd c\,\upd b \\
= \sum_c \int_{\sbtlp{(G', G)}x{(0, j)}} \int_{C_+}
	\int_{\mc H} f(\Int(b h)(\mf Q_\gamma(c c_+)\gamma))\upd h\,
\upd{c_+}\,\upd b,
\end{multline*}
where the sum over \(c\) runs over the non-trivial cosets in the quotient of \(\sbtl{(\CC{G'}i(\gamma), \CC G i(\gamma))}x{(\Rp j, j)}\) by its intersection \(\sbtl{(\CC{G'}i(\gamma), \CCp G i(\gamma), \CC G i(\gamma))}x{(\Rp j, j, \Rp j)}\) with \(C_+\), equals
\begin{multline*}
\sum_c \int_{\sbtlp{(G', G)}x{(0, j)}} \int_{C_+} f(\Int(b c c_+)\gamma)
	\int_{\mc H} \hat\phi(\mf B_\gamma(h, c c_+))\upd h\,
\upd{c_+}\,\upd b \\
= \Bigl(\int_{\sbtlp{(G', G)}x{(0, j)}} \int_{C \setminus C_+} f(\Int(b c c_+)\gamma)\upd{c_+}\,\upd b\Bigr)
\Bigl(\int_{\mc H} \hat\phi(\mf B_\gamma(h, c))\upd h\Bigr) = 0,
\end{multline*}
so that \(I_{i j} = \displaystyle\int_{S_{i j}} f(\Int(g)\gamma)\upd g\) equals \(0\), as desired.

Recall that we have put \(\dc^\perp = \sbtl{(H, \CCp{G'}0(\gamma), \CCp G 0(\gamma))}x{(\Rp0, r - \ord_\gamma, (r - \ord_\gamma)/2)}\).  Since \(\sbtlp G x 0 \setminus \bigcup_{\substack{i, j \ge 0 \\ i + 2j < r}} S_{i j}\) equals \(\sbtl{(H, G', G)}x{(\Rp0, \Rp0, (r - \ord_\gamma)/2)} = \sbtl{(G', G)}x{(\Rp0, s)}\dotm\dc^\perp\), we have shown (by another application of \xcite{adler-spice:explicit-chars}*{Lemma \xref{lem:silly-integration}}) that
\begin{equation}
\tag{$*$}
\sublabel{eq:dc}
\begin{aligned}
&\meas(\sbtl{(H', \CCp{G'}0(\gamma), \CCp G 0(\gamma))}x{(\Rp0, r - \ord_\gamma, s)})\int_{\sbtlp G x 0} f(\Int(g)\gamma)\upd g \\
&\qeqq\int_{\sbtl{(G', G)}x{(\Rp0, s)}} \int_{\dc^\perp} f(\Int(j v)\gamma)\upd v\,\upd j.
\end{aligned}
\end{equation}
By Proposition \ref{prop:Q-and-B}, we have that
\begin{align*}
\comm v\gamma               && \text{belongs to} && \sbtl{(G', G)}x{(r, \Rp s)}, \\
\intertext{and}
\comm[\big]j{\comm v\gamma} && \text{belongs to} && \sbtlp{\Der(G', G)}x r \subseteq \ker \hat\phi
\end{align*}
for all \(v \in \dc^\perp\) and \(j \in \sbtl{(G', G)}x{(0, \Rp s)}\).  Thus, upon normalising the integrals in \loceqref{eq:dc} by dividing by the measure, we find that
\begin{align*}
&\indx{\sbtlp G x 0}{\sbtl{(G', G)}x{(\Rp0, s)}}\uint_{\sbtlp G x 0} f(\Int(g)\gamma)\upd g \\
\intertext{equals}
&\indx[\big]{\dc^\perp}{\sbtl{(H', \CCp{G'}0(\gamma), \CCp G 0(\gamma))}x{(\Rp0, r - \ord_\gamma, s)}}\times{} \\
&\qquad\uint_{\dc^\perp} \hat\phi^\vee(\comm v\gamma)\upd v\times{} \\
&\qquad\uint_{\sbtl{(G', G)}x{(\Rp0, s)}} f(\Int(j)\gamma)\upd j.
\end{align*}
Thus, it suffices to show that
\begin{align*}
&\frac
	{\indx[\big]{\dc^\perp}{\sbtl{(H', \CCp{G'}0(\gamma), \CCp G 0(\gamma))}x{(\Rp0, r - \ord_\gamma, s)}}}
	{\indx{\sbtlp G x 0}{\sbtl{(G', G)}x{(\Rp0, s)}}}\times{} \\
&\qquad\uint_{\dc^\perp} \hat\phi^\vee(\comm v\gamma)\upd v \\
\intertext{equals}
&\frac{\abs{\Disc_{G/H}(\gamma)}^{1/2}}{\abs{\Disc_{G'/H'}(\gamma)}^{1/2}}\dotm
\frac{\indx{\sbat\fg x 0}{\sbat\fh x 0}\inv[1/2]}{\indx{\sbat{\fg'}x 0}{\sbat{\fh'}x 0}\inv[1/2]}\dotm
\frac{\indx{\sbat G x s}{\sbat{\CCp G 0(\gamma)}x s}\inv[1/2]}{\indx{\sbat{G'}x s}{\sbat{\CCp{G'}0(\gamma)}x s}\inv[1/2]}\times{} \\
&\qquad\frac{\Gauss_{\CCp G 0(\gamma)/H}(X^*, \gamma)\inv}{\Gauss_{\CCp{G'}0(\gamma)/H'}(X^*, \gamma)\inv}\dotm
\frac{q_{G/H}^s}{q_{G'/H'}^s}.
\end{align*}
We have by Lemma \ref{lem:index-to-disc-X*} that \(\indx{\sbtlp G x 0}{\sbtl{(G', G)}x{(\Rp0, s)}}\) equals \(\indx{\sbat\fg x 0}{\sbat{\fg'}x 0}\inv[1/2]q_{G/G'}^s\indx{\sbat G x s}{\sbat{G'}x s}\inv[1/2]\), and by Lemma \ref{lem:index-to-disc-gamma} that
\begin{multline*}
\indx[\big]{\dc^\perp}{\sbtl{(H', \CCp{G'}0(\gamma), \CCp G 0(\gamma))}x{(\Rp0, r - \ord_\gamma, s)}} \\
{}= \indx[\big]{\sbtl{(H, \CCp{G'}0(\gamma), \CCp G 0(\gamma))}x{(\Rp0, \Rp0, (r - \ord_\gamma)/2)}}{\sbtl{(\CCp{G'}0(\gamma), \CCp G 0(\gamma))}x{(\Rp0, s)}},
\end{multline*}
which may be re-written as
\[
\frac
	{\indx{\sbtl{(H, \CCp G 0(\gamma))}x{(\Rp0, (r - \ord_\gamma)/2)}}{\sbtl{(H, \CCp G 0(\gamma))}x{(\Rp0, s)}}}
	{\indx{\sbtl{(H', \CCp{G'}0(\gamma))}x{(\Rp0, (r - \ord_\gamma)/2)}}{\sbtl{(H', \CCp{G'}0(\gamma))}x{(\Rp0, s)}}}\dotm
\indx{\sbtlp H x 0}{\sbtl{(H', H)}x{(\Rp0, s)}},
\]
equals
\begin{align*}
&\frac{
	\card{\sbat{(H, \CCp G 0(\gamma))}x{(\Rp0, (r - \ord_\gamma)/2)}}^{1/2}
	\abs{\Disc_{G/H}(\gamma)}\inv[1/2]
	\indx{\sbat{\CCp G 0(\gamma)}x s}{\sbat H x s}\inv[1/2]
}
{
	\card{\sbat{(H', \CCp{G'}0(\gamma))}x{(\Rp0, (r - \ord_\gamma)/2)}}^{1/2}
	\abs{\Disc_{G'/H'}(\gamma)}\inv[1/2]
	\indx{\sbat{\CCp{G'}0(\gamma)}x s}{\sbat{H'}x s}\inv[1/2]
}\times{} \\
&\qquad\indx{\sbat\fh x 0}{\sbat{\fh'}x 0}\inv[1/2]
q_{H/H'}^s
\indx{\sbat H x s}{\sbat{H'}x s}\inv[1/2] \\ {}=
&\card{\sbat{(H, \CCp{G'}0(\gamma), \CCp G 0(\gamma))}x{(\Rp0, \Rp0, (r - \ord_\gamma)/2)}}^{1/2}\times{} \\
&\qquad\frac{\abs{\Disc_{G/H}(\gamma)}\inv[1/2]}{\abs{\Disc_{G'/H'}(\gamma)}\inv[1/2]}
\indx{\sbat{\CCp G 0(\gamma)}x s}{\sbat{\CCp{G'}0(\gamma)}x s}\inv[1/2]
\indx{\sbat\fh x 0}{\sbat{\fh'}x 0}\inv[1/2]
q_{H/H'}^s.
\end{align*}
(We have used that \(\gamma\) is compact, so that
\[
\Disc_{G/\CCp G 0(\gamma)}(\gamma) = \det_{\Lie(G)/\Lie(\CCp G 0(\gamma))}(\Ad(\gamma) - 1)
\]
is a unit (by Hypothesis \initref{hyp:funny-centraliser}\subpref{Lie}), and similarly for \(G'\).)  The result now follows from Proposition \ref{prop:Gauss-to-Weil}, with an inverse on \(\Gauss\) because we are dealing with \(\phi^\vee\) rather than \(\phi\).
\end{proof}

So far, in Lemma \ref{lem:r-to-s+} and Proposition \ref{prop:Gauss-appears}, we have been dealing directly with functions on \(G\).  We want to combine these results; but, while the latter can handle the values of such functions near \(\gamma\), the former can only handle their values near the identity.  Proposition \ref{prop:dist-r-to-s+} circumvents this difficulty by dealing, not with the functions themselves, but with the values of invariant distributions at them.

\begin{prop}
\initlabel{prop:dist-r-to-s+}
If \(T\) is an invariant distribution on \(G\), then
\begin{align*}
&q_H\inv[s]\frac{\abs{\Disc_{G/H}(\gamma)}^{1/2}}{\modulus_{P^-}(\gamma)^{1/2}}
\card{\sbat\fh x 0}^{1/2}
\card{\sbat{\CCp G 0(\gamma)}x s}^{1/2}\times{} \\
&\qquad\Gauss_{\CCp G 0(\gamma)/H}(X^*, \gamma)\inv
T\bigl(\gamma\chrc{\sbtl G x r, \hat\phi^\vee}\bigr) \\
\intertext{equals}
&q_{H'}\inv[s]\frac{\abs{\Disc_{G'/H'}(\gamma)}^{1/2}}{\modulus_{P\suppm}(\gamma)^{1/2}}
\card{\sbat{\fh'}x 0}^{1/2}
\card{\sbat{\CCp{G'}0(\gamma)}x s}^{1/2}\times{} \\
&\qquad\Gauss_{\CCp{G'}0(\gamma)/H'}(X^*, \gamma)\inv
T\bigl(\gamma\chrc{\sbtl{(G', G)}x{(r, \Rp s)}, \hat\phi^\vee}\bigr).
\end{align*}
\end{prop}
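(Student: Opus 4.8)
The plan is to interpolate between the two test functions using Lemma~\ref{lem:r-to-s+}, and then to feed the resulting $\sbtlp G x 0$-integral into Proposition~\ref{prop:Gauss-appears} to produce the Gauss sums and discriminants. The obstruction to applying Proposition~\ref{prop:Gauss-appears} directly is that it requires $\gamma$ to act compactly on $\bG$, which fails in general, so the first move is a Harish-Chandra semisimple descent. Concretely, both $\gamma\chrc{\sbtl G x r, \hat\phi^\vee}$ and $\gamma\chrc{\sbtl{(G', G)}x{(r, \Rp s)}, \hat\phi^\vee}$ are supported on small neighbourhoods of $\gamma$, so I would apply Lemma~\ref{lem:centre}, cited for $r = 0$ (permissible by Remark~\ref{rem:gamma}), to replace $T$ on $G$ by the $\bM\conn$-invariant distribution $T_\gamma$ on $\bM = \CC\bG 0(\gamma)$, unfolding the $\bN^\pm$-directions with Lemma~\ref{lem:fa-explicit}; this carries the two test functions to their $\bM$-analogues $\gamma\chrc{\sbtl M x r, \hat\phi^\vee}$ and $\gamma\chrc{\sbtl{(M', M)}x{(r, \Rp s)}, \hat\phi^\vee}$. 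Since $\gamma$ acts on $\Lie(\bM)$ with unit eigenvalues (Hypothesis~\ref{hyp:funny-centraliser}), its action on $\bM$ is compact; and, using the elementary identities $\abs{\Disc_{G/H}(\gamma)} = \modulus_{P^-}(\gamma)\abs{\Disc_{M/H}(\gamma)}$ and its $\bG'$-analogue, together with the equality $\CCp\bM 0(\gamma) = \CCp\bG 0(\gamma)$ (valid because $\CCp\bG 0(\gamma)$ already lies in $\bM$), this reduces the proposition to the case $\bM = \bG$, $\bN^\pm = 1$.

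In that case Proposition~\ref{prop:Gauss-appears} applies to $\bG$ itself. Writing $F = \chrc{\sbtl{(G', G)}x{(r, \Rp s)}, \hat\phi^\vee}$, I would apply $T(\gamma\dotm\anondot)$ to the identity of Lemma~\ref{lem:r-to-s+} and use invariance of $T$ --- rewriting $\gamma\dotm(g\inv F g)$ as $\Int(g\inv)\bigl((\Int(g)\gamma)\dotm F\bigr)$ --- to obtain
\[
q_G\inv[s]\card{\sbat\fg x 0}^{1/2}\card{\sbat G x s}^{1/2}\,T\bigl(\gamma\chrc{\sbtl G x r, \hat\phi^\vee}\bigr) = q_{G'}\inv[s]\card{\sbat{\fg'}x 0}^{1/2}\card{\sbat{G'}x s}^{1/2}\uint_{\sbtlp G x 0} T\bigl((\Int(g)\gamma)\dotm F\bigr)\upd g.
\]
Next I would observe that $m \mapsto T(m\dotm F)$ defines a function $F^\sharp$ which, once the contragredients built into the Hecke-algebra notation are tracked (exactly as in the proof of Proposition~\ref{prop:Gauss-appears}), is of the type to which Proposition~\ref{prop:Gauss-appears} applies, with $\uint_{\sbtlp G x 0} T((\Int(g)\gamma)\dotm F)\upd g = \uint_{\sbtlp G x 0} F^\sharp(\Int(g)\gamma)\upd g$. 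Feeding $F^\sharp$ into Proposition~\ref{prop:Gauss-appears} converts the right-hand side into the ratio of its $q$-, index-, discriminant-, and Gauss-sum factors times $\uint_{\sbtl{(G', G)}x{(\Rp0, s)}} F^\sharp(\Int(j)\gamma)\upd j$; and since $F$ is already invariant under conjugation by $\sbtl{(G', G)}x{(\Rp0, s)}$ (the relevant commutators lie in $\sbtlp{\Der G}x r$ by Lemma~\ref{lem:filtration}, hence in $\ker\hat\phi$), this last integral equals $T(\gamma\dotm F) = T(\gamma\chrc{\sbtl{(G', G)}x{(r, \Rp s)}, \hat\phi^\vee})$. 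Collecting the factors --- $q_G\inv[s]$ of Lemma~\ref{lem:r-to-s+} combines with $q_{G/H}^s$ of Proposition~\ref{prop:Gauss-appears} to give $q_H\inv[s]$; $\card{\sbat\fg x 0}$ and $\card{\sbat G x s}$ combine with $\indx{\sbat\fg x 0}{\sbat\fh x 0}\inv[1/2]$ and $\indx{\sbat G x s}{\sbat{\CCp G 0(\gamma)}x s}\inv[1/2]$ to give $\card{\sbat\fh x 0}^{1/2}$ and $\card{\sbat{\CCp G 0(\gamma)}x s}^{1/2}$; and $\abs{\Disc_{G/H}(\gamma)}^{1/2}$ and $\Gauss_{\CCp G 0(\gamma)/H}(X^*, \gamma)\inv$ survive directly --- gives exactly the asserted identity (with $\modulus_{P^-}(\gamma) = 1$), the $\bG'$-side being identical.

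The hard part will be the two passages where something genuinely has to be checked rather than merely rearranged: first, the $r = 0$ semisimple descent of the opening paragraph --- verifying that the $\bN^\pm$-directions integrate out to produce exactly the $\modulus_{P^-}(\gamma)$/discriminant bookkeeping and nothing extraneous, and that Lemma~\ref{lem:centre} may indeed be invoked at $r = 0$ in this setting; and second, pinning down the precise Hecke algebra to which $F^\sharp$ belongs, with the correct contragredient, so that Proposition~\ref{prop:Gauss-appears} applies verbatim and its Gauss-sum factors are exactly the ones in the assertion. The remaining work --- the assembly of the measure, index, $q$-, and discriminant factors --- is substantial but routine.
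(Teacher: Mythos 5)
Your argument is correct and rests on exactly the same three pillars as the paper's proof---Lemma \ref{lem:centre} cited at level \(0\), and Lemma \ref{lem:r-to-s+} and Proposition \ref{prop:Gauss-appears} applied with \(\bM\) in place of \(\bG\)---but you assemble them in the opposite order, and the difference is worth recording. The paper never replaces \(T\) by \(T_\gamma\) at the outset: it introduces the auxiliary function \(f(g) = T\bigl(g\chrc{\sbtl{(M', M, G)}x{(r, \Rp s, r)}, \hat\phi^\vee}\bigr)\), which invariance places in \(\Hecke(G\sslash\sbtl{(M', M, G)}x{(r, \Rp s, r)}, \hat\phi)\) and which is fixed by conjugation by \(\sbtl{(M', M)}x{(\Rp0, s)}\); Proposition \ref{prop:Gauss-appears} for \(\bM\) is applied to this \(f\), Lemma \ref{lem:r-to-s+} (also for \(\bM\)) then shows that the averaged test function equals, on the nose, the correct multiple of \(\chrc{\sbtl G x r, \hat\phi^\vee}\), so the \(\sbtl G x r\) side of the identity needs no descent at all, and Lemma \ref{lem:centre} with \(0\) in place of \(r\) enters exactly once, in the last line, to identify \(T\bigl(\gamma\chrc{\sbtl{(M', M, G)}x{(r, \Rp s, r)}, \hat\phi^\vee}\bigr)\) with \(T\bigl(\gamma\chrc{\sbtl{(G', G)}x{(r, \Rp s)}, \hat\phi^\vee}\bigr)\). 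Your descend-first arrangement works, but it makes you pay the descent toll twice, and Corollary \ref{cor:centre} does not apply verbatim to either of your test functions: \(\hat\phi\) restricted to \(\sbtl M x r\) is not a character of \(\sbat M x r\) (it can be non-trivial on \(\sbtlp{G'}x r\)), and \(\sbtl{(G', G)}x{(r, \Rp s)}\) is not a single Moy--Prasad group, so you must redo the coset-by-coset summation from the proof of Corollary \ref{cor:centre} directly out of Lemma \ref{lem:centre}, after checking that \(\hat\phi\) kills the relevant level-\(0\) perpendicular groups (their \(\mf Q_\gamma\)-parts lie in the \(N^\pm\)-directions of \(\Der G\) at depths greater than \(s\), hence in \(\ker\hat\phi\)); this is the same verification the paper leaves to its single, final citation of Lemma \ref{lem:centre}, so it is bookkeeping rather than a gap. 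Your remaining accounting---\(\abs{\Disc_{G/H}(\gamma)} = \modulus_{P^-}(\gamma)\abs{\Disc_{M/H}(\gamma)}\) and its primed analogue, the observation that \(\CCp\bG 0(\gamma)\), \(\bH\), and the Gauss sums are unchanged upon passing to \(\bM\), and the fusion of the \(q\)-, cardinality-, and index-factors of Lemma \ref{lem:r-to-s+} with those of Proposition \ref{prop:Gauss-appears}---matches the paper's.
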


\begin{proof}
Consider the function \map f G\C given by
\[
f(g) = T\bigl(g\chrc{\sbtl{(M', M, G)}x{(r, \Rp s, r)}, \hat\phi^\vee}\bigr)
\]
for all \(g \in G\).  By invariance of \(T\), we have that \(f(g)\) equals
\[
T\bigl(\chrc{\sbtl{(M', M, G)}x{(r, \Rp s, r)}, \hat\phi^\vee}\dotm g\chrc{\sbtl{(M', M, G)}x{(r, \Rp s, r)}, \hat\phi^\vee}\bigr)
\]
for all \(g \in G\), so that \(f\) belongs to \(\Hecke(G\sslash\sbtl{(M', M, G)}x{(r, \Rp s, r)}, \hat\phi)\); and also that \(f(g\gamma g\inv)\) equals
\[
T\bigl(\gamma\dotm g\inv\chrc{\sbtl{(M', M, G)}x{(r, \Rp s, r)}, \hat\phi^\vee}g\bigr)
\]
for all \(g \in G\).  Since Lemma \initref{lem:filtration}\subpref{gp-gp} gives that the commutator of \(\sbtl{(M', M)}x{(\Rp0, s)}\) with \(\sbtl{(M', M, G)}x{(r, \Rp s, r)}\) is contained in \(\sbtlp{\Der G}x r \subseteq \ker \hat\phi\), we have that \(\sbtl{(M', M)}x{(\Rp0, s)}\) stabilises \((\sbtl{(M', M, G)}x{(r, \Rp s, r)}, \hat\phi)\), hence fixes \(f\).  Thus Proposition \ref{prop:Gauss-appears}, with \bM in place of \bG, gives that
\begin{equation}
\tag{$*$}
\sublabel{eq:Gauss-appears}
\begin{aligned}
&q_{M/H}^s\frac{\abs{\Disc_{G/H}(\gamma)}^{1/2}}{\modulus_{P^-}(\gamma)^{1/2}}
\indx{\sbat\fm x 0}{\sbat\fh x 0}\inv[1/2]
\indx{\sbat M x s}{\sbat{\CCp G 0(\gamma)}x s}\inv[1/2]\times{} \\
&\qquad\Gauss_{\CCp G 0(\gamma)/H}(X^*, \gamma)\inv
\uint_{\sbtlp M x 0} f(\Int(g)\gamma)\upd g \\
&\text{equals} \\
&q_{M'/H'}^s\frac{\abs{\Disc_{G'/H'}(\gamma)}^{1/2}}{\modulus_{P\suppm}(\gamma)^{1/2}}
\indx{\sbat{\fm'}x 0}{\sbat{\fh'}x 0}\inv[1/2]
\indx{\sbat{M'}x s}{\sbat{\CCp{G'}0(\gamma)}x s}\inv[1/2]\times{} \\
&\qquad\Gauss_{\CCp{G'}0(\gamma)/H}(X^*, \gamma)\inv
f(\gamma).
\end{aligned}
\end{equation}
Now \(\displaystyle\uint_{\sbtlp M x 0} f(\Int(g)\gamma)\) equals
\[
T\Bigl(\gamma\dotm\uint_{\sbtlp M x 0} g\inv\chrc{\sbtl{(M', M, G)}x{(r, \Rp s, r)}, \hat\phi^\vee}g\,\upd g\Bigr).
\]

Since \(\chrc{\sbtl{(M', M, G)}x{(r, \Rp s, r)}, \hat\phi^\vee}\) agrees on \(\sbtl{(M', M)}x{(r, \Rp s)}\) with
\[
\frac
	{\meas(\sbtl{(M', M)}x{(r, \Rp s)})}
	{\meas(\sbtl{(M', M, G)}x{(r, \Rp s, r)})}
\chrc{\sbtl{(M', M)}x{(r, \Rp s)}, \hat\phi^\vee},
\]
we have by Lemma \ref{lem:r-to-s+} (again applied to \bM instead of \bG) that
\begin{equation}
\sublabel{eq:that-fun}
\tag{$\dag$}
q_M^s
\card{\sbat\fm x 0}\inv[1/2]
\card{\sbat M x s}\inv[1/2]
\uint_{\sbtlp M x 0} g\inv\chrc{\sbtl{(M', M, G)}x{(r, \Rp s, r)}, \hat\phi^\vee}g\,\upd g
\end{equation}
agrees on \(\sbtlp M x s\) with
\begin{equation}
\sublabel{eq:that-mult}
\tag{$\ddag$}
q_{M'}^s
\card{\sbat{\fm'}x 0}\inv[1/2]
\card{\sbat{M'}x s}\inv[1/2]
\end{equation}
times
\[
\frac
	{\meas(\sbtl{(M', M)}x{(r, \Rp s)}}
	{\meas(\sbtl{(M', M, G)}x{(r, \Rp s, r)})}
\chrc{\sbtl M x r, \hat\phi^\vee}
\]
(where the unexpected quotient of measures comes from our normalisation convention for \anonchrc, which involves dividing by the measure of the domain); hence, since \loceqref{eq:that-fun} belongs to \(\Hecke(\sbtl{(M, G)}x{(\Rp s, r)}\sslash\sbtl G x r, \hat\phi)\), that it equals the same multiple \loceqref{eq:that-mult} of
\begin{align*}
&&\frac
	{\meas(\sbtl{(M', M)}x{(r, \Rp s)}}
	{\meas(\sbtl{(M', M, G)}x{(r, \Rp s, r)})}
\frac
	{\meas(\sbtl G x r)}
	{\meas(\sbtl M x r)}
&\chrc{\sbtl G x r, \hat\phi^\vee} \\ ={}
&&\frac
	{\indx{\sbtl{(M', M)}x{(r, \Rp s)}}{\sbtl M x r}}
	{\indx{\sbtl{(M', M, G)}x{(r, \Rp s, r)}}{\sbtl G x r}}
&\chrc{\sbtl G x r, \hat\phi^\vee} \\ ={}
&&&\chrc{\sbtl G x r, \hat\phi^\vee}.
\end{align*}
That is, by \loceqref{eq:Gauss-appears}, and remembering that \(f(\gamma)\) equals \(T\bigl(\gamma\chrc{\sbtl{(M', M, G)}x{(r, \Rp s)}, \hat\phi}\bigr)\), we have shown that
\begin{align*}
&q_H\inv[s]\frac{\abs{\Disc_{G/H}(\gamma)}^{1/2}}{\modulus_{P^-}(\gamma)^{1/2}}
\card{\sbat\fh x 0}^{1/2}
\card{\sbat{\CCp G 0(\gamma)}x s}^{1/2}\times{} \\
&\qquad\Gauss_{\CCp G 0(\gamma)/H}(X^*, \gamma)\inv
T\bigl(\gamma\chrc{\sbtl G x r, \hat\phi^\vee}\bigr)
\intertext{equals}
&q_{H'}\inv[s]\frac{\abs{\Disc_{G'/H'}(\gamma)}^{1/2}}{\modulus_{P\suppm}(\gamma)^{1/2}}
\card{\sbat{\fh'}x 0}^{1/2}
\card{\sbat{\CCp{G'}0(\gamma)}x s}^{1/2}\times{} \\
&\qquad\Gauss_{\CCp{G'}0(\gamma)/H}(X^*, \gamma)\inv
T\bigl(\gamma\chrc{\sbtl{(M', M, G)}x{(r, \Rp s, r)}, \hat\phi^\vee}\bigr).
\end{align*}
The result now follows from Lemma \ref{lem:centre}, with \(0\) in place of \(r\).  (See Remark \ref{rem:gamma}.)
\end{proof}

Theorem \ref{thm:dist-G-to-G'} gives a condition under which an invariant distribution \(T\) on \(G\) may be said to match an invariant distribution \(T'\) on \(G'\), and shows that, in this case, certain sample values of those two distributions are equal.  Note the resemblance of the proof to the descent arguments appearing in \cite{jkim-murnaghan:gamma-asymptotic}*{\S\S6.2, 7.2}.  We have phrased Theorem \ref{thm:dist-G-to-G'} in a way that we think will be amenable to future calculations using Hecke-algebra isomorphisms, but, for now, the limited analogue that we prove in Theorem \ref{thm:isotypic-pi-to-pi'} suffices to handle the characters of tame supercuspidal representations in Theorem \ref{thm:asymptotic-pi-to-pi'}.
We make an essentially cosmetic re-statement of the the theorem later, as Lemma \ref{lem:dist-G-to-G'}, in a form that is amenable to use in our main result, Theorem \ref{thm:asymptotic-pi-to-pi'}.

An analysis of the proof of Theorem \ref{thm:dist-G-to-G'} shows that we only need to refer explicitly to the measures on \(H\conn\) and \(H\primeconn\), not on \(G\) and \(G'\).  (Recall, though, that the functions \(\chrc{\sbtl{G'}x r, \phi^\vee}\) and \(\chrc{\sbtl{(G', G)}x{(r, \Rp s)}, \hat\phi^\vee}\) implicitly depend on the choices of measure on \(G'\) and \(G\), respectively.)  This is the first place that we need to make use of the specific normalisation of Haar measure chosen in \S\ref{sec:p-adic-group-basics}.

\begin{thm}
\initlabel{thm:dist-G-to-G'}
Suppose that
	\begin{itemize}
	\item \(T'\) is an invariant distribution on \(G'\),
	\item \(c(T', \gamma)\) is a finitely supported, \(\OO^{H\primeconn}(\Lie^*(H'))\)-indexed vector of complex numbers such that \(\abs{\Disc_{G'/H'}(\gamma)}^{1/2}T'(\gamma\chrc{\sbtl{G'}x r, \phi^\vee})\) equals
\[
q_{H'}^s\frac{\Gauss_{G'/H'}(X^*, \gamma)}{\Gauss_{\CCp{G'}0(\gamma^2)/\CCp{G'}0(\gamma)}(X^*, \gamma)}
\sum_{\OO' \in \OO^{H\primeconn}(\Lie^*(H'))}
	c_{\OO'}(T', \gamma)\muhat^{H\primeconn}_{\OO'}\bigl(\chrc{\sbtl{\fh'}x r, \AddChar_{X^*}}\bigr),
\]
and	\item \(T\) is an invariant distribution on \(G\) such that
\begin{align*}
&\modulus_{P^-}(\gamma)^{1/2}
\card{\sbat{\CCp G 0(\gamma)}x s}\inv[1/2]
\Gauss_{G/\CCp G 0(\gamma^2)}(X^*, \gamma)\inv
T\bigl(\gamma\chrc{\sbtl{(G', G)}x{(r, \Rp s)}, \hat\phi^\vee}\bigr) \\
\intertext{equals}
&\modulus_{P\suppm}(\gamma)^{1/2}
\card{\sbat{\CCp{G'}0(\gamma)}x s}\inv[1/2]
\Gauss_{G'/\CCp{G'}0(\gamma^2)}(X^*, \gamma)\inv
T'\bigl(\gamma\chrc{\sbtl{G'}x r, \hat\phi^\vee}\bigr).
\end{align*}
	\end{itemize}
Then \(\abs{\Disc_{G/H}(\gamma)}^{1/2}T(\gamma\chrc{\sbtl G x r, \hat\phi^\vee})\) equals
\[
q_H^s
\frac{\Gauss_{G/H}(X^*, \gamma)}{\Gauss_{\CCp G 0(\gamma^2)/\CCp G 0(\gamma)}(X^*, \gamma)}
\sum_{\OO' \in \OO^{H\primeconn}(\Lie^*(H'))}
	c_{\OO'}(T', \gamma)\muhat^{H\conn}_{\OO'}\bigl(\chrc{\sbtl\fh x r, \AddChar_{X^*}}\bigr).
\]
\end{thm}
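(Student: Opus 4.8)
The plan is to chain together the section's two transfer results. Beginning from $\abs{\Disc_{G/H}(\gamma)}^{1/2}T(\gamma\chrc{\sbtl G x r, \hat\phi^\vee})$, the first step is to apply Proposition~\ref{prop:dist-r-to-s+} to $T$ on $\bG$ (with the tame, twisted Levi subgroup $\bG'$), which expresses $T(\gamma\chrc{\sbtl G x r, \hat\phi^\vee})$ as $T(\gamma\chrc{\sbtl{(G', G)}x{(r, \Rp s)}, \hat\phi^\vee})$ times an explicit ratio of discriminants, the moduli $\modulus_{P^-}(\gamma)$ and $\modulus_{P\suppm}(\gamma)$, the cardinalities $\card{\sbat\fh x 0}$, $\card{\sbat{\fh'}x 0}$, $\card{\sbat{\CCp G 0(\gamma)}x s}$, $\card{\sbat{\CCp{G'}0(\gamma)}x s}$, the quantities $q_H^s$ and $q_{H'}^s$, and the Gauss sums $\Gauss_{\CCp G 0(\gamma)/H}(X^*,\gamma)$ and $\Gauss_{\CCp{G'}0(\gamma)/H'}(X^*,\gamma)$.

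Next, the matching condition in the third bullet of the statement replaces $T(\gamma\chrc{\sbtl{(G', G)}x{(r, \Rp s)}, \hat\phi^\vee})$ by $T'(\gamma\chrc{\sbtl{G'}x r, \hat\phi^\vee})$, at the cost of the remaining moduli, the cardinalities $\card{\sbat{\CCp G 0(\gamma)}x s}$ and $\card{\sbat{\CCp{G'}0(\gamma)}x s}$, and the Gauss sums $\Gauss_{G/\CCp G 0(\gamma^2)}(X^*,\gamma)$, $\Gauss_{G'/\CCp{G'}0(\gamma^2)}(X^*,\gamma)$; at this point the $\modulus_{P^\pm}$ and $\card{\sbat{\CCp G 0(\gamma)}x s}$, $\card{\sbat{\CCp{G'}0(\gamma)}x s}$ factors from the first two steps cancel. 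Since $\hat\phi$ restricts on $\sbtl{G'}x r$ to $\phi$, so that $\chrc{\sbtl{G'}x r, \hat\phi^\vee} = \chrc{\sbtl{G'}x r, \phi^\vee}$, one may then feed in the hypothesised asymptotic identity for $T'$ from the first bullet, turning $T'(\gamma\chrc{\sbtl{G'}x r, \phi^\vee})$ into $\abs{\Disc_{G'/H'}(\gamma)}\inv[1/2]q_{H'}^s$ times $\Gauss_{G'/H'}(X^*,\gamma)\Gauss_{\CCp{G'}0(\gamma^2)/\CCp{G'}0(\gamma)}(X^*,\gamma)\inv$ times $\sum_{\OO'} c_{\OO'}(T',\gamma)\muhat^{H\primeconn}_{\OO'}(\chrc{\sbtl{\fh'}x r, \AddChar_{X^*}})$.

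The final substitution uses Lemma~\ref{lem:mu-G-to-G'}, applied to $\bH$ and $\bH'$ in place of $\bG$ and $\bG'$; this is legitimate because $\bH'$ is a tame, twisted Levi subgroup of $\bH$ (Hypothesis~\initref{hyp:funny-centraliser}\subpref{Levi}) and $X^*$ inherits Hypothesis~\ref{hyp:X*} relative to it (Lemma~\ref{lem:commute-Lie*} with $i = r$). The lemma rewrites each $\muhat^{H\primeconn}_{\OO'}(\chrc{\sbtl{\fh'}x r, \AddChar_{X^*}})$ as $\muhat^{H\conn}_{\OO'}(\chrc{\sbtl\fh x r, \AddChar_{X^*}})$ times $\meas(\sbtlp{H'}x 0)/\meas(\sbtlp H x 0)$; by our normalisation convention for $\anonchrc$ the relevant Fourier transforms are supported near $X^*$, so both orbital-integral values vanish for all but finitely many $\OO'$ in the a priori large index set $\OO^{H\primeconn}(\Lie^*(H'))$, and for those finitely many the lemma applies. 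One then bookkeeps the constants: with Waldspurger's canonical measures (the first point where the normalisation fixed in \S\ref{sec:p-adic-group-basics} matters) one has $\meas(\sbtlp H x 0) = \card{\sbat\fh x 0}\inv[1/2]$ and likewise for $H'$, so the factors $\card{\sbat\fh x 0}$, $\card{\sbat{\fh'}x 0}$ from Steps~1 and~4 cancel, as do $q_{H'}^s$ and $\abs{\Disc_{G'/H'}(\gamma)}^{1/2}$, leaving $q_H^s\abs{\Disc_{G/H}(\gamma)}\inv[1/2]$ times $\sum_{\OO'} c_{\OO'}(T',\gamma)\muhat^{H\conn}_{\OO'}(\chrc{\sbtl\fh x r, \AddChar_{X^*}})$ times the residual Gauss-sum factor $\Gauss_{\CCp G 0(\gamma)/H}\,\Gauss_{G/\CCp G 0(\gamma^2)}\,\Gauss_{G'/H'} / (\Gauss_{\CCp{G'}0(\gamma)/H'}\,\Gauss_{G'/\CCp{G'}0(\gamma^2)}\,\Gauss_{\CCp{G'}0(\gamma^2)/\CCp{G'}0(\gamma)})$ (all at $(X^*,\gamma)$). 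Using multiplicativity of Weil indices, $\Gauss_{A/B} = \Gauss_A/\Gauss_B$, the denominator telescopes to $\Gauss_{G'/H'}(X^*,\gamma)$, which cancels the last numerator factor, and what remains is exactly $\Gauss_{G/H}(X^*,\gamma)/\Gauss_{\CCp G 0(\gamma^2)/\CCp G 0(\gamma)}(X^*,\gamma)$; multiplying through by $\abs{\Disc_{G/H}(\gamma)}^{1/2}$ gives the asserted identity.

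I expect the main obstacle to be not any individual step but the constant-chasing itself: correctly tracking which of the numerous normalisation factors --- the two moduli, the several cardinality factors, the Waldspurger measures, and the half-dozen Gauss sums --- cancels at each stage, and verifying the concluding Gauss-sum telescoping. Secondary care is needed to confirm that the cited inputs genuinely apply here, in particular that $\bH'$ really is a tame, twisted Levi subgroup of $\bH$, that $X^*$ inherits Hypothesis~\ref{hyp:X*} relative to $(\bH', \bH)$, and that the standing hypothesis $r > 0$ is what makes Proposition~\ref{prop:dist-r-to-s+} available.
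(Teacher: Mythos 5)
Your proposal is correct and follows essentially the same route as the paper: apply Proposition \ref{prop:dist-r-to-s+} to pass from \(\sbtl G x r\) to \(\sbtl{(G', G)}x{(r, \Rp s)}\), use the matching condition and the hypothesised expansion for \(T'\), and finish with Lemma \ref{lem:mu-G-to-G'} together with Waldspurger's normalisation \(\indx{\sbat\fh x 0}{\sbat{\fh'}x 0}\inv[1/2] = \meas(\sbtlp H x 0)/\meas(\sbtlp{H'}x 0)\). Your constant- and Gauss-sum bookkeeping (the telescoping via \(\Gauss_{A/B} = \Gauss_A/\Gauss_B\)) agrees with the paper's computation, so no further comment is needed.
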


Note that \(\muhat^{H\primeconn}_{\OO'}\) and \(\muhat^{H\conn}_{\OO'}\) are Fourier transforms of different orbital integrals, as indicated by the superscripts, even though the subscripts are the same.

It does not matter for Theorem \ref{thm:dist-G-to-G'} whether \(X^* + \sbtlpp{\Lie^*(G')}x{-r}\) is the ``dual blob'' of \((\sbtl{G'}x r, \phi)\), only that Hypothesis \ref{hyp:phi} is satisfied.

\begin{proof}
By Proposition \ref{prop:dist-r-to-s+}, we have that \(q_H\inv[s]\abs{\Disc_{G/H}(\gamma)}^{1/2}T\bigl(\gamma\chrc{\sbtl G x r, \AddChar_{X^*}}\bigr)\) equals
\begin{equation}
\sublabel{eq:sample-G}
\tag{$*$}
\begin{aligned}
&q_{H'}\inv[s]
\frac{\Gauss_{G/H}(X^*, \gamma)}{\Gauss_{G'/H'}(X^*, \gamma)}
\indx{\sbat\fh x 0}{\sbat{\fh'}x 0}\inv[1/2]\times{} \\
&\qquad\abs{\Disc_{G'/H'}(\gamma)}^{1/2}
\frac{\Gauss_{\CCp G 0(\gamma^2)/\CCp G 0(\gamma)}(X^*, \gamma)\inv}{\Gauss_{\CCp{G'}0(\gamma^2)/\CCp{G'}0(\gamma)}(X^*, \gamma)\inv}\times{} \\
&\qquad\underbrace{
	\frac{\modulus_{P^-}(\gamma)^{1/2}}{\modulus_{P\suppm}(\gamma)^{1/2}}
	\indx{\sbat{\CCp G 0(\gamma)}x s}{\sbat{\CCp{G'}0(\gamma)}x s}\inv[1/2]
	\frac{\Gauss_{G/\CCp G 0(\gamma^2)}(X^*, \gamma)\inv}{\Gauss_{G'/\CCp{G'}0(\gamma^2)}(X^*, \gamma)\inv}
}_{(G')}\times{} \\
&\qquad\underbrace{
	T\bigl(\gamma\chrc{\sbtl{(G', G)}x{(r, \Rp s)}, \hat\phi^\vee}\bigr)
}_{(G')}.
\end{aligned}
\end{equation}
By assumption, we have that (\(\text{\locref{eq:sample-G}}_{G'}\)) equals \(T'\bigl(\gamma\chrc{\sbtl{G'}x r, \AddChar_{X^*}}\bigr)\), so that, again by assumption, \(q_H\inv[s]\abs{\Disc_{G/H}(\gamma)}^{1/2}T\bigl(\gamma\chrc{\sbtl G x r, \AddChar_{X^*}}\bigr)\) itself equals
\begin{align*}
&\frac{\Gauss_{G/H}(X^*, \gamma)}{\Gauss_{\CCp G 0(\gamma^2)/\CCp G 0(\gamma)}(X^*, \gamma)}\times{} \\
&\qquad\indx{\sbat\fh x 0}{\sbat{\fh'}x 0}\inv[1/2]\sum_{\OO' \in \OO^{H\primeconn}(\Lie^*(H')} c_{\OO'}(T', \gamma)\muhat_{\OO'}^{H\primeconn}\bigl(\chrc{\sbtl{\fh'}x r, \AddChar_{X^*}}\bigr).
\end{align*}
According to Waldspurger's canonical Haar measures, we have that \(\indx{\sbat\fh x 0}{\sbat{\fh'}x 0}\inv[1/2]\) equals \(\frac{\meas(\sbtlp{H'}x 0)\inv}{\meas(\sbtlp H x 0)\inv}\), so that the result follows from Lemma \ref{lem:mu-G-to-G'}.
\end{proof}

\subsection{Supercuspidal representations}
\label{sec:cuspidal}

After Lemma \ref{lem:rho-isotypic}, we will begin to re-introduce notation from the previous sections; but we begin this section without any of the accumulated notation (except for \bG itself), by recalling the construction of tame supercuspidal representations from \cite{yu:supercuspidal}.  Because the restriction is necessary for Yu's construction, we assume now that \bG is \tamefield-split and connected.

The construction is inductive in nature, and we consider only a single step in the induction; so we replace the datum of \cite{yu:supercuspidal}*{\S3, p.~590} with a quintuple \(((\bG', \bG), o, (0 < r \le r_d), \rho', (\phi_o, \chi))\), which we fix for the rest of the paper.  Here,
	\begin{itemize}
	\item our \(\bG'\) is Yu's \(\bG^{d - 1}\);
	\item our \(o\) is Yu's \(y\);
	\item our \(r\) is Yu's \(r_{d - 1}\);
	\item our \(\rho'\) is the induction up to \(\stab_{G'}(\ol o)\) of the representation \(\rho_{d - 1}\) constructed in \cite{yu:supercuspidal}*{\S4, p.~592};
and	\item our \(\phi_o\) and \(\chi\) are Yu's \(\phi_{d - 1}\) and \(\phi_d\), respectively.
	\end{itemize}
Note that \(\rho'\) is constructed by inducing \(\rho_{d - 1}\), \emph{not} \(\rho'_{d - 1}\); that is, it includes the twist by \(\phi_o\) by which those two representations differ.  The group \(\bG'\) and positive real number \(r\) here will be the same as in \S\ref{sec:nearly-good} and \S\ref{sec:depth-matrix}, respectively.  The idea is that the part of Yu's datum that he denotes by \(((\bG^0 \subsetneq \dotsb \subsetneq \bG^{d - 1}), y, (r_0 < \dotsc < r_{d - 1}), \rho_0', (\phi_0, \dotsc, \phi_{d - 1}))\) has already been used to construct \(\rho'\), which is \((\sbtl{G'}o r, \phi_o)\)-isotypic and induces an irreducible, hence supercuspidal, depth-\(r\) representation of \(G'\), which we call \(\mnotn{\pi'} \ldef \Ind_{\stab_{G'}(\ol o)}^{G'} \rho'\).

Let \mnotn{Z^*_o} be a generic element, as in \cite{yu:supercuspidal}*{\S8}, representing \((\sbtl{G'}o r, \phi_o)\), as in \cite{yu:supercuspidal}*{\S9}.  Because \bG is connected, Hypothesis \ref{hyp:Z*} is just the genericity condition \cite{yu:supercuspidal}*{\S8, p.~596, \textbf{GE}}.  Because \bG is tame, so that Moy--Prasad isomorphisms are always available, Hypothesis \ref{hyp:K-type} follows from the definition of ``representing'' \cite{yu:supercuspidal}*{\S5}.

\begin{notn}
\label{notn:Yu}
Put
\begin{align*}
\mnotn{K'_o}               & {}= \stab_{G'}(\ol o), \\
\matnotn{Jo}{\oJgp o}      & {}= \sbtl{(G', G)}o{(r, s)}, \\
\mnotn{K_o}                & {}= K'_o\dotm\oJgp o = \stab_{G'}(\ol o)\sbtl G o s, \\
\intertext{and}
\matnotn{Joplus}{\oJpgp o} & {}= \sbtl{(G', G)}o{(r, \Rp s)}. \\
\intertext{For uniformity of notation, we also put}
\matnotn{Joprime}{\ojJgp o'} & {}= G' \cap \oJgp o = \sbtl{G'}o r.
\end{align*}
Let \matnotn{phihat}{\hat\phi_o} be the extension of \((\ojJgp o', \phi_o)\) trivially across \(\sbtlp{\Der(G', G)}o{(r, s)}\) to \(\oJpgp o\).
\end{notn}

Yu uses the theory of the Weil representation to construct a representation \(\tilde\phi_o\) of \(K'_o \ltimes \oJgp o\) \cite{yu:supercuspidal}*{Theorem 11.5} that is (\(\sbtlp{G'}o 0 \ltimes 1\))-isotrivial and \((1 \ltimes \oJpgp o, \hat\phi_o)\)-isotypic.  As suggested in \cite{yu:supercuspidal-revisited}, and seen by example in \cite{debacker-spice:stability}, the use of the ``bare'', untwisted, Weil representation is not ideal.  We describe in \cite{spice:weil}, and use here, a representation \matnotn{phitilde}{\tilde\phi_o^+} (a twist of \(\tilde\phi_o\) by a character of \(K'_o \ltimes \oJgp o\) that is trivial on \(\sbtlp{G'}o 0 \ltimes \oJgp o\)) that has better intertwining and character-theoretic properties, but still satisfies \cite{yu:supercuspidal}*{\S4, \textbf{SC2}}.

\begin{defn}
\label{defn:rho-pi}
As in \cite{yu:supercuspidal}*{\S4, p.~592}, there is a unique representation \(\rho\) of \(K_o = K'_o\dotm\oJgp o\) on the tensor product of the spaces of \(\rho'\) and \(\tilde\phi_o^+\) such that
\[
(\rho\chi\inv)(k'_o j_o)
\qeqq
\rho'(k'_o) \otimes \tilde\phi_o^+(j_o)
\]
for all \(k'_o \in K'_o\) and \(j_o \in \oJgp o\).  Put \(\mnotn\pi = \Ind_{K_o}^G \rho\).
\end{defn}

Yu's analogue of \(\rho\), which is denoted by \(\rho_d\), is defined on a smaller subgroup than ours; but, if we were not using a different choice of representation \(\tilde\phi_o\) satisfying \textbf{SC2}, then its induction to \(K_o\) would be isomorphic to our representation \(\rho\), hence would not change the isomorphism class of \(\pi\).  Although our \(\pi\) is not the same as the one that the unmodified construction of \cite{yu:supercuspidal} assigns to our quintuple (because of our use of \(\tilde\phi_o^+\) instead of \(\tilde\phi_o\)), it does still arise from that unmodified construction \via a different choice of quintuple.  To obtain the parameterising datum for the unmodified construction, one absorbs into \(\phi_o\) the twist by which \(\tilde\phi_o^+\) differs from \(\tilde\phi_o\).

\begin{lem}
\label{lem:rho-isotypic}
We have that \(\rho\) is \((\oJpgp o, \hat\phi_o)\)-isotypic.
\end{lem}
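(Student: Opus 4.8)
The plan is to unwind Definition~\ref{defn:rho-pi} and feed in the transformation properties of its two constituents. First I would fix $j \in \oJpgp o$. Since $\oJpgp o \subseteq \oJgp o \subseteq K_o$, I may apply the defining relation of Definition~\ref{defn:rho-pi} with $k'_o = 1$ and $j_o = j$, which gives
\[
(\rho\chi\inv)(j) \qeqq \rho'(1) \otimes \tilde\phi_o^+(j).
\]
Here $\rho'(1)$ is the identity, so everything reduces to understanding $\tilde\phi_o^+$ on $\oJpgp o$.

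Next I would record that $\tilde\phi_o^+$, restricted to $1 \ltimes \oJgp o$, is $(\oJpgp o, \hat\phi_o)$-isotypic. For $\tilde\phi_o$ this is part of Yu's \textbf{SC2} \cite{yu:supercuspidal}*{Theorem 11.5}; and $\tilde\phi_o^+$ is obtained from $\tilde\phi_o$ by twisting with a character of $K'_o \ltimes \oJgp o$ that is trivial on $\sbtlp{G'}o 0 \ltimes \oJgp o$, hence on $1 \ltimes \oJgp o$, so $\tilde\phi_o^+$ inherits the same property (this is exactly one of the reasons \cite{spice:weil} insists $\tilde\phi_o^+$ still satisfy \textbf{SC2}). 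Thus $\tilde\phi_o^+(j)$ is the scalar $\hat\phi_o(j)$, so $(\rho\chi\inv)(j)$ is the scalar $\hat\phi_o(j)$, i.e.\ $\rho(j)$ is the scalar $\chi(j)\hat\phi_o(j)$.

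It then remains to observe that $\chi$ is trivial on $\oJpgp o = \ojJgp o'\dotm\sbtlp{\Der(G', G)}o{(r, s)}$. On the ``transverse'' factor $\sbtlp{\Der(G', G)}o{(r, s)} \subseteq \Der G$ this is automatic, since the character $\chi$ of the connected group $G$ kills $\Der G$; and on the factor $\ojJgp o' = \sbtl{G'}o r$ it follows from the conditions defining Yu's generic cuspidal datum (equivalently, from the fact that $\chi = \phi_d$ must not disturb the $\hat\phi_o$-isotypicality of Yu's own $\rho_d$ on $\oJpgp o$). Combining, $\rho(j)$ is the scalar $\hat\phi_o(j)$ for every $j \in \oJpgp o$, which is the assertion. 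The only step requiring care is this last one --- identifying precisely why $\phi_d$ contributes nothing on the diagonalizable part $\sbtl{G'}o r$ of $\oJpgp o$ --- and I would cite \cite{yu:supercuspidal}*{\S3} (and Hypothesis~\ref{hyp:phi} / Notation~\ref{notn:Yu}, which were set up exactly to make $\hat\phi_o$ the character that appears) rather than re-derive it.
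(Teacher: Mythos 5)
Your first two steps are exactly the paper's proof: evaluate Definition \ref{defn:rho-pi} at \(k'_o = 1\), \(j_o = j \in \oJpgp o\), and invoke \textbf{SC2} for \(\tilde\phi_o^+\) (as you note, the twist relating \(\tilde\phi_o^+\) to \(\tilde\phi_o\) is trivial on \(1 \ltimes \oJgp o\), so it is harmless here). At that point you have, correctly, that \(\rho(j)\) is the scalar \(\chi(j)\hat\phi_o(j)\).

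The gap is your third step: the assertion that \(\chi\) is trivial on \(\oJpgp o\) is not true in general, and neither justification you offer holds up. A smooth character of \(G\) is trivial on the abstract commutator subgroup of \(G\), but not necessarily on \(\Der G = (\Der\bG)(\field)\); and, more to the point, when \(r = r_{d - 1} < r_d\) Yu's conditions make \(\chi = \phi_d\) a character of depth \(r_d > r > s\), so nothing forces it to be trivial on subgroups of \(\oJpgp o\) whose elements have depths in \((s, r_d]\).  In particular it is typically nontrivial on the factor \(\sbtl{G'}o r\): already for \(\bG = \GL_n\), \(\bG'\) an elliptic maximal torus, and \(\chi = \chi_0 \circ \det\) of depth \(r_d > r\), the restriction of \(\chi\) to \(\sbtl{G'}o r\) is nontrivial.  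The genericity conditions constrain \(\phi_o = \phi_{d - 1}\), not \(\phi_d\), so they give you nothing here.  What the paper actually does is suppress the \(\chi\)-twist in this lemma and absorb it later: every subsequent use begins with the reduction ``we may, and do, assume that \(\chi\) is trivial'' (via \(\Hom(\tilde\phi\chi, \rho) = \Hom(\tilde\phi, \rho\chi\inv)\), as in the proof of Lemma \ref{lem:phi-into-rho}), and Corollary \ref{cor:isotypic-pi-to-pi'} correspondingly tests \(\pi\) against \(\hat\phi^\vee\chi^\vee\) on \(\Jpgp\) rather than \(\hat\phi^\vee\) --- that extra \(\chi\) is exactly the factor you were trying to kill.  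So the way to finish is either to make that reduction explicitly (after which your first two steps are the entire proof) or to record the conclusion as ``\(\rho\) is \((\oJpgp o, \hat\phi_o\,\chi)\)-isotypic''; attempting to prove that \(\chi\) restricts trivially to \(\oJpgp o\) is not a viable route.
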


\begin{proof}
By Definition \ref{defn:rho-pi}, for all \(\jp \in \oJpgp o\), the operator \(\rho(\jp)\) may be written as \(\rho'(1) \otimes \tilde\phi_o^+(1 \ltimes \jp)\).  The result thus follows from the fact that \(\tilde\phi_o^+\) is \((1 \ltimes \oJpgp o, \hat\phi_o)\)-isotypic \cite{yu:supercuspidal}*{\S4, \textbf{SC2}}.
\end{proof}

We resume the notation for the rest of the paper a bit at a time.  For now, in addition to the
	\begin{itemize}
	\item positive real number \(r\),
	\item tame, twisted Levi subgroup \(\bG'\),
and	\item element \(Z^*_o \in \Lie^*(G')\)
	\end{itemize}
that we have already, we let \(x\) be a point of \(\BB(G)\) (eventually, the point in \S\ref{sec:depth-matrix}), and recall the
	\begin{itemize}
	\item element \(X^* \in \sbtl{\Lie^*(G')}x{-r}\)
and	\item characters \(\phi\) of \(\sbtl{G'}x r\) and \(\hat\phi\) of \(\sbtl{(G', G)}x{(r, \Rp s)}\)
	\end{itemize}
from \S\ref{sec:Gauss}, of which we require that \(X^*\) belongs to \(Z^*_o + \sbjtlpp{\Lie^*(G')}{-r}\).  We do not have explicitly to impose Hypothesis \ref{hyp:X*}; it will hold automatically, by Remark \ref{rem:X*}, once we have recalled \(\gamma\).  We should regard \(x\), \(X^*\), and \(\phi\) as fixed only provisionally, until Lemma \ref{lem:dist-G-to-G'}; we need to allow it to vary in the proof of Theorem \ref{thm:asymptotic-pi-to-pi'}.  Although we do impose Hypothesis \ref{hyp:phi} here, it will not need to be explicitly stated in our main result, Theorem \ref{thm:asymptotic-pi-to-pi'}, where it will automatically be satisfied.

In Notation \ref{notn:Yu-x}, we mostly adapt Notation \ref{notn:Yu} and the following discussion from the point \(o\) to the point \(x\) (and then drop the subscript \(x\)); but note that the group \(K'\) may be smaller than the direct analogue \(\stab_{G'}(\ox)\) of \(K'_o\).

\begin{notn}
\label{notn:Yu-x}
Put
\begin{align*}
\mnotn\Jgp    & {}= \sbtl{(G', G)}x{(r, s)}, \\
\mnotn\Jpgp   & {}= \sbtl{(G', G)}x{(r, \Rp s)},
\intertext{and}
\mnotn{\Jgp'} & {}= G' \cap \Jgp = \sbtl{G'}x r, \\
\mnotn{K'}    & {}= \stab_{G'}(\ox, \phi)
\intertext{and}
\mnotn K      & {}= K'\dotm\Jgp = \stab_{G'}(\ox, \phi)\sbtl G x s.
\end{align*}
As in \cite{yu:supercuspidal}*{Proposition 11.4 and Theorem 11.5}, we may use a special isomorphism, in the sense of \cite{yu:supercuspidal}*{\S10, p.~601}, to pull back a Weil representation to \(K' \ltimes \Jgp\).  We then write \matnotn{phitilde}{\tilde\phi^+} for the twist of this pullback constructed in \cite{spice:weil}.  Recall that this twist is by a character that is trivial on \(\sbtlp{G'}x 0 \ltimes \Jgp\).  To emphasise when we are considering only the action of \Jgp, which is unaffected by the twist, we denote \(\Res^{K' \ltimes \Jgp}_\Jgp \tilde\phi^+\) by \matnotn{phitilde}{\tilde\phi}.
\end{notn}

Lemma \ref{lem:Ko-and-J} is the first of three calculations that prepare us for one of the main results of this section, Theorem \ref{thm:isotypic-pi-to-pi'}.  It seems that the two statements in Lemma \ref{lem:Ko-and-J} should admit a common generalisation, rather than just having similar proofs, but we have not been able to find such a generalisation.

\begin{lem}
\label{lem:Ko-and-J}
For all \(g' \in G'\), we have that
\begin{align*}
\Int(g')\inv K_o \cap \Jgp
&\qeqq
(\Int(g')\inv K'_o \cap \Jgp')(\Int(g')\inv\oJgp o \cap \Jgp) \\
\intertext{and}
K_o g'\Jgp \cap G'
&\qeqq
K'_o g'\Jgp'.
\end{align*}
\end{lem}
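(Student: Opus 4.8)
The plan is to deduce both identities from Lemma~\ref{lem:heres-a-gp}, after reducing to the case \(g' = 1\) (with \(o\) and \(x\) then allowed to be two arbitrary points of \(\BB(G')\)). For the first identity, conjugation by \(g'^{-1}\) carries \(K_o = \stab_{G'}(\ol o)\oJgp o\) to \(\stab_{G'}(\overline{g'^{-1}\dotm o})\dotm\sbtl{(G', G)}{g'^{-1}\dotm o}{(r, s)}\) — that is, to \(K_{g'^{-1}\dotm o}\) — since both \(\stab_{G'}(\ol o)\) and the Moy--Prasad-type group \(\oJgp o = \sbtl{(G', G)}o{(r, s)}\) depend functorially on \(o\), while it fixes \(\Jgp\) and \(\Jgp'\); so it suffices to prove \(K_o \cap \Jgp = (K'_o \cap \Jgp')(\oJgp o \cap \Jgp)\) for an arbitrary \(o \in \BB(G')\) (recall \(K'_o = \stab_{G'}(\ol o)\)). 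The inclusion ``\(\supseteq\)'' is formal: the two factors on the right are subgroups of the group \(K_o \cap \Jgp\), and the first, being contained both in \(\stab_{G'}(\ol o)\) (which normalises \(\oJgp o\)) and in \(\sbtl{G'}x r = \Jgp'\) (which normalises \(\Jgp\)), normalises the second. Likewise, for the second identity, right multiplication by \(g'^{-1}\) reduces it to \(K_o\dotm\sbtl{(G', G)}z{(r, s)} \cap G' = K'_o\dotm\sbtl{G'}z r\) with \(z = g'\dotm x \in \BB(G')\), and the ``\(\supseteq\)'' there is immediate.

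For the reverse inclusion in the first identity I would fix a maximally \tamefield-split maximal torus \(\bT\) of \(\bG\) that is contained in \(\bG'\) and whose apartment contains both \(o\) and \(x\); this is possible because \(\bG'\), being a tame, twisted Levi subgroup of the \tamefield-split group \(\bG\), is itself \tamefield-split, so \(o\) and \(x\) lie in a common apartment of \(\BB(G') \subseteq \BB(G)\). Let \(f_1 = f_{(G', G), (r, s)}\) be the depth function that equals \(r\) on \(\Weight(\bG'_\sepfield)\) and \(s\) on the remaining weights; it is everywhere positive and, because \(s + s = r\), concave, so \(\oJgp o = \sbtl G o{f_1}\), while \(\Jgp = \sbtl G o{f_2}\) with \(f_2 = f_1 + (o - x)\) the same group relocated from \(x\) to \(o\). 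Both \(f_1\) and \(f_2\) are translates by linear functions of an everywhere positive concave function, so the hypotheses of Lemma~\ref{lem:heres-a-gp} are met — with \xcite{adler-spice:good-expansions}*{Hypothesis~\xref{hyp:torus-H1-triv}} automatic, since \(\bG\) is \tamefield-split — and that lemma yields
\[
\stab_{G'}(\ol o)\oJgp o \cap \Jgp
\qeqq
\sbtl{G'}o{\max \sset{0, f_2}}\dotm\sbtl G o{\max \sset{f_1, f_2}}.
\]
I would then identify the two factors: \(\sbtl G o{\max \sset{f_1, f_2}} = \sbtl G o{f_1} \cap \sbtl G o{f_2} = \oJgp o \cap \Jgp\), and \(\sbtl{G'}o{\max \sset{0, f_2}} = \stab_{G'}(\ol o) \cap \sbtl{G'}x r = K'_o \cap \Jgp'\), the latter by one more application of Lemma~\ref{lem:heres-a-gp}, inside \(\bG'\) and with first function \(\infty\), since the restriction of \(f_2\) to \(\Weight(\bG'_\sepfield)\) is the constant function \(r\) relocated from \(x\) to \(o\).

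For the second identity, decomposing an element of \(K_o\dotm\sbtl{(G', G)}z{(r, s)} \cap G'\) as \(\kappa\mu j\) with \(\kappa \in K'_o \subseteq G'\), \(\mu \in \oJgp o\), and \(j \in \sbtl{(G', G)}z{(r, s)}\), and absorbing \(\kappa\), reduces the problem to \(\oJgp o\dotm\sbtl{(G', G)}z{(r, s)} \cap G' = \sbtl{G'}o r\dotm\sbtl{G'}z r\). I would prove this by passing to a finite tame extension \(E/\field\) over which \(\bG'_E\) is a genuine Levi subgroup of \(\bG_E\), with a chosen parabolic \(\bP'_E\) and its opposite, so that \(G' = \bG'(E) \cap G\); using the Iwahori factorisations, relative to \(\bP'_E\), of \(\oJgp o\) and \(\sbtl{(G', G)}z{(r, s)}\) over \(E\) to see that \(\oJgp o\dotm\sbtl{(G', G)}z{(r, s)} \cap \bG'(E) = \sbtl{\bG'(E)}o r\dotm\sbtl{\bG'(E)}z r\); and then descending from \(E\) to \(\field\) by the argument of the second statement of Lemma~\ref{lem:heres-a-gp} (again with the requisite \(\operatorname H^1\)-vanishing automatic, because \(\bG'\) is \tamefield-split).

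The main obstacle is the Iwahori-factorisation step just indicated: making precise how, once one insists that the product of the two relocated Moy--Prasad groups lie in \(\bG'(E)\), the unipotent-radical factors cancel and only the Levi parts survive. Everything else — the reductions to \(g' = 1\), moving \(o\) and \(x\) into a single apartment, and checking the translate-by-a-linear-function hypotheses of Lemma~\ref{lem:heres-a-gp} — is routine.
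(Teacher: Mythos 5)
Your proposal follows essentially the same route as the paper's proof: reduce to \(g' = 1\) (equivalently, to a statement about two arbitrary points of \(\BB(G')\)), place the two groups at a common point of a common apartment by translating the concave \((r, s)\)-function by a linear function, invoke Lemma \ref{lem:heres-a-gp} (whose hypotheses hold for exactly the reason you give), and, for the second identity, pass to a tame extension over which \(\bG'\) is a Levi subgroup, argue with Iwahori factorisations, and descend using the second statement of Lemma \ref{lem:heres-a-gp}.  Your treatment of the first identity, applying the displayed formula of Lemma \ref{lem:heres-a-gp} directly at \(o\) and identifying the two factors, is a cosmetic variant of the paper's triple factorisation of \(K_o \cap \Jgp\), and is fine granted the standard intersection fact \(\sbtl G o{f_1} \cap \sbtl G o{f_2} = \sbtl G o{\max \sset{f_1, f_2}}\).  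The one step you defer---the cancellation of the unipotent factors in the second identity---is precisely the paper's closing argument, and it goes through, but it needs one idea you do not spell out: factor the two groups in \emph{opposite} orders relative to a pair \(\bU^\pm\) of opposite unipotent radicals, so that an element of \(K_o\Jgp \cap G'\) may be written \(g' = k'k^+k^-j^-j^+j'\) with \(k' \in K'_o\), \(k^\pm \in K_o \cap U^\pm\), \(j^\pm \in \Jgp \cap U^\pm\), \(j' \in \Jgp'\), and the two adjacent middle factors lie in the single group \(U^-\).  Then \(k^-j^- = (k^+)\inv(k')\inv g'(j')\inv(j^+)\inv\) lies in \(U^- \cap U^+G'U^+ = U^- \cap G'U^+ = \sset1\) (since \(G'\) normalises \(U^+\), and by uniqueness in the big cell), after which \(k^+j^+ \in U^+ \cap G' = \sset1\), leaving \(g' = k'j' \in K'_o\Jgp'\).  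With that supplied, your intermediate claim that \(\oJgp o\dotm\Int(g')\Jgp \cap G'\) equals \(\sbtl{G'}o r\dotm\Int(g')\Jgp'\) follows from the same computation (the \(G'\)-part of the \(\oJgp o\)-factor lies in \(\oJgp o \cap G' = \sbtl{G'}o r\)), and your descent step is the same use of the second statement of Lemma \ref{lem:heres-a-gp} that the paper makes.
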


\begin{proof}
Multiplying the second equality on the left by \(g\primeinv\) shows that it, too, is a statement about \(\Int(g')\inv K'_o\).  Replacing \(o\) by \(g\primeinv\dota o\), hence \(K'_o\) and \(K_o\) by their conjugates under \(g\primeinv\), corresponds to replacing \(((\bG', \bG), o, (r \le r_d), \rho', (\phi, \chi))\) by \(((\bG', \bG), g\primeinv\dota o, (r \le r_d), \rho' \circ \Int(g), (\phi, \chi))\), which gives rise to the same representations \(\pi'\) and \(\pi\); so we may, and do, assume upon making this replacement that \(g'\) equals \(1\).

Note that \(K'_o\) normalises \oJgp o, so that \((K'_o \cap \Jgp')(\oJgp o \cap \Jgp)\) is a group.

By Lemma \ref{lem:heres-a-gp}, we may, and do, pass to a tame extension, and so assume that \(\bG'\) is a Levi subgroup of \bG.  Let \(\bU^\pm\) be the unipotent radicals of opposite parabolics with Levi component \(\bG'\).  Also by Lemma \ref{lem:heres-a-gp}, we have that \(K_o \cap \Jgp\) equals \((K_o \cap \Jgp \cap G')(K_o \cap \Jgp \cap U^+)(K_o \cap \Jgp \cap U^-)\) and \(K_o\Jgp\) equals \(K'_o(K_o \cap U^+)K'_o(K_o \cap U^-)(\Jgp \cap U^-)(\Jgp \cap U^+)\Jgp'\).

We have that \(K_o \cap \Jgp \cap G'\) equals \(K'_o \cap \Jgp'\) and \(K_o \cap U^\pm\) equals \(\sbtl{U^\pm}o s \subseteq \sbtl{(G', G)}o s = \oJgp o\), so the first equality follows.

If we write an element of its intersection with \(\bG'\) as \(g' = k'k^+ k^- j^- j^+ j'\), then \(k^-j^-\) belongs to \(U^- \cap (k'k^+)\inv G'(j^+ j')\inv \subseteq U^- \cap G' U^+ = \sset1\).  That is, \(k^- j^-\) equals \(1\).  Then \(k^+ j^+ = k^+ k^- j^- j^+\) belongs to \(U^+ \cap k\primeinv G' j\primeinv = U^+ \cap G' = \sset1\), so that \(k^+ j^+\) also equals \(1\), and hence \(g'\) equals \(k'j'\), which belongs to \(K'_o\Jgp'\).
\end{proof}

Recall that Definition \ref{defn:rho-pi} expresses the inducing representation \(\rho\) for \(\pi\) as a tensor product of the inducing representation \(\rho'\) for \(\pi'\) with a Weil representation.  We show in Lemma \ref{lem:phi-into-rho} that its (\(\tilde\phi\chi\))-isotypic subspace admits a similar tensor-product decomposition.

\begin{lem}
\initlabel{lem:phi-into-rho}
The canonical isomorphism
\[
\anonmap{\Hom_\C(\phi, \rho') \otimes_\C \Hom_\C(\tilde\phi, \tilde\phi_o)}{\Hom_\C(\tilde\phi\chi, \rho)}
\]
(all Hom spaces being in the category of vector spaces) sending \(v' \otimes \varphi\) to the map \anonmapto w{v'(1) \otimes \varphi(w)} restricts to an isomorphism
\[
\anonmap{\Hom_{\Jgp' \cap K'_o}(\phi, \rho') \otimes_\C \Hom_{\Jgp \cap \oJgp o}(\tilde\phi, \tilde\phi_o)}{\Hom_{\Jgp \cap K_o}(\tilde\phi\chi, \rho)}
\]
\end{lem}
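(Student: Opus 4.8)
The plan is to reduce the statement to two separate facts: first, that the canonical vector-space isomorphism \(\Hom_\C(\phi, \rho') \otimes_\C \Hom_\C(\tilde\phi, \tilde\phi_o) \cong \Hom_\C(\tilde\phi\chi, \rho)\) is well-defined (this is immediate from Definition \ref{defn:rho-pi}, since \(\rho\chi\inv\) restricted to \(\ojJgp o' \times \oJgp o\) is \(\rho' \otimes \tilde\phi_o^+\), and \(\tilde\phi_o^+\) agrees with \(\tilde\phi_o\) on \(\oJgp o\)); and second, that under this identification the subspace of maps intertwining the \(\Jgp \cap K_o\)-action corresponds exactly to the tensor product of the two smaller \(\Hom\) spaces. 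The key point for the second fact is the decomposition \(\Jgp \cap K_o = (\Jgp' \cap K'_o)(\Jgp \cap \oJgp o)\) provided by Lemma \ref{lem:Ko-and-J} (applied with \(g' = 1\)), together with the observation that \(\Jgp' \cap K'_o \subseteq G'\) acts only through the \(\rho'\)/\(\phi\) tensor factor (and trivially, up to the twist \(\chi\), on the Weil factor via the isotriviality of \(\tilde\phi_o^+\) on \(\sbtlp{G'}o 0\)), while \(\Jgp \cap \oJgp o \subseteq \oJgp o\) acts only through the Weil tensor factor.

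First I would fix notation: write \(A = \Jgp' \cap K'_o\) and \(B = \Jgp \cap \oJgp o\), so that Lemma \ref{lem:Ko-and-J} gives \(\Jgp \cap K_o = A B\), and note \(A\) normalises \(B\) (as \(K'_o\) normalises \(\oJgp o\)). A map \(\Psi \in \Hom_\C(\tilde\phi\chi, \rho)\) lies in \(\Hom_{\Jgp \cap K_o}(\tilde\phi\chi, \rho)\) iff it intertwines both the \(A\)-action and the \(B\)-action. Under the canonical isomorphism, write \(\Psi\) corresponding to \(\sum_i v'_i \otimes \varphi_i\) with the \(\varphi_i\) linearly independent. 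For \(b \in B \subseteq \oJgp o\), Definition \ref{defn:rho-pi} shows \(\rho(b)\) acts as \(1 \otimes \tilde\phi_o^+(b)\) on the tensor product (up to the scalar \(\chi(b)\)), and the source representation \(\tilde\phi\chi\) restricted to \(B\) is \(\tilde\phi\chi\) as well; so \(B\)-equivariance of \(\Psi\) is equivalent to each \(\varphi_i\) being \(B\)-equivariant from \(\tilde\phi\) to \(\tilde\phi_o\) (here one uses that \(\tilde\phi_o^+ = \tilde\phi_o\) on \(\oJgp o\), absorbing the twist character's restriction, which is trivial on \(\Jgp\), and that the \(\chi\)-scalars match). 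Then, given \(B\)-equivariance, \(A\)-equivariance: for \(a \in A \subseteq K'_o \cap G'\), \(\rho\chi\inv(a) = \rho'(a) \otimes \tilde\phi_o^+(a)\), and since \(a \in \sbtlp{G'}o 0\)-ish — more precisely, since \(\tilde\phi_o^+\) is \((\sbtlp{G'}o 0 \ltimes 1)\)-isotrivial, the factor \(\tilde\phi_o^+(a)\) is a scalar (or at worst handled by the twist), so \(A\)-equivariance of \(\Psi\) becomes \(A\)-equivariance of the \(v'_i\) from \(\phi\) to \(\rho'\). This shows \(\Psi\) lies in the claimed subspace iff each \(v'_i \in \Hom_A(\phi,\rho')\) and each \(\varphi_i \in \Hom_B(\tilde\phi, \tilde\phi_o)\), which is exactly the assertion.

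The main obstacle I anticipate is the careful bookkeeping around the twist \(\tilde\phi_o^+\) versus \(\tilde\phi_o\) and the character \(\chi\): one must verify that the twisting character (which is trivial on \(\sbtlp{G'}o 0 \ltimes \oJgp o\) by the discussion after Notation \ref{notn:Yu}, and whose \(\Jgp\)-restriction is likewise controlled) does not obstruct the factorisation, and that the scalars \(\chi(a)\), \(\chi(b)\) cancel consistently between source and target. A secondary subtlety is making sure that \(A = \Jgp' \cap K'_o\) really does act on the \(\rho'\) factor through a well-defined restriction of \(\rho'\) — i.e. that \(A \subseteq K'_o\) so that \(\rho'(a)\) makes sense — which is exactly what the first displayed equality of Lemma \ref{lem:Ko-and-J} guarantees. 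Once these points are checked, the argument is a routine linear-independence manipulation on the tensor decomposition, so I would present it compactly rather than belabouring it.

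Concretely, the write-up would run: (1) cite Lemma \ref{lem:Ko-and-J} with \(g'=1\) to get \(\Jgp \cap K_o = A B\); (2) recall from Definition \ref{defn:rho-pi} the explicit action of \(K_o\) on the tensor product and note the restrictions to \(A\) and to \(B\); (3) recall from \cite{yu:supercuspidal}*{\S4, \textbf{SC2}} and \cite{spice:weil} the isotriviality of \(\tilde\phi_o^+\) on \(\sbtlp{G'}o0\) and the \((1 \ltimes \oJpgp o, \hat\phi_o)\)-isotypicity; (4) for \(\Psi\) corresponding to \(\sum_i v'_i \otimes \varphi_i\) with \(\{\varphi_i\}\) independent, unwind \(B\)-equivariance to equivariance of the \(\varphi_i\), then \(A\)-equivariance to equivariance of the \(v'_i\); (5) conclude the restricted map is an isomorphism onto \(\Hom_{\Jgp \cap K_o}(\tilde\phi\chi,\rho)\), injectivity being inherited from the ambient isomorphism and surjectivity from the computation just performed.
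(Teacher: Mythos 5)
Your reduction of the lemma to the decomposition \(\Jgp \cap K_o = (\Jgp' \cap K'_o)(\Jgp \cap \oJgp o)\) from Lemma \ref{lem:Ko-and-J}, followed by first unwinding \((\Jgp \cap \oJgp o)\)-equivariance and then \((\Jgp' \cap K'_o)\)-equivariance, is the same skeleton as the paper's proof, and your \(B\)-step is fine.  The gap is in the \(A\)-step.  You assert that, for \(a \in \Jgp' \cap K'_o\), the operator \(\tilde\phi_o^+(a \ltimes 1)\) on the Weil factor is a scalar ``since \(a \in \sbtlp{G'}o 0\)-ish''.  But \(\Jgp' \cap K'_o = \sbtl{G'}x r \cap \stab_{G'}(\ol o)\) need not lie in \(\sbtlp{G'}o 0\): an element of depth at least \(r\) at \(x\) can have depth exactly \(0\) at \(o\) (its image in the reductive quotient at \(o\) being a nontrivial unipotent element), so the isotriviality of \(\tilde\phi_o^+\) on \(\sbtlp{G'}o 0 \ltimes 1\) does not apply, and \(\tilde\phi_o^+(a \ltimes 1)\) is genuinely non-scalar in general.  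Nor is this ``handled by the twist'': the twist is trivial on \(\sbtlp{G'}o 0 \ltimes \oJgp o\), so it is silent exactly where you need help.

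The missing input is the result from \cite{spice:weil} that the paper invokes at precisely this point: \(\Hom_{\Jgp \cap \oJgp o}(\tilde\phi, \tilde\phi_o)\) equals \(\Hom_{(K' \cap K'_o) \ltimes (\Jgp \cap \oJgp o)}(\tilde\phi^+, \tilde\phi_o^+)\), hence is stable under \(\Jgp' \cap K'_o\) and is \((\Jgp' \cap K'_o, \phi^\vee)\)-isotypic.  Concretely, for \(\varphi\) in this space and \(a \in \Jgp' \cap K'_o\) one has \(\tilde\phi_o^+(a \ltimes 1) \circ \varphi = \varphi \circ \tilde\phi^+(a \ltimes 1) = \varphi\) (the isotriviality now being used at \(x\), where \(\Jgp' \subseteq \sbtlp{G'}x 0\) does hold), while precomposition with \(\tilde\phi(a)\inv\) contributes the scalar \(\phi(a)\inv\).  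It is this intertwining identity that trades the non-scalar target-side operator for a trivial source-side one and pins down the character as \(\phi^\vee\), so that the \(A\)-equivariance condition on your \(v'_i\) comes out as exactly \(\rho'(a)v'_i = \phi(a)v'_i\), that is \(v'_i \in \Hom_{\Jgp' \cap K'_o}(\phi, \rho')\), rather than some other (for instance trivial) equivariance.  Without citing this equality of intertwining spaces, your step (4) does not go through; once you add it, your argument becomes essentially the paper's.
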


\begin{proof}
Since \(\Hom_{\Jgp \cap K_o}(\tilde\phi\chi, \rho)\) equals \(\Hom_{\Jgp \cap K_o}(\tilde\phi, \rho\chi\inv)\), we may, and do, assume for notational convenience that \(\chi\) is trivial.  Since we will do the same thing repeatedly in later results, we emphasise that it does not change any essential idea of the proof, only keeps us from having to carry around a cumbersome factor of \(\chi\) everywhere.

Since \(\Jgp \cap K_o\) equals \((\Jgp' \cap K'_o)(\Jgp \cap \oJgp o)\) by Lemma \ref{lem:Ko-and-J}, certainly the indicated restriction has image in \(\Hom_{\Jgp \cap K_o}(\tilde\phi, \rho)\).

By Definition \ref{defn:rho-pi}, we have that \(\Res^K_{\oJgp o} \rho\) equals (not just is isomorphic to) \(\rho' \otimes_\C \tilde\phi_o\), where the space of \(\rho'\) carries the trivial action of \(\oJgp o\); so the (\(\Jgp \cap \oJgp o\))-isotrivial component of \(\Hom_\C(\tilde\phi, \rho)\) is identified \via the above isomorphism with
\begin{equation}
\tag{$*$}
\sublabel{eq:JJ}
\Hom_\C(\phi, \rho') \otimes_\C \Hom_{\Jgp \cap \oJgp o}(\tilde\phi, \tilde\phi_o).
\end{equation}
By \cite{spice:weil}, we have that \(\Hom_{\Jgp \cap \oJgp o}(\tilde\phi, \tilde\phi_o)\) equals \(\Hom_{(K' \cap K'_o) \ltimes (\Jgp \cap \oJgp o)}(\tilde\phi^+, \tilde\phi_o^+)\), in particular is stable under \(\Jgp' \cap K'_o\), and is \((\Jgp' \cap K'_o, \phi^\vee)\)-isotypic; so that the (\(\Jgp' \cap K'_o\))-isotrivial component of \loceqref{eq:JJ} is \(\Hom_{\Jgp' \cap K'_o}(\phi, \rho') \otimes_\C \Hom_{\Jgp \cap \oJgp o}(\tilde\phi, \tilde\phi_o)\).  This shows that \(\Hom_{\Jgp \cap K_o}(\tilde\phi, \rho)\) is no larger than desired.  Since we have already shown the reverse containment, we are done.
\end{proof}

We use Lemma \ref{lem:Heis-intertwining} to cut down the summands appearing in a Mackey-type formula in the proof of Theorem \ref{thm:isotypic-pi-to-pi'}.

\begin{lem}
\label{lem:Heis-intertwining}
If \(g \in G\) is such that \(\Hom_{\Jgp \cap \Int(g)\inv K_o}(\tilde\phi\chi, \rho^g)\) is non-\(0\), then \(g\) belongs to \(K_o G'\Jgp\).
\end{lem}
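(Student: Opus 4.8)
The plan is to strip the intertwining down to the ``Heisenberg centre'', extract from it an equality of generic characters, and then invoke the genericity of $Z^*_o$. First I would dispose of $\chi$: since $\chi$ is a one-dimensional, hence conjugation-invariant, character of $G$, we have $\Hom_{\Jgp \cap \Int(g)\inv K_o}(\tilde\phi\chi, \rho^g) = \Hom_{\Jgp \cap \Int(g)\inv K_o}(\tilde\phi, (\rho\chi\inv)^g)$, so it does no harm to replace $\rho$ by $\rho\chi\inv = \rho' \otimes \tilde\phi_o^+$ in the sense of Definition \ref{defn:rho-pi} (this is the same bookkeeping that was used in Lemmas \ref{lem:phi-into-rho} and \ref{lem:rho-isotypic}). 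After this reduction $\rho$ is $(\oJpgp o, \hat\phi_o)$-isotypic by Lemma \ref{lem:rho-isotypic}, and $\Res^{K_o}_{\oJgp o}\rho$ is a multiple of the Heisenberg--Weil representation $\tilde\phi_o$; dually, $\tilde\phi$ is irreducible and its restriction to $\Jpgp$ is $\hat\phi$-isotypic.

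The heart of the argument is to restrict the given non-zero intertwining from $L := \Jgp \cap \Int(g)\inv K_o$ to the subgroup $L_0 := \Jpgp \cap \Int(g)\inv\oJpgp o$. Conjugation by $g$ carries $L_0$ into $\oJpgp o$, on which $\rho$ acts by the scalar character $\hat\phi_o$; and $\tilde\phi$ restricted to $L_0 \subseteq \Jpgp$ acts by the scalar character $\hat\phi$. Hence $0 \ne \Hom_L(\tilde\phi, \rho^g) \subseteq \Hom_{L_0}(\tilde\phi, \rho^g)$ forces $\hat\phi$ to agree with $\hat\phi_o \circ \Int(g)$ on $L_0$, which is exactly the intersection of the domains of the K-types $(\Jpgp, \hat\phi)$ at $x$ and $(\Int(g)\inv\oJpgp o, \hat\phi_o^g)$ at $\Int(g)\inv\dota o$. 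To make this comparison actually bite I expect to need, before restricting, to move $g$ into a convenient position: using Lemma \ref{lem:heres-a-gp} I would pass to a tame extension so that $\bG'$ becomes an honest Levi subgroup of $\bG$, and perform the resulting Iwahori-type factorisations of $K_o$, $\Jgp$ and $\Int(g)\inv K_o$ relative to $\bG'$ (as in the proof of Lemma \ref{lem:Ko-and-J}), modifying $g$ on the left by $K_o$ and on the right by $\Jgp$ — changes that affect neither the hypothesis nor the asserted conclusion $g \in K_o G'\Jgp$ — so as to line up the two base points $o$ and $x$ compatibly with the $\bG'$-factorisation.

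Having the two K-types agree on the intersection of their domains, I would appeal to the theory of generic characters underlying Yu's construction (\cite{yu:supercuspidal}*{\S\S8--9}; equivalently the ``associate K-type'' argument of \cite{moy-prasad:k-types}*{\S5} and \cite{jkim-murnaghan:charexp}*{Lemma 2.3.6}, which is available here because $\bG$ is tame so Moy--Prasad isomorphisms exist) to conclude that $X^* + \sbtlpp{\Lie^*(G')}x{-r}$ meets $\Ad^*(g)\inv\bigl(Z^*_o + \sbtlpp{\Lie^*(G')}o{-r}\bigr)$, where $X^*$ is the dual blob of $\phi$ (in the sense of Hypothesis \ref{hyp:phi}) and $Z^*_o$ that of $\phi_o$ (Yu's genericity datum). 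Since $X^* \in Z^*_o + \sbjtlpp{\Lie^*(G')}{-r}$ by hypothesis and both cosets $\sbtlpp{\Lie^*(G')}x{-r}$, $\sbtlpp{\Lie^*(G')}o{-r}$ lie inside $\sbjtlpp{\Lie^*(G')}{-r}$, this gives that $\Ad^*(g)\inv(Z^*_o + \sbjtlpp{\Lie^*(G')}{-r}) \cap (Z^*_o + \sbjtlpp{\Lie^*(G')}{-r})$ is non-empty, so $g \in G'$ by Hypothesis \initref{hyp:Z*}\subpref{orbit}. Undoing the earlier modifications of $g$ then places the original $g$ in $K_o G'\Jgp$.

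The main obstacle I anticipate is precisely the middle step. The representations $\tilde\phi$ and $\rho$ live at two different base points ($x$ and $o$) and on groups that do not nest, and moreover $\tilde\phi$ and $\hat\phi$ are defined on the depth-$\Rp s$ group $\Jpgp$ rather than on a group of the exact form $\sbtl{(G',G)}y{(r,b)}$ with $2b > r$; so one must be careful that restricting the intertwining to $L_0$ genuinely pins down the generic characters to the accuracy needed to feed the generic-character machinery, and that the Iwahori-type factorisations used to arrange this keep $g$ inside $K_o \cdot (\,\cdot\,) \cdot \Jgp$ throughout. It is exactly this ``depth-$\Rp s$ versus depth-$b$'' and two-base-point bookkeeping that is responsible for the conclusion being stated as $g \in K_o G'\Jgp$; by contrast the purely representation-theoretic input — that an intertwining with a $\tilde\phi_o$-isotypic representation, restricted to the common central subgroup, equates central characters — and the reduction to trivial $\chi$ are both routine.
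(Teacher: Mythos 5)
Your route is the paper's in outline: trivialise \(\chi\), use Lemma \ref{lem:rho-isotypic} together with the \((\Jpgp, \hat\phi)\)-isotypy of \(\tilde\phi\) to conclude that \((\Jpgp, \hat\phi)\) and \((\Int(g)\inv\oJpgp o, \hat\phi_o^g)\) agree on the intersection of their domains, feed that agreement into the genericity machinery, and finish with Hypothesis \initref{hyp:Z*}\subpref{orbit}.  The gap is in what the genericity appeal actually delivers.  The associate-K-type input here is precisely Hypothesis \ref{hyp:K-type} (automatic in this section because \bG is tame), and with \(b = \Rp s\) it yields only that the \emph{coarse} dual blobs meet: \(X^* + \sbtl{\Lie^*(G', G)}x{(\Rpp{-r}, -s)}\) intersects \(\Ad^*(g)\inv\bigl(Z^*_o + \sbtl{\Lie^*(G', G)}o{(\Rpp{-r}, -s)}\bigr)\), i.e.\ the blobs of the depth-\((r, \Rp s)\) characters \(\hat\phi\) and \(\hat\phi_o\), with depth-\((-s)\) fuzz transverse to \(G'\).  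You assert instead the finer intersection of \(X^* + \sbtlpp{\Lie^*(G')}x{-r}\) with \(\Ad^*(g)\inv(Z^*_o + \sbtlpp{\Lie^*(G')}o{-r})\), which is what Hypothesis \initref{hyp:Z*}\subpref{orbit} needs but is strictly more than the K-type comparison gives.  (Your side worry that \(\Jpgp\) is not of the form \(\sbtl{(G', G)}y{(r, b)}\) with \(2b > r\) is unfounded: it is, with \(b = \Rp s\).)

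The paper bridges this coarse-to-fine gap not by tame base change and the factorisations of Lemmas \ref{lem:heres-a-gp} and \ref{lem:Ko-and-J}, but by the dual-Lie-algebra analogue of the approximation lemma of \cite{adler-spice:good-expansions} already used in Lemma \ref{lem:near-H'}: after noting (Remark \ref{rem:X*}) that \(X^* + \sbtlpp{\Lie^*(G')}x{-r}\) lies in \(Z^*_o + \sbjtlpp{\Lie^*(G')}{-r}\), one adjusts \(g\) on the left by an element of \(\sbtl G o s\) and on the right by an element of \(\sbtl G x s\) so that the fine intersection with \(\Ad^*(g)\inv(Z^*_o + \sbjtlpp{\Lie^*(G')}{-r})\) holds; these adjustments are harmless because \(\sbtl G o s \subseteq K_o\) and \(\sbtl G x s \subseteq G'\Jgp\).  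Your proposed manoeuvre factorises the relevant \emph{groups} but does not move an element of the coarse dual-blob intersection into the fine one, so the ``main obstacle'' you flag is exactly the missing step rather than a bookkeeping nuisance.  With that approximation step supplied, the remainder of your argument (containment in \(Z^*_o + \sbjtlpp{\Lie^*(G')}{-r}\), Hypothesis \initref{hyp:Z*}\subpref{orbit}, undoing the modifications of \(g\)) coincides with the paper's proof.
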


\begin{proof}
As in the proof of Lemma \ref{lem:phi-into-rho}, we may, and do, assume that \(\chi\) is trivial.

We have that \(\rho\) is \((\oJpgp o, \hat\phi_o)\)-isotypic (Lemma \ref{lem:rho-isotypic}), and \(\tilde\phi\) is \((\Jpgp, \hat\phi)\)-isotypic.  In particular, if the indicated Hom space is non-\(0\), then \((\Jpgp, \hat\phi)\) and \((\Int(g)\inv\oJpgp o, \hat\phi_o^g)\) agree on the intersection of their domains.  By Hypothesis \ref{hyp:K-type}, we have that \(X^* + \sbtl{\Lie^*(G', G)}x{(\Rpp{-r}, -s)}\) intersects \(\Ad^*(g)\inv(Z^*_o + \sbtl{\Lie^*(G', G)}o{(\Rpp{-r}, -s)})\).  Recall that \(X^* + \sbtlpp{\Lie^*(G')}x{-r}\) is contained in \(Z^*_o + \sbjtlpp{\Lie^*(G')}r\) by Remark \ref{rem:X*}.  By the dual-Lie-algebra analogue of \xcite{adler-spice:good-expansions}*{Lemma \xref{lem:good-comm}}, we may, and do, assume, upon adjusting \(g\) on the right by an element of \(\sbtl G x s\) and on the left by an element of \(\sbtl G o s\), which does not affect the conclusion (because \(\sbtl G o s\) is contained in \(K_o\), and \(\sbtl G x s\) is contained in \(G'\Jgp\)), that \(X^* + \sbtlpp{\Lie^*(G')}x{-r} \subseteq Z^*_o + \sbjtlpp{\Lie^*(G')}{-r}\) intersects \(\Ad^*(g)\inv(Z^*_o + \sbjtlpp{\Lie^*(G')}{-r})\).  Then Hypothesis \initref{hyp:Z*}\subpref{orbit} gives that \(g'\) belongs to \(G'\), as desired.
\end{proof}

We regard Theorem \ref{thm:isotypic-pi-to-pi'} as a hint about the existence of the Hecke-algebra isomorphisms predicted in \cite{yu:supercuspidal}*{Conjecture 17.7}.  Although it deals only with modules \(\Hom_{\Jgp'}(\phi, \pi')\) and \(\Hom_\Jgp(\tilde\phi\chi, \pi)\) for the compactly supported parts \(\Hecke(K'/\Jgp', \phi)\) and \(\Hecke(K/\Jgp, \tilde\phi\chi)\) of the Hecke algebras (for which an isomorphism is already known when \(x\) equals \(o\) \cite{yu:supercuspidal}*{Lemma 17.10}), we find it suggestive, and hope that it will be useful as a starting point for future investigations.

\begin{thm}
\initlabel{thm:isotypic-pi-to-pi'}
There is a (non-canonical) isomorphism of vector spaces
\[
\anonmap[\mapisoarrow]{\Hom_{\Jgp'}(\phi, \pi')}{\Hom_\Jgp(\tilde\phi\chi, \pi)}
\]
so that the resulting map
\[
\anonmap{\Hom_{\Jgp'}(\phi, \pi') \otimes_\C \tilde\phi^+\chi}{\Res^G_K \pi}
\]
is a \(K\)-homomorphism.
\end{thm}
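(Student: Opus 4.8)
The plan is to compute $\Hom_\Jgp(\tilde\phi\chi, \pi)$ by Frobenius reciprocity and a Mackey decomposition, and then to match its nonzero constituents, one at a time, with those of $\Hom_{\Jgp'}(\phi, \pi')$. As in the proofs of Lemmas \ref{lem:phi-into-rho} and \ref{lem:Heis-intertwining} I would first twist $\chi$ away, reducing to the construction of an isomorphism $\Hom_{\Jgp'}(\phi, \pi') \to \Hom_\Jgp(\tilde\phi, \pi)$ whose attendant evaluation map $\Hom_{\Jgp'}(\phi, \pi') \otimes_\C \tilde\phi^+ \to \Res^G_K \pi$ is a homomorphism for the natural action of $K = K'\Jgp$ on the source in which $K'$ acts diagonally (on $\Hom_{\Jgp'}(\phi, \pi')$ through its intertwining action, which is trivial on $\Jgp' = K'\cap\Jgp$) and $\Jgp$ acts on $\tilde\phi^+$ alone; the two actions agree on $\Jgp'$ because $\tilde\phi^+|_{\Jgp'} = \tilde\phi|_{\Jgp'}$ is $\phi$-isotypic. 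Since $\pi = \Ind_{K_o}^G\rho$ and $\pi' = \Ind_{K'_o}^{G'}\rho'$ are compact inductions from open subgroups, Frobenius reciprocity and Mackey's formula give
\[
\Hom_\Jgp(\tilde\phi, \pi) \cong \bigoplus_{g \in \Jgp\bslash G/K_o} \Hom_{\Jgp \cap \Int(g)\inv K_o}(\tilde\phi, \rho^g),
\]
together with the analogous decomposition of $\Hom_{\Jgp'}(\phi, \pi')$ indexed by $\Jgp'\bslash G'/K'_o$.

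The next step is to cut the index sets down and identify them. By Lemma \ref{lem:Heis-intertwining} the $g$-summand vanishes unless $g \in K_oG'\Jgp$; for such $g$, writing $g = kg'j$ with $k \in K_o$, $g'\in G'$, $j\in\Jgp$ and absorbing the factors $k$ and $j$ (which merely conjugates $\rho^g$ and $\tilde\phi$) replaces the summand by the $g'$-summand. Lemma \ref{lem:Ko-and-J}, via the identity $K_og'\Jgp\cap G' = K'_og'\Jgp'$, then shows that the surviving summands are parametrised bijectively by $\Jgp'\bslash G'/K'_o$ under $g'\mapsto\Jgp g'K_o$. (This double-coset bookkeeping, both the injectivity just indicated and the surjectivity onto the surviving cosets, is exactly what Lemma \ref{lem:Ko-and-J} is built to supply; passing, by Lemma \ref{lem:heres-a-gp}, to a tame extension in which $\bG'$ is an honest Levi subgroup is convenient here.) On the summand indexed by $g'\in G'$, conjugating the whole situation by $g'$ — that is, replacing $o$ by $g'\inv\cdot o$, just as in the proof of Lemma \ref{lem:Ko-and-J} — puts us in position to apply Lemma \ref{lem:phi-into-rho}, which yields
\[
\Hom_{\Jgp \cap \Int(g')\inv K_o}(\tilde\phi, \rho^{g'}) \cong \Hom_{\Jgp' \cap \Int(g')\inv K'_o}(\phi, (\rho')^{g'}) \otimes_\C \Hom_{\Jgp \cap \Int(g')\inv \oJgp o}(\tilde\phi, \tilde\phi_o^{g'}).
\]
The second tensor factor is a space of intertwiners between two pull-backs of Heisenberg--Weil representations attached to the \emph{same} underlying Heisenberg datum — this is where the matching of dual blobs forced by Hypothesis \ref{hyp:K-type} and Lemma \ref{lem:Heis-intertwining} enters — so it is one-dimensional by Stone--von Neumann; this is the generalisation to $x\ne o$ of \cite{yu:supercuspidal}*{Lemma 17.10}. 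Choosing a basis vector of each of these lines assembles a (non-canonical) isomorphism $\Phi \colon \Hom_{\Jgp'}(\phi, \pi') \to \Hom_\Jgp(\tilde\phi, \pi)$, which gives the first assertion.

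It then remains to arrange that $v\otimes w\mapsto\Phi(v)(w)$ is a $K$-homomorphism; since $K$ is generated by $K'$ and $\Jgp$, it suffices to check $\Jgp$- and $K'$-equivariance separately. The $\Jgp$-equivariance is essentially tautological, the $\Jgp$-actions having been set up so that evaluation of a $\Jgp$-homomorphism is $\Jgp$-equivariant. For $K'$-equivariance one must follow the $K'$-action through all the identifications above: $K'$ permutes the double cosets in $\Jgp'\bslash G'/K'_o$ through its action on $\pi'$, and on each Mackey constituent the one-dimensional Heisenberg intertwining space carries a compatible action of the relevant subgroup of $K'$ coming from the \emph{refined} Weil representation $\tilde\phi^+$ of \cite{spice:weil} — which, unlike the bare Weil representation of \cite{yu:supercuspidal}, intertwines correctly (cf.\ the discussion preceding Definition \ref{defn:rho-pi}). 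Hence the basis vectors of the lines can be chosen $K'$-coherently, and the evaluation map becomes $K'$-equivariant. \textbf{The main obstacle} is precisely this last point: $\Phi$ is glued from non-canonical one-dimensional choices, and the real content is that these choices can be made simultaneously compatible with the $K$-action that permutes and conjugates the Mackey summands — which is where the good intertwining behaviour of the refined Weil representation of \cite{spice:weil} is indispensable.
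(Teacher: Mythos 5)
Your proposal follows essentially the same route as the paper's proof: Mackey decomposition plus Frobenius reciprocity, Lemma \ref{lem:Heis-intertwining} to discard all but the $K_o G'\Jgp$ cosets, Lemmas \ref{lem:Ko-and-J} and \ref{lem:phi-into-rho} to match the surviving summands with those of $\Hom_{\Jgp'}(\phi, \pi')$, and a $K'$-coherent choice of basis vectors in the one-dimensional intertwining spaces of the refined Weil representation from \cite{spice:weil}. You also correctly identify the real content as making those non-canonical choices compatible with the $K$-action, which is exactly what the paper does by choosing representatives of $K'_o\bslash G'/K'$ and extending \via $\tilde\phi_o^+$ and $\tilde\phi^+$.
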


\begin{proof}
Again, as in the proof of Lemma \ref{lem:phi-into-rho}, we may, and do, assume that \(\chi\) is trivial.

We have by Mackey theory that \(\Res^G_\Jgp \pi = \Res^G_\Jgp \Ind_{K_o}^G \rho\) is canonically isomorphic to \(\bigoplus_{g \in K_o\bslash G/\Jgp} \Ind_{\Jgp \cap \Int(g)\inv K_o}^\Jgp \rho^g\).  Since the space of \(\tilde\phi\) is finite-dimensional, the natural map
\[
\anonmap
	{\bigoplus_{g \in K_o\bslash G/\Jgp} \Hom_\Jgp(\tilde\phi, \Ind_{\Jgp \cap \Int(g)\inv K_o}^\Jgp \rho^g)}
	{\Hom_\Jgp(\tilde\phi, \pi)}
\]
is an isomorphism.  We make a note here that will crop up repeatedly in this proof.  Although the \emph{isomorphism classes} of the summands on the left do not depend on the specific choice of representative \(g\) for a \((K_o, \Jgp)\)-double coset, the actual \emph{sets} do.  Specifically, for \(g_1, g_2 \in G\) with \(g_2 = k_o g_1 j\), where \(k_o \in K_o\) and \(j \in \Jgp\), we have a canonical isomorphism
\[
\anoniso
	{\Ind_{\Jgp \cap \Int(g_1)\inv K_o}^\Jgp \rho^{g_1}}
	{\Ind_{\Jgp \cap \Int(g_2)\inv K_o}^\Jgp \rho^{g_2}}
\]
that sends a function \(f\) in the former space to the function \anonmapto g{\rho(k_o)f(g j)}.  This map depends only on \(g_1\) and \(g_2\), not on \(k_o\) and \(j\).  We use it freely to change representative in a double coset without explicit mention.  (Strictly speaking, what we are doing is viewing the summands not as individual spaces, but as limits of spaces with respect to the system of maps as above corresponding to various \(j\) and \(k_o\).)

For \(g \in G\), Frobenius reciprocity gives a canonical isomorphism of \(\Hom_\Jgp(\tilde\phi, \Ind_{\Jgp \cap \Int(g)\inv K_o}^\Jgp \rho^g)\) with \(\Hom_{\Jgp \cap \Int(g)\inv K_o}(\tilde\phi, \rho^g)\).  By Lemma \ref{lem:Heis-intertwining}, we have for \(g \in G\) that \(\Hom_{\Jgp \cap \Int(g)\inv K_o}(\tilde\phi, \rho^g)\) is \(0\) unless \(g\) belongs to \(K_o G'\Jgp\).  For \(g' \in G'\), we have from Lemma \ref{lem:phi-into-rho} a canonical isomorphism
\begin{multline*}
\Hom_{\Jgp' \cap \Int(g')\inv K'_o}(\phi, \rho^{\prime\,g'}) \otimes_\C \Hom_{\Jgp \cap \Int(g')\inv\oJgp o}(\tilde\phi, \tilde\phi_o^{{+}\,{g' \ltimes 1}}) \\
\mapisoarrow \Hom_{\Jgp \cap \Int(g')\inv\oJgp o}(\tilde\phi, \rho^{g'}).
\end{multline*}

Finally, by Lemma \ref{lem:Ko-and-J}, we have that the natural map \anonmap{K'_o\bslash G'/{\Jgp'}}{K_o\bslash G/\Jgp} is an injection, so that we have constructed a canonical isomorphism
\begin{multline*}
\bigoplus_{g' \in K'_o\bslash G'/{\Jgp'}}
	\Hom_{\Jgp' \cap \Int(g')\inv K'_o}(\phi, \rho^{\prime\,g'}) \otimes_\C \Hom_{\Jgp \cap \Int(g')\inv\oJgp o}(\tilde\phi, \tilde\phi_o^{{+}\,{g' \ltimes 1}}) \\
\mapisoarrow \Hom_\Jgp(\tilde\phi, \pi).
\end{multline*}
We make this explicit, and convert it into the desired isomorphism, after a few remarks.  First, write \(V'\) for the space of \(\rho'\), and \(W\) and \(W_o\) for the spaces of \(\tilde\phi\) and \(\tilde\phi_o^+\), respectively, so that the space of \(\rho\) is \(V' \otimes W_o\).  Second, note that there is a canonical identification, for each \(g' \in G'\), of \(\Hom_{\Jgp' \cap \Int(g')K'_o}(\phi, \rho^{\prime\,g'})\) with the \((\Jgp' \cap \Int(g')K'_o, \phi)\)-isotypic subspace of \(\rho^{\prime\,g'}\) (a subspace of \(V'\)); we make this identification without further comment.  Finally, note again that the summands, as isomorphism classes, are determined by the double coset \(K'_o g'\Jgp'\), but, as sets, depend on the choice of representative \(g'\).  Namely, if \(g'_1, g'_2 \in G'\) with \(g'_2 = k'_o g'_1j'\), where \(k'_o \in K'_o\) and \(j' \in \Jgp'\), then we have a canonical isomorphism
\[
\anoniso
	{\Hom_{\Jgp' \cap \Int(g'_1)\inv K'_o}(\phi, \rho^{\prime\,g'_1})}
	{\Hom_{\Jgp' \cap \Int(g'_2)\inv K'_o}(\phi, \rho^{\prime\,g'_2})}
\]
given by \anonmapto{v'}{\rho'(k'_o)\phi(j')v'}, and another canonical isomorphism
\[
\anoniso
	{\Hom_{\Jgp \cap \Int(g'_1)\inv\oJgp o}(\tilde\phi, \tilde\phi_o^{{+}\,{g'_1 \ltimes 1}})}
	{\Hom_{\Jgp \cap \Int(g'_2)\inv\oJgp o}(\tilde\phi, \tilde\phi_o^{{+}\,{g'_2 \ltimes 1}})}
\]
given by \anonmapto\varphi{\tilde\phi_o^+(k'_o) \circ \varphi \circ \tilde\phi(j')}.  Each of these maps depends only on \(g'_1\) and \(g'_2\), not on \(k'_o\) or \(j'\).

Further, each space \(\Hom_{\Jgp \cap \Int(g')\inv\oJgp o}(\tilde\phi, \tilde\phi_o^{{+}\,{g' \ltimes 1}})\) is non-\(0\) (in fact \(1\)-dimensional), and equals \(\Hom_{K' \cap \Int(g')\inv K'_o \ltimes \Jgp \cap \Int(g')\inv\oJgp o}(\tilde\phi^+, \tilde\phi_o^{{+}\,{g' \ltimes 1}})\) \cite{spice:weil}.  Choose a set \(\mc S'\) of representatives for \(K'_o\bslash G'/K'\) (not just \(K'_o\bslash G'/{\Jgp'}\)) in \(G'\), and arbitrarily choose, for each \(g' \in \mc S'\), a non-\(0\) element \(\varphi_{g'}\) of the corresponding Hom space.  (It would be pleasant to describe a canonical choice here, but we have not been able to make one.)  For \(g' \in \mc S'\), \(k'_o \in K'_o\), and \(k' \in K'\), the composition \(\tilde\phi_o^+(k'_o \ltimes 1) \circ \varphi_{g'} \circ \tilde\phi^+(k' \ltimes 1)\) depends only on the product \(k'_o g' k'\); so we may, and do, unambiguously put \(\varphi_{k'_o g'k'} = \tilde\phi_o^+(k'_o) \circ \varphi_{g'} \circ \tilde\phi^+(k')\).  We have now defined \(\varphi_{g'}\) for all \(g' \in G'\).

Now notice that we have an isomorphism (canonical subject to the choice of \(\varphi_{g'}\) above) given by
\begin{multline*}
\bigoplus_{g' \in K'_o\bslash G'/{\Jgp'}} \Hom_{\Jgp' \cap \Int(g')\inv K'_o}(\phi, \rho^{\prime\,g'}) \\
\mapisoarrow \bigoplus_{g' \in K'_o\bslash G'/{\Jgp'}} \Hom_{\Jgp' \cap \Int(g')\inv K'_o}(\phi, \rho^{\prime\,g'}) \otimes_\C \Hom_{\Jgp \cap \Int(g')\inv\oJgp o}(\tilde\phi, \tilde\phi_o^{{+}\,{g' \ltimes 1}})
\end{multline*}
sending \(\bigoplus v'_{g'}\) to \(\bigoplus v'_{g'} \otimes \varphi_{g'}\).  We have already seen that the right-hand side is canonically isomorphic to \(\Hom_\Jgp(\tilde\phi, \pi)\).  A similar but easier calculation shows that the left-hand side is canonically isomorphic to \(\Hom_{\Jgp'}(\phi, \pi')\), identified, as usual, with the \((\Jgp', \phi)\)-isotypic subspace of \(\pi'\).  The resulting isomorphism (canonical subject to the choice of \(\varphi_{g'}\))
\[
\anonmap[\mapisoarrow]{\Hom_{\Jgp'}(\phi, \pi')}{\Hom_\Jgp(\tilde\phi, \pi)},
\]
which we call \(T\), takes a particularly pleasant form.  Namely, if \(f'\) is a \((\Jgp', \phi)\)-isotypic vector in the target, which is to say a certain function \anonmap{G'}{V'}, then the image of \(f'\) is the function that sends \(w \in W\) to the function \map f G{V = V' \otimes_\C W} that vanishes off \(K_o G'\Jgp\), and, for \(g' \in G'\), \(k_o \in K_o\), and \(j \in \Jgp\), satisfies
\[
f(k_o g'j) = \rho(k_o)\bigl(f'(g') \otimes \varphi_{g'}(\tilde\phi(j)w)\bigr).
\]

The map that we want to be a \(K\)-homomorphism is, by definition, at least a \Jgp-homomorphism, so it suffices to consider the action of \(K'\).  Fix \(\gamma \in K'\).  Notice that \(\gamma\) normalises \(\Jgp\), and that, by construction, \(\varphi_{g'\gamma}\) equals \(\varphi_{g'} \circ \tilde\phi(\gamma)\); so \((\pi(\gamma)f)(k_o g'j) = f(k_o g'j\gamma)\) equals
\[
\rho(k_o)\bigl((\pi'(\gamma)f')(g') \otimes \varphi_{g'}(\tilde\phi(j)(\tilde\phi^+(\gamma)(w)))\bigr).
\]
Since \(K'\) normalises \((\Jgp', \phi)\) by Notation \ref{notn:Yu-x}, we have that \(\pi'(\gamma)f'\) is still a \((\Jgp', \phi)\)-isotypic vector.  In other words,
\[
\pi(\gamma)\bigl((T(f'))(w)\bigr)
\qeqq
(T(\pi'(\gamma)f'))(\tilde\phi^+(\gamma)w).\qedhere
\]
\end{proof}

Corollary \ref{cor:isotypic-pi-to-pi'}, which follows almost immediately from Theorem \ref{thm:isotypic-pi-to-pi'}, allows us to use Theorem \ref{thm:dist-G-to-G'} (or, rather, its reformulation Lemma \ref{lem:dist-G-to-G'}) in Theorem \ref{thm:asymptotic-pi-to-pi'}.

\begin{cor}
\label{cor:isotypic-pi-to-pi'}
For \(\gamma \in K'\), we have that
\begin{align*}
\card{\sbat{\CCp G 0(\gamma)}x s}\inv[1/2]\Gauss_{G/\CCp G 0(\gamma^2)}(X^*, \gamma)&\tr \pi\bigl(\gamma\chrc{\Jpgp, \hat\phi^\vee\chi^\vee}\bigr) \\
\intertext{equals}
\card{\sbat{\CCp{G'}0(\gamma)}x s}\inv[1/2]\Gauss_{G'/\CCp{G'}0(\gamma^2)}(X^*, \gamma)&\chi(\gamma)\tr \pi'\bigl(\gamma\chrc{\Jgp', \phi^\vee}\bigr).
\end{align*}
\end{cor}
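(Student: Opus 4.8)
The plan is to rewrite both traces as traces on isotypic subspaces, apply the $K'$-equivariant isomorphism of Theorem~\ref{thm:isotypic-pi-to-pi'}, and thereby reduce the asserted identity to the character formula for the twisted Weil representation $\tilde\phi^+$.

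First I would reinterpret the left-hand trace. Since $\gamma \in K' = \stab_{G'}(\ox, \phi)$, it normalises $\Jpgp$ and fixes $\hat\phi$ (hence $\hat\phi\chi$); and, remembering our normalisation convention for \anonchrc, the operator $\pi(\chrc{\Jpgp, \hat\phi^\vee\chi^\vee})$ is the idempotent projecting onto the $(\Jpgp, \hat\phi\chi)$-isotypic subspace $V$ of $\pi$, so that $\tr\pi\bigl(\gamma\chrc{\Jpgp, \hat\phi^\vee\chi^\vee}\bigr)$ is the trace of $\pi(\gamma)$ on $V$; likewise $\tr\pi'\bigl(\gamma\chrc{\Jgp', \phi^\vee}\bigr)$ is the trace of $\pi'(\gamma)$ on the $(\Jgp', \phi)$-isotypic subspace of $\pi'$. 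By the Stone--von Neumann structure underlying Yu's construction \cite{yu:supercuspidal}, a vector of $\pi$ on which $\Jpgp$ acts through the scalar character $\hat\phi\chi$ lies in the $\tilde\phi\chi$-isotypic subspace of $\Res^G_\Jgp \pi$, and conversely; thus $V = \Hom_\Jgp(\tilde\phi\chi, \pi) \otimes \tilde\phi\chi$, and combining the evaluation map with the isomorphism of Theorem~\ref{thm:isotypic-pi-to-pi'} identifies $V$, as a $K'$-representation, with $\Hom_{\Jgp'}(\phi, \pi') \otimes \tilde\phi^+\chi$, where $\Hom_{\Jgp'}(\phi, \pi')$ is the $(\Jgp', \phi)$-isotypic subspace of $\pi'$ carrying the action of $K'$ through $\pi'$.

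Taking the trace of $\gamma$ through this tensor factorisation gives
\[
\tr\pi\bigl(\gamma\chrc{\Jpgp, \hat\phi^\vee\chi^\vee}\bigr)
\qeqq
\chi(\gamma)\cdot\tr\tilde\phi^+(\gamma)\cdot\tr\pi'\bigl(\gamma\chrc{\Jgp', \phi^\vee}\bigr).
\]
Thus it suffices to prove the character formula
\[
\card{\sbat{\CCp G 0(\gamma)}x s}\inv[1/2]\Gauss_{G/\CCp G 0(\gamma^2)}(X^*, \gamma)\,\tr\tilde\phi^+(\gamma)
\]
equals
\[
\card{\sbat{\CCp{G'}0(\gamma)}x s}\inv[1/2]\Gauss_{G'/\CCp{G'}0(\gamma^2)}(X^*, \gamma),
\]
for, given this, multiplying through by $\chi(\gamma)\tr\pi'\bigl(\gamma\chrc{\Jgp', \phi^\vee}\bigr)$ and using the displayed factorisation yields the Corollary (whether or not that last factor vanishes). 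This character formula expresses the value of the twisted Weil representation $\tilde\phi^+$ at an element of $K'$: the space of $\tilde\phi^+$ arises from the symplectic module $\Jgp/\Jpgp$, with form induced by $X^*$, on which $\gamma$ acts symplectically, and its character at such $\gamma$ is the Weil index of the quadratic form $q_{X^*, \gamma}$ on the part of that module moved by $\gamma$ --- a quotient of the Gauss sums $\Gauss$, the $\gamma^2$ in $\CCp G 0(\gamma^2)$ and $\CCp{G'}0(\gamma^2)$ entering because that is where the relevant form becomes non-degenerate (cf.\ Proposition~\ref{prop:lattice-orth} and Corollary~\ref{cor:lattice-orth}) --- times the square root of the cardinality of the $\gamma$-fixed part, recorded by the groups $\CCp G 0(\gamma)$ and $\CCp{G'}0(\gamma)$, with the $G'$-data dividing out the contribution of $\Jgp \cap G' = \Jgp'$. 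This formula is the content of \cite{spice:weil}, which I would cite here.

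The main obstacle is precisely this last step: pinning down (or locating in \cite{spice:weil}) the explicit character formula for $\tilde\phi^+$ in exactly the form above, and reconciling the group-side constants $\card{\sbat{\CCp G 0(\gamma)}x s}$ and $\card{\sbat{\CCp{G'}0(\gamma)}x s}$ with the lattice- and Haar-measure conventions of Definition~\ref{defn:Weil-index}; the rest of the argument is formal bookkeeping with isotypic components. A secondary point to verify is that the $(\Jpgp, \hat\phi\chi)$-isotypic subspace of $\pi$ coincides with the full $\tilde\phi\chi$-isotypic subspace of $\Res^G_\Jgp \pi$, which is standard in the pro-$p$ Heisenberg-group setting of Yu's construction.
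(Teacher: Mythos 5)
Your proposal is correct and follows essentially the same route as the paper: identify \(\pi\bigl(\chrc{\Jpgp, \hat\phi^\vee\chi^\vee}\bigr)\) as the projector onto the \((\Jpgp, \hat\phi\chi)\)-isotypic subspace, use Stone--von Neumann to see that this coincides with the \((\Jgp, \tilde\phi\chi)\)-isotypic component, factor the trace through the \(K\)-equivariant isomorphism of Theorem \ref{thm:isotypic-pi-to-pi'}, and invoke the character formula for \(\tilde\phi^+\) from \cite{spice:weil}. The one ingredient you flag as the ``main obstacle'' --- the explicit trace of \(\tilde\phi^+(\gamma)\) in terms of the index \(\indx{\sbat{\CCp G 0(\gamma)}x s}{\sbat{\CCp{G'}0(\gamma)}x s}^{1/2}\) and the quotient of Gauss sums --- is precisely what the paper also cites from \cite{spice:weil} as a black box, so nothing further is missing.
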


\begin{proof}
Once more, as in the proof of Lemma \ref{lem:phi-into-rho}, we may, and do, assume that \(\chi\) is trivial.

By Lemma \initref{lem:filtration}\subpref{gp-gp}, we have that \Jgp normalises \((\Jpgp, \hat\phi)\), hence also the \((\Jpgp, \hat\phi)\)-isotypic component of \(\pi\); and that (by the Stone--von Neumann theorem) \((\Jgp, \tilde\phi)\) is the unique irreducible representation of \Jgp containing \((\Jpgp, \hat\phi)\).  It follows that the \((\Jpgp, \hat\phi)\)- and \((\Jgp, \tilde\phi)\)-isotypic components of \(\pi\) are the same, so that \(\pi(\chrc{\Jpgp, \hat\phi^\vee})\) is the projection onto the latter.  Thus, the desired result is a consequence of Theorem \ref{thm:isotypic-pi-to-pi'} and the fact \cite{spice:weil} that
\[
\tr \tilde\phi^+(\gamma)
\qeqq
\indx{\sbat{\CCp G 0(\gamma)}x s}{\sbat{\CCp{G'}0(\gamma)}x s}^{1/2}\frac{\Gauss_{G/\CCp G 0(\gamma^2)}(X^*, \gamma)}{\Gauss_{G'/\CCp{G'}0(\gamma)^2}(X^*, \gamma)}.\qedhere
\]
\end{proof}

Now, in addition to the
	\begin{itemize}
	\item positive real number \(r\),
	\item tame, twisted Levi subgroup \(\bG'\),
	\item element \(Z^*_o \in \Lie^*(G')\),
	\item point \(x \in \BB(G)\),
	\item element \(X^* \in \Lie^*(G')\),
and	\item characters \((\sbtl{G'}x r, \phi)\) and \((\sbtl{(G', G)}x{(r, \Rp s)}, \hat\phi)\)
	\end{itemize}
that we have already (although we regard \(x\), \(X^*\), and \(\phi\) as fixed only provisionally), we recall the
	\begin{itemize}
	\item element \(\gamma \in G\), with associated group \(\bH = \CC\bG r(\gamma)\),
	\end{itemize}
of which we require that
\(x\) belongs to \(\BB(H)\) and \(X^*\) to \(\Lie^*(H)\), satisfying Hypotheses \ref{hyp:funny-centraliser}, \ref{hyp:gamma}, and now \ref{hyp:gamma-central}.  We have by Hypothesis \ref{hyp:gamma-central} and Lemma \ref{lem:in-H'} (and the fact that \(X^*\) belongs to \(\Lie^*(H) \cap (Z^*_o + \sbjtlpp{\Lie^*(G')}{-r})\)) that \(\gamma\) belongs to \(G'\); and by Remark \ref{rem:X*} that Hypothesis \ref{hyp:X*} is satisfied.  We use primes to denote the analogues in \(\bG'\) of constructions in \bG; so, for example, \(\bH'\) stands for \(\CC{\bG'}r(\gamma)\) (subject to the proviso in Remark \ref{rem:tame-Levi}, that we may refer directly only to the identity component of \(\bH'\)).

We prepare for our main result, Theorem \ref{thm:asymptotic-pi-to-pi'}, with Lemma \ref{lem:dist-G-to-G'}.  This slightly reformulates Theorem \ref{thm:dist-G-to-G'}, taking advantage of Hypothesis \ref{hyp:gamma-central} to speak of \(\Gauss(\OO', \gamma)\) rather than \(\Gauss(X^*, \gamma)\) (Notation \ref{notn:Gauss}).  Note that the hypothesised relationship between \(T\) and \(T'\) in Lemma \ref{lem:dist-G-to-G'} is exactly the same as in Theorem \ref{thm:dist-G-to-G'}, although the asymptotic expansions of \(T\) and \(T'\) individually are different.

\begin{lem}
\label{lem:dist-G-to-G'}
Suppose that
	\begin{itemize}
	\item \(T'\) is an invariant distribution on \(G'\),
	\item \(c(T', \gamma)\) is a finitely supported, \(\OO^{H\primeconn}(\Ad^*(G')(Z^*_o + \sbjtlpp{\Lie^*(H')}{-r}))\)-indexed vector of complex numbers such that \(\abs{\Disc_{G'/H'}(\gamma)}^{1/2}T'(\gamma\chrc{\sbtl{G'}x r, \phi^\vee})\) equals
\begin{align*}
q_{H'}^s\sum_{\OO' \in \OO^{H\primeconn}(\Ad^*(G')(Z^*_o + \sbjtlpp{\Lie^*(H')}{-r}))}
{}	&\frac{\Gauss_{G'}(\OO', \gamma)}{\Gauss_{\CCp{G'}0(\gamma^2)/\CCp{G'}0(\gamma)}(\OO', \gamma)}\times{} \\
	&\qquad c_{\OO'}(T', \gamma)
	\Gauss_{H'}(\OO', \gamma)\inv\muhat^{H\primeconn}_{\OO'}\bigl(\chrc{\sbtl{\fh'}x r, \AddChar_{X^*}^\vee}\bigr),
\end{align*}
and	\item \(T\) is an invariant distribution on \(G\) such that
\begin{align*}
&\modulus_{P^-}(\gamma)^{1/2}
\card{\sbat{\CCp G 0(\gamma)}x s}\inv[1/2]
\Gauss_{G/\CCp G 0(\gamma^2)}(X^*, \gamma)\inv
T\bigl(\gamma\chrc{\sbtl{(G', G)}x{(r, \Rp s)}, \hat\phi^\vee}\bigr) \\
\intertext{equals}
&\modulus_{P\suppm}(\gamma)^{1/2}
\card{\sbat{\CCp{G'}0(\gamma)}x s}\inv[1/2]
\Gauss_{G'/\CCp{G'}0(\gamma^2)}(X^*, \gamma)\inv
T'\bigl(\gamma\chrc{\sbtl{G'}x r, \hat\phi^\vee}\bigr).
\end{align*}
	\end{itemize}
Then \(\abs{\Disc_{G/H}(\gamma)}^{1/2}T(\gamma\chrc{\sbtl G x r, \hat\phi^\vee})\) equals
\begin{align*}
q_H^s
\sum_{\OO' \in \OO^{H\primeconn}(Z^*_o + \sbjtlpp{\Lie^*(H')}{-r})}
{}	&\frac{\Gauss_G(\OO', \gamma)}{\Gauss_{\CCp G 0(\gamma^2)/\CCp G 0(\gamma)}(\OO', \gamma)}\times{} \\
	&\qquad c_{\OO'}(T', \gamma)
	\Gauss_H(\OO', \gamma)\inv\muhat^{H\conn}_{\OO'}\bigl(\chrc{\sbtl\fh x r, \AddChar_{X^*}^\vee}\bigr).
\end{align*}
\end{lem}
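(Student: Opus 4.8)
The plan is to deduce this from Theorem \ref{thm:dist-G-to-G'} by a purely formal manipulation of Gauss sums. The hypothesised relation between \(T\) and \(T'\) here is verbatim the one in Theorem \ref{thm:dist-G-to-G'}, so the only thing to check is that the expansion hypothesised here for \(\abs{\Disc_{G'/H'}(\gamma)}^{1/2}T'(\gamma\chrc{\sbtl{G'}x r, \phi^\vee})\) is equivalent to the one in Theorem \ref{thm:dist-G-to-G'}, and that the conclusion of Theorem \ref{thm:dist-G-to-G'} is equivalent to the one asserted here. Both reductions have the same shape, so I describe the first.

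I would first record two elementary observations. Because \(\gamma\) centralises \(\bH\conn\), every element of \(H'\conn \subseteq H\conn\) commutes with \(\gamma\), so for \(g \in H'\conn\) and \(Y^* \in \Lie^*(H')\) we have \(\Gauss_{G'}(\Ad^*(g)Y^*, \gamma) = \Gauss_{G'}(\Ad^*(g)Y^*, \Int(g)\gamma) = \Gauss_{G'}(Y^*, \gamma)\) by Notation \ref{notn:Gauss}; hence \(\Gauss_{G'}(\OO', \gamma)\), and likewise \(\Gauss_{H'}(\OO', \gamma)\) and \(\Gauss_{\CCp{G'}0(\gamma^2)/\CCp{G'}0(\gamma)}(\OO', \gamma)\), are well defined for an \(H'\conn\)-orbit \(\OO'\) in \(\Lie^*(H')\), and \(\Gauss_{G'}(\OO', \gamma)\Gauss_{H'}(\OO', \gamma)\inv\) equals \(\Gauss_{G'/H'}(\OO', \gamma)\), so the Gauss-sum prefactor appearing in the sum here is \(\Gauss_{G'/H'}(\OO', \gamma)\Gauss_{\CCp{G'}0(\gamma^2)/\CCp{G'}0(\gamma)}(\OO', \gamma)\inv\). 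Secondly, remembering our normalisation convention for \anonchrc, the Fourier transform of \(\chrc{\sbtl{\fh'}x r, \AddChar_{X^*}^\vee}\) is the characteristic function of \(X^* + \sbtlpp{\Lie^*(H')}x{-r}\), so the summand indexed by \(\OO'\) vanishes unless \(\OO'\) meets \(X^* + \sbtlpp{\Lie^*(H')}x{-r}\).

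For an orbit \(\OO'\) that does meet \(X^* + \sbtlpp{\Lie^*(H')}x{-r}\), I would invoke Corollary \ref{cor:Gauss-const}, applied within \(\bG'\) to the tame, twisted Levi subgroup \(\bH' = \CC{\bG'}r(\gamma)\): since Hypothesis \ref{hyp:gamma-central} forces \(\CC{\bH'}r(\gamma)\), \(\CCp{\bH'}0(\gamma)\), and \(\CCp{\bH'}0(\gamma^2)\) all to coincide with \(\bH'\), the ``respectively'' assertion of that corollary specialises to say precisely that \(\Gauss_{G'/H'}(X^*, \gamma)\Gauss_{\CCp{G'}0(\gamma^2)/\CCp{G'}0(\gamma)}(X^*, \gamma)\inv\) is unchanged when \(X^*\) is translated by an element of \(\sbtlpp{\Lie^*(H')}x{-r}\); combined with the first observation this gives \(\Gauss_{G'/H'}(\OO', \gamma)\Gauss_{\CCp{G'}0(\gamma^2)/\CCp{G'}0(\gamma)}(\OO', \gamma)\inv = \Gauss_{G'/H'}(X^*, \gamma)\Gauss_{\CCp{G'}0(\gamma^2)/\CCp{G'}0(\gamma)}(X^*, \gamma)\inv\). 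Thus, summand by summand (the vanishing summands requiring nothing), the expansion hypothesised here for \(T'\) agrees with the one in Theorem \ref{thm:dist-G-to-G'}, after harmlessly regarding \(c(T', \gamma)\) as a finitely supported vector indexed by \(\OO^{H'\conn}(\Lie^*(H'))\); the identical argument, with \(\bH = \CC\bG r(\gamma)\) in place of \(\bH'\) and the Fourier transform of \(\chrc{\sbtl\fh x r, \AddChar_{X^*}^\vee}\), rewrites the conclusion of Theorem \ref{thm:dist-G-to-G'} as the one claimed here (and also accounts for the switch of indexing set to \(\OO^{H'\conn}(Z^*_o + \sbjtlpp{\Lie^*(H')}{-r})\), since only orbits meeting \(X^* + \sbtlpp{\Lie^*(H')}x{-r}\), hence lying in the latter set, contribute). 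The statement then follows from Theorem \ref{thm:dist-G-to-G'}. The one place requiring care is the appeal to Corollary \ref{cor:Gauss-const} in the ``reduced'' settings of \(\bG'\) and \(\bG\) relative to \(\bH'\) and \(\bH\): one must confirm that the corollary's hypotheses descend (via Remark \ref{rem:tame-Levi} and Lemma \ref{lem:commute-Lie*}) and that the auxiliary funny centralisers collapse as claimed under Hypothesis \ref{hyp:gamma-central}; the rest is bookkeeping.
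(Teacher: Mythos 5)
Your overall scheme---reduce to Theorem \ref{thm:dist-G-to-G'}, observe that the matching condition on \(T\) and \(T'\) is verbatim the same, and compare the Gauss-sum prefactors orbit by orbit using the supports of the Fourier transforms of the test functions---is the paper's, but the pivotal step is not justified and asserts more than is available. You claim that \(\Gauss_{G'/H'}(X^*, \gamma)\Gauss_{\CCp{G'}0(\gamma^2)/\CCp{G'}0(\gamma)}(X^*, \gamma)\inv\) is by itself unchanged when \(X^*\) is translated by \(\sbtlpp{\Lie^*(H')}x{-r}\), deducing this from Corollary \ref{cor:Gauss-const} ``applied within \(\bG'\) to \(\bH'\)''. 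That corollary is a statement about the \emph{quotient} of the \(\bG\)-quantity by the \(\bG'\)-quantity, and its proof (through Proposition \ref{prop:lattice-orth} and Corollary \ref{cor:lattice-orth}) lives in the standing setting in which the smaller member of the pair is the twisted Levi \(\bG'\) of Hypothesis \ref{hyp:Z*} and \(X^*\) is generic relative to it in the sense of Hypothesis \ref{hyp:X*}. Neither condition transfers to the pair \((\bG', \bH')\): the group \(\bH' = \CC{\bG'}r(\gamma)\) need not be a twisted Levi subgroup of \(\bG'\) (it need not even have full rank), and, decisively, on \(\Lie(G')\) the form \(q_{X^*, \gamma}\) sees only \(X^* - Z^*_o\), since \(Z^*_o\) is fixed by the coadjoint action of \(G'\); this is an element of depth greater than \(-r\), so the depth-\((-r)\) genericity that drives the lattice-orthogonality computation fails inside \(\Lie(G')\), and there is no reason the primed (or, likewise, the unprimed) prefactor is locally constant on its own---the corollary is stated as a ratio precisely so that these contributions cancel. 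The repair is to prove only the cross-ratio identity: for a contributing orbit \(\OO'\), the quantity \(\Gauss_{G/H}(\anondot, \gamma)\Gauss_{\CCp G 0(\gamma^2)/\CCp G 0(\gamma)}(\anondot, \gamma)\inv\) divided by \(\Gauss_{G'/H'}(\anondot, \gamma)\Gauss_{\CCp{G'}0(\gamma^2)/\CCp{G'}0(\gamma)}(\anondot, \gamma)\inv\) takes the same value at \(X^*\) as at \(\OO'\). This is exactly Corollary \ref{cor:Gauss-const} for the pair \((\bG, \bG')\) as stated, and it suffices: absorb the orbit-dependent primed prefactor into the coefficient vector handed to Theorem \ref{thm:dist-G-to-G'}, and then matching the theorem's conclusion against the expansion asserted here requires precisely that equality of cross-ratios.

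There is a second, smaller gap. The restriction to orbits meeting \(X^* + \sbtlpp{\Lie^*(H')}x{-r}\) is immediate on the primed side, but on the unprimed side the non-vanishing of \(\muhat^{H\conn}_{\OO'}\bigl(\chrc{\sbtl\fh x r, \AddChar_{X^*}^\vee}\bigr)\) only tells you that the \(H\conn\)-saturation of \(\OO'\) meets the larger coset \(X^* + \sbtlpp{\Lie^*(H)}x{-r}\). Passing from this to a representative of \(\OO'\) itself lying in \(X^* + \sbtlpp{\Lie^*(H')}x{-r}\)---which you must have before any appeal to Corollary \ref{cor:Gauss-const}, since that corollary only permits translating \(X^*\) within \(\sbtlpp{\Lie^*(\CC{G'}r(\gamma))}x{-r}\)---is not ``the identical argument''; it is the content of Lemma \ref{lem:X*-orbits} (resting on Remark \ref{rem:X*}, Hypothesis \ref{hyp:Z*}, and Lemma \ref{lem:commute-Lie*}), which needs to be invoked at this point, exactly as in the paper's proof.
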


\begin{proof}
By Theorem \ref{thm:dist-G-to-G'}, it suffices to show that
\begin{align*}
&\Bigl(\frac{\Gauss_{G'/H'}(X^*, \gamma)}{\Gauss_{\CCp{G'}0(\gamma^2)/\CCp{G'}0(\gamma)}(X^*, \gamma)}\Bigr)\inv
\Bigl(\frac{\Gauss_{G'/H'}(\OO', \gamma)}{\Gauss_{\CCp{G'}0(\gamma^2)/\CCp{G'}0(\gamma)}(\OO', \gamma)}\Bigr) \\
\intertext{equals the analogous quantity ``without primes'', which is to say}
&\Bigl(\frac{\Gauss_{G/H}(X^*, \gamma)}{\Gauss_{\CCp G 0(\gamma^2)/\CCp G0(\gamma)}(X^*, \gamma)}\Bigr)\inv
\Bigl(\frac{\Gauss_{G/H}(\OO', \gamma)}{\Gauss_{\CCp G 0(\gamma^2)/\CCp G0(\gamma)}(\OO', \gamma)}\Bigr),
\end{align*}
for all orbits \(\OO'\) that intersect \(\Ad^*(G')(Z^*_o + \sbjtlpp{\Lie^*(H')}{-r})\), and for which \(\muhat^H_{\OO'}\bigl(\chrc{\sbtl\fh x r, \AddChar_{X^*}^\vee}\bigr)\) is non-\(0\).  Since the non-vanishing of the Fourier transform implies that \(\Ad^*(H)\OO'\) intersects \(X^* + \sbtlpp{\Lie^*(H)}x{-r}\), we have by Lemma \ref{lem:X*-orbits} that such an orbit \(\OO'\) in fact intersects \(X^* + \sbtlpp{\Lie^*(H')}x{-r}\).  It now follows from Corollary \ref{cor:Gauss-const} that
\begin{align*}
&\frac{\Gauss_{G/H}(X^*, \gamma)}{\Gauss_{G'/H'}(X^*, \gamma)}\dotm\frac{\Gauss_{\CCp G 0(\gamma^2)/\CCp G 0(\gamma)}(X^*, \gamma)\inv}{\Gauss_{\CCp{G'}0(\gamma^2)/\CCp{G'}0(\gamma)}(X^*, \gamma)\inv} \\
\intertext{equals}
&\frac{\Gauss_{G/H}(\OO', \gamma)}{\Gauss_{G'/H'}(\OO', \gamma)}\dotm\frac{\Gauss_{\CCp G 0(\gamma^2)/\CCp G 0(\gamma)}(\OO', \gamma)\inv}{\Gauss_{\CCp{G'}0(\gamma^2)/\CCp{G'}0(\gamma)}(\OO', \gamma)\inv},
\end{align*}
as desired.
\end{proof}

Theorem \ref{thm:asymptotic-pi-to-pi'} is our main result.  It reduces the computation of the character of \(\pi\) to that of \(\pi'\), so iterating it allows us, at least in principle, to reduce the computation of characters of positive-depth, tame supercuspidals to that of characters of depth-\(0\) supercuspidals (for tame, twisted Levi subgroups).

So far throughout the section, we have worked with a \emph{fixed} point \(x \in \BB(G)\), element \(X^* \in \Lie^*(G)\), and character \(\hat\phi\) of \((\sbtl{(G', G)}x{(r, \Rp s)}\), satisfying Hypothesis \ref{hyp:phi}.  For the proof of Theorem \ref{thm:asymptotic-exists}, we must forget our binding, and regard \(x\), \(X^*\), and \(\phi\) as free variables.  Since it is needed to relate an invariant distribution on the group (a character) to one on the Lie algebra (a Fourier transform of an orbital integral), we now also recall the mock-exponential map \(\mexp\) of \S\ref{sec:dual-blob}.

As we did for Theorem \ref{thm:asymptotic-exists}, we re-capitulate all the hypotheses that are currently in force.  We are imposing
	\begin{itemize}
	\item Hypotheses
		\ref{hyp:funny-centraliser},
		\ref{hyp:gamma} (for \emph{all} points \(x \in \BB(H)\)),
	and	\ref{hyp:gamma-central} (on \(\gamma\)),
and	\item Hypotheses
		\ref{hyp:mexp}
	and	\ref{hyp:MP-ad} (on \(\mexp\) and \(\sbat{\ol\mexp}x{\vec\jmath}\)).
	\end{itemize}
Also note that we have three exponential-type maps floating around, namely, the actual exponential map of Hypothesis \ref{hyp:mexp}; the Moy--Prasad isomorphisms of Hypothesis \ref{hyp:MP-ad}; and the map implicitly used Yu in \cite{yu:supercuspidal}*{\S9} when attaching the element \(Z^*_o\) to the character \(\phi_o\).  We require that these be compatible, in the sense that the diagram
\[\xymatrix{
\sbtl{\Lie(H)}x r        \ar[r]^\mexp\ar[d]          & \sbtl H x r          \ar[d] \\
\sbat{\Lie(M)}x r        \ar[r]^{\text{Yu}}\ar[d]    & \sbat M x r          \ar[d] \\
\sbat{\Lie(M\adform)}x r \ar[r]^{\sbat{\ol\mexp}x r} & \sbat{(M\adform)}x r
}\]
commutes for all \(x \in \BB(H)\).  We do \emph{not} need to impose Hypothesis \ref{hyp:Z*} (on \(Z^*_o\)) or \ref{hyp:K-type} (on \((\sbtl G o r, \hat\phi_o)\)), since they follow from the conditions that Yu imposes.

As stated, Theorem \ref{thm:asymptotic-pi-to-pi'} is contingent on Theorem \ref{thm:asymptotic-exists} and Lemma \ref{lem:asymptotic-check}.  Recall that these theorems carry lengthy lists of hypotheses; rather than citing them, and so incurring the weight of those hypotheses, we prefer to emphasise that they may be treated as black boxes.  As long as the necessary asymptotic expansions (with some, unspecified, coefficients) exist, and can be detected by sampling the distribution character with unrefined, minimal K-types, we are fine.  For example, we do not need to re-impose Hypothesis \ref{hyp:depth}; it was used only to prove Theorem \ref{thm:asymptotic-exists}.

Since we are assuming Theorem \ref{thm:asymptotic-exists} anyway, it may seem that the hypothesis regarding the existence of the various vectors \(c(\pi^{\prime\,g}, \gamma)\) is redundant.  The point is that we are requiring that the support of \(c(\pi^{\prime\,g}, \gamma)\) be contained in \(\OO^{(H \cap \Int(g)\inv G')\conn}(\Ad^*(G'g)\inv X^*_o)\), which may be a proper subset of \(\OO^{(H \cap \Int(g)\inv G')\conn}(\Ad^*(g)\inv Z^*_o)\).  That is, Theorem \ref{thm:asymptotic-pi-to-pi'} \emph{supposes} that only certain of those orbits allowed by Theorem \ref{thm:asymptotic-exists} actually occur in the asymptotic expansions of the various \(\pi^{\prime\,g}\), and \emph{concludes} an analogous statement for \(\pi\) (as well as actually computing the coefficients).

\begin{thm}
\label{thm:asymptotic-pi-to-pi'}
Suppose that Theorem \ref{thm:asymptotic-exists} and Lemma \ref{lem:asymptotic-check} are satisfied, and that \(\gamma\) is compact.

Suppose also that \(X^*_o\) is an element of
\(Z^*_o + \sbjtlpp{\Lie^*(G')}{-r}\)
such that, for every \(g \in G'\bslash G/H\conn\) for which \(\Ad^*(G'g)\inv X^*_o\) intersects \(\Lie^*(H)\), there is an \(\OO^{(H \cap \Int(g)\inv G')\conn}(\Ad^*(G'g)\inv X^*_o)\)-indexed vector \(c(\pi^{\prime\,g}, \gamma)\) for which
we have that
\begin{align*}
&&\Phi_{\pi^{\prime\,g}}(\gamma\dotm\mexp(Y'))&\qeqq \\
&&\sum_{\OO' \in \OO^{(H \cap \Int(g)\inv G')\conn}(\Ad^*(G'g)\inv X^*_o)}
	&\frac{\Gauss_{\Int(g)\inv G'}(\OO', \gamma)}{\Gauss_{\CCp{\Int(g)\inv G'}0(\gamma^2)/\CCp{\Int(g)\inv G'}0(\gamma)}(\OO', \gamma)}\times{} \\
&&	&\qquad c_{\OO'}(\pi^{\prime\,g}, \gamma)\Gauss_{H \cap \Int(g)\inv G'}(\OO', \gamma)\inv\Ohat^{(H \cap \Int(g)\inv G')\conn}_{\OO'}(Y')
\intertext{for all \(Y' \in \Lie(H \cap \Int(g)\inv G')\rss \cap \sbjtl{\Lie(H \cap \Int(g)\inv G')}r\).  Then we have that}
&&\Phi_\pi(\gamma\dotm\mexp(Y))&\qeqq \\
&&\chi(\gamma)\sum_{
g \in G'\bslash G/H\conn
} \sum_{\OO'}
{}
	&\frac{\Gauss_G(\OO', \gamma)}{\Gauss_{\CCp G 0(\gamma^2)/\CCp G 0(\gamma)}(\OO', \gamma)}\times{} \\
&&	&\qquad c_{\OO'}(\pi^{\prime\,g}, \gamma)\Gauss_H(\OO', \gamma)\inv\chi(\mexp(Y))\Ohat^{H\conn}_{\OO'}(Y)
\end{align*}
for all \(Y \in \Lie(H)\rss \cap \sbjtl{\Lie(H)}r\).
\end{thm}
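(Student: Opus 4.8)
The plan is to combine Theorem~\ref{thm:asymptotic-exists}, which guarantees existence of the asymptotic expansion for $\pi$ (and for each conjugate $\pi^{\prime\,g}$ of $\pi'$), with the descent machinery of Lemma~\ref{lem:dist-G-to-G'}, using Corollary~\ref{cor:isotypic-pi-to-pi'} to supply the compatibility between the distribution characters of $\pi$ and $\pi'$ that Lemma~\ref{lem:dist-G-to-G'} requires. First I would apply Theorem~\ref{thm:asymptotic-exists} to $\pi$ to obtain \emph{some} finitely supported $\OO^{H\conn}(\Ad^*(G)Z^*_o)$-indexed vector $b(\pi,\gamma)$ realizing the expansion of $\Phi_\pi$ about $\gamma$; the substance of the proof is to identify these coefficients with the right-hand side of the claimed formula. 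To do that, I would fix a point $x\in\BB(H\cap\Int(g)\inv G')$, an element $X^*\in\sbtl{\Lie^*(H)}x{-r}\cap\Ad^*(g)\inv(Z^*_o+\sbjtlpp{\Lie^*(G')}{-r})$ (so Remark~\ref{rem:X*} gives Hypothesis~\ref{hyp:X*}, and the compatibility diagram before the theorem makes Hypothesis~\ref{hyp:phi} hold for the character $\hat\phi$ with dual blob $X^*+\sbtlpp{\Lie^*(G)}x{-r}$), and sample the character $\Theta_\pi$ against $\gamma\chrc{\sbtl G x r,\hat\phi^\vee}$.

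The engine is Lemma~\ref{lem:dist-G-to-G'} applied with $T=\Theta_\pi$ and $T'=\chi(\gamma)\inv\Theta_{\pi'}$ (the twist by $\chi$ being the usual bookkeeping from Definition~\ref{defn:rho-pi}). Its first hypothesis---the asymptotic expansion for $T'$ with coefficients $c(\pi^{\prime\,g},\gamma)$---is exactly what is assumed in the statement, transported from $G'$ to its conjugate $\Int(g)\inv G'$; here I would use Remark~\ref{rem:which-char} to pass between the $\Phi$-normalized and $\Theta$-normalized forms, and Lemma~\ref{lem:asymptotic-check} (together with Hypothesis~\ref{hyp:Z*}) to confirm that the hypothesized expansion of $\pi^{\prime\,g}$ is the genuine one sampled correctly on unrefined minimal K-types. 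Its second hypothesis---the compatibility of sample values of $T$ at $\gamma\chrc{\Jpgp,\hat\phi^\vee}$ with those of $T'$ at $\gamma\chrc{\Jgp',\phi^\vee}$---is precisely Corollary~\ref{cor:isotypic-pi-to-pi'}, once one notes that $\tr\pi(\gamma\chrc{\Jpgp,\hat\phi^\vee\chi^\vee})$ is the value of the distribution character at that test function and that $\gamma\in K'$ since $\gamma$ is compact and centralizes $\bH\conn$ (Hypothesis~\ref{hyp:gamma-central}). Feeding these in, Lemma~\ref{lem:dist-G-to-G'} yields
\[
\abs{\Disc_{G/H}(\gamma)}^{1/2}\Theta_\pi(\gamma\chrc{\sbtl G x r,\hat\phi^\vee})
= \chi(\gamma) q_H^s \sum_{\OO'} \frac{\Gauss_G(\OO',\gamma)}{\Gauss_{\CCp G 0(\gamma^2)/\CCp G 0(\gamma)}(\OO',\gamma)}\, c_{\OO'}(\pi^{\prime\,g},\gamma)\Gauss_H(\OO',\gamma)\inv \muhat^{H\conn}_{\OO'}\bigl(\chrc{\sbtl\fh x r,\AddChar_{X^*}^\vee}\bigr),
\]
the sum over $\OO'\in\OO^{H\primeconn}(\Ad^*(g)\inv X^*_o + \sbjtlpp{\Lie^*(H')}{-r})$.

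Next I would translate this sampled identity into an identity of asymptotic expansions via Lemma~\ref{lem:sample} and Lemma~\ref{lem:asymptotic-check}. Lemma~\ref{lem:sample} (with $a=r$), applied to $\pi$, rewrites $\widecheck\Theta_{\pi,\gamma}(\chrc{X^*+\sbtlpp{\Lie^*(H)}x{-r}})$ as $\meas(\sbtl H x r)$ times $\tr\pi(\gamma\mexp(0)\chrc{\sbtl G x r,\hat\phi^\vee})$, up to the measure normalization; matching this against the displayed formula, and summing over the double cosets $g\in G'\bslash G/H\conn$ (finitely many, by Lemma~\ref{lem:finite}), shows that the candidate vector $b(\pi,\gamma)$ built from the $c_{\OO'}(\pi^{\prime\,g},\gamma)$ via the Gauss-sum weights satisfies the sampling criterion of Lemma~\ref{lem:asymptotic-check} at every $(x,X^*)$ of the relevant form---here Lemma~\ref{lem:X*-orbits} is what pins down which orbits $\OO'$ contribute and identifies the intersection of nilpotent cones, and Hypothesis~\ref{hyp:Z*}\subpref{orbit} keeps the double cosets separated so the sum does not overcount. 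By the uniqueness clause of Lemma~\ref{lem:asymptotic-check}, this candidate vector \emph{is} $b(\pi,\gamma)$ (more precisely its $\Phi$-normalized form $\tilde b$), and reassembling via Remark~\ref{rem:which-char}, pulling the discriminant factors through Hypothesis~\initref{hyp:mexp}\incpref{elt}\subpref{disc}, and reintroducing the factor $\chi(\mexp(Y))$ coming from the $\chi\inv$-twist in Definition~\ref{defn:rho-pi} (so that $\rho\chi\inv$ rather than $\rho$ appears), gives the stated expansion of $\Phi_\pi(\gamma\cdot\mexp(Y))$.

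The main obstacle is the careful bookkeeping of normalizations: the canonical Haar measures of Section~\ref{sec:p-adic-group-basics}, the $\meas(K)\inv$ convention built into $\anonchrc$, the factors $q_H^s$, the index terms like $\indx{\sbat\fh x 0}{\sbat{\fh'}x 0}\inv[1/2]$ absorbed via Lemma~\ref{lem:mu-G-to-G'}, and the inversion on $\Gauss$ that appears because one works with $\phi^\vee$ rather than $\phi$---all of these must line up exactly for the Gauss-sum weights on $G$ to come out as stated rather than, say, with a stray power of $q$ or an extra $\card{\sbat{\CCp G 0(\gamma)}x s}$. The cleanest route is to do all the arithmetic once, inside the proof of Lemma~\ref{lem:dist-G-to-G'} (which already packages Proposition~\ref{prop:dist-r-to-s+}, Corollary~\ref{cor:Gauss-const}, and Lemma~\ref{lem:mu-G-to-G'}), and then let Lemma~\ref{lem:asymptotic-check} do the rest without any further explicit constants; the twist by $\chi$ and the reduction to the case $\chi$ trivial, as in the proof of Lemma~\ref{lem:phi-into-rho}, removes one more layer of clutter.
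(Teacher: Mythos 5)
Your proposal is correct and follows essentially the same route as the paper's own proof: existence from Theorem \ref{thm:asymptotic-exists}, reduction to sample values via Remark \ref{rem:which-char}, Lemma \ref{lem:asymptotic-check} and Lemma \ref{lem:sample}, the matching condition supplied by Corollary \ref{cor:isotypic-pi-to-pi'} (after noting \(\gamma \in K'\)), descent through Lemma \ref{lem:dist-G-to-G'}, and the reduction to \(\chi\) trivial together with the final measure and discriminant bookkeeping. The only cosmetic difference is the order in which you invoke Lemma \ref{lem:dist-G-to-G'} versus the sampling reduction, which does not change the argument.
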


Note that \(\mexp(Y')\) belongs to \(\sbjtl{(H \cap \Int(g)\inv G')}r\) whenever \(Y'\) belongs to \(\sbjtl{\Lie(H \cap \Int(g)\inv G')}r\), for \(g \in G\) such that \(\Ad^*(G'g\inv)X^*_o\) intersects \(\Lie^*(H)\), by Hypothesis \initref{hyp:mexp}\incpref{coset}\subpref{coset}.  Thus the notation \(\Phi_{\pi^{\prime\,g}}(\gamma\dotm\mexp(Y'))\) makes sense.

\begin{proof}
As in Lemma \ref{lem:phi-into-rho}, we may, and do, assume that \(\chi\) is trivial.

By Theorem \ref{thm:asymptotic-exists}, there is \emph{some} asymptotic expansion for \(\pi\) in terms of \(\OO^{H\conn}(\Ad^*(G)Z^*_o)\).  By Remark \ref{rem:which-char} and Lemma \ref{lem:asymptotic-check}, to check that the proposed expansion in the statement is correct, it suffices to show that, whenever \(g_0 \in G\) is such that \(\Ad^*(g_0)\inv Z^*_o\) belongs to \(\Lie^*(H)\), we have for all \(x \in \BB(H \cap \Int(g_0)\inv G')\) and \(X^* \in \sbtl{\Lie^*(H)}x{-r} \cap \Ad^*(g_0)\inv(Z^*_o + \sbjtlpp{\Lie^*(G')}{-r})\) that
\begin{align*}
\abs{\redD_H(\Ad^*(g_0)\inv Z^*_o)}^{1/2}\sum_{\OO'}
	{}
	&\frac{\Gauss_G(\OO', \gamma)}{\Gauss_{\CCp G 0(\gamma^2)/\CCp G 0(\gamma)}(\OO', \gamma)}\times{} \\
	&\qquad c_{\OO'}(\pi^{\prime\,g_0}, \gamma)\Gauss_H(\OO', \gamma)\muhat^{H\conn}_{\OO'}(\chrc{\sbtl\fh x r, \AddChar_{X^*}^\vee}),
\end{align*}
equals
\[
\int_\fh \abs{\Disc_{G/H}(\gamma)}^{1/2}\Theta_\pi(\gamma\dotm\mexp(Y))\chrc{\sbtl\fh x r, \AddChar_{X^*}^\vee}(Y)\upd Y,
\]
which, by Lemma \ref{lem:sample}, equals
\[
\frac{\meas(\sbtl H x r)}{\meas(\sbtl\fh x r)}\abs{\Disc_{G/H}(\gamma)}^{1/2}\tr \pi(\gamma\chrc{\sbtl G x r, \hat\phi^\vee}),
\]
where \(\hat\phi\) is the character of \(\sbtl G x r\) with dual blob \(X^* + \sbtlpp{\Lie^*(G)}x{-r}\).

We assume for notational convenience that \(g_0\) is the identity.  In particular, by Lemma \ref{lem:in-H'} and Hypothesis \ref{hyp:gamma-central}, if there is anything to test, which is to say if \(\Lie^*(H) \cap (Z^*_o + \sbjtlpp{\Lie^*(G')}{-r})\) is non-empty, then \(\gamma\) belongs to \(G'\) and \(Z^*_o\) belongs to \(\Lie^*(H')\).

If \(\gamma\) is not compact modulo \(\Zent(G)\), then, because \(\Zent(G')/\Zent(G)\) is compact \cite{yu:supercuspidal}*{\S3, p.~590, \textbf{D1}}, it is also not compact modulo \(\Zent(G')\), so the matching condition becomes the equality \(0 = 0\) by \cite{deligne:support}*{p.~156, Th\'eor\`eme 2} and \cite{casselman:jacquet}*{Theorem 5.2}.

If \(\gamma\) \emph{is} compact modulo \(\Zent(G)\), then, since it stabilises the image of \ox in the reduced building of \bH, we must have that \(\gamma\) fixes \ox.  Since \(\gamma\) stabilises \(X^* \in \Lie^*(H)\), hence \(X^* + \sbtlpp{\Lie^*(M')}x{-r} = X^* + \sbtlpp{\Lie^*(G')}x{-r}\) (by Lemma \initref{lem:commute-gp}\subpref{down}, say) we have by Hypothesis \initref{hyp:MP-ad}\subpref{Ad} that it also stabilises \((\sbtl{G'}x r, \phi)\), where \(\phi\) is the character of \(\sbtl{G'}x r\) with dual blob \(X^* + \sbtlpp{\Lie^*(G')}x{-r}\), hence belongs to \(K'\).  Thus, the matching condition for \(T = \Theta_\pi\) and \(T' = \Theta_{\pi'}\) in Lemma \ref{lem:dist-G-to-G'} is satisfied.

By Remark \ref{rem:which-char}, (the other direction of) Lemma \ref{lem:asymptotic-check}, and Lemma \ref{lem:sample}, we have the desired equality ``with primes''; that is to say, we \emph{know} that
\begin{align*}
\abs{\redD_{H'}(Z^*_o)}^{1/2}\sum_{\OO' \in \OO^{H\primeconn}(\Ad^*(G')X^*_o)}
	{}
	&\frac{\Gauss_{G'}(\OO', \gamma)}{\Gauss_{\CCp{G'}0(\gamma^2)/\CCp{G'}0(\gamma)}(\OO', \gamma)}\times{} \\
	&\qquad c_{\OO'}(\pi', \gamma)\Gauss_{H'}(\OO', \gamma)\muhat^{H\primeconn}_{\OO'}(\chrc{\sbtl{\fh'}x r, \AddChar_{X^*}^\vee})
\end{align*}
equals
\[
\frac{\meas(\sbtl{H'}x r)}{\meas(\sbtl{\fh'}x r)}\abs{\Disc_{G'/H'}(\gamma)}^{1/2}\tr \pi'(\gamma\chrc{\sbtl{G'}x r, \phi^\vee}),
\]
where \(\phi\) is the character of \(\sbtl{G'}x r\) with dual blob \(X^* + \sbtlpp{\Lie^*(G')}x{-r}\).

Now, according to Waldspurger's canonical Haar measures, we have that \(\sbtlp H x 0\) and \(\sbtlp{\Lie(H)}x 0\) both have the same measure; and, by Lemma \ref{lem:MP-card}, the indices \(\indx{\sbtlp H x 0}{\sbtl{(H', H)}x{(\Rp0, r)}}\) and \(\indx{\sbtlp{\Lie(H)}x 0}{\sbtl{\Lie(H', H)}x{(\Rp0, r)}}\) are the same; so \(\dfrac{\meas(\sbtl H x r)}{\meas(\sbtl{\Lie(H)}x r)}\) equals \(\dfrac{\meas(\sbtl{H'}x r)}{\meas(\sbtl{\Lie(H')}x r)}\).  By Hypothesis \initref{hyp:Z*}\subpref{good}, we have that \(\dfrac{\abs{\redD_H(Z^*_o)}}{\abs{\redD_{H'}(Z^*_o)}}\) equals \(q_{H/H'}^r\).  The desired equality thus follows from Lemma \ref{lem:dist-G-to-G'}.
\end{proof}

\printindex

\begin{bibdiv}
\begin{biblist}
\bibselect{references}
\end{biblist}
\end{bibdiv}
\end{document}